\documentclass[reqno]{amsart}
\usepackage{amsmath} 
\usepackage{amssymb}  
\usepackage{amsfonts}  
\usepackage{mathtools}  
\usepackage{physics} 
\usepackage{esint} 
\usepackage{braket} 
\usepackage{enumitem}
\usepackage[toc,page]{appendix} 
\usepackage[breakable]{tcolorbox}
\usepackage{xcolor}
\usepackage{geometry}
\usepackage[hidelinks]{hyperref}
\usepackage{adjustbox}
\usepackage{stackrel}
\usepackage{cite}

\usepackage[T1]{fontenc}
\usepackage{mathrsfs}
 \usetikzlibrary{arrows.meta}
\newcommand{\BBB}{\color{black}}
\newcommand{\DDD}{\color{black}}

\newcommand{\EEE}{\color{black}}

\newcommand{\MMM}{\color{black}}

\definecolor{tubeorange}{HTML}{F0BC42}
\definecolor{kred}{HTML}{8E1F2F}
\definecolor{AsRoma}{HTML}{FF7900}
\definecolor{mossgreen}{HTML}{6AA84F}

\numberwithin{equation}{section}

\newtheorem{theorem}{Theorem}[section]
\newtheorem{proposition}[theorem]{Proposition}

\newtheorem{lemma}[theorem]{Lemma}
\newtheorem{corollary}[theorem]{Corollary}
\newtheorem{definition}[theorem]{Definition}
\theoremstyle{remark}
\newtheorem{remark}[theorem]{Remark}

\newcommand{\E}{\mathfrak{E}}
\newcommand{\cR}{\mathcal{Q}}
\newcommand{\R}{\mathbb{R}}
\newcommand{\Rd}{\mathbb{R}^d}
\newcommand{\Rk}{\mathbb{R}^k}
\newcommand{\Rkd}
{\mathbb{R}^{k\times d}}
\newcommand{\Sd}{\mathbb{S}^{d-1}}
\newcommand{\m}{\mathfrak{m}}
\newcommand{\N}{\mathbb{N}}

\newcommand{\B}{\mathcal{B}}
\newcommand{\C}{\mathcal{C}}
\newcommand{\F}{\mathfrak{F}}
\newcommand{\cF}{\mathcal{F}}
\newcommand{\Op}{\mathcal{O}}
\newcommand{\e}{\varepsilon}

\mathchardef\emptyset="001F

\newcommand{\Lb}{\mathcal{L}} 
\newcommand{\Hd}{\mathcal{H}^{d-1}}
\newcommand{\mres}{\mathbin{\vrule height 1.6ex depth 0pt width
0.13ex\vrule height 0.13ex depth 0pt width 1.3ex}}

\newcommand{\GBVs}{{\rm GBV}_\star}
\newcommand{\BV}{{\rm BV}}
\newcommand{\GBV}{{\rm GBV}}
\newcommand{\SD}{{\rm SD}}

\newcommand{\SDtostar}{\stackbin[SD_\star]{}{\to}}

\title[Structured deformations for energies with general surface terms]{Structured deformations for energies \\ with general surface terms}

\author[D. Donati and M. Friedrich]{Davide Donati and Manuel Friedrich}

\address[D. Donati]{Scuola Internazionale Superiore di Studi Avanzati, via Bonomea 265, 34136 Trieste, Italy}
\email{ddonati@sissa.it}
\address[M. Friedrich]{Department of Mathematics, Johannes Kepler Universität Linz. Altenbergerstrasse 69,
4040 Linz, Austria}
\email{manuel.friedrich@jku.at}

\begin{document}

\begin{abstract}
We develop a variational theory of structured deformations for energies whose surface densities satisfy general growth conditions. This requires a formulation in the  generalised space ${\rm GBV}_\star$, introduced by Dal Maso and Toader, which is the natural setting for surface energies that are linear near the origin and bounded at infinity.
In this framework, we prove three main results: an approximation theorem for structured deformations, an integral representation theorem for abstract lower semicontinuous functionals, and an explicit representation formula for relaxed  energies. The proofs rely on new density results for functions of bounded variation and on Poincaré-type inequalities tailored to ${\rm GBV}_\star$. Our results extend the applicability of structured deformations to cohesive models in fracture mechanics.

\vspace{0.5 cm}
\noindent {\bf MSC codes:} 49J45, 49Q20,  74A60, 74M99  

\noindent {\bf Keywords:} Structured deformations, Relaxation, Approximation results, Generalised functions of bounded variation, Cohesive energies
\end{abstract}

\maketitle

\section{Introduction}

\BBB 
The purpose of this paper is to establish a theory for the description of first-order structured deformations with general nonlinear surface densities.  \EEE 
Structured deformations were introduced by Del Piero \& Owen \cite{DelPieroOwen} as a mathematical framework to describe multiscale \BBB phenomena in materials.   In classical continuum mechanics, the behaviour of materials is   typically described \EEE by an injective map
$g\colon\Omega\to \R^d$, defined on a reference configuration $\Omega\subset \R^d$, whose gradient $\nabla g$ represents the local \BBB distortions. \EEE In the theory of structured deformations, this description is  enriched by replacing $g$ by a triple $(\kappa,g,G)$: the map $g$ still describes the macroscopic deformation, but \MMM it \EEE is now allowed to have discontinuities, which are concentrated on the closed set  $\kappa$, interpreted as a \emph{crack site}. The matrix field $G$ accounts for the contributions at the macroscopical level of smooth \MMM sub-macroscopic \EEE changes, while the tensor $\nabla g-G$ captures those arising from non-smooth sub-macroscopic changes, such as slips and separations (referred to as \emph{disarrangements} \cite{deseri2002energetics}).
 One of the   main contributions of \cite{DelPieroOwen} consists \BBB in \EEE an \emph{approximation result} that \BBB clarifies this interpretation: \EEE  every structured deformation $(\kappa,g,G)$ can be seen as the limit of  \emph{simple deformations}, i.e., \BBB triples of the form \EEE   $\{(\kappa_n,g_n,\nabla g_n)\}_n$,   such  that  the tensors $G$ and $\nabla g-G$  arise from the approximating sequence\DDD s \EEE $\{g_n\}_n$ \BBB   and $\{\nabla g_n\}_n$  in the following way: $G$ is \MMM  the weak limit \EEE of $\nabla g_n$, while $\nabla g-G$ is given by $\nabla (\lim_n g_n)-\lim_n\nabla g_n$, thus representing a measure \MMM of  noncommutability of \EEE the operations $\nabla$ and $\lim_n$. \MMM The \EEE theory of structured   deformations can  \EEE model a broad class of mechanical phenomena in which the macroscopic response is influenced by microscopic slips, cracks, or other disarrangements, including elasticity, plasticity, fracture, and the mechanics of defective crystalline solids. 

Adopting this point of view, Choksi \& Fonseca \cite{ChoksiFonseca} laid the foundations for a variational theory of structured deformations. In their formulation, the triple $(\kappa,g,G)$ is  replaced by a \MMM pair \EEE $(g,G)$, where the deformation map $g$ is  a function in ${\rm SBV}(\Omega;\Rk)$   and $G\in L^p(\Omega;\Rkd)$ for some $p\geq 1$. The crack site $\kappa$ is no longer prescribed independently,
but is instead represented by the jump set $J_g$ of the deformation map. Relying on  a previous \EEE result of Alberti \cite{Alberti}, they were able to recover an analogue of the approximation theorem of Del Piero \& Owen: every pair $(g,G)$ can be approximated, in a suitable sense, by sequences $\{(u_n,\nabla u_n)\}_n$ with $\{u_n\}_n\subset {\rm SBV}(\Omega;\Rk)$, $\nabla u_n$ being the approximate gradient of $u_n$. 
\BBB Departing from a general class of energy functionals with bulk and surface densities 
\begin{align}\label{Intro: relaxation}
\mathcal{E}(u) = \int_\Omega \Psi(x,\nabla u)\,{\rm d}x + \int_{J_u\cap \Omega} \gamma(x,[u],\nu_{u})\,{\rm d}\Hd,
\end{align}
this result justified  the assignment of a \emph{relaxed formulation} $I^p$ representing the energetically most favourable way to approximate a structured deformation, namely  \EEE  
\begin{align}\notag 
       I^p(g,G)\coloneq \inf\Big\{&\liminf_{n\to+\infty} \mathcal{E}(u_n) \colon \,  \{u_n\}_n\subset {\rm SBV}(\Omega;\Rk) \text{ and } \{(u_n,\nabla u_n)\}_n \text{ converges to }(g,G)\Big\}.\label{Intro: relaxation}
\end{align}
Here, $[u]\coloneq u^+-u^-$ is the difference of the unilateral traces \MMM $u^+,u^-$ \EEE  of $u$ on  $J_u$, whose unit normal is given by $\nu_u$,   $\Psi$ is a non-negative integrand with $p$-growth, for $p\geq1$, and $\gamma$ satisfies
\begin{equation}\label{Intro: growth Choksi Fonseca}
 c|\zeta| \leq  \gamma(x,\zeta,\nu)\leq C|\zeta|
\end{equation}
for suitable positive constants $c$ and $C$.
Under these assumptions, Choksi \& Fonseca \MMM identified \EEE  effective bulk and surface
energy densities \MMM which are \EEE  defined through cell formulas (computed  by looking  at simple deformations $(u,\nabla u)$ only) \MMM and  which \EEE allow to represent  $I^p$ as an integral functional.

This variational approach has been developed \BBB further over the last years,  \EEE  including higher-order and multi-level theories  
\cite{FonsHafParo,J.Elasticity,BarrosoMatiasZappale2,BarrosoMatiasZappale1,BaiaMatiasSantos,BarMatMorOwenARMA}, \DDD  linearised models \cite{FriMatZap}, as well as dimension reduction \cite{DimensionReduction2014, DimensionReduction2018} and homogenisation \cite{AmarMatiasMorandotti}\EEE.  For a comprehensive introduction to the theory and for an exhaustive list of  contributions on the topic,
we refer the interested reader to the recent monograph
\cite{MorandottiBook}.

While being satisfactory for \MMM an \EEE  array of applications, \BBB the \EEE growth condition \eqref{Intro: growth Choksi Fonseca} excludes, however, a large plethora of \BBB relevant settings\DDD, \EEE\EEE such as \BBB models for defects in crystals or \EEE  cohesive zone models in fracture mechanics. \BBB There,  \EEE it is natural to assume that for small jump openings the function $\zeta\mapsto\gamma(x,\zeta,\nu)$
exhibits a linear behaviour, while it saturates for large jump openings, approaching a positive constant as $|\zeta|\to +\infty$. 
To treat \BBB such  applications \EEE in the framework of structured deformations, it is thus necessary to relax the bounds in 
\eqref{Intro: growth Choksi Fonseca} and to  consider surface densities satisfying 
 \begin{equation}\label{Intro: cohesive growth}
     c ( \EEE|\zeta|\land 1  ) \EEE\leq  \gamma(x,\zeta,\nu)\leq C  ( \EEE|\zeta|\land 1  ) \EEE,
\end{equation}
where $a\land b\coloneq \min \{a,b\}$ for $a,b\in\R$. \BBB The main scope of this paper  \EEE is to systematically  study the relaxation of energies of the form \eqref{Intro: relaxation}   under the growth \MMM assumption \EEE  \eqref{Intro: cohesive growth}.

Some \BBB observations \EEE in this direction  have already been \BBB made, \EEE see, e.g., the discussions in \cite[Remark~3.3]{ChoksiFonseca} or \cite[Remark~3.1]{MatiasMorandottiZappale}. However, the suggested approach \BBB in \cite{ChoksiFonseca, MatiasMorandottiZappale} \EEE comes at the expense of altering the definition of the relaxation $I^p$ by considering only approximating sequences with \emph{equibounded ${\rm BV}$-norm}. As the lower bound in \eqref{Intro: cohesive growth} does not guarantee this equiboundedness property, this may lead to a\DDD n  ill\EEE-posed existence theory for minimisation problems associated to the relaxed energy $I^p$.  A natural way to circumvent this limitation is  to generalise the admissible class of deformation fields $g$ 
to the natural \MMM domain \EEE  of energies with surface densities as in \eqref{Intro: cohesive growth}. This corresponds to considering the space of generalised functions of bounded variation
${\rm GBV}_\star(\Omega;\Rk)$, introduced in the scalar case by Dal Maso \& Toader
\cite{DalToa22} and extended to the vector-valued setting in
\cite{DonatiGBV}. \BBB In fact, this space has been tailored   to study problems in  cohesive fracture and fractured elastoplastic materials featuring models with surface densities of the form \eqref{Intro: cohesive growth}, and has already been employed  successfully in dealing with several variational problems, see \cite{DalToa22,DonatiGBV,DalToaConvex,DalToaHomo,dal2025homogenization}. \MMM In particular, in this setting suitable compactness results are available, see Remark~\ref{rem:compactness}.

Our goal is to provide a rather complete picture of the relaxation of $I^p$ \EEE in the class of  admissible  deformation fields $g \in {\rm GBV}_\star(\Omega;\Rk)$, \EEE   under the growth \MMM assumption \EEE \eqref{Intro: cohesive growth}, without requiring any additional constraint on the norms of the approximating sequences. 
\BBB Our results can be summarised as follows: \EEE 
\begin{itemize}
    \item [(a)] an extension of the approximation theorem of Choksi \& Fonseca, \BBB where we prove \EEE that every pair $(g,G)$, with $g\in {\rm GBV}_\star(\Omega;\Rk)$ and  $G\in L^1(\Omega;\Rkd)$, can be
approximated by means   of a sequence $\{u_n\}_n\subset {\rm SBV}(\Omega;\mathbb R^k)$  (see Theorem~\ref{thm:approximation SDstar});
\item [(b)] an integral representation result for abstract functionals by means of the global method for relaxation of Bouchitté, Fonseca, \& Mascarenhas \cite{BFM1998} (see Theorem~\ref{thm: integral star}); 
\item [(c)] an explicit integral representation for the relaxation of \eqref{Intro: relaxation}, under the growth \MMM  condition \EEE \eqref{Intro: cohesive growth} (see Theorems~\ref{thm: relaxation partial}, \ref{thm:cell formulas}, and \ref{thm:relaxation full}).
\end{itemize}

Besides the technical difficulties inherent to the function space, the approach exhibits two major conceptual difficulties that render the analysis substantially different from the existing literature on structured deformations on the one hand, and from models formulated on ${\rm GBV}_\star$ on the other hand: 

\begin{itemize}
    \item[(i)] The first is related to the approximation theorem: in contrast to the result in \cite{ChoksiFonseca},  in our setting  it is not meaningful to ask that the  total variation $|Du_n|(\Omega)$ of the approximating sequence is controlled by $ |Dg|(\Omega) + \Vert G \Vert_{L^1(\Omega)}$, but    the total variation on both sides must be replaced by the nonlinear quantity $\mathcal{V}$ defined by       
\begin{equation*}
\mathcal{V}(g,O)\coloneq \sum_{i=1}^k
\Big( \int_O |\nabla g_i|\,{\rm d}x + |D^c g_i|(O)+ \int_{J_{g_i}\cap O} |[g_i]|\wedge 1 \,{\rm d}\mathcal H^{d-1}
\Big) \BBB \quad \text{for $O \subset \Omega$}. \EEE
\end{equation*}
Here,   \MMM $\nabla g_i$ \EEE corresponds to the approximate gradient, $D^cg_i$ to the Cantor part, and $J_{g_i}$ to the jump set \MMM of the $i$-th component of $g$. \EEE  Although  the space ${\rm GBV}_\star$ is far more general than ${\rm BV}$, it retains the fine structure needed to study free discontinuity problems, at the expense of   replacing the control of the total variation $|Dg|$ by the weaker, energetically natural quantity   $\mathcal{V}$.  

\item[(ii)] In the abstract integral representation result based on the global method, we cannot resort to the by-now classical approach in ${\rm (G)SBV}$ and ${\rm GBV}_\star$, which relies on a \emph{perturbation-truncation} argument, see e.g. \cite{BachBraidesCica,BachBraidesZeppieri,BarchiesFocardi,BarchiesiLazzaroniZeppieri,CagnettiDet,DalToaConvex,dal2025homogenization,DalToaHomo}.   More precisely, the usual approach consists in perturbing  a functional satisfying \eqref{Intro: cohesive growth}   by a surface term, leading to a growth condition of the form \eqref{Intro: relaxation}, which allows \MMM to \EEE employ the integral representation results in  \cite{BFML2002} or in  \cite{BFM1998}. Afterwards, the perturbation parameter is sent to zero by means of a suitable truncation technique. While this strategy has been successfully employed in ${\rm GBV}_\star$ in \cite{DalToaConvex,DalToaHomo,dal2025homogenization}, the presence of the matrix field $G$ prevents us from carrying out the same analysis. This requires careful modifications and local truncations in blow-up constructions around bulk and surface points, thereby avoiding any perturbation argument.
\end{itemize}
 
 \BBB 
We further comment on the three main results (a)--(c) highlighting ingredients in the proof. \EEE

(a)
 The requirement of a uniform control \MMM on \EEE $\mathcal{V}$    along the approximating sequence, which is an
essential feature of the Approximation Theorem~\ref{thm:approximation SDstar}, makes the proof of this result highly non-trivial. In particular, it requires  a novel density result for functions of bounded variation.  More precisely, it relies crucially  on Theorem~\ref{thm: Goffmann-Serrin} (see also \MMM Remark~\ref{remark: vVec}(i),(iii)), \EEE which shows that  every function $u\in {\rm GBV}_\star(\Omega;\Rk)$ can be approximated 
by a sequence $\{u_n\}_n\subset {\rm SBV}(\Omega;\Rk)\cap L^\infty(\Omega;\Rk)$ in such a way that $\mathcal{V}(u_n,\Omega\setminus J_{u_n})$ and $\mathcal{V}(u_n,J_{u_n})$ converge to $\mathcal{V}(u,\Omega\setminus J_u)$ and to $\mathcal{V}(u, J_u)$, respectively.  This is achieved by exploiting a classical result of Goffman  \& Serrin \cite[Theorem~4$^\prime$]{GoffmanSerrin} to approximate the function $u$ far from the jump set $J_u$, while a hands-on construction is performed close to $J_u$.
This result can also be adapted to more general energies with linear growth (see Remark~\ref{remark: vVec}(ii)) and,   combining it  with the recent approximation result \cite[Theorem~1.1]{CFI-JFA} for functions with  possibly infinite jump set, it shows  that ${\rm GBV}_\star$-functions can be approximated in energy by piecewise affine functions  with piecewise affine jump set.  We hope that this result might be of independent interest and that our proposed  strategy   may find applications beyond the setting of the present work.

(b) In the second part of the paper, we consider 
a class $\F^p_\star$   of relevant local functionals satisfying two-sided growth conditions \BBB in terms of $\mathcal{V}(g, \MMM \Omega) \EEE  + \|G\|_{L^p}^p$, see Definition  \EEE \ref{def: Functionals}.  We then employ the global method 
\cite{BFM1998,BFML2002} to characterise its
absolutely continuous and its jump part as integral functionals.  \BBB As customary in this approach, \EEE the
corresponding bulk and surface densities are obtained through asymptotic
cell formulas on small cubes, with affine boundary data for the bulk part
and pure jump boundary data for the surface part (see \eqref{def f} and \eqref{def Psi}).   Both formulas   also \EEE take into account  constraints on the average  of the matrix fields $G$.

We stress that in the setting of structured deformations we have to face some serious obstacles   in implementing   this well-known strategy. In particular, as mentioned already before, we are forced to  adopt an approach deviating from the one which is frequently employed for generalised  functions of bounded variation.
This is mainly due to the fact that the relaxation of \eqref{Intro: relaxation} does not satisfy the  `truncation properties' that would allow us to implement the abovementioned  perturbation-truncation method.  To solve this issue, we follow an approach more similar to the one employed in integral representation results in ${\rm SBV}^p$ \cite{BFML2002} and in   ${\rm (G)SBD}^p$ \cite{ContiFocardiIurlanoSBDp,CriFriSolo}, which is based on a careful analysis of the blow-ups of such functions  (see \cite[Lemmas~2 and 3]{BFML2002}, \cite[Lemma~3.4]{ContiFocardiIurlanoSBDp}, \MMM  and \cite[Lemmas~5.1 and 6.1]{CriFriSolo}). \EEE In these works, such analysis is made possible by the use of Poincaré and Korn-Poincaré inequalities that allow for a better control on the quantities at play. \BBB Following this reasoning, \EEE in Theorem~\ref{Thm: Poincaré inequality} we provide an adaptation to ${\rm GBV}_\star(\Omega;\Rk)$ of the Poincaré inequality of De Giorgi, Carriero, \& Leaci \cite{CarrieroLeaci}, which we  then use in Lemmas  \ref{lemma blow up ac} and \ref{lemma: blow up jump points} to perform the required blow-up analysis. With these tools at our disposal, we are then able to proceed with the global method.

(c) In the final part of the paper, we provide an integral representation for the relaxation of  \eqref{Intro: relaxation} under the growth condition \eqref{Intro: cohesive growth}. As an application of Theorem~\ref{thm: integral star}, we first prove in Theorem~\ref{thm: relaxation partial} that \BBB the \EEE bulk and surface parts can always be represented as integral functionals, whose densities we denote by $f_{\rm bulk}$ and $f_{\rm surf}$. We then specialise to the case $p>1$, \BBB corresponding to   superlinear growth of the bulk density $\Psi$. \EEE Here, some truncation properties are available (see Lemmas \ref{lemma:CagnettiDetFree W} and  \ref{lemma:I truncated}), and we are able to prove (see Theorem~\ref{thm:cell formulas}) that $f_{\rm bulk}$ and $f_{\rm surf}$ are given by cell formulas consistent with the ones identified by Choksi \& Fonseca (see \eqref{bulk dens relax} and \eqref{surface dens p>1} and compare with \cite[Remark~3.3]{ChoksiFonseca}).  Finally, in the case \MMM that \EEE $\Psi$ and $\gamma$ are independent of the spatial variable $x$, we obtain a complete representation of the
relaxed functional, including its Cantor part,  which is expressed in
terms of the recession function of the bulk energy density. This is done relying  on the relaxation result \cite[Theorem~3.12]{BFM1998}, similarly to what was done, for instance, for measure-valued structured deformations \cite[Theorem~2.4]{MeasureSD} and for structured deformations in linearised elasticity \cite[Theorem~2.12]{FriMatZap}.  
\medskip

We briefly describe the organisation of the paper.  In Section \ref{sec: preliminaries} we fix
the notation and recall the definition and the main properties of the space
${\rm GBV}_\star$. In Section~\ref{setting and main results} we set the notation concerning \emph{generalised} structured deformations ${\rm SD}^p_\star$ and present the statements of the main results.  Section~\ref{sec: Approximation Lemma} is devoted to the proof \BBB of the \EEE approximation
theorem and to the auxiliary density results on which it relies.  In
Section~\ref{section: Global Method} we prove the abstract integral representation theorem for
functionals in $\mathfrak{F}^p_\star$.  In Section~\ref{section Relaxation} we apply the results of Section~\ref{section: Global Method} to analyse the properties of the relaxed energy.  The appendices collect several new technical
tools concerning functions in ${\rm GBV}_\star(\Omega;\Rk)$. In Appendix~\ref{sec: GBV} we give a brief introduction to the fine properties of generalised \BBB functions \EEE of bounded variation. This is complemented by the analysis carried out in Appendix \ref{sec:slicing}, where we study their slicing properties.  Appendix \ref{sec: Poincarè} is devoted to the proof of Poincaré-type inequalities, which are then used to obtain the blow-up lemmas of  Appendix \ref{sec:blow up}. Finally, in Appendix \ref{sec:lower} we present a well-known result concerning lower semicontinuous integral functionals with linear growth.

\EEE

\section{Preliminaries on generalised functions of bounded variation}\label{sec: preliminaries}

In this section we fix  notation and present some \BBB important definitions.  
\medskip

\noindent \MMM \textbf{Notation:} \EEE  Throughout the paper, \MMM $d, k \in \N$,   and  \EEE $\Omega\subset \Rd$ is always a bounded open subset of $\Rd$ with Lipschitz boundary. 
For $m\in\N$ the scalar product between two vectors $a,b\in\R^m$   is denoted by $a\cdot b$ , while the Euclidean norm of $\mathbb{R}^m$ is denoted by  $|\, \cdot \,|$. \EEE
For every $\rho>0$ and $x\in\Rd$, the open ball  with \EEE centre $x$ and radius $\rho$ is denoted by $B_\rho(x)$; we omit the indication of the centre $x$ if $x=0.$ The unit sphere of $\Rd$ is denoted by
 $\mathbb{S}^{d-1}\coloneq \{\nu\in\Rd: |\nu|=1\}$. Given $x\in\Rd$, $\nu\in\Sd$, and $\rho>0$, we let $Q_\nu(x,\rho)$ \MMM be \EEE a cube centred at $x$, of side-length $\rho$, and with two faces orthogonal to $\nu$. When $\nu=e_d$, we omit the dependence on $\nu$ and  assume that $Q(x,\rho)\coloneq x+(-\rho/2,\rho/2)^d.$  We also simply write $Q$ if $x = 0$ and $\rho = 1$. \EEE
 
 We identify the vector space $\Rkd$ with the space of  $k\times d$ matrices, \MMM  endowed   with the Frobenius norm. \EEE   Given  $A\in\R^{k\times d}$, its $ij$-th component is denoted by $A_{ij}$.
 The tensor product $a\otimes b\in\Rkd$ between two vectors $a\in\Rk$ and $b\in\Rd$ is the matrix whose $ij$-th component is given by $(a\otimes b)_{ij}\coloneq a_ib_j$.
 
For $m\in\N$,  the symbol  $\mathcal{M}_b(\Omega;\R^m)$ denotes the space of all bounded $\R^m$-valued Radon measures. We omit the indication of the target space \MMM if \EEE $m=1$. The space of all positive Radon measures is denoted by  $\mathcal{M}^+(\Omega)$. The subspace of $\mathcal{M}^+(\Omega)$ of all bounded positive measures is denoted by $\mathcal{M}^+_b(\Omega)$.  Given $\mu \in\mathcal{M}_b(\Omega; \R^m \EEE )$ and $\lambda\in \mathcal{M}^+_b(\Omega)$, ${\rm d}\mu/{\rm d}\lambda$ is the Radon-Nikod\'ym derivative of $\mu$ with respect to $\lambda$, while $|\mu|$ denotes the total variation measure of  $\mu$.

Let  $\Lb^m$  and $\mathcal{H}^m$ denote the $m$-dimensional Lebesgue measure and the  $m$-dimensional Hausdorff measure, respectively.  Given a Borel measure $\mu$ on $\Omega$ and a $\mu$-measurable set $E\subset \Omega$, we let $\mu\mres E$ be the restricted measure defined by $(\mu\mres E) (B)\coloneq \mu(E\cap B)$ for every Borel set  $B\subset \Omega$. Given an $\Lb^d$-measurable subset $ E \subset \EEE \Rd$, we denote by $L^0(E;\R^m)$ the collection of all $\Lb^d$-measurable functions $u\colon E\to \R^m$. We endow the space $L^0(E;\R^m)$ with the topology associated with the convergence in measure, which we recall to be separable and metrisable. In particular, a distance inducing this topology is given by 
  \begin{equation}\notag
      d_E(u,v)\coloneq \int_{E}|u-v|\land 1\,{\rm d}x,
  \end{equation}
  where $a\land b\coloneq \min\{a,b\}$ for any $a,b\in\R$.
  
  Given an $\Lb^d$-measurable set $E\subset \Rd$, we denote by $\chi_E$ its characteristic function, defined by $\chi_E(x)=1$ if $x\in E$ and $\chi_E(x)=0$ otherwise. If $E$ is of finite perimeter, we denote by $\partial^*E$ its reduced boundary.  Moreover, given \EEE  $x\in\Rd$ and $\rho>0$,  we set 
\begin{equation}\label{def blow up set}
    \Omega_{\rho,x}\coloneq x+\rho(\Omega-x).
\end{equation}
  The collection of all open (resp.\ Borel) subsets of $\Omega$ is denoted by $\Op(\Omega)$ (resp.\ $\B(\Omega$)). Given two open sets $O',O''\subset \Rd$ we write $O'\subset \subset O''$ if  $\overline{O'} \subset O''$. \EEE   
\medskip 
\medskip

\noindent \MMM \textbf{(Generalised) functions of  bounded variation.}  \EEE
We recall the definition and the main properties of the function spaces \BBB used in this work. \EEE In particular, we present the spaces ${\rm GBV}(\Omega;\Rk)$ and ${\rm GBV}_\star(\Omega;\Rk)$, the first introduced by  De Giorgi \& Ambrosio \cite{DeGioAmbro}  and later investigated in \cite{AmbrosioExistence}\EEE, the latter by Dal Maso \& Toader \cite{DalToa22} in the scalar setting and later generalised to the vector-valued case in \cite{DonatiGBV}. \BBB We give only the basic definitions here and refer the reader to Appendix \ref{sec: GBV} for further properties, including \EEE  the definition of approximate gradient and jump set of $\Lb^d$-measurable functions.\EEE

The space $\BV(\Omega;\Rk)$ is the space of all functions $u\in L^1(\Omega;\Rk)$ whose distributional derivative $Du$ belongs to $\mathcal{M}_b(\Omega;\Rkd)$. We recall that the measure $Du$ can be decomposed as 
\begin{equation*}
Du=\nabla u\Lb^d +D^cu+([u]\otimes\nu_u)\Hd\mres J_u,\end{equation*}
where 
\begin{itemize}
    \item $\nabla u\in L^1(\Omega;\Rkd)$ is the {\it approximate gradient} of $u$  (see \eqref{approximate grad});\EEE
\item the {\it Cantor part} $D^cu$ is a measure that is singular with respect to $\Lb^d$ and that vanishes on all Borel sets $B\in\mathcal{B}(\Omega)$ that are $\sigma$-finite with respect to $\Hd$;
\item  $[u]\coloneq u^{+}-u^-$ is the difference of the two unilateral traces of $u$ on the jump set $J_u$, whose unit normal is $\nu_u$ (see   \eqref{def Jump set}).\EEE
\end{itemize}
The space ${\rm GBV}(\Omega;\Rk)$ is the space of all functions $u\in L^0(\Omega;\Rk)$ such that $\Phi\circ u\in \BV_{\rm loc}(\Omega;\Rk)$ for every Lipschitz function $\Phi\colon\Rk\to \Rk$ whose gradient $\nabla \Phi$ has compact support. We recall that $\GBV(\Omega;\Rk)$ is not a vector space and that, in particular, $\GBV(\Omega;\Rk)\MMM \supsetneq \EEE \GBV(\Omega)^{k}$ if $k> 1$ (see \cite[Example~4.27]{AFP}). 

 It is well-known that every function $u\in {\rm GBV}(\Omega;\Rk)$ admits an approximate gradient $\nabla u $  (see {\rm (b)} of Proposition~\ref{Prop:fine prop GBV}).
It can also be shown (see {\rm (d)} of Proposition~\ref{Prop:fine prop GBV}) that \BBB there is \EEE a positive (possibly unbounded) measure $|D^cu|$, which generalises the total variation of the Cantor part of the distributional gradient of a function of bounded variation. 

We now recall the definition of the space $\GBVs(\Omega;\Rk)$.
To this end, we introduce the functional $\mathcal{V}\colon L^0(\Omega;\Rk)\times \B(\Omega)\to [0,+\infty]$ which  for every $O\in\Op(\Omega)$ is defined by  
\begin{equation}\label{Def V BV}
\mathcal{V}(u,O)\coloneq 
\begin{cases}\displaystyle
    \sum_{i=1}^k\Big(\int_O|\nabla u_i|\,{\rm d}x+|D^cu_i|(O)+\int_{J_{u_i}\cap O}|[u_i]|\land 1\,{\rm d}\Hd \Big)& \text{ if $u\in \GBV(O)^k$,}\\
    +\infty & \text{ otherwise}.
    \end{cases}
\end{equation}
\BBB Here, \EEE $u_i$ denotes the $i$-th component of $u$. 
The definition is then extended to every Borel set $B\in\B(\Omega)$ by setting
\begin{equation}\label{finalVdef}
    \mathcal{V}(u,B)\coloneq \inf\{\mathcal{V}(u,O)\colon \,  O\supset B,\, O\in\Op(\Omega)\}.
\end{equation}
\begin{remark}\label{remark: lowersemicontinuity}
  It follows from   \cite[Theorem~6.1]{DalToa22}   that for every $O\in\Op(\Omega)$ the functional $\mathcal{V}(\cdot,O)$ is lower semicontinuous with respect to the $L^0(\Omega;\Rk)$-convergence.
\end{remark}

For $O\in\Op(\Omega)$,   the space $\GBVs(O;\Rk)$ is   defined as 
\begin{equation}\label{def canonica}
   \GBVs(O;\Rk)\coloneq \big\{u\in {\rm GBV}(O)^k\colon \, \mathcal{V}( u,O)<+\infty\big\},
\end{equation}
or, equivalently, as 
\begin{align*}
    {\rm GBV}_\star(O;\Rk)\coloneq \{u\in {\rm GBV}(O)^k&\colon\, \nabla u\in L^1(O;\Rkd), |D^cu|\in \mathcal{M}_b(O),\\
    &\ |[u]|\in L^1_{\Hd}(J_u\setminus J^1_u), \text{ and }\Hd(J^1_u)<+\infty\},
\end{align*}
where, for $r\geq0$, we set 
\begin{equation}\label{def Jru}
    J^r_u\coloneq \{x\in J_u\colon\,|[u]|\geq r\}.
\end{equation}
 It was shown in  \cite[Theorem~3.9]{DalToa22}  and \cite[Proposition~3.3]{DonatiGBV} that ${\rm GBV}_\star(O;\Rk)$ is a vector space. Moreover, it follows from  definition \eqref{def canonica} that for every $u\in \GBVs(O;\Rk)$ and $i\in\{1,\dots,k\}$ the $i$-th component $u_i$ belongs to $ \GBVs(O)$. 

\BBB To \EEE  each  $u\in \GBVs(O;\Rk)$, one can associate a {\it  matrix-valued} measure $D^cu\in\mathcal{M}_b( O\EEE;\Rkd)$ that generalises the Cantor part of the distributional derivative for functions of bounded variation. We remark that the total variation of $D^cu$ coincides with the measure $|D^cu|$ introduced before for functions in ${\rm GBV}$  (see {\rm (d)} of Proposition~\ref{Prop:fine prop GBV}). We refer the reader to Proposition~\ref{Prop:Cantor GBV} for the precise definition of this measure. \EEE

\section{Setting of the problem and main results}\label{setting and main results}

In this section, we present the  setting and the \EEE main results of the paper. We begin by introducing the class of structured deformations that will be our main object of investigation.

\medskip 

\noindent {\bf Structured Deformations.} Throughout   the work,   let $p \ge 1$.  We specify only when needed if $p=1$ or $p>1$. We recall that $\Omega\subset \Rd$ always denotes a bounded, open set with Lipschitz boundary.  Given $O\in  \mathcal{O}(\Omega) \EEE $, the class of {\it  generalised  structured deformations}   $\SD^p_\star(O;\Rk)$   is defined by setting
\begin{equation*}
   {\rm SD}_\star^p(O;\Rk)\coloneq \big\{(g,G)\colon \,g\in {\rm GBV}_\star(O;\Rk)\, \text{ and }G\in L^p(O;\Rkd)\big\}.
\end{equation*}
 We use the term `generalised' as the deformations $g$ belong to a class  of generalised ${\rm BV}$-functions.   
The first main result of this work is  the following approximation result for structured deformations $\SD_\star^p(\Omega;\Rk)$. \MMM Recall the definition of $\mathcal{V}$
 in \eqref{finalVdef}. \EEE \begin{theorem}[Approximation Theorem\EEE]\label{thm:approximation SDstar}
\BBB There exists a constant $C=C(d)>0$ such that for all $O\in\Op(\Omega)$  with Lipschitz boundary  and all \EEE $(g,G)\in \SD^p_\star(O;\Rk)$ we can find a sequence $\{u_n\}_n\subset {\rm SBV}(O;\Rk)$ such that $\{u_n\}_n$ converges to $g$  in $L^0(O;\Rk)$ as $n\to+\infty$,  $\nabla u_n=G$ $\Lb^d$-a.e. \BBB in $O$ \EEE for all $n\in\N$, and 
\begin{equation}\label{claim main approximation theorem}
\limsup_{n\to+\infty}\mathcal{V}(u_n,O)\leq  C\Big(\mathcal{V}(g,O)+\|G\|_{L^1(O)}\Big).
\end{equation}
\end{theorem}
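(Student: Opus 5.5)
The plan is a two-step reduction to the Choksi–Fonseca approximation in ${\rm SBV}$, which rests on Alberti's theorem. First approximate $g$ by ${\rm SBV}\cap L^\infty$ functions with controlled $\mathcal{V}$. Then, for each such function $v$, use Alberti's theorem to correct the gradient to exactly $G$ at a cost of order $\|G-\nabla v\|_{L^1}$, placed entirely in the jump part.

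\textbf{Step 1 (density in energy).} We invoke the density result announced in the introduction (Theorem~\ref{thm: Goffmann-Serrin} together with Remark~\ref{remark: vVec}). It gives a sequence $\{v_m\}_m\subset {\rm SBV}(O;\Rk)\cap L^\infty(O;\Rk)$ with $v_m\to g$ in $L^0(O;\Rk)$ and
\[
\limsup_{m}\mathcal{V}(v_m,O)\le \mathcal{V}(g,O).
\]
Since $v_m$ is bounded, $v_m\in {\rm SBV}(O;\Rk)$ with $|Dv_m|(O)<+\infty$, even though $|Dg|$ may be infinite. The Cantor part of $g$ has been converted into gradient and small-jump contributions, and these are counted by $\mathcal{V}$ with constant one.

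\textbf{Step 2 (Alberti correction).} For fixed $m$, set $H_m\coloneq G-\nabla v_m\in L^1(O;\Rkd)$. By Alberti's theorem \cite{Alberti} there is $w_m\in {\rm SBV}(O;\Rk)$ with
\[
\nabla w_m=H_m, \qquad |D^jw_m|(O)\le C(d)\|H_m\|_{L^1(O)}.
\]
Following Choksi–Fonseca, we then take piecewise-constant corrections $\bar w_{m,n}$ on a fine grid of cubes of side $1/n$, subtracting the cube averages of $w_m$. This gives $z_{m,n}\coloneq w_m-\bar w_{m,n}$ with $\nabla z_{m,n}=H_m$ and $z_{m,n}\to0$ in $L^1$ as $n\to\infty$, by Poincaré on cubes. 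The jump part grows by at most the traces on the cube faces, and we need this to stay bounded by $C\|H_m\|_{L^1}$ rather than by $|Dw_m|(O)$.

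\textbf{Main obstacle.} Controlling this face contribution is the delicate point. The remedy I would try first is to apply Alberti on each cube separately to $H_m\chi_{Q}$, which gives a function $w_{m,n}^Q\in{\rm SBV}(Q)$ with
\[
\nabla w_{m,n}^Q=H_m, \qquad |Dw_{m,n}^Q|(Q)\le C\|H_m\|_{L^1(Q)},
\]
and to normalise each one to mean zero. Poincaré on $Q$ gives $\|w_{m,n}^Q\|_{L^1(Q)}\le C n^{-1}\|H_m\|_{L^1(Q)}$. The trace inequality on cubes bounds the face jumps by
\[
C\big(|Dw^Q_{m,n}|(Q)+n\|w^Q_{m,n}\|_{L^1(Q)}\big)\le C\|H_m\|_{L^1(Q)}.
\]
Summing over the cubes yields the total jump bound $C(d)\|H_m\|_{L^1(O)}$ and $L^1$-smallness $Cn^{-1}\|H_m\|_{L^1}$. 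Cubes meeting $\partial O$ are handled by intersecting with $O$, where the Lipschitz boundary is used only through Poincaré/trace constants on the pieces.

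\textbf{Step 3 (estimate and diagonal argument).} Set $u_{m,n}\coloneq v_m+z_{m,n}$. Then $\nabla u_{m,n}=G$, and by subadditivity of $\mathcal{V}$ (using $|a+b|\wedge1\le |a|\wedge1+|b|$ on the jump set),
\begin{align*}
\mathcal{V}(u_{m,n},O)&\le \mathcal{V}(v_m,O)-\|\nabla v_m\|_{L^1(O)}+\|G\|_{L^1(O)}+C\|G-\nabla v_m\|_{L^1(O)}\\
&\le (C+1)\big(\mathcal{V}(v_m,O)+\|G\|_{L^1(O)}\big).
\end{align*}
A diagonal choice $n=n(m)$ gives $u_m\coloneq u_{m,n(m)}\to g$ in $L^0$, and \eqref{claim main approximation theorem} follows from Step 1 with a constant depending only on $d$.
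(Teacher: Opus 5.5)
Your proof is correct, and its second half takes a different route from the paper. Step~1 is exactly the paper's first step: truncation by $\Phi_n$ followed by Theorem~\ref{thm: Goffmann-Serrin}.

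The paper then applies Alberti's Theorem~\ref{Alberti's Theorem} once, to $G$ itself, obtaining $h$ with $\nabla h=G$. It then approximates $u_{\ell,n}-h$ by piecewise constant functions via Lemma~\ref{lemma piecewise constants}. That lemma is a coarea/Riemann-sum construction on level sets which controls $\mathcal{V}$ rather than the total variation. It is needed because $u_{\ell,n}-h$ carries the large jumps of $u_{\ell,n}$.

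You never touch the jumps of $v_m$; you only correct its gradient, using an Alberti function for $G-\nabla v_m$. This is a genuine ${\rm BV}$ function with $|Dw_m|(O)\le(1+C(d))\|G-\nabla v_m\|_{L^1(O)}$. Hence classical cube averaging with Poincar\'e and trace inequalities (as in Choksi--Fonseca) suffices, and Lemma~\ref{lemma piecewise constants} is not needed at all.

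The price is the term $C\|\nabla v_m\|_{L^1(O)}$, which puts a factor $C+1$ in front of $\mathcal{V}(g,O)$. The paper's route in fact gives $\limsup_n\mathcal{V}(u_n,O)\le\mathcal{V}(g,O)+C\|G\|_{L^1(O)}$, by subadditivity of $\mathcal{V}$ and $\mathcal{V}(z^j_{\ell,n},O)\to\mathcal{V}(u_{\ell,n}-h,O)$. This coefficient one is also what yields the factor $C_\gamma$ in front of $\mathcal{V}(g_1-g_2,B)$ in Lemma~\ref{lemma: Lipschitz}, where Lemma~\ref{lemma piecewise constants} is reused. For the theorem as stated, both routes suffice.

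Two corrections to your Step~2.

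First, your ``main obstacle'' is not one. The absolutely continuous part of $Dw_m$ is $H_m$ itself, so $|Dw_m|(O)$ and $\|H_m\|_{L^1(O)}$ are comparable. A single Alberti function with cube averages therefore already works; applying Alberti per cube is harmless but unnecessary.

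Second, and more importantly, boundary cubes cannot be handled by intersecting with $O$. The pieces $Q\cap O$ can be arbitrarily thin or cusp-shaped, for instance near points where $\partial O$ is tangent to a grid face, and this happens even for smooth $O$. There are no uniform Poincar\'e or face-trace constants on such pieces. The fix is simple:
\begin{itemize}
\item extend $H_m$ by zero outside $O$;
\item run the construction on the whole grid cubes meeting $O$;
\item restrict the result to $O$.
\end{itemize}
Restriction does not increase the variation, so $\nabla z_{m,n}=H_m$ in $O$, $|D^jz_{m,n}|(O)\le C(d)\|H_m\|_{L^1(O)}$, and $\|z_{m,n}\|_{L^1(O)}\le C(d)n^{-1}\|H_m\|_{L^1(O)}$. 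With this change, the regularity of $\partial O$ is used only in Step~1.
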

 We briefly comment on this result. \BBB  If \eqref{claim main approximation theorem} is dropped, \EEE the theorem is an immediate consequence of \cite[Theorem~1.2]{Silhavy2015} (see also \cite[Theorem~2.12]{ChoksiFonseca}). Indeed, using the truncation properties of ${\rm GBV}_\star$-functions described in Proposition~\ref{prop:truncation}, we can approximate $g$ by means of ${\rm BV}$-functions, to which \cite[Theorem~1.2]{Silhavy2015} applies. A diagonal argument then gives a sequence converging to $g$ in $L^0(O;\Rk)$ \BBB which satisfies \EEE the required gradient constraints, but which may fail to satisfy \eqref{claim main approximation theorem}. To overcome this difficulty, we will depart from the results currently available in the literature, and rely on new density results for functions of bounded variation tailored to our functional setting, namely Theorem~\ref{thm: Goffmann-Serrin} and Lemma~\ref{lemma piecewise constants}. In particular, \BBB Theorem~\ref{thm: Goffmann-Serrin} provides \EEE an approximation result for ${\rm BV}$-functions by means of ${\rm SBV}$-functions that applies to a general class of energies, including $\mathcal{V}$ (see Remark~\ref{remark: vVec}(ii)).  We hope that this result, which is of independent interest, may find applications also in \BBB other \EEE contexts, for instance, when addressing the $\Gamma$-limsup inequality in the asymptotic analysis of free discontinuity problems. \EEE 

We endow $\SD^p_\star(\Omega;\Rk)$ with the following notion of convergence.
\begin{definition}\label{def: convergences}
   We say that a sequence $\{(g_n,G_n)\}_n\subset \SD^p_\star(\Omega;\Rk)$ converges to $(g,G)\in \SD^p_\star(\Omega;\Rk)$ as $n\to+\infty$, in symbols $(g_n,G_n)\SDtostar(g,G)$  in $\Omega$,   if and only if $g_n\to g$ in $L^0(\Omega;\Rk)$ and  
\begin{equation*}
    G_n\rightharpoonup G \text{ in $L^p(\Omega;\Rkd)$ if $p>1$}\quad \text{ and }\quad  G_n \rightharpoonup^* G \text{ weakly$^*$ in $\mathcal{M}_b(\Omega;\Rkd)$ if $p=1$}
    \end{equation*}
    as $n\to+\infty.$  
    
     Given a sequence $\{u_n\}_n\subset {\rm SBV}(\Omega;\Rk)$, we say that $\{u_n\}_n$ converges to $(g,G)\in\SD^p_\star(\Omega;\Rk)$,   in symbols $u_n\SDtostar(g,G)$, if $(u_n,\nabla u_n)\SDtostar(g,G)$.\EEE
\end{definition}

\noindent{\bf Integral representation for abstract functionals.}
We  present a result concerning the integral representation of   functionals defined on  ${\rm SD}^p_\star(\Omega;\Rk)$. We begin by introducing the class $\F^p_\star$, which will be one of the main objects of study in this paper.

\begin{definition}\label{def: Functionals}
   $\mathfrak{F}^p_\star$ is the class of all functionals $\mathcal{F}\colon {\rm SD}^p_\star(\Omega;\Rk)\times\B(\Omega) \to [0,+\infty)$ satisfying the following conditions: 
\begin{enumerate}[label=\textup{(${\rm H}$\arabic*)}, start=1]
    \item \label{hyp:H1}{\rm (}{\it Measure property}{\rm )} for every $(g,G)\in {\rm SD}^p_\star(\Omega;\Rk)$ the set function $\mathcal{F}(g,G,\cdot)$ is a Radon measure on $\B(\Omega)$;
    \item \label{hyp:H2}{\rm (}{\it Locality}{\rm )} for every $O\in\Op(\Omega)$ and \MMM pairs \EEE $(g,G), (u,U)\in {\rm SD}^p_\star(\Omega;\Rk)$, with $g=u$ and  $G=U$
 $\Lb^d$-a.e.\ in $O$, it holds $\mathcal{F}(g,G,O)=\mathcal{F}(u,U,O)$;
    \item \label{hyp:H3} {\rm (}{\it Lower semicontinuity}{\rm )} for every $O\in\mathcal{O}(\Omega)$ the functional $\mathcal{F}(\cdot,\cdot,O)$ is lower semicontinuous with respect to the convergence in ${\rm SD}^p_\star(\Omega;\Rk)$;  
    \item \label{hyp:H4}{\rm (}{\it Growth conditions}{\rm )} there exist positive constants $\alpha,\beta$, with $\alpha\leq \beta$, such that 
 \begin{equation}\label{upper bound}
\alpha\Big(\mathcal{V}(g,B)+\|G\|^p_{L^p(B)}\Big)\leq \mathcal{F}(g,G,B)\leq \beta \Big(\mathcal{V}(g,B)+\|G\|^p_{L^p(B)}+\Lb^d(B)\Big)
 \end{equation}
 for every $(g,G)\in {\rm SD}^p_\star(\Omega;\Rk)$ and $B\in\B(\Omega)$.
\end{enumerate}
\end{definition}

\begin{remark}
 Combining properties \ref{hyp:H1} and \ref{hyp:H2} we obtain that every functional $\cF\in\mathfrak{F}^p_\star$ satisfies the following locality property: if $B\in\B(\Omega)$, $(g,G), (u,U)\in {\rm SD}^p_\star(\Omega;\Rk)$, and $g=u$  and $G=U$ $\Lb^d$-a.e.\ \MMM in \EEE  a neighbourhood of $B$, then $\cF(g,G,B)=\cF(u,U,B)$.
 
    For this reason, given $O\in\Op(\Omega)$ with Lipschitz boundary and   $(g,G)\in {\rm SD}^p_\star(O;\Rk)$, for every $B\in\B(O)$ we can define  $\cF(g,G,B)\coloneq \cF(u,U,B)$, where $(u,U)\in \SD^p_\star(\Omega;\Rk)$ is any pair such that $g=u$ and $G=U$ $\Lb^d$-a.e.\ on $O$. For instance, this extension can be performed by using Lemma \ref{lemma: Extension of GBV}. \EEE The locality property described above implies that the value of $\cF(g,G,B)$  does not depend on the chosen extension $(u,U)$. 
\end{remark}

In Section \ref{section: Global Method} we  prove \EEE an integral representation result for  functionals in \EEE  $\F^p_\star$. To give the precise statement of this result, we introduce a decomposition of elements in $\F^p_\star$ that mimics the usual decomposition of the distributional derivative for functions of bounded variation.

\begin{definition}\label{decomposition}
    Let  $\cF\in \mathfrak{F}^p_\star$ and $(g,G)\in\SD^p_\star(\Omega;\Rk)$. By \ref{hyp:H1} and \ref{hyp:H4}, $\mathcal{F}(g,G,\cdot)$ is a bounded Radon measure on $\B(\Omega)$.  Its absolutely continuous and singular parts with respect to $\Lb^d$ are denoted by $\mathcal{F}^a(g,G,\cdot)$ and $\mathcal{F}^s(g,G,\cdot)$, respectively.  We also introduce the bounded Radon measures $\mathcal{F}^c(g,G,\cdot)$ and $\mathcal{F}^j(g,G,\cdot)$ defined by 
    \begin{gather*}
\mathcal{F}^c(g,G,B)\coloneq \mathcal{F}^s(g,G,B\setminus J_g) \text{ and }\mathcal{F}^j(g,G,B)\coloneq \mathcal{F}(g,G, B\cap J_g)
    \end{gather*}
    for every $B\in\B(\Omega)$.
We \MMM obtain \EEE 
\begin{equation*}
\mathcal{F}(g,G,B)=\mathcal{F}^a(g,G,B)+\mathcal{F}^c(g,G,B)+\mathcal{F}^j(g,G,B)
\end{equation*}
for every $B\in \B(\Omega)$. 
\end{definition}

\begin{remark}
    Note that by Remark~\ref{remark: lowersemicontinuity} the functional $(g,G,B)\mapsto \mathcal{V}(g,B)+\|G\|^p_{L^p(B)}$ belongs to $\F^p_\star$. Moreover, if $\cF\in\F^p_\star$ and $(g,G)\in \SD^p_\star(\Omega;\Rk)$, then  
    \begin{align*}
        &\text{$\cF^a(g,G,\cdot)$ is absolutely continuous with respect to $\Lb^d$,}\\
        &\text{$\cF^c(g,G,\cdot)$ is absolutely continuous with respect to $|D^cg|$,}\\
&\text{$\cF^j(g,G,\cdot)$ is absolutely continuous with respect to $ \mathcal{V}(g,\cdot)\mres J_g$.   }
    \end{align*}
\end{remark}
Our goal is to represent  $\mathcal{F}^a$ and $\mathcal{F}^j$ as integral functionals. As customary in integral representation results for free discontinuity functionals, the bulk and surface integrands \MMM are \EEE obtained by  considering \EEE the asymptotic behaviour of the minimum values of certain constrained problems on small cubes.  To this end, given $(g,G)\in\SD^p_\star(\Omega;\Rk)$ and $O\in\Op(\Omega)$ with Lipschitz boundary, we set
\begin{equation*}
    \C^p_\star(g,G,O)\coloneq \Big\{(u,U)\in \SD^p_\star(\Omega;\Rk)\colon \,  u=g \, \text{ near }\partial O, \, \int_O(G-U)\,{\rm d}x=0\Big\}
\end{equation*}
and consider the auxiliary minimisation \MMM problem \EEE  given by 
\begin{gather}\label{minimisation problem}
    \m(g,G,O)\coloneq \inf \Big\{\mathcal{F}(u,U,O)\colon \, (u,U)\in \C^p_\star(g,G,O)\Big\}.
\end{gather}
 The boundary conditions  that we generally \EEE  use for these minimisation problems are given by   affine functions \EEE and pure jump functions: for $x\in \Omega$, $ a \in \Rk$, and  $A\in \mathbb{R}^{k\times d}$ we set $\ell_{x,a,A}(y)\coloneq  a+ A(y-x)$, and for $x\in \Omega$, $ \zeta, \theta \in \mathbb R^k$,  $\nu \in \mathbb S^{d-1}$ we let 
\begin{align}\label{stepfun}
v_{x,\zeta,\theta, \nu}(y) \coloneq  \begin{cases} \zeta &\hbox{if } (y-x)\cdot \nu > 0,\\
\theta &\hbox{if } (y-x)\cdot \nu \leq 0.\end{cases}
\end{align}
Our candidate bulk and surface integrands are given by  
\begin{align}\label{def f}
f_{\rm bulk}(x, a, A, L)\coloneq 	\limsup_{\rho \to 0^+}\frac{\m(\ell_{x,a,A}, L, Q(x,\rho))}{\rho^d}
	\end{align}
	 for all $x\in \Omega$, $ a \in \mathbb R^k$,   $A, L\in \Rkd$,   and
\begin{align}\label{def Psi}
	f_{\rm surf}(x, \zeta, \theta, \nu)\coloneq  \limsup_{\rho \to 0^+}\frac{\m(v_{x,\zeta, \theta,\nu}, 0, Q_{\nu}(x,\rho))}{\rho ^{d-1}}
	\end{align}
for all $x\in \Omega$, $ \zeta, \theta \in \Rk$, and $\nu \in \mathbb S^{d-1}$, where we recall that $Q_\nu(x,\rho)$ is a cube centred at $x$, of side-length $\rho$, and with two faces orthogonal to $\nu$, while $Q(x,\rho)\coloneq x+(-\rho/2,\rho/2)^d.$

The following  theorem \EEE constitutes our main result concerning the class $\F^p_\star$. \BBB It \EEE  gives   an integral representation of the bulk and surface parts   by \EEE means of the functions $f_{\rm bulk}$ and $f_{\rm surf}$ above.\EEE 

\begin{theorem}[Integral representation]\label{thm: integral star}
     Let $\mathcal{F}\in\F^p_\star$. 
Then, for every $(g,G) \in \SD^p_\star(\Omega;\Rk)$ and $B \in \B(\Omega)$  it  holds 
	\begin{align}\label{repr a star thm}
	&\mathcal{F}^a(g,G,B)= \int_B \!f_{\rm bulk}\big(x,g,   \nabla  g, G\big)\,{\rm d} x,\\& \label{repre j star thm}\mathcal{F}^j(g,G,B)=\int_{J_g\cap B}\!\!\!\!\!f_{\rm surf}\big(x, g^+, g^-,\nu_g\big) 
	\,{\rm d} \Hd,
    \end{align}
where $f_{\rm bulk}$ and $f_{\rm surf}$ are given by \eqref{def f} and \eqref{def Psi}. In particular, if $(g,G) \in \SD^p_\star(\Omega;\Rk)$ \BBB with \EEE  $|D^cg|(\Omega)=0$, then for all $B\in\B(\Omega)$ \BBB it holds that \EEE 
\begin{equation*}
    \cF(g,G,B)=\int_B \!f_{\rm bulk}\big(x,g,   \nabla  g, G\big)\,{\rm d} x+\int_{J_g\cap B}\!\!\!\!\!f_{\rm surf}\big(x, g^+, g^-,\nu_g\big) 
	\,{\rm d} \Hd.
\end{equation*}
\end{theorem}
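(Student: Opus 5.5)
We follow the global method of Bouchitté, Fonseca, \& Mascarenhas \cite{BFM1998}, in the form adapted to free discontinuity problems in \cite{BFML2002}. The core of the argument is to show that, for every $(g,G)\in\SD^p_\star(\Omega;\Rk)$, the measure $\cF(g,G,\cdot)$ can be compared with the localised minimum values $\m(g,G,\cdot)$ on small cubes. Precisely, we aim to prove that for $\mu$-a.e.\ $x_0$, where $\mu\coloneq\Lb^d+\mathcal{V}(g,\cdot)\mres J_g$, it holds
\begin{equation*}
\lim_{\rho\to0^+}\frac{\cF(g,G,Q_\nu(x_0,\rho))}{\mu(Q_\nu(x_0,\rho))}=\lim_{\rho\to0^+}\frac{\m(g,G,Q_\nu(x_0,\rho))}{\mu(Q_\nu(x_0,\rho))}.
\end{equation*}
\textbf{Inequality ``$\geq$''.} This direction is immediate, since $(g,G)$ is itself a competitor in $\C^p_\star(g,G,Q)$.

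\textbf{Inequality ``$\le$''.} Here we argue through the standard Vitali covering argument. We cover a cube by disjoint small cubes $Q_i$, on each of which we choose almost-minimisers $(u_i,U_i)$ of $\m(g,G,Q_i)$, and we glue them to $(g,G)$ outside. The average constraint $\int_{Q_i}(G-U_i)\,{\rm d}x=0$ guarantees that the glued matrix fields converge weakly (weakly$^*$ if $p=1$) to $G$ as the mesh goes to zero, while the boundary condition ``$u_i=g$ near $\partial Q_i$'' ensures that no extra jump is created. The growth condition \ref{hyp:H4} gives $L^p$- and $\mathcal{V}$-equiboundedness; together with the fact that the $u_i$ agree with $g$ near boundaries, this yields convergence in measure of the glued functions to $g$ (via a Poincaré-type estimate on each $Q_i$, Theorem~\ref{Thm: Poincaré inequality}). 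Lower semicontinuity \ref{hyp:H3} then gives the inequality. The same argument shows that $\m(\cdot,\cdot,O)$ is close to a measure, which is the key ingredient of the global method.

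Next we identify the limit densities by comparing $\m(g,G,Q(x_0,\rho))$ with $\m(\ell_{x_0,g(x_0),\nabla g(x_0)},G(x_0),Q(x_0,\rho))$ at Lebesgue points, and with $\m(v_{x_0,g^+,g^-,\nu_g},0,Q_\nu(x_0,\rho))$ at jump points. For this we use the blow-up Lemmas~\ref{lemma blow up ac} and \ref{lemma: blow up jump points}. At a point of approximate differentiability, these give modifications of $g$ agreeing with the affine map near $\partial Q(x_0,\rho)$ at a cost $o(\rho^d)$ in $\mathcal{V}$. At a jump point, they give modifications agreeing with the pure jump map near the boundary at a cost $o(\rho^{d-1})$. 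The matrix field is handled by replacing $G$ with the constant $G(x_0)$ (respectively $0$ at jump points) in the interior, adding a small correction on a thin layer to restore the average constraint. This costs $o(\rho^d)$ by Lebesgue-point estimates for $|G|^p$, respectively $o(\rho^{d-1})$ since $\|G\|^p_{L^p(Q_\rho)}=o(\rho^{d-1})$ at $\Hd$-a.e.\ jump point. With these comparisons, \eqref{repr a star thm} and \eqref{repre j star thm} follow from Besicovitch differentiation. The final statement is then immediate from Definition~\ref{decomposition}, since $\cF^c$ vanishes when $|D^cg|=0$.

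The main obstacle is the comparison step at jump points. Because $\mathcal{V}$ only controls $|[g]|\wedge1$, $g$ may be unbounded and the usual perturbation-truncation trick is unavailable. We must therefore truncate locally: we cut off $g$ on a layer near $\partial Q_\nu(x_0,\rho)$, where by the ${\rm GBV}_\star$ Poincaré inequality $g$ is close in measure to the step function, and we choose the layer via slicing (Appendix~\ref{sec:slicing}) so that the new jump created on the layer has $\Hd$-measure $o(\rho^{d-1})$.
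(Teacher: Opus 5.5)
Your architecture is the paper's: $\cF=\widetilde{\m}$ via a covering, gluing, the Poincaré inequality and the average constraint (Lemma~\ref{Lemma delta=tilde}); the differentiation statement (Theorem~\ref{thm: minimum=F}); and blow-up comparisons with a frame correction of $G$ (Propositions~\ref{prop: repre AC} and~\ref{prop:repre surf}). Two steps do not work as written.

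First, your $\mu=\Lb^d+\mathcal{V}(g,\cdot)\mres J_g$ omits $|D^cg|$. In the covering step, the part of $O$ not covered by the cubes is only $\mu$-null. To conclude $\cF(g,G,O)\le\widetilde{\m}(g,G,O)$ it must also be $\cF(g,G,\cdot)$-null. By the lower bound in \ref{hyp:H4}, $\cF(g,G,\cdot)$ charges every set carrying $D^cg$. So coverings that avoid a compact $\mu$-null set $N$ with $|D^cg|(N)>0$ give $\widetilde{\m}(g,G,O)\le\cF(g,G,O\setminus N)<\cF(g,G,O)$. You need $\cF(g,G,\cdot)\ll\mu$, i.e.\ $\mu$ as in \eqref{def mu}.

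Second, Lemmas~\ref{lemma blow up ac} and~\ref{lemma: blow up jump points} do not give modifications of $g$ equal to the profile $\ell_{x_0}\coloneq\ell_{x_0,\widetilde g(x_0),\nabla g(x_0)}$ (resp.\ $v_{x_0}$) near $\partial Q$. They give truncations which coincide with $g$ off a small set and are only $L^1$-close to the profile after rescaling. Your layer selection pastes the truncation to $g$ at small cost, as in \eqref{good shell ac} and \eqref{good shell}; note this is plain Fubini, not Appendix~\ref{sec:slicing}. What you never do is join the almost-minimiser, which equals the profile only on an outer frame, to the truncated function. The paper does this with the Fundamental Estimate, Lemma~\ref{lemma: Fundamental Estimate}, which is where Appendix~\ref{sec:slicing} actually enters (through Lemma~\ref{products with smooth functions}). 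Its error term $M\rho^{-1}\|\cdot\|_{L^1}$ is exactly what the $L^1$ statements of the blow-up lemmas control. A second sharp cut on a Fubini-chosen shell would also work, since an interface costs at most $\beta k\int|[w]|\,{\rm d}\Hd$. This gives an error of order $(\theta\rho)^{-1}\|\widetilde{g}(x_0)+\rho\overline{u}_\rho-\ell_{x_0}\|_{L^1(Q(x_0,\rho))}=o(\rho^d)$ for fixed $\theta$.

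Relatedly, the delicate scale is the bulk one, not the jump one. At Lebesgue points every gluing error must be $o(\rho^d)$, which requires $\Lb^d(\{\overline{u}_\rho\neq u_\rho\})=o(\rho^{d+1})$ as in \eqref{claim 2 blowup ac}. Approximate differentiability alone only gives $o(\rho^d)$; the extra power comes from the Poincaré inequality via $\Hd(J_g\cap Q(x_0,\rho))^{d/(d-1)}$. At jump points only $o(\rho^{d-1})$ is needed, which a sharp cut obtains from convergence in measure alone. Finally, the frame correction of $G$ at Lebesgue points costs $O(\theta)\rho^d$, not $o(\rho^d)$, and disappears only as $\theta\to0$.
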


\begin{remark}\label{remark: invariance}
  \MMM If \EEE $\cF\in \F^p_\star$ is invariant under   translation,  the functions \MMM $f_{\rm bulk}$  \EEE and $f_{\rm surf}$ have  simpler expressions. More precisely, if 
    \begin{equation}\label{invariance} 
        \cF(g+a,G,B)= \cF(g,G,B)
    \end{equation}
    for all $(g,G)\in\SD^p_\star(\Omega;\Rk)$, $a\in\Rk$, and $B\in \B(\Omega)$, then 
    \begin{equation}\notag 
        f_{\rm bulk}(x,a,A,L)=f_{\rm bulk}(x,0,A,L)\quad \text{ and }\quad 
f_{\rm surf}(x,\zeta,\theta,\nu)=f_{\rm surf}(x,\zeta-\theta,0,\nu)
    \end{equation}
    for all $x\in\Omega$, $a\in\Rk$, $A,L\in\Rkd$, $\zeta,\theta\in\Rk$, and $\nu\in\Sd$. To simplify the notation, when  the invariance property \eqref{invariance} holds, we will write $f_{\rm bulk}(x,A,L)$ and $f_{\rm surf}(x,\zeta-\theta,\nu)$ in place of $f_{\rm bulk}(x,0,A,L)$ and $f_{\rm surf}(x,\zeta-\theta,0,\nu)$.
\end{remark}

\noindent{\bf Relaxation of \MMM bulk-surface \EEE energies.}  Our goal is to employ the \MMM integral representation result in \EEE  Theorem~\ref{thm: integral star}  to prove a relaxation result for   energies defined on ${\rm SBV}(\Omega;\Rk)$, in the spirit of the seminal work \cite{ChoksiFonseca}. More precisely, we   consider a bulk integrand $\Psi\colon\Rd\times \Rkd\to [0,+\infty)$ and a surface integrand $\gamma\colon \Rd\times \Rk\times \Sd\to [0,+\infty)$, and the associated integral functional $\MMM \mathcal{E} \EEE \colon {\rm SBV}(\Omega;\Rk)\times \B(\Omega)\to [0,+\infty)$ defined by 
\begin{equation*}
\mathcal{E}(u,B)\coloneq \int_{B}\Psi(x,\nabla u)\,{\rm d}x+\int_{J_u\cap B}\gamma(x,[u],\nu_u)\,{\rm d}\Hd
    \end{equation*}
for all $(u,B)\in {\rm SBV}(\Omega;\Rk)\times \B(\Omega)$.

Under suitable hypotheses on $\Psi$ and $\gamma$  specified below, \EEE we  study the functional
\begin{equation}\label{def Istar}
I^p_\star(g,G)\coloneq \inf\Big\{\liminf_{n\to+\infty} \mathcal{E}(u_n,\Omega)\colon \, 
\{u_n\}_{n}\subset {\rm SBV}(\Omega;\Rk)\text{ with }  \MMM u_n \EEE \SDtostar(g,G) \text{ in }\Omega \Big\} 
\end{equation}
for all $(g,G)\in \SD^p_\star(\Omega;\Rk)$.  To this end, 
for $O\in \Op(\Omega)$  we   consider   the localised functional $I^p_\star(\cdot,\cdot,O)$ defined  for all $(g,G)\in\SD^p_\star(\Omega;\Rk)$ by 
\begin{equation}\label{def Istar localised}
\hspace{-0.3 cm}I^p_\star(g,G,O)\coloneq \inf\Big\{\liminf_{n\to+\infty} \mathcal{E}(u_n,O)\colon \,
\{u_n\}_{n}\subset {\rm SBV}(O;\Rk)\text{ with }  u_n\SDtostar(g,G) \text{ in } O \Big\}.
\end{equation}
 We then extend this definition to every Borel set $B\subset \Omega$ by setting 
\begin{equation}\label{def Borel relax}
\hspace{-0.3 cm}I^p_\star(g,G,B)\coloneq \inf\big\{ I^p_\star(g,G,O)\colon \,  O\supset B \text{ with }O\in\Op(\Omega) \big\}.
\end{equation}
It will be shown in Theorem \ref{thm: relaxation partial} that, under suitable hypotheses on $\Psi$ and $\gamma$, the set function $O\mapsto I^p_\star(g,G,O)$ given by \eqref{def Istar localised} coincides with the trace on $\Op(\Omega)$ of a bounded Radon measure, so that definitions \eqref{def Istar localised} and \eqref{def Borel relax} agree on $\Op(\Omega)$. \EEE

We now introduce the sets of assumptions we will make on $\Psi$ and $\gamma$.   Whereas the \MMM assumptions \EEE on $\Psi$ are modelled after those in \cite{ChoksiFonseca}, for the surface density \MMM we  \EEE fundamentally depart from the usual assumption of 1-homogeneous densities by considering bounded densities which are close to  1-homogeneous functions  at the origin. \EEE
We assume that $\Psi$ satisfies the following properties:
\setlist[enumerate]{label=($\Psi$\arabic*)}
\begin{enumerate}
\item \label{(W1)_p} (measurability) $\Psi$ is a Carath\'eodory function;
\item \label{(W2)_p}  (Lipschitz continuity) there is a   constant $C_\Psi>0$ such that
	\begin{equation*}
		|\Psi(x,A_1) - \Psi(x,A_2)| \leq C_\Psi |A_1 - A_2| \big(1+|A_1|^{p-1}+|A_2|^{p-1}\big)
	\end{equation*}
    for $\Lb^d$-a.e.\ $x\in\Omega$ and all $A_1,A_2 \in  \Rkd$;
\item \label{W3} (control from above) there is $A_0 \in \mathbb R^{k\times d}$ such that 
$\Psi(\cdot, A_0)\in L^\infty(\Omega)$;  
\item \label{W4} (lower bound)  there is a   constant $c_\Psi>0$ such that  
	\begin{equation*}
		c_\Psi |A|^{p}-\frac{1}{c_\Psi}\leq \Psi(x,A)
	\end{equation*}
    for $\Lb^d$-a.e.\ $x \in \Omega$ and all $A \in \Rkd$;
\item\label{Wcontinuous}  
(continuity in $x$)  there is a continuous function $\omega_\Psi\colon[0,+\infty)\to[0,+\infty)$, with $\omega_\Psi(0)=0$, such that for every $x_1,x_2\in\Omega$ and $A\in\Rkd$ it holds
		\begin{equation*}
	|\Psi(x_1,A)-\Psi(x_2,A)|\leq\omega_\Psi(|x_1-x_2|)\big(\Psi(x_1,A)+\Psi(x_2,A)\big).
		\end{equation*}
\end{enumerate}

\begin{remark}\label{remark: Improved upper bound}
    Any function $\Psi$ satisfying  {\rm \ref{(W2)_p}} and {\rm \ref{W3}}  also satisfies the following property: 
    \begin{itemize}
        \item[{\rm ($\Psi$3')}](upper bound) there exists a   constant $\overline{C}_\Psi >0$ such that         \begin{equation}\label{bound above W} 
            \Psi(x,A)\leq \overline{C}_\Psi\big(|A|^p+1)
        \end{equation}
        for $\Lb^d$-a.e.\ $x\in\Omega$ and all $A\in\Rkd.$
    \end{itemize}
    Indeed, letting $A_0$ be the matrix of {\rm \ref{W3}}, we may use {\rm \ref{(W2)_p}} to obtain for all $A\in\Rkd$
    \begin{equation*}
        \Psi(x,A)\leq \Psi(x,A_0)+ C_\Psi(|A| + |A_0| + |A|^p +|A_0|^p +|A||A_0|^{p-1} +|A_0||A|^{p-1}).
    \end{equation*}
    In light of {\rm \ref{W3}}, this inequality implies  \eqref{bound above W} for $\Lb^d$-a.e.\ $x\in\Omega$ and for all $A\in\Rkd$, for a  sufficiently \EEE large constant $\overline{C}_\Psi$ depending only on $A_0$, $C_\Psi$, $\Omega$, and $p$. 
\end{remark}

In the following list we collect the assumptions we make on the surface integrand $\gamma$:
\setlist[enumerate]{label=($\gamma$\arabic*)}
\begin{enumerate}

\item\label{(gamma1)} (symmetry) for every $x \in \Omega$, $\zeta \in  \Rk$, and 
$\nu \in \Sd$ it holds
	\begin{equation*}
		\gamma (x, \zeta, \nu)= \gamma (x,-\zeta, -\nu);
	\end{equation*}
\item\label{(gamma2)} (upper and lower bounds) there are two \MMM constants \EEE   $c_\gamma,C_\gamma >0$ such that for all $x\in\Omega$, $\zeta \in \Rk$, and $\nu \in \Sd$ it holds   
	\begin{equation*}
		c_\gamma (|\zeta| \wedge 1) \leq \gamma(x,\zeta, \nu) \leq C_\gamma (|\zeta|  \wedge 1);
	\end{equation*}
\item \label{(gamma3)} (sub-additivity) for all $x\in\Omega$, $\zeta_1$, $\zeta_2 \in  \Rk$, and $\nu \in \Sd$ it holds
		\begin{equation*}
			\gamma(x, \zeta_1 + \zeta_2, \nu) \leq \gamma(x,\zeta_1, \nu) +\gamma(x,\zeta_2, \nu);
		\end{equation*}
 \item \label{(gamma4)} (continuity in $x$)  there is a continuous function $\omega_\gamma\colon[0,+\infty)\to[0,+\infty)$, with $\omega_\gamma(0)=0$, such that for every $x_1,x_2\in\Omega$, $\zeta \in \Rk$, and $\nu \in \Sd$ it holds
		\begin{equation*}
	|\gamma(x_1,\zeta,\nu)-\gamma(x_2,\zeta,\nu)|\leq\omega_\gamma(|x_1-x_2|)(|\zeta|\land 1);
		\end{equation*}
        \item \label{(gamma5)}(quasi-monotonicity) there exists a constant $\beta_\gamma\geq 1$ such that for every $x\in\Omega$  and $\nu\in\Sd$ it holds
        \begin{equation*}
           \gamma(x,\zeta_1,\nu)\leq \gamma(x,\zeta_2,\nu)
        \end{equation*}
        whenever $\beta_\gamma|\zeta_1|\leq |\zeta_2|$;  
\item\label{(gamma6)} (rate of growth at the origin) there exists a continuous nondecreasing function $\vartheta\colon [0,+\infty)\to [0,+\infty)$, with $\vartheta(0)=0$ and $\vartheta(h)\geq \frac{c_\gamma}{C_\gamma}\EEE h-1$ for all $h\geq 0$,  with the constants  $c_\gamma,C_\gamma >0$  of \ref{(gamma2)},  such that for all $x\in\Omega,$ $\zeta\in\Rk$, and $\nu\in\Sd$  the limit
    \begin{equation}\label{existence gamma zero}
\gamma^0(x,\zeta,\nu)\coloneq \lim_{s\to 0^+}\frac{1}{s}\gamma(x,s\zeta,\nu)
    \end{equation}
    exists and \BBB satisfies \EEE
    \begin{equation*} \big|\frac{1}{s}\gamma(x,s\zeta,\nu)-\gamma^0(x,\zeta,\nu)\big|\leq \vartheta(s|\zeta|)\frac{1}{s}\gamma(x,s\zeta,\nu)
    \end{equation*}
    for all $s>0$.\EEE
\end{enumerate}

\begin{remark}[\MMM Assumptions \ref{(gamma1)}--\ref{(gamma5)}\EEE]
\MMM Properties \ref{(gamma1)} and \ref{(gamma3)} are standard assumptions on surface densities, while the growth condition \ref{(gamma2)} is a core ingredient in our study, see \eqref{Intro: cohesive growth} in the introduction for the motivation. The continuity condition \ref{(gamma4)} is also a standard assumption, slightly adjusted to our setting by using $(|\zeta|\land 1)$ in place of $|\zeta|$. \EEE 
Property \ref{(gamma5)} was introduced in \cite{CagnettiDet} and was  later employed in several other works (see, for instance, \cite{CagnettiStoc,FriedrichCompactness,DalToa22,dal2025homogenization}).  In that paper, it is also shown (see \cite[Remark~3.2]{CagnettiDet} and \cite[Remark~3.4]{dal2025homogenization}) that this property  is weaker than monotonicity in $|\zeta|$ of $\gamma$.
\end{remark}

\begin{remark}[\MMM Assumption \ref{(gamma6)}\EEE]
 A condition closely related to \ref{(gamma6)} was already considered in \cite[Remark~3.5]{CagnettiBV} or \cite[Remark~3.3]{ChoksiFonseca}, and was first introduced in this \BBB exact \EEE form in \cite{DalToaHomo, dal2025homogenization}, \MMM see in particular \cite[(4.9)]{dal2025homogenization}. \EEE   In \cite[Remark~4.7]{DalToaHomo} and \cite[Remark~6.5]{dal2025homogenization}  \BBB it is noted \EEE that condition $\vartheta(h)\geq \frac{c_\gamma}{C_\gamma}\EEE h-1$ for all $h\geq 0$ is necessary and sufficient for the existence of functions $\gamma$ satisfying condition {\rm \ref{(gamma6)}}.  \EEE We also observe  that  $\gamma^0$ is positively $1$-homogeneous, i.e.,
\begin{equation}\label{homogeneity of gamma0}
\gamma^0(x,t\zeta,\nu)=t\gamma^0(x,\zeta,\nu)
\end{equation}
    for all $x\in\Omega$, $\zeta\in\Rk$, $\nu\in\Sd$, and $t\geq 0$. Note that, together with \ref{(gamma2)}, \BBB \eqref{homogeneity of gamma0} \EEE implies  
    \begin{equation}\label{bounds gammazero}
        c_\gamma|\zeta|\leq \MMM \gamma^0 \EEE (x,\zeta,\nu)\leq C_\gamma|\zeta|
    \end{equation}
    for all $x\in\Omega$, $\zeta\in\Rk$, and $\nu\in\Sd$.
\end{remark}
 
\begin{remark}[Properties of $\BBB \gamma^0 \EEE $]
\label{remark:gamma zero uniform}
  The integrand $\BBB \gamma^0 \EEE$ satisfies properties closely related to \ref{(gamma3)}--\ref{(gamma4)}.  Indeed,  it follows directly from \ref{(gamma3)} and \ref{(gamma6)} that $\BBB \gamma^0 \EEE$ satisfies \ref{(gamma3)} with $\gamma$ replaced by $\BBB \gamma^0 \EEE$, while  \ref{(gamma4)} and \ref{(gamma6)} imply that
    	\begin{equation}\label{uniform gamma zero} 
	|\gamma^0(x_1,\zeta,\nu)-\gamma^0(x_2,\zeta,\nu)|\leq\omega_\gamma(|x_1-x_2|)|\zeta|
		\end{equation}
for all $x_1,x_2\in\Omega$, $\zeta\in\Rk$, and $\nu\in\Sd$.    
\end{remark}
\EEE

The following is the first of the three results concerning the integral representation of \eqref{def Istar}. Note that we do not assume    \ref{Wcontinuous} and \EEE \ref{(gamma5)}--\ref{(gamma6)} \EEE nor put any restrictions on the value of~$p$.

\begin{theorem}[Relaxation I]\label{thm: relaxation partial}
Let $\Psi$ and $\gamma$ be integrands satisfying assumptions {\rm\ref{(W1)_p}}--{\rm \ref{W4}} and {\rm \ref{(gamma1)}--\ref{(gamma4)}}.  
Then {\rm(}up to a shift{\rm )} \EEE the functional $I^p_\star$ defined by \eqref{def Borel relax} belongs  to $\F^p_\star$   and   for all $(g,G)\in \SD^p_\star(\Omega;\Rk)$ and $B\in\B(\Omega)$ it holds
	\begin{align}\label{repr bulk relax}
	&(I^p_\star)^a(g,G,B)= \int_B \!f_{\rm bulk}\big(x,   \nabla  g, G\big)\,{\rm d} x,\\& \label{repre surf relax}(I^p_\star)^j(g,G,B)=\int_{J_g\cap B}\!\!\!\!\!f_{\rm surf}\big(x, [g], \nu_g\big) 
	\,{\rm d} \Hd,
    \end{align}
    where $f_{\rm bulk}$ and $f_{\rm surf}$ are given by \eqref{def f} and \eqref{def Psi}, replacing $\cF$ with $I^p_\star$ {\rm(}see also Remark~\ref{remark: invariance}{\rm)}.  In particular, if $(g,G) \in \SD^p_\star(\Omega;\Rk)$ \BBB with \EEE  $|D^cg|(\Omega)=0$, then  for all $B\in\B(\Omega)$ it holds that 
\begin{equation*}
    I^p_\star(g,G,B)=\int_B \!f_{\rm bulk}\big(x, \nabla  g, G\big)\,{\rm d} x+\int_{J_g\cap B}\!\!\!\!\!f_{\rm surf}\big(x, [g],\nu_g\big) 
	\,{\rm d} \Hd.
\end{equation*}

    \EEE
\end{theorem}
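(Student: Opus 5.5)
The plan is to verify that $I^p_\star$, after a harmless shift, satisfies \ref{hyp:H1}--\ref{hyp:H4}, and then to apply Theorem~\ref{thm: integral star}. The formulas \eqref{repr bulk relax}--\eqref{repre surf relax} with reduced arguments then follow from Remark~\ref{remark: invariance}. Translation invariance \eqref{invariance} is immediate from the definition: $\mathcal E(u+a,\cdot)=\mathcal E(u,\cdot)$ and $u_n+a\SDtostar(g+a,G)$. The shift is needed because \ref{W4} only gives $\Psi\ge c_\Psi|A|^p-1/c_\Psi$. I would replace $\Psi$ by $\Psi+1/c_\Psi\ge 0$. This changes $I^p_\star(g,G,O)$ by exactly $\Lb^d(O)/c_\Psi$ on open sets, and it changes $f_{\rm bulk}$ by the constant $1/c_\Psi$.

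\textbf{Growth \ref{hyp:H4}.} For the lower bound, take $u_n\SDtostar(g,G)$ in $O$. By \ref{(gamma2)}, $\mathcal E(u_n,O)\ge c\,(\mathcal V(u_n,O)+\|\nabla u_n\|^p_{L^p})$ up to the shift. Lower semicontinuity of $\mathcal V$ (Remark~\ref{remark: lowersemicontinuity}) handles the first term. For $p>1$ the second term is handled by weak lower semicontinuity of the $L^p$ norm; for $p=1$ one uses weak$^*$ lower semicontinuity of the total variation. For the upper bound, apply Theorem~\ref{thm:approximation SDstar} on $O$: it gives $u_n\to g$ in $L^0$ with $\nabla u_n=G$, so \ref{W3} and Remark~\ref{remark: Improved upper bound} yield
\[
\mathcal E(u_n,O)\le \overline C_\Psi(\|G\|^p_{L^p(O)}+\Lb^d(O))+C_\gamma\,\mathcal V(u_n,O).
\]
Combined with \eqref{claim main approximation theorem} and $\|G\|_{L^1}\le \|G\|^p_{L^p}+\Lb^d$, this gives the bound on Lipschitz open sets. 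For general open sets I would exhaust by Lipschitz subsets; this also requires inner regularity, obtained in the next step.

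\textbf{Measure property \ref{hyp:H1}.} I would use the De Giorgi--Letta criterion, so the main obstacle is the nested subadditivity
\[
I^p_\star(g,G,O)\le I^p_\star(g,G,O')+I^p_\star(g,G,O\setminus\overline{O''}),\qquad O''\subset\subset O'.
\]
The standard fundamental estimate joins two sequences $u_n$ and $v_n$ by a cut-off $\varphi$. Here this must be done without destroying the gradient: $\nabla(\varphi u_n+(1-\varphi)v_n)$ contains the extra term $\nabla\varphi\otimes(u_n-v_n)$, which is not controlled in $L^0$. Instead I would glue by a sharp cut along a level set $\{\varphi>t\}$, chosen by the coarea formula on a layer between $O''$ and $O'$. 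The gradient then stays $\nabla u_n$ or $\nabla v_n$ a.e., so the limit $G$ is preserved. The new jump costs
\[
\int_{\partial^*\{\varphi>t\}}\gamma(x,u_n-v_n,\nu)\,{\rm d}\Hd\le C_\gamma\int |u_n-v_n|\wedge 1\,{\rm d}\Hd .
\]
Averaging over $t$ bounds this cost by $C\|\nabla\varphi\|_\infty\int_{\rm layer}|u_n-v_n|\wedge1\,{\rm d}x$, which tends to $0$ since both sequences converge to $g$ in measure. This is exactly where the truncated growth $|\zeta|\wedge 1$ helps, since it makes convergence in $L^0$ suffice, and no truncation is needed. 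Superadditivity on disjoint sets follows by restricting sequences. Inner regularity follows from subadditivity plus the upper bound on thin layers $O\setminus\overline{O_\delta}$, which is small by absolute continuity of $\mathcal V(g,\cdot)+\|G\|^p_{L^p}+\Lb^d$.

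\textbf{Locality and lower semicontinuity.} Locality \ref{hyp:H2} is immediate, since the definition on $O$ only involves $g|_O$ and $G|_O$. For lower semicontinuity \ref{hyp:H3}, take $(g_j,G_j)\SDtostar(g,G)$ with bounded energy, choose near-optimal sequences $u^j_n$ for each $j$, and extract a diagonal sequence. This uses metrisability of $L^0$-convergence and of weak (or weak$^*$) convergence on bounded sets; the required boundedness of $\nabla u^j_n$ in $L^p$ (resp. in $L^1$) follows from the growth lower bound. With \ref{hyp:H1}--\ref{hyp:H4} established, Theorem~\ref{thm: integral star} gives the claims, and the final identity follows since $(I^p_\star)^c=0$ when $|D^cg|(\Omega)=0$.
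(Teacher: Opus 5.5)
Your route is the paper's. It uses the two-sided bounds of Lemma~\ref{lemma: upper bound} (lower semicontinuity of $\mathcal{V}$ plus Theorem~\ref{thm:approximation SDstar}) and the nested subadditivity \eqref{weaksub}, proved by a sharp gluing along a level set chosen by the coarea formula; the paper cuts along level sets of ${\rm dist}(\cdot,O_1)$ and uses the truncated growth in \ref{(gamma2)} exactly as you do. It then applies the De Giorgi--Letta criterion as in Choksi--Fonseca, a diagonal argument for \ref{hyp:H3}, and finally Theorem~\ref{thm: integral star} with Remark~\ref{remark: invariance}. Your treatment of the shift and of \ref{hyp:H2} is fine.

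There is, however, a circularity in how you reach the upper bound on arbitrary open sets. You prove it only for Lipschitz $O$ and plan to extend it by inner regularity. But your inner-regularity step uses the upper bound on the layers $O\setminus\overline{O_\delta}$, whose boundary contains $\partial O$. These layers are not Lipschitz when $O$ is not (e.g.\ a slit domain). Nor can you replace them by a Lipschitz open superset in $\Omega$ whose $\mathcal{V}(g,\cdot)$-measure vanishes as $\delta\to0$: such a superset must contain the slit, on which $g$ may jump. So the estimate is used to prove itself.

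The remedy is to prove the upper bound directly on every open set, which is how Lemma~\ref{lemma: upper bound} is formulated. The estimate \eqref{claim main approximation theorem} in fact needs no regularity of $O$. Alberti's Theorem~\ref{Alberti's Theorem}, Proposition~\ref{prop:truncation}, and Lemma~\ref{lemma piecewise constants} all hold on arbitrary open sets. Apply Lemma~\ref{lemma piecewise constants} directly to $\Phi_n\circ g-h$, with $h$ given by Alberti's theorem, and use the subadditivity of $\mathcal{V}(\cdot,O)$. This yields functions $h+z^j$ with $\nabla(h+z^j)=G$ and $\limsup_j\mathcal{V}(h+z^j,O)\le \mathcal{V}(g,O)+C\|G\|_{L^1(O)}$. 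It bypasses Theorem~\ref{thm: Goffmann-Serrin}, the only ingredient that requires a Lipschitz boundary. Once the bound holds on all open sets, your inner-regularity argument goes through.

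A smaller point concerns the gluing. Fix the level independently of $n$, via Fatou's lemma as the paper does, and then pass to a subsequence realising the $\liminf$, rather than taking an $n$-dependent level from averaging. For $p=1$, also discard the at most countably many levels charged by weak$^*$ limits of $|\nabla u_n|\Lb^d$ and $|\nabla v_n|\Lb^d$. Weak$^*$ convergence of measures is not stable under multiplication by a characteristic function. So the fact that the glued gradient equals $\nabla u_n$ or $\nabla v_n$ a.e.\ does not by itself give convergence to $G$.
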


Our second main result gives a more explicit expression for the densities $f_{\rm bulk}$ and $f_{\rm surf}$ appearing in  \eqref{repr bulk relax} and \eqref{repre surf relax} when $p>1$. To present the result, we  first introduce some further notation. \EEE For $\zeta\in\R^{k}$ and $\nu\in\Sd$  we set  $v_{\zeta,\nu}\coloneq  v_{0,\zeta,0,\nu}$ (see \eqref{stepfun}). 
  Given $A,L\in\R^{k\times d}$,  $\zeta\in\Rk$, and $\nu\in\Sd$, we consider the classes  
  \begin{align}  
\C_{p,\star}^{\rm bulk}(A,L)\coloneq  \Big\{u\in {\rm SBV}(Q;\Rk) \colon \,   u=\ell_{0,0,A} \EEE &\text{ near $ \partial Q$}, \
\int_Q \nabla u\,{\rm d} x=L, \  \|\nabla u\|_{L^p(Q)}^p<+\infty \Big\}
\label{bulk competitors}
\end{align}
and 
\begin{equation}\label{surf competitors}
\hspace{-0.1 cm}\C_{p,\star}^{\rm surf}(\zeta,\nu)\coloneq  \Big\{u\in {\rm SBV}(Q_\nu;\Rk)\colon  \,  u=v_{\zeta,\nu} \text{  near $\partial Q_\nu$\EEE},  \   \nabla u=0 \ \text{ $\Lb^d$-a.e.\ on \EEE $ Q_\nu$}\Big\}.
\end{equation}
We set
\begin{equation}\label{bulk dens relax}
H_p(x,A,L)\coloneq  \inf_{u\in\C_{p,\star}^{\rm bulk}(A,L)}\Big\{  
\int_Q \Psi\big(x,\nabla  u(y)\big )\,{\rm d} y+\int_{J_u\cap Q} \gamma^0\big( x,[u](y),\nu_u(y)\big)\,{\rm d}\Hd(y)\Big\}
\end{equation}
 for all $x\in\Omega$ and $A,L\in\R^{k\times d}$, and 
\begin{equation}\label{surface dens p>1}
 h_p(x,\zeta,\nu)\coloneq  \inf_{u\in\C_{p,\star}^{\rm surf}(\zeta,\nu)}\Big\{ 
 \! \int_{J_u\cap Q_\nu}  \gamma\big(x,[u](y),\nu_u(y)\big)\,{\rm d}\Hd(y)\Big\}
\end{equation}
for all $x\in\Omega$, $\zeta\in\R^k$, and   $\nu\in\Sd$. 
\MMM Note that, in contrast to \eqref{surface dens p>1}, in \eqref{bulk dens relax} the density $\gamma^0$ appears which is the relevant quantity here as affine boundary conditions are effectively approximated by infinitesimally small jumps, cf.\ also \cite[Remark~3.3]{ChoksiFonseca}. \EEE 

The following constitutes a characterisation of $(I^p_\star)^a$ and $(I^p_\star)^j$ in terms of the densities $H_p$ and~$h_p$.  
We remark that the restriction $p>1$ is due to the nature of the truncation techniques we employ (see Lemma~\ref{lemma:convergence truncations}), which are ill-behaved when $p=1$. \BBB This is \EEE due to concentration phenomena that may occur when dealing with the weak$^*$-convergence of measures.

\begin{theorem}[Relaxation II]\label{thm:cell formulas}
      Let $p>1$ and let $\Psi$ and $\gamma$ be integrands satisfying assumptions  {\rm \ref{(W1)_p}--\ref{Wcontinuous}} \EEE  and {\rm \ref{(gamma1)}--\ref{(gamma6)}}.  Then,   for all $(g,G)\in \SD^p_\star(\Omega;\Rk)$ and $B\in\B(\Omega)$ equalities \eqref{repr bulk relax} and \eqref{repre surf relax} hold with $f_{\rm bulk}(x,\nabla g,G)$ and $f_{\rm surf}(x,[g],\nu_g)$ replaced by $H_p(x,\nabla g,G)$ and $h_p(x,[g],\nu_g)$, where $H_p$ and $h_p$ are the functions defined by \eqref{bulk dens relax} and \eqref{surface dens p>1}, respectively.

\end{theorem}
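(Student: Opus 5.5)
Since Theorem~\ref{thm: relaxation partial} already gives the representation with $f_{\rm bulk}$ and $f_{\rm surf}$ for $\cF=I^p_\star$, it suffices to prove the pointwise identities
\begin{equation*}
f_{\rm bulk}(x,A,L)=H_p(x,A,L),\qquad f_{\rm surf}(x,\zeta,\nu)=h_p(x,\zeta,\nu)
\end{equation*}
for $\Lb^d$-a.e.\ (indeed every) $x\in\Omega$ and all $A,L\in\Rkd$, $\zeta\in\Rk$, $\nu\in\Sd$. Each identity splits into two inequalities. In all four we first use \ref{Wcontinuous} and \ref{(gamma4)} to freeze the $x$-variable, at the cost of an error vanishing as $\rho\to0$. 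We then rescale the cube $Q(x,\rho)$ (resp.\ $Q_\nu(x,\rho)$) to the unit cube.

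\emph{Upper bounds.} Take a competitor $u\in\C^{\rm bulk}_{p,\star}(A,L)$. We periodically tile $Q$ by $m^d$ copies of $\frac1m u(m\,\cdot)$, which keeps the affine boundary datum and the average gradient $L$. The bulk energy is unchanged, while the jump amplitudes shrink like $1/m$. Property \ref{(gamma6)} then gives $\gamma(x,[u]/m,\nu)\cdot m\to\gamma^0(x,[u],\nu)$, with error controlled by $\vartheta(|[u]|/m)$, by dominated convergence (we first truncate $u$ to be $L^\infty$, which is allowed since competitors can be assumed bounded via Lemma~\ref{lemma:CagnettiDetFree W}). The rescaled functions converge in $\SD^p_\star$ to $(\ell_{0,0,A},L)$. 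Comparing with the definition of $\m$ (a fixed sequence gives an upper bound for $I^p_\star$ on the small cube and satisfies the constraints in $\C^p_\star$) yields $f_{\rm bulk}\le H_p$. For the surface, a competitor in $\C^{\rm surf}_{p,\star}$ with $\nabla u=0$, rescaled to $Q_\nu(x,\rho)$, directly gives a constant sequence with $U=0$, hence $f_{\rm surf}\le h_p$.

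\emph{Lower bounds.} Fix $\rho$ and an almost minimiser $(u,U)\in\C^p_\star(\ell_{x,0,A},L,Q(x,\rho))$ of $\m$. By the definition of $I^p_\star$ we find an ${\rm SBV}$ sequence $u_n\SDtostar(u,U)$ with almost optimal energy. The issue is that the boundary values and the gradient average are matched only in the limit. We fix this by a cut-off near $\partial Q(x,\rho)$ between $u_n$ and the boundary datum, using a De Giorgi slicing/averaging over layers to control the transition cost. Since $p>1$, the gradients converge weakly in $L^p$ without concentration, and Lemma~\ref{lemma:I truncated} and Lemma~\ref{lemma:convergence truncations} give the truncations needed to replace $L^0$-convergence by $L^1$-type control in the transition layer. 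The average constraint $\int\nabla u_n=L$ is then restored by adding a small piecewise-constant correction, namely jumps with prescribed gradient, produced by the approximation Theorem~\ref{thm:approximation SDstar} at cost $\to0$. Finally we compare $\gamma$ with $\gamma^0$: after blow-up the jumps of a bulk competitor have amplitude of order $\rho$, so by \ref{(gamma6)} and \ref{(gamma2)} we get $\gamma\ge(1-\vartheta(\cdot))\gamma^0$ on jumps with small amplitude. Large jumps cost at least $c_\gamma$ each but have $\Hd$-measure $o(\rho^{d-1})$ relative to $\rho^d$, and \ref{(gamma5)} handles the truncation. For the surface, the same scheme applies with $L=0$; there we must additionally eliminate the absolutely continuous part, again via Theorem~\ref{thm:approximation SDstar} with $G=0$, whose cost scales like $\rho^d=o(\rho^{d-1})$.

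\emph{Main obstacle.} The main obstacle is the boundary-matching step in the bulk lower bound. We must join $u_n$ to the affine datum while simultaneously preserving the exact gradient average, and keep the jump part measured by $\gamma^0$ rather than $\gamma\wedge1$. Here the lack of $L^1$ bounds in $\GBVs$ forces the use of the truncation lemmas, which is precisely where $p>1$ is needed.
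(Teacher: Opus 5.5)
Your upper bounds are correct and essentially coincide with the paper's Step~3 and the first half of Step~5. Two remarks. The tiling is superfluous: rescaling $u\in\C^{\rm bulk}_{p,\star}(A,L)$ to $Q(x,\rho)$ already turns the jumps into $\rho[u]$, and dominated convergence with majorant $C_\gamma|[u]|\in L^1(\Hd\mres J_u)$ yields $\gamma^0$. Moreover, truncating a competitor, as you suggest, would destroy the constraint $\int_Q\nabla u\,{\rm d}x=L$.

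The lower bounds, however, have a genuine gap. You start from an almost minimiser $(u,U)$ of $\m(\ell_{x,0,A},L,Q(x,\rho))$ and truncate a recovery sequence $u_n$ for it.
\begin{itemize}
\item Lemma~\ref{lemma:convergence truncations} only gives convergence of $\Phi_R\circ u_n$ to $(\Phi_R\circ u,\nabla\Phi_R(u)U)$. This coincides with $(u,U)$ only when $u\in L^\infty$ and $R$ is large compared with $\|u\|_{L^\infty}$.
\item An almost minimiser in $\GBVs$ need not be bounded. The truncation then alters $U$ on $\{|u|>R\}$ and breaks exactly the constraint $\int U=\int L$ you want to preserve.
\item The additive error $\Lb^d(\{|u_n|\geq R\})$ in Lemma~\ref{lemma:CagnettiDetFree W}, which is not damped by $1/N$, does not vanish.
\item Making both errors small would require a tightness bound on the rescaled almost minimisers, uniform in $\rho$, with $R$ independent of $\rho$ so that $\vartheta(2\tau^NR\rho)\to0$. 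You give no such bound, and Lemma~\ref{lemma:I truncated} cannot supply it, since it concerns $G=0$.
\item Smallness of the $\Hd$-measure of large jumps is no substitute: their $\gamma^0$-cost is linear in the amplitude, so they must be removed by truncation.
\end{itemize}
In the surface case there is a further problem. Theorem~\ref{thm:approximation SDstar} carries a factor $C(d)$ in front of the whole energy, so it cannot remove $\nabla u_n$ at vanishing cost. A sharp device would be needed instead, e.g.\ Alberti's theorem combined with the subadditivity \ref{(gamma3)}.

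The paper avoids all of this with two ideas.

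\emph{Bounded targets.} By Theorem~\ref{thm: minimum=F}, for a.e.\ $x$ one has $f_{\rm bulk}(x,A,L)=\limsup_{\rho\to0^+}\rho^{-d}I^p_\star(\ell_{x,0,A},L,Q(x,\rho))$, and similarly for $f_{\rm surf}$. Hence one only needs recovery sequences for the bounded targets $(\ell_{x,0,A},L)$ and $(v_{x,\zeta,0,\nu},0)$. After rescaling, truncation at $R=2\|\ell_{0,0,A}\|_{L^\infty(Q)}$ (resp.\ $R=2|\zeta|$) leaves the limit unchanged and kills the term $\Lb^d(\{|v^\rho_n|\geq R\})$. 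It also caps all jumps by $2\tau^NR\rho$ in the original scale, which is what makes the comparison with $\gamma^0$ via \ref{(gamma6)} work.

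\emph{No boundary matching.} No boundary matching or average correction is performed at all. The truncated, rescaled sequence is directly a competitor for the unconstrained relaxed cell formulas:
\begin{itemize}
\item $\widetilde H_p$, the relaxation of $\mathcal{E}^x_0$ along $u_n\to\ell_{0,0,A}$ in $L^1$ and $\nabla u_n\rightharpoonup L$ in $L^p$;
\item $\widetilde h_p$, with $u_n\to v_{\zeta,\nu}$ in $L^1$ and $\nabla u_n\to0$ in $L^p$, the latter following from $\int_{Q_\nu}|\nabla w^\rho_n|^p\lesssim\rho^{p-1}$.
\end{itemize}
The identities $\widetilde H_p=H_p$ and $\widetilde h_p=h_p$ are then imported from Choksi--Fonseca. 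Their Proposition~3.1 applies because $\gamma^0(x,\cdot)$ is $1$-homogeneous with linear bounds, and for $h_p$ one follows their Proposition~4.2.

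What you identify as the main obstacle is thus handled in the classical linear-growth setting, rather than redone by hand in the cohesive one.
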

 
\begin{proposition}[Properties of $H_p$ and $h_p$]\label{prop:integrands}
 Let $p>1$ and let $\Psi$ and $\gamma$ be integrands satisfying assumptions  {\rm \ref{(W1)_p}--\ref{Wcontinuous}} \EEE and {\rm \ref{(gamma1)}--\ref{(gamma6)}}. Given $L\in\Rkd$, let $H_p^L\colon\Omega\times \Rkd\to [0,+\infty)$ be the function defined for $x\in\Omega$ and $A\in\Rkd$  by $ H^L_p(x,A)\coloneq H_p(x,A,L)$. Then $H^L_p$ satisfies  {\rm \ref{(W1)_p}--\ref{Wcontinuous}}
 with $p$ replaced by $1$, while $h_p$ satisfies {\rm \ref{(gamma1)}--\ref{(gamma5)}}.

 Moreover, for every $x\in\Omega$ and $A\in\Rkd$ there exists a constant $C>0$, depending only on $x$ and on $A$, such that 
 \begin{equation*}
     |H_p(x,A,L_1)-H_p(x,A,L_2)|\leq C|L_1-L_2|(1+|L_1|^{p-1}+|L_2|^{p-1})\quad \text{ for all $L_1,L_2\in\Rkd$}. \end{equation*}
Finally, there exist constants \BBB $c_H, C_H>0$ \EEE such that 
\begin{equation*}
    c_H(|A|+|L|^p)-\frac{1}{c_H}\leq H_p(x,A,L)\leq C_H(|A|+|L|^p+1)
\end{equation*}
for all $x\in\Omega$ and $A,L\in\Rkd$.
\end{proposition}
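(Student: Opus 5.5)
The strategy is to work directly with the cell formulas \eqref{bulk dens relax} and \eqref{surface dens p>1}, building explicit competitors and comparing them. We use the growth bounds \ref{W4}, \eqref{bound above W} and \eqref{bounds gammazero} for the estimates, and the continuity assumptions \ref{Wcontinuous}, \ref{(gamma4)} and \eqref{uniform gamma zero} for the dependence on $x$.

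\emph{Two-sided bounds for $H_p$.} For the upper bound we construct, given $A,L$, a competitor $u\in\C^{\rm bulk}_{p,\star}(A,L)$ in the spirit of the Approximation Theorem~\ref{thm:approximation SDstar}, or more simply by hand. Take $u=\ell_{0,0,A}+w$, where $w$ is piecewise affine with gradient $L-A$ on the bulk of $Q$. Correct $w$ by a finite number of planar jumps parallel to the coordinate faces, so that $w$ vanishes near $\partial Q$. Its jump part then costs at most $C|L-A|$ in $\gamma^0$-energy by \eqref{bounds gammazero}. Arranging $\nabla u= L$ exactly (so $\int_Q\nabla u=L$) gives
\[
H_p\le \overline C_\Psi(|L|^p+1)+C|A-L|\le C_H(|A|+|L|^p+1).
\]
For the lower bound, Jensen together with \ref{W4} gives $\int_Q\Psi(x,\nabla u)\ge c_\Psi|L|^p-1/c_\Psi$. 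Moreover, the function $u-\ell_{0,0,A}$ vanishes near $\partial Q$, so $Du(Q)=A$ and hence $|A-L|\le|D^ju|(Q)$. This yields $\int_{J_u}\gamma^0\ge c_\gamma|A-L|\ge c_\gamma(|A|-|L|)$. Absorbing $|L|\le |L|^p+1$ gives the claimed lower bound.

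\emph{Regularity of $H^L_p$ and in $L$.} Measurability follows once continuity in $(x,A)$ is known. For Lipschitz continuity in $A$ (with $p=1$): given a near-optimal $u$ for $(A_1,L)$, add $(A_2-A_1)y$ near $\partial Q$ and cancel it in the interior by planar jumps. This costs $C|A_1-A_2|$ in the $\gamma^0$-part and does not change $\int\nabla u$, since we may keep the gradient change inside $Q$ compensated. Continuity in $x$ follows from \ref{Wcontinuous} and \eqref{uniform gamma zero} applied to an almost-minimiser, giving $|H(x_1)-H(x_2)|\le\omega(|x_1-x_2|)(H(x_1)+H(x_2))$ after adjusting constants. \ref{W3} follows from the upper bound. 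For the dependence on $L$, take a near-optimal $u$ for $L_1$ and modify its gradient by adding $(L_2-L_1)$ on a cube $Q'\subset\subset Q$ through a piecewise affine bump closed by jumps. Here I must control $\int\Psi(x,\nabla u+M)-\Psi(x,\nabla u)$ via \ref{(W2)_p} and Hölder, using $\|\nabla u\|_{L^p}^p\lesssim H_p(x,A,L_1)+1\lesssim 1+|A|+|L_1|^p$. This explains why the constant depends on $x$ and $A$. The jump cost $C|L_1-L_2|$ is dominated accordingly.

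\emph{Properties of $h_p$.} Symmetry follows by the change of variables $u\mapsto -u$ combined with reflection and \ref{(gamma1)}. Bounds: the competitor $v_{\zeta,\nu}$ gives $h_p\le C_\gamma(|\zeta|\wedge1)$. The lower bound comes from slicing in direction $\nu$: a.e. line crosses jumps whose total satisfies $\sum|[u]|\ge|\zeta|$, and with \ref{(gamma2)} and subadditivity of $t\mapsto t\wedge 1$ this gives $\ge c_\gamma(|\zeta|\wedge1)$ times a projection factor, using $|\nu_u\cdot\nu|\le1$. Sub-additivity is obtained by stacking two rescaled competitors in half-cubes along $\nu$, or superposing a near-optimal $u_1$ for $\zeta_1$ and a translate of $u_2$ for $\zeta_2$ shifted in the $\nu$-direction. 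Overlapping jump sets are handled with \ref{(gamma3)}, and the error terms from the rescaled lateral faces are removed by the standard periodic tiling argument showing that $h_p$ equals its asymptotic version on large cubes. Continuity in $x$ is immediate from \ref{(gamma4)}. Quasi-monotonicity: given $u$ for $\zeta_2$, compose with a piecewise constant map (a projection onto $\R\zeta_1$ scaled by $|\zeta_1|/|\zeta_2|$) to obtain a competitor for $\zeta_1$, each of whose jumps is $\beta_\gamma$-smaller than the original. Then \ref{(gamma5)} gives $h_p(x,\zeta_1,\nu)\le h_p(x,\zeta_2,\nu)$.

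I expect this last step, along with the subadditivity tiling, to be the main obstacle. Composition with a Lipschitz map does not necessarily shrink jumps in the quantitative $\beta_\gamma$ sense, so I would first try the linear map $y\mapsto \frac{|\zeta_1|}{|\zeta_2|}R\,y$, with $R$ a rotation sending $\zeta_2/|\zeta_2|$ to $\zeta_1/|\zeta_1|$.
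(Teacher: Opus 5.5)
Your plan is essentially correct, but for the bulk density it takes a genuinely different route from the paper. The paper proves nothing by hand about $H_p$. The growth bounds, the continuity in $x$, the Lipschitz continuity in $A$ and the local Lipschitz estimate in $L$ are all imported from \cite[Theorem~2.10]{J.Elasticity}.

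You instead work directly with the cell formula \eqref{bulk dens relax}. For the lower bound you use Jensen and the identity $D^ju(Q)=Du(Q)-L=A-L$, which holds because $u-\ell_{0,0,A}$ has compact support in $Q$. For the other estimates you build competitors by adding compactly supported piecewise affine functions closed off by small jumps, and you control them with \ref{(W2)_p}, H\"older's inequality and the subadditivity of $\gamma^0$ (Remark~\ref{remark:gamma zero uniform}). This route is self-contained. It also shows that $c_H$, $C_H$ and the Lipschitz constant in $A$ depend only on the structural constants, and that the constant in the $L$-estimate depends on $A$ only through $|A|^{(p-1)/p}$ and not on $x$. The citation buys brevity.

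For $h_p$, your arguments for \ref{(gamma2)} and \ref{(gamma4)} are fine. Your final choice for \ref{(gamma5)}, namely $u_1=\frac{|\zeta_1|}{|\zeta_2|}Ru_2$ with $R$ a rotation, is exactly the paper's.

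A few steps need tightening.

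\textbf{Upper bound for $H_p$.} Having $\nabla u=L$ a.e.\ in $Q$ is incompatible with $u=\ell_{0,0,A}$ near $\partial Q$ unless $A=L$. Instead, keep $\nabla u=A$ on a boundary layer of volume $\delta$. Take the interior gradient $L_\delta$ fixed by the constraint $\int_Q\nabla u=L$, and let $\delta\to0$.

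\textbf{Lipschitz estimate in $A$.} Switching $A_1$ to $A_2$ on a boundary layer changes $\int_Q\nabla u$ by $\delta(A_2-A_1)$. This must be compensated by an interior gradient shift, whose cost vanishes as $\delta\to0$.

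\textbf{Lower bound for $H_p$.} Absorb the term $c_\gamma|L|$ via Young's inequality, or by keeping only a small fraction of the jump term. The bound $|L|\le|L|^p+1$ alone fails when $c_\gamma\ge c_\Psi$.

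\textbf{Symmetry \ref{(gamma1)}.} The reflection must be the point reflection $y\mapsto-y$. A reflection across $\Pi^\nu$ alters $\nu_u$ in a way an anisotropic $\gamma$ need not tolerate. Simpler still, $u\mapsto u-\zeta$ maps $\C^{\rm surf}_{p,\star}(\zeta,\nu)$ onto $\C^{\rm surf}_{p,\star}(-\zeta,-\nu)$ without changing $[u]$ or $\nu_u$.

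\textbf{Sub-additivity \ref{(gamma3)}.} This is not an obstacle and needs no tiling. For $u_i\in\C^{\rm surf}_{p,\star}(\zeta_i,\nu)$, the \emph{unshifted} sum $u_1+u_2$ lies in $\C^{\rm surf}_{p,\star}(\zeta_1+\zeta_2,\nu)$, because $v_{\zeta_1,\nu}+v_{\zeta_2,\nu}=v_{\zeta_1+\zeta_2,\nu}$. Apply \ref{(gamma3)} pointwise on $J_{u_1}\cap J_{u_2}$, with the normals oriented consistently using \ref{(gamma1)}, and the inequality follows. This is why the paper calls it immediate. Your variant with $u_2$ translated along $\nu$ would instead violate the boundary condition near the lateral faces, where it produces two parallel jumps.
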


The final result of this paper provides a complete integral representation  for $I^{p}_\star$, including its Cantor part $(I^p_\star)^c$. This comes upon assuming a stronger hypothesis on $\Psi$ and $\gamma$, namely that $\Psi$ and $\gamma$ are independent of the $x$-variable.  This hypothesis ensures that the densities $H_p$ and $h_p$  are independent of $x$, and that $I^p_\star$ satisfies the following invariance property: for every $(g,G)\in\SD^p_\star(\Omega;\Rk)$ and $x\in\Rd$, it holds
\begin{equation}\label{independence day}
I^p_\star(g(\cdot-x),G(\cdot-x),x+B)=I^p_\star(g,G, B)
\end{equation}
for every $B\in\B(\Omega)$ with $x+B\subset \Omega$. This property makes possible to rely on the integral representation of Bouchitté, Fonseca, \& Mascarenhas \cite[Theorem~3.12]{BFM1998} for functionals defined on ${\rm BV}(\Omega;\Rk)$.  
\begin{theorem}[Relaxation III]\label{thm:relaxation full}
    Let $p>1$  and let $\Psi$ and $\gamma$  be $x$-independent integrands  satisfying assumptions {\rm \ref{(W1)_p}}--{\rm \ref{Wcontinuous}} \EEE and  {\rm \ref{(gamma1)}--\ref{(gamma6)}}.  Then, for every $(g,G)\in \SD_\star^p(\Omega;\Rk)$ and $B\in \B(\Omega)$ we have 
    \begin{equation}\notag 
I^p_\star(g,G,B)=\int_{B}H_p(\nabla g,   G)\,{\rm d}x+\int_{B}H_p^\infty\Big(\frac{{\rm d}D^cg}{{\rm d}|D^cg|},0\Big)\,{\rm d}|D^cg|+\int_{J_{g}\cap B}h_p([g],\nu_g)\,{\rm d}\Hd,
    \end{equation}
  where $H_p$ and $h_p$ are the functions defined by \eqref{bulk dens relax} and \eqref{surface dens p>1} without $x$-dependence, and \EEE  $H_p^\infty(\cdot,0)$ denotes the recession function of $H_p(\cdot,0)$, i.e., the function defined for all $A\in\Rkd$ by 
    \begin{equation}\label{def Recession}
    H_p^\infty(A,0)\coloneq \limsup_{t\to +\infty}\frac{H_p(tA,0)}{t}.
    \end{equation}
\end{theorem}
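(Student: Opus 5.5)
The plan is to combine Theorem~\ref{thm: relaxation partial} and Theorem~\ref{thm:cell formulas} with a separate identification of the Cantor part. Since $\Psi$ and $\gamma$ do not depend on $x$, the first two theorems already give $(I^p_\star)^a(g,G,B)=\int_B H_p(\nabla g,G)\,{\rm d}x$ and $(I^p_\star)^j(g,G,B)=\int_{J_g\cap B}h_p([g],\nu_g)\,{\rm d}\Hd$. By Definition~\ref{decomposition}, it then remains to prove
\[
(I^p_\star)^c(g,G,B)=\int_B H_p^\infty\Big(\tfrac{{\rm d}D^cg}{{\rm d}|D^cg|},0\Big)\,{\rm d}|D^cg|.
\]
I would do this in three steps: remove the dependence on $G$, reduce from $\GBVs$ to ${\rm BV}$ by truncation, and then invoke \cite[Theorem~3.12]{BFM1998}.

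\textbf{Step 1: the Cantor part does not depend on $G$.} I will show that
\[
I^p_\star(g,G,O)\le I^p_\star(g,G',O)+C\big(\|G-G'\|_{L^p(O)}^p+\|G-G'\|_{L^1(O)}+\|G\|^p_{L^p(O)}+\|G'\|^p_{L^p(O)}+\Lb^d(O)\big)
\]
for all open $O$.
\begin{itemize}
\item Take a recovery sequence $u_n\SDtostar(g,G')$ in $O$.
\item Apply Theorem~\ref{thm:approximation SDstar} to the pair $(0,G-G')$ to obtain $w_n\to0$ in $L^0$ with $\nabla w_n=G-G'$ and $\limsup_n\mathcal V(w_n,O)\le C\|G-G'\|_{L^1(O)}$.
\item Use $u_n+w_n$ as a competitor for $(g,G)$. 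The bulk term is estimated with \ref{(W2)_p}. The surface term is estimated with the sub-additivity \ref{(gamma3)} and the bound \ref{(gamma2)}, which give $\gamma([u_n+w_n],\nu)\le\gamma([u_n],\nu)+C_\gamma(|[w_n]|\wedge1)$ on $J_{u_n}\cup J_{w_n}$.
\end{itemize}
The bulk term contains the cross term $|G-G'|\,|\nabla u_n|^{p-1}$. Because $u_n$ is a recovery sequence, $\nabla u_n$ is bounded in $L^p$, so Hölder's inequality turns this cross term into an error that is absolutely continuous with respect to $\Lb^d$. The right-hand side above is therefore an absolutely continuous measure plus $I^p_\star(g,G',\cdot)$. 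Exchanging $G$ and $G'$ shows that $|I^p_\star(g,G,\cdot)-I^p_\star(g,G',\cdot)|$ is controlled by a measure that is absolutely continuous with respect to $\Lb^d$. Hence the Cantor parts of the two measures coincide, and it suffices to treat $G=0$.

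\textbf{Step 2: reduction to ${\rm BV}$.} For $G=0$, consider the functional $\mathcal F(u,O):=I^p_\star(u,0,O)$ on ${\rm BV}(O;\Rk)$.
\begin{itemize}
\item By Theorem~\ref{thm: relaxation partial}, $\mathcal F(u,\cdot)$ is a Radon measure satisfying the ${\rm BV}$-growth $\mathcal F(u,O)\le\beta(|Du|(O)+\Lb^d(O))$, because $\mathcal V\le|Du|$.
\item It is $L^1$-lower semicontinuous, since $L^1$ convergence implies $L^0$ convergence.
\item It is local, and translation invariant in both senses: \eqref{invariance} and \eqref{independence day}.
\end{itemize}
To pass from $g\in\GBVs$ to ${\rm BV}$, I use the componentwise truncations $T_Mg$ from Proposition~\ref{prop:truncation}, which are bounded and belong to ${\rm BV}$. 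The key fact is that $|D^cg|$-a.e.\ point lies in $\{|g|<M\}$ for some $M$, and at such points $D^c(T_Mg)=D^cg$. The main obstacle is to transfer the Cantor part of $I^p_\star(g,0,\cdot)$ to that of $I^p_\star(T_Mg,0,\cdot)$ on this set. Locality \ref{hyp:H2} applies only on open sets where the two functions coincide, and $\{|g|<M\}$ need not be open. My first attempt is a blow-up argument at $|D^cg|$-a.e.\ point $x_0$, using Lemma~\ref{lemma: blow up jump points}-type analysis. For $M>|\tilde g(x_0)|+1$, the functions $g$ and $T_Mg$ agree outside a set whose $\mathcal V$-measure in $Q(x_0,\rho)$ is $o(|D^cg|(Q(x_0,\rho)))$. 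I will then compare the cell minima $\m$ for $g$ and $T_Mg$ using the growth condition \ref{hyp:H4}.

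\textbf{Step 3: identification of the density.} Applying \cite[Theorem~3.12]{BFM1998} to $\mathcal F$ represents its Cantor part with density $f^\infty$, where $f(A)=\limsup_{\rho\to0}\rho^{-d}\m_{\mathcal F}(\ell_{0,0,A},Q(0,\rho))$ is computed in the ${\rm BV}$ setting without a constraint on the average of $U$. It then remains to show that $f(A)=H_p(A,0)$. Both quantities are the bulk densities of the same measure, evaluated at affine $g$ with $G=0$. So by Theorem~\ref{thm: integral star}, Theorem~\ref{thm:cell formulas}, and the uniqueness of Radon–Nikodým derivatives, $f=f_{\rm bulk}(\cdot,0)=H_p(\cdot,0)$. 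Taking recession functions as in \eqref{def Recession} gives the Cantor density $H_p^\infty(\cdot,0)$. This parallels the argument in \cite[Theorem~2.4]{MeasureSD} and \cite[Theorem~2.12]{FriMatZap}.
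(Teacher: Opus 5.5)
\textbf{The gap is in Step~3.} \cite[Theorem~3.12]{BFM1998} assumes two-sided linear growth $\frac1C|Du|(B)\le\mathcal F(u,B)\le C(\Lb^d(B)+|Du|(B))$, and you only check the upper bound. For $\mathcal F(u,B)=I^p_\star(u,0,B)$ the lower bound fails, because the coercivity is only $\alpha\mathcal V(u,B)$. For example, take $u=t\,e_1\chi_E$ with $E\subset\subset\Omega$ smooth. Then $I^p_\star(u,0,\Omega)\le\beta(\Hd(\partial E)+\Lb^d(\Omega))$ stays bounded, while $|Du|(\Omega)=t\,\Hd(\partial E)\to+\infty$. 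This hypothesis is genuinely used in the global method, e.g.\ to get $L^1$-convergence of the patched competitors via the Poincar\'e inequality. So the step fails as written.

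The missing idea is a coercive perturbation that leaves the Cantor part untouched. The paper applies \cite[Theorem~3.12]{BFM1998} to $I^p_\star(g,0,B)+\delta\int_{J_g\cap B}|[g]|\,{\rm d}\Hd$, lets $\delta\to0^+$, and identifies the Cantor density through Lemma~\ref{lemma: recession}. If you adopt this, be aware that a jump-only perturbation is in general not $L^1$-lower semicontinuous. For instance, with $\Psi=|\cdot|^p$ and $\gamma=|\cdot|\wedge1$ one checks $H_p(\zeta\otimes\nu,0)=h_p(\zeta,\nu)=|\zeta|$ for $|\zeta|\le1$. Hence the smooth transitions $\zeta\phi(n\,x\cdot\nu)$ have perturbed energy $|\zeta|$ in $Q_\nu$, while their jump limit has $(1+\delta)|\zeta|$.

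A cleaner choice is $\mathcal F_\delta(g,B)\coloneq I^p_\star(g,0,B)+\delta|Dg|(B)$. It is lower semicontinuous, coercive, local, and invariant under both translations. By Theorem~\ref{thm:cell formulas} its bulk density is $H_p(\cdot,0)+\delta|\cdot|$. So \cite[Theorem~3.12]{BFM1998} gives the Cantor density $H_p^\infty(\cdot,0)+\delta|\cdot|$, and subtracting $\delta|D^cg|$ yields the claim on ${\rm BV}$ for any fixed $\delta>0$. Your identification of the bulk density by uniqueness of Radon--Nikod\'ym derivatives then goes through unchanged.

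\textbf{Two smaller points.}

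In Step~2, \ref{hyp:H4} cannot compare the cell minima of $g$ and $T_Mg$, since it bounds each energy only by $\mathcal V$ of its own argument. The right tool is Lemma~\ref{lemma: Lipschitz} with $G_1=G_2=0$, which gives $|I^p_\star(g,0,Q)-I^p_\star(T_Mg,0,Q)|\le C_\gamma\mathcal V(g-T_Mg,Q)$. Combine this with your correct observation that $\mathcal V(g-T_Mg,Q(x_0,\rho))=o(|D^cg|(Q(x_0,\rho)))$ at $|D^cg|$-a.e.\ $x_0$ with $|\widetilde g(x_0)|<M$, and with $D^c(T_Mg)=D^cg$ on $\{|\widetilde g|<M\}$. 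Besicovitch differentiation then transfers the ${\rm BV}$ result to ${\rm GBV}_\star$. This route is simpler than the paper's Step~3, which goes through the averaged truncation identity \eqref{cantor truncation} of Lemma~\ref{lemma:I truncated} (taken from \cite{dal2025homogenization}) together with Proposition~\ref{prop:Properties of derivatives of GBVstar}.

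In Step~1, your displayed estimate is not justified. The cross term is bounded by $\|G-G'\|_{L^p(O)}\sup_n\|\nabla u_n\|^{p-1}_{L^p(O)}$. Here $\|\nabla u_n\|^p_{L^p(O)}$ is controlled by $I^p_\star(g,G',O)+\Lb^d(O)$, which contains singular parts, not by $\|G'\|^p_{L^p(O)}$. The conclusion survives in either of two ways. You can use Young's inequality to get $(1+\e)I^p_\star(g,G',O)+C_\e(\|G-G'\|^p_{L^p(O)}+\|G-G'\|_{L^1(O)}+\Lb^d(O))$ and let $\e\to0$ after differentiating with respect to $|D^cg|$. Alternatively, argue with the constant $\widehat C$ of \eqref{constant chat}, as the paper does.
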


\begin{remark}[Compactness]\label{rem:compactness}
    Relying on arguments first devised in \cite{FriedrichCompactness}, some sufficient conditions for coerciveness of integral functionals on ${\rm GBV}_\star(\Omega;\Rk)$  were proved in \cite[Theorem~5.5]{DalToa22} and \cite[Theorem~4.8]{DonatiGBV}. More precisely, in those papers the authors consider  open sets $\Omega$, $\widetilde{\Omega}$,  with Lipschitz boundary  satisfying \EEE $\Omega\subset   \widetilde{\Omega}$,   a boundary datum $w\in W^{1,1}(\widetilde{\Omega};\Rk)$, and the functional  $\cF\colon {\rm GBV}_\star(\widetilde\Omega;\Rk)\to [0,+\infty)$ defined by  
    \begin{equation*} \cF(u)\coloneq \begin{cases}\displaystyle
    \int_{\widetilde{\Omega}}f (\nabla u)\,{\rm d}x+\int_{\widetilde{\Omega}}f^\infty \Big(\frac{{\rm d}D^cu}{{\rm d}|D^cu|}\Big)\,{\rm d}|D^cu|+\int_{ J_{u}\cap \widetilde{\Omega}}g ([u],\nu_u)\,{\rm d}\Hd &\text{if $u=w$  on $\widetilde{\Omega}\setminus \Omega$}, \\
    +\infty &\text{otherwise},
    \end{cases}
    \end{equation*}
    for some Borel functions $f \colon \Rkd\to [0,+\infty)$ and $g\colon\Rk\times \Sd\to [0,+\infty)$, with $f$ satisfying  
    \begin{gather*}
        c_F |A|-\frac{1}{c_F}\leq f (A)\leq C_F |A|+C_F\quad \text{for all $A\in\Rkd$},
    \end{gather*}
    for some   constants $c_F, C_F>0$, and with $g$ satisfying \ref{(gamma2)} and \ref{(gamma5)}. Under these hypotheses, they show that all sequences of functions $\{u_n\}_n\subset {\rm GBV}_\star(\widetilde{\Omega};\Rk)$ with $\sup_n\cF(u_n)<+\infty$ admit modifications $\{y_n\}_n\EEE\subset {\rm GBV}_\star(\widetilde{\Omega};\Rk)$ such that $\cF(y_n)\leq \cF(u_n)+1/n$   for all $n\in\N$,   and such that (a subsequence of) $\{y_n\}_n$ converges in $L^0(\widetilde{\Omega};\Rk)$ to some $u\in {\rm GBV}_\star(\widetilde{\Omega};\Rk)$ with $\cF(u)<+\infty$.

Taking into account some minor modifications due to the presence of the matrix variable $G$, 
 these compactness results readily  adapt to the functional $I^p_\star$, thanks to  Proposition~\ref{prop:integrands} and Theorem~\ref{thm:relaxation full}.  This procedure guarantees that $I^p_\star$ admits minimisers attaining the boundary condition $w$ on  $\widetilde{\Omega}\setminus \Omega$.   \EEE
\end{remark}

\section{Approximation Theorem in \texorpdfstring{${\rm SD}^p_\star(\Omega;\Rk)$}{SDp*(Ω;Rk)}:  Proof of Theorem \ref{thm:approximation SDstar}}\label{sec: Approximation Lemma}

This section is devoted to the proof of Theorem \ref{thm:approximation SDstar}. The approach we follow is similar to the one  in \cite[Theorem~1.2]{Silhavy2015}   (see also \cite[Theorem~2.12]{ChoksiFonseca}),   but the proof in our case is   technically   more  \EEE demanding. To explain why, we recall that the main ingredients of the proof of \cite[Theorem~2.3]{Silhavy2015} are essentially three: (1) Anzellotti-Giaquinta's Theorem \cite[Theorem~1]{AnzellottiGiaquinta}, (2)   \cite[\MMM Lemma~2.9]{ChoksiFonseca},    which states that   every $u\in C^\infty_c(\Rd)$ \BBB can be approximated \EEE by means of piecewise constant functions whose total variation is controlled by the $L^1$ norm of $\nabla u$, and (3) Alberti's Theorem (see Theorem~\ref{Alberti's Theorem}).  
\subsection{Proof of the Approximation Theorem}
 
To proceed with this strategy, \EEE we need to prove suitable adaptations of the first two results in our functional setting,  calling for nontrivial new arguments. \EEE    We first give the statements of all these results and show how they allow to obtain Theorem~\ref{thm:approximation SDstar}, postponing their proofs to the end of the section.  The first result, whose proof is based on a classical result of Goffman \& Serrin \cite[Theorem~ 4$^\prime$]{GoffmanSerrin}, shows that, if $O\in\Op(\Omega)$ is a Lipschitz set,  every
 $u\in {\rm BV}(O;\Rk)$ can be approximated by a sequence of ${\rm SBV}(O;\Rk)$-functions $\{u_n\}_n$ in such a way that $\mathcal{V}(u_n,O)$ converges to  $\mathcal{V}(u,O)$. \EEE 
\begin{theorem}[Density result in $\BV$]\label{thm: Goffmann-Serrin}
 Let $O',O\in\Op(\Omega)$, with $O'\subset \subset O$, and let  $u\in {\rm BV}(O;\Rk)$.  Assume  that $\Hd(J_u\cap\partial O')=0.$  \EEE Then there exists a sequence $\{u_n\}_{n}\subset {\rm SBV}(O';\Rk)\cap L^\infty(O';\Rk)$ such that $u_n\to u$ strongly in $L^1(O';\Rk)$ as $n\to+\infty$ and 
\begin{equation}\label{limsup leq F}
\limsup_{n\to+\infty}\mathcal{V}(u_n,O')\leq \mathcal{V}(u,O).
\end{equation}
Moreover, if $O$ has Lipschitz boundary, then we can find another sequence, still denoted by $\{u_n\}_{n}$, with $\{u_n\}_{n}\subset  {\rm SBV}(O;\Rk)\cap L^\infty(O;\Rk)$, such that $u_n\to u$ strongly in $L^1(O;\Rk)$ as $n\to+\infty$ and 
\begin{equation}\label{full convergence}
    \lim_{n\to+\infty}\mathcal{V}(u_n,O)=\mathcal{V}(u,O).
\end{equation}
\end{theorem}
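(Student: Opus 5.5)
We build the approximating functions by treating the region away from $J_u$ and a thin layer around $J_u$ separately, and then gluing. The quantity $\mathcal{V}(u,O)$ for $u\in{\rm BV}$ has three parts: the absolutely continuous part $\int|\nabla u_i|$, the Cantor part $|D^cu_i|$, and the truncated jump part $\int_{J_{u_i}}|[u_i]|\land 1$. The first two add up to $|\widetilde D u_i|$, the diffuse part of $Du_i$. The key observation is that away from the jump set the Goffman--Serrin theorem \cite[Theorem~4$'$]{GoffmanSerrin} gives smooth approximants $w_n\in C^\infty$ with $w_n\to u$ in $L^1$ and $\int|\nabla (w_n)_i|\to|Du_i|$. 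Off $J_u$, $|Du_i|$ is exactly the diffuse part, so there the approximation costs nothing extra. Near $J_u$ smoothing is harmful: a smoothed jump of height $|[u_i]|>1$ costs $|[u_i]|$ instead of $1$. There we must keep $u$ essentially piecewise constant across the jump, i.e.\ the approximant should itself jump.

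\textbf{Step 1 (compact rectifiable approximation of $J_u$).} Fix $\e>0$. We choose a compact set $K\subset J_u\cap O'$ such that
\[
\mathcal{V}(u,(J_u\cap O)\setminus K)<\e.
\]
We may take $K$ to be a finite union of disjoint compact pieces of $C^1$ graphs $\Gamma_j$, with normals close to $\nu_u$. Since $\Hd\mres J_u$ is $\sigma$-finite, $|Du|(J_u\setminus K)$ can be made small, and this controls the leftover jump energy. The hypothesis $\Hd(J_u\cap\partial O')=0$ ensures that $K$ can be chosen inside $O'$ while losing no mass on $\partial O'$. This is exactly why we get \eqref{limsup leq F} with $O$ on the right-hand side: the part of $Du$ on $\partial O'$ is either diffuse, and then it is controlled by $\mathcal{V}(u,O)$, or negligible.

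\textbf{Step 2 (construction near $K$).} Around each $\Gamma_j$ we take a thin tubular neighbourhood $T_j^\delta$ split by $\Gamma_j$ into two sides $T_j^{\delta,\pm}$. On each side we take the Goffman--Serrin (or Anzellotti--Giaquinta) approximation of $u$ restricted to that side. These restrictions are BV functions on Lipschitz-type domains whose traces on $\Gamma_j$ are $u^\pm$. Because the approximations can be chosen with boundary traces equal to the traces of $u$, the glued function $u_n$ jumps across $\Gamma_j$ with $[u_n]=[u]$. Hence the jump contribution of $\Gamma_j$ is exactly $\int_{\Gamma_j}|[u_i]|\land1$.

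\textbf{Step 3 (gluing and cost of the remainder).} Away from $K$, on $O'\setminus \overline{\bigcup_j T_j^{\delta/2}}$, we use the global Goffman--Serrin approximation. We glue it to the one-sided approximations with a partition of unity $\{\varphi,1-\varphi\}$, with $\nabla\varphi$ supported in $T^\delta\setminus T^{\delta/2}$. The gluing error is
\[
\int|\nabla\varphi|\,|w_n-w_n'|,
\]
which vanishes as $n\to\infty$ for fixed $\delta$ by $L^1$ convergence. On $T^\delta$, the diffuse energy of the one-sided approximants tends to $|Du|(T^{\delta,\pm})$. As $\delta\to0$ this tends to $|Du|(K)=\int_K|[u]|$ minus the jump part, i.e.\ to $0$ for the diffuse contribution. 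Jumps of $u$ outside $K$ are smoothed out by the Goffman--Serrin approximation, at cost at most $|Du|(J_u\setminus K)$, which is small by Step 1 provided we choose $K$ so that the full variation $|D u|(J_u\setminus K)<\e$ and not only the truncated one. Making $L^\infty$ bounds and truncation compatible is done by composing with Lipschitz truncations at level $M$ and letting $M\to\infty$. Finally, $\limsup_n$ of the sequence in $\e,\delta$ together with a diagonal argument gives \eqref{limsup leq F}. Strict convergence \eqref{full convergence} on Lipschitz $O$ follows by extending $u$ to a larger set with $|Du|(\partial O)=0$, applying the first part with $O'=O$, and combining with the lower semicontinuity of Remark~\ref{remark: lowersemicontinuity}.

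\textbf{Main obstacle.} The delicate step is Step 2. We need one-sided approximants whose traces on $\Gamma_j$ coincide exactly with $u^\pm$ while their diffuse energy near $\Gamma_j$ tends to zero. Goffman--Serrin-type results preserving traces are available on Lipschitz domains. If exact traces are not available, we would add thin correction jumps on parallel surfaces. Their cost must be estimated via the truncated quantity $|\cdot|\land1$, using subadditivity of $t\mapsto t\land1$: a jump $[u]+\eta$ costs at most $|[u]|\land1+|\eta|$ with $\|\eta\|_{L^1(\Gamma_j)}\to0$.
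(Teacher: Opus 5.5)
Your proof is correct. Its skeleton is the paper's: finitely many disjoint compact pieces of $J_u$ on $C^1$ patches, mollification away from them, a separate construction in thin tubes, cut-off gluing, and diagonalisation. What differs is how the sharp jump is produced.

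\emph{The paper's mechanism.} It works with the single mollified sequence $v_n=u*\rho_n$ and opens a gap of width $2\sigma$ around $M'_\ell$ by the normal reparametrisation $y+t\nu_\ell\mapsto y+\phi^\sigma(t)\nu_\ell$. The jump is then $v_n(\cdot+\sigma\nu_\ell)-v_n(\cdot-\sigma\nu_\ell)$. This costs change-of-variables constants $C_\ell$, an $L^1$ error that vanishes only as $\sigma\to0$, and an a.e.-$\sigma$ argument for the convergence of traces of $v_n$ on the parallel surfaces $M_\ell\pm\sigma\nu_\ell$.

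\emph{Your mechanism.} You take strict (Anzellotti--Giaquinta) approximations on the two Lipschitz half-tubes $T^{\delta,\pm}_j$. The jump across $\Gamma_j$ is then exactly $[u]$, or at least converges to it in $L^1(\Gamma_j)$ by continuity of traces under strict convergence. Together with $|a\land 1-b\land 1|\le|a-b|$, this settles your ``main obstacle'' without any correction jumps. The $L^1$ and gluing errors vanish as $n\to\infty$ at fixed $\delta$. The only $\delta$-dependent error is $|Du|(T^\delta\setminus\Gamma)\to0$, since $T^\delta\setminus\Gamma\searrow\emptyset$; this, not anything involving $|Du|(K)$, is what your Step~3 should say.

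\emph{Trade-off.} You must place $K$ inside graph patches over nice bases so that the half-tubes are Lipschitz, and you rely on boundary-trace machinery. The paper needs only interior mollification and Goffman--Serrin, which is what it leans on for Remark~\ref{remark: vVec}(ii).

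\emph{A bonus of your route.} Take the global approximation on $O'$ to be an intrinsic Meyers--Serrin-type strict approximation in $O'$, instead of a mollification using the values of $u$ on $O$. Then nothing in your construction leaves $O'$, and you get $\limsup_n\mathcal{V}(u_n,O')\le\mathcal{V}(u,O')$ without assuming $\Hd(J_u\cap\partial O')=0$. This gives \eqref{full convergence} directly, with no boundary-layer exhaustion and no extension.

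\emph{Two slips to fix.}
\begin{itemize}
\item With $K\subset O'$ you can only make $\mathcal{V}(u,(J_u\cap O')\setminus K)$ and $|Du|((J_u\cap O')\setminus K)$ small, not the same quantities on $J_u\cap O$. The jumps of $u$ in $O\setminus O'$ enter only through the mollification near $\partial O'$, where $\Hd(J_u\cap\partial O')=0$ removes them.
\item In your extension argument for Lipschitz $O$, the first part gives $\mathcal{V}(\hat u,\hat O)$ on the right-hand side. You therefore need outer sets $\hat O_k\searrow\overline{O}$, the property $|D\hat u|(\partial O)=0$, and a diagonal argument.
\end{itemize}
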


\begin{remark}\label{remark: vVec}
  (i)  Replacing  $L^1$-convergence by $L^0$-convergence\EEE, the statement also holds for $\GBV_\star$-functions by performing an additional truncation procedure, as explained at the beginning of the proof of Theorem~\ref{thm:approximation SDstar} below. 
    
    (ii) The   theorem holds in a slightly weaker form also for more general energies  than  $\mathcal{V}$. More precisely, the arguments \BBB in the proof \EEE can be adapted with no difficulty to energies of the form   
    \begin{equation*}
        F(u,O)\coloneq \int_{O} f  (\nabla u)\,{\rm d}x+\int_{O} f^\infty \EEE \Big(\frac{{\rm d}D^cu}{{\rm d}|D^cu|}\Big)\,{\rm d}|D^cu|+\int_{J_u\cap O} g ([u],\nu_u)\,{\rm d}\Hd,
    \end{equation*}
   with $f \EEE \colon\Rkd\to [0,+\infty)$ convex, $ g\colon\Rk\times\Sd\to [0,+\infty)$ with $\zeta\mapsto g\EEE(\zeta,\nu)$ continuous for all $\nu\in\Sd$, and 
   \begin{equation*}
      f\EEE(A)\leq C(|A|+1) \text{ for all $A\in\Rkd$}\quad \text{ and }\quad g(\zeta,\nu)\leq C|\zeta|\land 1\text{ for all $\zeta\in\Rk$ and $\nu\in\Sd$}
   \end{equation*}
   for some constant $C>0$,  where  \MMM the recession function \EEE  $f^\infty$ is defined as in \eqref{def Recession} with $f $  in place of $H_p$. Indeed, this is the setting where \cite[Theorem~4$^\prime$]{GoffmanSerrin} can still be employed.  In this case, \eqref{full convergence} has to be replaced by 
   \begin{equation*}
        \limsup_{n\to+\infty}{F}(u_n,O)\leq {F}(u,O),
   \end{equation*}
   as, in general, $F$ could fail to be $L^1(O;\Rk)$-lower semicontinuous.

(iii) A careful inspection of the proof of Theorem~\ref{thm: Goffmann-Serrin} reveals that the approximating sequence $\{u_n\}_n$ actually satisfies a property stronger than \eqref{full convergence}.  More precisely, we have
    \begin{equation*}
        \lim_{n\to+\infty} \mathcal{V}(u_n,O\setminus J_{u_n})= V(u,O\setminus J_u) \quad \text{ and } \lim_{n\to+\infty} \mathcal{V}(u_n,J_{u_n})= V(u,J_u).
    \end{equation*}
    The same observation applies to the functionals $F$ of (ii), with the  caveat that equalities  need to be replaced by inequalities if $F$ is not lower semicontinuous.
\end{remark}
\EEE

The general  idea is to   approximate the sequence obtained in Theorem~\ref{thm: Goffmann-Serrin}  by a sequence of piecewise constant functions. The existing results in the literature, see e.g.\ \cite[Lemma~2.9]{ChoksiFonseca}, are not expedient for us as they guarantee  convergence of the total variation but not of the functional $\mathcal{V}$, as required in our setting. \EEE

\begin{lemma}\label{lemma piecewise constants}
  Let $O\in\Op(\Omega)$ and $u\in {\rm BV}(O;\Rk)$. Then there exists a sequence of piecewise constant functions  $\{u_n\}_n\subset {\rm SBV}(O;\Rk)\cap L^\infty(O;\Rk)$ such that $u_n\to u$ strongly in $L^1(O;\Rk)$ as $n\to+\infty$ and 
  \begin{equation*}
      \lim_{n\to+\infty} \mathcal{V}(u_n,O)=\mathcal{V}(u,O).
  \end{equation*}
\end{lemma}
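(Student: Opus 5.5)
The plan is to discretise each component of $u$ by a randomly shifted staircase $h\lfloor\cdot/h\rfloor$ and to control $\mathcal{V}$ through the coarea formula, averaging over the shift. The lower bound is immediate: by Remark~\ref{remark: lowersemicontinuity}, $\mathcal{V}(\cdot,O)$ is lower semicontinuous along $L^1$-convergent (hence $L^0$-convergent) sequences, so $\liminf_n\mathcal{V}(u_n,O)\ge\mathcal{V}(u,O)$. It therefore suffices to build piecewise constant $u_n\to u$ in $L^1$ with $\limsup_n\mathcal{V}(u_n,O)\le\mathcal{V}(u,O)$.

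\emph{Reduction to bounded $u$.} Replace each component $u_i$ by its truncation $T_Mu_i=(u_i\vee -M)\wedge M$. Truncation only decreases $|\nabla u_i|$ and $|D^cu_i|$, and it decreases $|[u_i]|$ (hence $|[u_i]|\wedge 1$). So $\mathcal{V}(T_Mu,O)\le\mathcal{V}(u,O)$, while $T_Mu\to u$ in $L^1$ as $M\to\infty$. A diagonal argument then lets us assume $|u_i|\le M$. The staircase functions below then take finitely many values, so they lie in ${\rm SBV}\cap L^\infty$ and are genuinely piecewise constant on a Caccioppoli partition.

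\emph{Staircase construction.} For $h\in(0,1)$ and a shift $t\in[0,h)$, set
\[
u^{h,t}_i\coloneq t+h\Big\lfloor \frac{u_i-t}{h}\Big\rfloor ,
\]
so that $|u^{h,t}_i-u_i|\le h$. By the coarea formula, $u_i^{h,t}$ is piecewise constant, and its jump set is $\bigcup_{m}\partial^*\{u_i>t+mh\}$ up to $\Hd$-null sets. On this set we distinguish two kinds of points.

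At points outside $J_{u_i}$, the jump of $u^{h,t}_i$ has size exactly $h\le 1$. Their contribution to $\mathcal{V}$ is therefore $h\sum_m{\rm Per}(\{u_i>t+mh\};O\setminus J_{u_i})$. Averaging in $t$ over $[0,h)$ gives
\[
\int_{\R}{\rm Per}(\{u_i>s\};O\setminus J_{u_i})\,{\rm d}s=|Du_i|(O\setminus J_{u_i})=\int_O|\nabla u_i|\,{\rm d}x+|D^cu_i|(O).
\]

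At points of $J_{u_i}$, the jump of $u^{h,t}_i$ is $h$ times the number of levels $t+mh$ lying between $u_i^-$ and $u_i^+$. Its average over $t$ equals exactly $|[u_i]|$. Since $X\mapsto X\wedge 1$ is concave, Jensen's inequality gives that the $t$-average of $|[u_i^{h,t}]|\wedge1$ is at most $|[u_i]|\wedge 1$. This step is the main obstacle. The naive pointwise bound $|[u^{h,t}_i]|\le|[u_i]|+h$ produces an error $h\,\Hd(J_{u_i})$, which need not vanish when $J_u$ has infinite measure, and the averaging-plus-concavity argument is what circumvents it.

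\emph{Conclusion.} Using Fubini (the map $t\mapsto\mathcal{V}(u^{h,t},O)$ is measurable, by the coarea formula) and summing over $i$, we get
\[
\frac1h\int_0^h\mathcal{V}(u^{h,t},O)\,{\rm d}t\le\mathcal{V}(u,O).
\]
Hence, for each $h_n=1/n$, there is a shift $t_n$ with $\mathcal{V}(u^{h_n,t_n},O)\le\mathcal{V}(u,O)+1/n$. The sequence $u_n\coloneq u^{h_n,t_n}$ converges uniformly, hence in $L^1(O;\Rk)$ since $O$ is bounded, and together with the lower bound this yields $\lim_n\mathcal{V}(u_n,O)=\mathcal{V}(u,O)$.
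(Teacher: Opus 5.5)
Your proof is correct, and its skeleton matches the paper's: truncation to bounded $u$, a shifted staircase built on the superlevel sets $\{u_i>t+mh\}$, the coarea formula, and lower semicontinuity for the $\liminf$. The difference lies in how you handle the jump part and how the shift is chosen.

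The paper splits $O$ along $J^1_u$ rather than along $J_u$.
\begin{itemize}
\item On $J^1_u$ it uses only $|[u_n]|\wedge 1\le 1=|[u]|\wedge 1$, so $\mathcal V(u_n,J^1_u)\le\Hd(J^1_u)=\mathcal V(u,J^1_u)$.
\item On $O\setminus J^1_u$ all jumps of $u$ are below $1$, hence $\mathcal V(u,O\setminus J^1_u)=|Du|(O\setminus J^1_u)$. It then suffices to bound $|Du_n|(O\setminus J^1_u)$ by a Riemann sum of perimeters converging to $|Du|(O\setminus J^1_u)$, with the shift selected by a Riemann-sum approximation lemma.
\end{itemize}
With that decomposition, the error $h\,\Hd(J_u)$ that you identify as the main obstacle never arises. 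Small jumps are absorbed in aggregate into the total variation, and no pointwise description of the staircase at jump points is needed.

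Your route requires finer bookkeeping. You need $|[u_i^{h,t}](x)|=|\phi_t(u_i^+(x))-\phi_t(u_i^-(x))|$, with $\phi_t(s)\coloneq t+h\lfloor (s-t)/h\rfloor$, whenever $u_i^\pm(x)\notin t+h\mathbb Z$; this excludes at most two values of $t\in[0,h)$ for each $x$. You also need Tonelli's theorem for the $\sigma$-finite measure $\Hd\mres J_{u_i}$.

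In exchange, your approach has two advantages. It replaces the external Riemann-sum lemma by an explicit average, and the Jensen step treats small and large jumps uniformly. It also yields the clean inequality $\frac1h\int_0^h\mathcal V(u^{h,t},O)\,{\rm d}t\le\mathcal V(u,O)$ for every $h\in(0,1)$.

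One minor simplification: off $J_{u_i}$ you do not need the jumps to be exactly $h$. The bound $|[u_i^{h,t}]|\wedge1\le|[u_i^{h,t}]|$, together with $|Du_i^{h,t}|(O\setminus J_{u_i})=h\sum_m{\rm Per}(\{u_i>t+mh\};O\setminus J_{u_i})$, already suffices.
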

\begin{remark}\label{re:lastonestanding}
 Replacing $L^1$-convergence by $L^0$-convergence, the statement also holds for $\GBV_\star$-functions by performing an additional truncation procedure, as explained at the beginning of the proof of Theorem~\ref{thm:approximation SDstar} below.  
\end{remark}

Before proceeding with the proof of Theorem~\ref{thm:approximation SDstar}, we recall for the reader's convenience  the statement of Alberti's Theorem \cite[Theorem~3]{Alberti}.
\begin{theorem}[Alberti's Theorem] \label{Alberti's Theorem} There exists a constant $C=C(d)>0$ with the property that for every $O\in\Op(\Omega)$ and $G\in L^1(O;\Rkd)$ there exists a function $u\in{\rm SBV}(O;\Rk)$  such that $\nabla u=G$ $\Lb^d$-a.e.\ in $O$ and 
\begin{equation*}
    \int_{J_u}|[u]|\,{\rm d}\Hd\leq C\|G\|_{L^1(O)}.
\end{equation*}
\end{theorem}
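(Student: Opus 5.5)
The result is Alberti's \cite[Theorem~3]{Alberti}, and I would prove it in two steps: an explicit construction for piecewise constant fields on cubes, followed by an absolutely convergent series argument for general $G\in L^1(O;\Rkd)$.

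\emph{Step 1 (piecewise constant fields).} Let $\{Q_i\}_i$ be a countable family of pairwise disjoint open cubes with $Q_i\subset\subset O$ and $\Lb^d(O\setminus\bigcup_i Q_i)=0$, for instance a Whitney-type dyadic decomposition. Let $H=\sum_i A_i\chi_{Q_i}$ with $A_i\in\Rkd$ and $H\in L^1(O;\Rkd)$. Writing $x_i$ for the centre and $r_i$ for the side of $Q_i$, set
\begin{equation*}
v\coloneq \sum_i A_i(\cdot-x_i)\chi_{Q_i}.
\end{equation*}
Then $v\in{\rm SBV}(O;\Rk)$ and $\nabla v=H$. The jump set of $v$ lies in $\bigcup_i\partial Q_i$. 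On $\partial Q_i$ the trace from inside satisfies $|A_i(y-x_i)|\le \tfrac{\sqrt d}{2}r_i|A_i|$, and $\Hd(\partial Q_i)=2d\,r_i^{d-1}$. Since adjacent cubes contribute only through their own traces, this gives
\begin{equation*}
\int_{J_v}|[v]|\,{\rm d}\Hd\le \sum_i d^{3/2} r_i^d|A_i| = d^{3/2}\|H\|_{L^1(O)}.
\end{equation*}
Moreover $\|v\|_{L^1}\le \sqrt d\sup_i r_i\,\|H\|_{L^1}$, so $|Dv|(O)+\|v\|_{L^1(O)}\le C(d,O)\|H\|_{L^1(O)}$.

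\emph{Step 2 (series).} Piecewise constant fields of this form, subordinate to finer and finer dyadic refinements of the $Q_i$, are dense in $L^1(O;\Rkd)$. I would therefore choose $H_1,H_2,\dots$ of this type, inductively, such that
\begin{equation*}
\Big\|G-\sum_{j\le m}H_j\Big\|_{L^1(O)}\le 2^{-m}\|G\|_{L^1(O)} \quad\text{for all } m.
\end{equation*}
This yields $\|H_1\|\le 2\|G\|$ and $\|H_j\|\le 2^{-j+2}\|G\|$, hence $\sum_j\|H_j\|_{L^1}\le 4\|G\|_{L^1}$. Let $v_j$ be the function given by Step 1 for $H_j$. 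The series $u\coloneq\sum_j v_j$ then converges in ${\rm BV}(O;\Rk)$, because $\sum_j(\|v_j\|_{L^1}+|Dv_j|(O))<\infty$, and $Du=\sum_j Dv_j$ as measures. The absolutely continuous part gives $\nabla u=\sum_j H_j=G$ in $L^1$. The singular part $\sum_j D^jv_j$ is a countable sum of measures concentrated on the countable union of cube faces, which is $\sigma$-finite for $\Hd$. Consequently $D^cu=0$, so $u\in{\rm SBV}(O;\Rk)$, and
\begin{equation*}
\int_{J_u}|[u]|\,{\rm d}\Hd=|D^ju|(O)\le\sum_j|D^jv_j|(O)\le 4d^{3/2}\|G\|_{L^1(O)}.
\end{equation*}
Thus $C=4d^{3/2}$ depends only on $d$.

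The main point to check carefully is the claim in Step 2 that the limit has no Cantor part and that its singular part is exactly the sum of the jump parts. This follows because each $D^jv_j$ lives on a fixed countable family of hyperplane pieces, and a singular part carried by an $\Hd$-$\sigma$-finite set is automatically a jump part. A secondary point is that $O$ is an arbitrary open set, not necessarily bounded in the grid sense. This is why I use a countable decomposition into cubes rather than a single grid, and why only the $L^1$ norm of $G$, together with the bound $\sup_i r_i<\infty$ coming from the boundedness of $\Omega$, enters the estimates.
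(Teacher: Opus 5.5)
Your proof is correct. The paper does not prove this statement. It recalls it from \cite[Theorem~3]{Alberti} and uses it as a black box, in the proof of Theorem~\ref{thm:approximation SDstar} and of Lemma~\ref{lemma: Lipschitz}.

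Your argument is self-contained and elementary, and each step checks out:
\begin{itemize}
\item On every cube of a (refined) Whitney partition you take the affine map with slope $A_i$ centred at $x_i$ and cut it off by a jump across $\partial Q_i$. This costs at most $d^{3/2}r_i^d|A_i|$.
\item You reach general $G$ through an absolutely convergent series of piecewise constant approximants. This uses three facts:
\begin{itemize}
\item ${\rm BV}$ is complete;
\item the Lebesgue decomposition commutes with sums converging in total variation, so the absolutely continuous part of $Du$ is $\sum_j H_j\Lb^d=G\Lb^d$;
\item a singular part concentrated on the $\Hd$-$\sigma$-finite union of all cube faces has no Cantor component.
\end{itemize}
\item The bookkeeping $\sum_j\|H_j\|_{L^1}\le 4\|G\|_{L^1}$ is also right.
\end{itemize}

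Compared with the citation, your route buys the following.
\begin{itemize}
\item An explicit constant $C=4d^{3/2}$, which is independent of $k$ as the statement requires.
\item It works on an arbitrary bounded open $O$ with no boundary regularity.
\item It gives directly $|Du|(O)\le(1+4d^{3/2})\|G\|_{L^1(O)}$ and $\|u\|_{L^1(O)}\le C(d,O)\|G\|_{L^1(O)}$. This is exactly the form \eqref{estimate alberti proof} in which the paper uses the result.
\end{itemize}
As an optional refinement, choose the $H_j$ as averages of the successive remainders over sufficiently fine dyadic refinements. These averaging operators are $L^1$-contractions, so the constant drops to $d^{3/2}(1+\eta)$ for any $\eta>0$.

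The citation is the economical choice for the paper, since the result is classical. Your proof makes that step independent of the reference.
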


We are \BBB ready to give \EEE the proof of Theorem ~\ref{thm:approximation SDstar}.
\medskip

\noindent{\it Proof of Theorem \ref{thm:approximation SDstar}.} \EEE
Let us fix $(g,G)\in {\rm SD}_\star^p(O;\Rk)$. By Proposition~\ref{prop:truncation}\EEE, for every   $n\in\N$ the function $g^n\coloneq \Phi_n\circ g$ (see \eqref{def PhiR}) belongs to ${\rm BV}(O;\Rk)$ and \EEE 
\begin{equation*}
\sup_{n\in\N}\mathcal{V}(g^n,O)= \mathcal{V}(g,O), \quad \text{ and }\quad  \{g^n\}_n  \text{ converges to }g \text{ in $L^0(O;\Rk)$ as $n\to+\infty$}.
\end{equation*}
  We apply Theorem~\ref{thm: Goffmann-Serrin} to $g^n$ and obtain a sequence $\{u_{\ell,n}\}_\ell\subset {\rm SBV}(O;\Rk)\cap L^\infty(O;\Rk)$ converging to $g^n$ strongly in $L^1(O;\Rk)$ as $\ell\to+\infty$ and such that
\begin{equation}\label{stima Goffmann}
    \lim_{\ell\to+\infty}\mathcal{V}(u_{\ell,n},O)= \mathcal{V}(g^n,O)\leq \mathcal{V}(g,O).
\end{equation}
We can now use Alberti's Theorem~\ref{Alberti's Theorem} to obtain a function $h\in {\rm SBV}(O;\Rk)$ with $\nabla h=G$ $\Lb^d$-a.e.\ in $O$ and \begin{equation}\label{estimate alberti proof}
    |Dh|(O)\leq C\|G\|_{L^1(O)}
\end{equation} for a suitable constant $C$ depending only on $d$. Applying Lemma~\ref{lemma piecewise constants} to $u_{\ell,n}-h$,   we find a sequence of piecewise constant functions $\{z_{\ell,n}^j\}_j\subset {\rm SBV}(O;\Rk)\cap L^\infty(O;\Rk)$ which converges to $u_{\ell,n}-h$ strongly in $L^1(O;\Rk)$ as $j\to+\infty$ and with 
\begin{equation}\label{stima Alberti}
    \MMM \lim_{j\to+\infty} \EEE \mathcal{V}(z^j_{\ell,n},O)= \mathcal{V}(u_{\ell,n}-h,O)\leq C\big(\mathcal{V}(u_{\ell,n},O)+|Dh|(O)\big)
\end{equation}
for a possibly  larger \EEE  constant $C$.
 Next, \EEE  we set $u_{\ell,n}^j\coloneq h+z_{\ell,n}^j\in {\rm SBV}(O;\Rk) $   and observe that by construction $\nabla u_{\ell,n}^j=G$  $\Lb^d$-a.e.\ in $O$ and that $\{u_{\ell,n}^j\}_j$ converges to $u_{\ell,n}$ strongly in $L^1(O;\Rk)$ as $j\to+\infty$. Moreover, by \eqref{stima Goffmann}--\eqref{stima Alberti}   we have,  for all $n \in \N$, \EEE

\begin{equation*}
 \limsup_{\ell\to+\infty} \MMM \lim_{j\to+\infty} \EEE \mathcal{V}(u^j_{\ell,n},O)\le C\big(\mathcal{V}(g,O)+\|G\|_{L^1(O)}\big).
\end{equation*}
Since the $L^0$-convergence is metrisable, we may use a diagonal argument to obtain sequences $\{\ell_n\}_n $ and $\{j_n\}_n$, with $\ell_n\to+\infty$ and $j_n\to +\infty$ as $n\to+\infty$, so that, setting $u_n\coloneq u^{j_n}_{\ell_n,n}$, we have $u_n\to g$ in $L^0(O;\Rk)$ and 
\begin{equation*}
    \limsup_{n\to+\infty} \mathcal{V}(u_n,O)\leq C\Big(\mathcal{V}(g,O)+\|G\|_{L^1(O)}\Big).
\end{equation*}
 This concludes \EEE the proof.\qed

\medskip

\subsection{Proof of the density results}
In the rest of the section, we prove Theorem~\ref{thm: Goffmann-Serrin} and Lemma~\ref{lemma piecewise constants}.
\EEE
\medskip

\noindent \emph{Proof of Theorem~\ref{thm: Goffmann-Serrin}. }  Thanks to a truncation argument based on Proposition \ref{prop:truncation}, we may assume that $u\in L^\infty(O; \Rk \EEE)$. Moreover, \EEE it is not restrictive to assume $k=1$ because we can argue componentwise.  We begin  by  proving that there exists $\{u_n\}_n\subset {\rm SBV}(O')\cap L^\infty(O')$ converging to $u$ strongly in $L^1(O')$ such that \eqref{limsup leq F} holds. \EEE

Since $J_u\cap O'$ is $(\Hd,d-1)$-rectifiable, there exist a sequence $\{K_\ell\}_\ell$ of pairwise disjoint compact sets of $J_u\cap O'$, 
 a sequence $\{M_\ell\}_\ell$ of $(d-1)$-dimensional orientable $C^1$-manifolds, with $K_\ell\subset \subset M_\ell\subset\subset O'$ for every $\ell\in\N$, and a Borel set $N\subset O'$, with $\Hd(N)=0$, such that $J_u\cap O'=N\cup \big(\bigcup_{\ell\in\N} K_\ell\big)$.
Let $\nu_{M_\ell}$ be a continuous unit normal to $M_\ell$. 
 It is not restrictive to assume that for every $\ell\in\N$ there exist $\sigma_\ell>0$ and $\nu_\ell\in\Sd$, with $\nu_\ell\cdot \nu_{M_\ell}> 0$ on $M_\ell$, such that $y+t\nu_\ell\in O'\setminus M_\ell$ for every $y\in M_\ell$  and   for every $t\in(-\sigma_\ell,0)\cup(0,\sigma_\ell)$.
 
\medskip
\noindent \emph{Step 1.} (Approximation away from the jump set)  Let us fix $\e>0$. There exists  $\ell_\e\in\N$ such that, setting $K^\e\coloneq \bigcup_{\ell=1}^{\ell_\e}K_\ell$ and $N^\e\coloneq  \big(\bigcup_{\ell > \EEE \ell_\e}K_\ell\big)\cup N$, we have 
 \begin{equation}\label{smalnness epsilon}
     J_u\cap O'=K^\e\cup N^\e\quad \text{ and }\quad  \int_{N^\e}|[u]|\,{\rm d}\Hd \le \EEE \e. 
 \end{equation}
 Moreover, we may assume that there exists $\eta^\e>0$ such that  $\text{dist}(M_{j},M_\ell)>\eta^\e$ for every $j,\ell\in\{1,\dots,\ell_\e\}$ with $j\neq \ell$ and that $\sigma_\ell\leq \frac{\eta^\e}{2}$ for every $\ell\in\{1,\dots,\ell_\e\}$. For technical reasons, it will be convenient to introduce  $(d-1)$-dimensional \BBB manifolds $M'_\ell$, $M''_\ell$ \EEE such that $K_\ell \BBB \subset \subset  M_\ell'' \EEE\subset \subset  M_\ell'\subset \subset M_\ell$. For every $\sigma>0$  we define  
 \begin{equation*}
     M_\ell(\sigma)\coloneq \big\{y+t\nu_\ell\colon\, y\in M_\ell\,\,\text{ and }\,\, t\in(-\sigma,\sigma)\big\}
 \end{equation*}
 and give a similar definition for $M'_\ell(\sigma)$ \BBB and $M''_\ell(\sigma)$. \EEE We also
 set \begin{equation}\label{def Mell sigma}
     M^\e(\sigma)\coloneq \bigcup_{\ell=1}^{\ell_\e} M_\ell'(\sigma)
 \end{equation}
 Later we will send $\sigma \to 0$. For the moment, we choose a fixed $\sigma$ with \EEE   \begin{equation}\label{sigmaepsilon}    0<\sigma<\sigma^\e\coloneq \min\big\{\sigma_\ell/4\colon \,\ell\in\{1,\dots,\ell_\e\}\big\},
 \end{equation}
 and observe that with this choice we have $M_\ell(4\sigma)\subset \subset O'$  for every $\ell\in\{1,\dots,\ell_\e\}$ and $M_j(4\sigma)\cap M_\ell(4\sigma)=\emptyset$ for every $j,\ell\in\{1,\dots,\ell_\e\}$ with $j\neq \ell$.  Moreover,  we have that $\tau_\ell\coloneq \text{dist}(K_\ell,O'\setminus M'_\ell(\sigma))>0$.  We refer to Figures \ref{fig:Asroma}  and \ref{fig: subdivision} for an illustration.

 \begin{figure}[b]
    \centering
\begin{tikzpicture}[
    line join=round,
    >=Latex,
    strip/.style={fill=AsRoma, opacity=.55},
    Mpart/.style={black, line width=2.1pt, line cap=butt},
    Kpart/.style={red!85!black, line width=3.0pt, line cap=butt},
    normal/.style={->, thick},
    lab/.style={font=\small}
]

\fill[gray!10]
  (-4.25,-2.45)
  .. controls (-4.55,-1.00) and (-3.60,1.85) .. (-2.00,2.70)
  .. controls (-0.30,3.45) and (2.20,3.15) .. (3.80,1.85)
  .. controls (4.85,0.80) and (4.35,-1.35) .. (2.75,-2.35)
  .. controls (0.80,-3.20) and (-2.80,-3.05) .. (-4.25,-2.45)
  -- cycle;

\draw[thick]
  (-4.25,-2.45)
  .. controls (-4.55,-1.00) and (-3.60,1.85) .. (-2.00,2.70)
  .. controls (-0.30,3.45) and (2.20,3.15) .. (3.80,1.85)
  .. controls (4.85,0.80) and (4.35,-1.35) .. (2.75,-2.35)
  .. controls (0.80,-3.20) and (-2.80,-3.05) .. (-4.25,-2.45)
  -- cycle;

\node[lab] at (3.95,2.15) {$O$};

\fill[white]
  (-2.90,-1.80)
  .. controls (-3.05,-0.45) and (-2.35,1.60) .. (-1.05,2.05)
  .. controls (0.45,2.55) and (2.45,2.25) .. (3.08,1.08)
  .. controls (3.62,0.05) and (3.10,-1.48) .. (1.78,-1.88)
  .. controls (0.30,-2.28) and (-2.18,-2.18) .. (-2.90,-1.80)
  -- cycle;

\draw[thick,dashed]
  (-2.90,-1.80)
  .. controls (-3.05,-0.45) and (-2.35,1.60) .. (-1.05,2.05)
  .. controls (0.45,2.55) and (2.45,2.25) .. (3.08,1.08)
  .. controls (3.62,0.05) and (3.10,-1.48) .. (1.78,-1.88)
  .. controls (0.30,-2.28) and (-2.18,-2.18) .. (-2.90,-1.80)
  -- cycle;

  \begin{scope}[yshift=3.0 cm, xshift=1.8 cm]
\draw [line width=1.8pt ,smooth,color=mossgreen] plot coordinates {(-1,-1.3) (-0.5,-1.1) (-0.2,-1.2) (0,-1.4)};
\end{scope}

\node[lab] at (2.30,1.45) {$O'$};

\fill[strip]
  (-1.90,1.20)
    .. controls (-1.35,1.44) and (-0.60,1.42) .. (0.00,1.28)
    .. controls (0.55,1.15) and (1.15,1.10) .. (1.75,1.16)
  --
  (1.75,0.60)
    .. controls (0.95,0.54) and (0.55,0.59) .. (0.00,0.72)
    .. controls (-0.60,0.86) and (-1.35,0.88) .. (-1.90,0.64)
  -- cycle;

\draw[Mpart]
  (-1.90,0.92)
    .. controls (-1.52,1.08) and (-1.20,1.12) .. (-0.88,1.10);

\draw[Kpart]
  (-1.08,1.10)
    .. controls (-0.50,1.08) and (-0.00,1.00) .. (0.54,0.93)
    .. controls (0.82,0.89) and (1.06,0.87) .. (1.22,0.87);

\draw[Mpart]
  (1.22,0.87)
    .. controls (1.42,0.88) and (1.59,0.89) .. (1.75,0.88);

\draw[normal,->] (0.05,0.96) -- (0.05,1.60)
  node[right,lab] {$\nu_1$};

\node[lab,text=red!85!black] at (0.4,0.39) {$K_1$};

\node[lab,text=AsRoma] at (2,0.34) {$M_1(\sigma)$};

\fill[strip]
  (-1.82,-1.32)
    .. controls (-1.79,-1.12) and (-1.75,-0.95) .. (-1.68,-0.78)
    .. controls (-1.56,-0.50) and (-1.38,-0.20) .. (-1.26,0.03)
    .. controls (-1.18,0.18) and (-1.12,0.35) .. (-1.08,0.54)
  --
  (-0.52,0.50)
    .. controls (-0.56,0.33) and (-0.62,0.18) .. (-0.70,0.03)
    .. controls (-0.82,-0.20) and (-1.00,-0.50) .. (-1.12,-0.78)
    .. controls (-1.19,-0.95) and (-1.23,-1.12) .. (-1.26,-1.32)
  -- cycle;

\draw[Mpart]
  (-1.54,-1.32)
    .. controls (-1.51,-1.16) and (-1.47,-1.02) .. (-1.42,-0.88);

\draw[Kpart]
  (-1.42,-0.88)
    .. controls (-1.31,-0.62) and (-1.16,-0.32) .. (-1.04,-0.08)
    .. controls (-0.95,0.10) and (-0.88,0.25) .. (-0.84,0.38);

\draw[Mpart]
  (-0.84,0.38)
    .. controls (-0.81,0.45) and (-0.78,0.50) .. (-0.78,0.50);

\draw[normal] (-1.27,-0.47) -- (-0.55,-0.47)
  node[above,lab] {$\nu_2$};

\begin{scope}[yshift=-0.2 cm]
\draw [line width=1.8pt,smooth,color=mossgreen] plot coordinates {(-1,-1.5) (-0.5,-1.3) (-0.2,-1.4) (0,-1.3)};

\node[lab,text=mossgreen] at (-0.5,-1.65) {$N^\e$};
\end{scope}

\node[lab,text=red!85!black] at (-1.47,0.16) {$K_2$};

\node[lab,text=AsRoma] at (-2.22,-0.66) {$M_2(\sigma)$};

\fill[strip]
  (1.09,-1.18)
    .. controls (1.32,-1.09) and (1.54,-0.89) .. (1.72,-0.68)
    .. controls (1.86,-0.52) and (1.99,-0.41) .. (2.12,-0.35)
  --
  (2.44,-0.89)
    .. controls (2.31,-0.95) and (2.18,-1.06) .. (2.04,-1.22)
    .. controls (1.86,-1.43) and (1.64,-1.63) .. (1.41,-1.72)
  -- cycle;

\draw[Mpart]
  (1.25,-1.45)
    .. controls (1.38,-1.39) and (1.50,-1.30) .. (1.62,-1.19);

\draw[Kpart]
  (1.62,-1.19)
    .. controls (1.76,-1.04) and (1.90,-0.89) .. (2.03,-0.79)
    .. controls (2.09,-0.74) and (2.13,-0.71) .. (2.16,-0.69);

\draw[Mpart]
  (2.16,-0.69)
    .. controls (2.21,-0.66) and (2.25,-0.64) .. (2.28,-0.62);

\draw[normal,->] (1.86,-0.96) -- (1.50,-0.36)
  node[left,lab] {$\nu_3$};

\node[lab,text=red!85!black] at (2.21,-1.38) {$K_3$};

\node[lab,text=AsRoma] at (0.65,-1.42) {$M_3(\sigma)$};

\end{tikzpicture}
\caption{The sets $M^\e(\sigma)$ and $N^\e$ for $\ell_\e=3$.} 
  \label{fig:Asroma}
\end{figure}

We consider a sequence of convolution kernels $\{\rho_n\}_{n}\subset  C^\infty_c(\Rd)$ with $\text{supp}(\rho_n)\subset B(0,\frac{1}{n})$ and define $v_n\coloneq u*\rho_n$. Observe that, if  $O''\in \Op(O)$ is such that $O'\subset \subset O''\subset \subset O$, \EEE  $x\in O'\setminus M^\e(\sigma)$, and if $\frac{1}{n}\leq \min\big\{\text{dist}(O',\Rd\setminus O''\EEE),\min\{\tau_\ell\colon \,\ell\in\{1,\dots,\ell_\e\}\}\big\}$, then  $B(x,\frac{1}{n})\subset O'' \EEE\setminus K^\e$.  Since $v_n\coloneq u*\rho_n$,   using a result of Goffman \& Serrin \cite[Theorem~4$^\prime$]{GoffmanSerrin} and \eqref{smalnness epsilon} we may estimate   
\begin{align*}
\limsup_{n\to+\infty}\mathcal{V}(v_n,O'\setminus M^\e(\sigma))&= \limsup_{n\to+\infty}\int_{O'\setminus M^\e(\sigma)}|\nabla  v_n|\,{\rm d}x \leq \int_{O'' \EEE}|\nabla u|\,{\rm d}x+|D^su|( O'' \EEE \setminus K^\e)\notag\\&  = \mathcal{V}(u,  O'' \EEE \setminus J_u)+\int_{N^\e\cup (J_u\cap(O''\setminus O'))\EEE} |[u]|\,{\rm d}\Hd  \notag\\
&\leq  \mathcal{V}(u, O \setminus J_u)+\int_{ J_u\cap(O''\setminus O')\EEE} |[u]|\,{\rm d}\Hd +\e.\EEE
\end{align*}
 As $O''$ was arbitrary and we assumed that $\Hd(J_u\cap \partial O')=0$, we may let $O''\searrow O'$ and deduce that  
\begin{equation}
  \limsup_{n\to+\infty}\mathcal{V}(v_n,O'\setminus M^\e(\sigma))\leq   \mathcal{V}(u, O \setminus J_u) +\e\label{stima convolution}.
\end{equation}
\EEE

\medskip
\noindent \emph{Step 2.} (Construction at the jump set)
We introduce the function $\phi^\sigma\colon [-3\sigma,3\sigma]\to [-3\sigma,-\sigma]\cup[\sigma,3\sigma]$ defined by    
 \begin{equation*}
\phi^\sigma(t)\coloneq \begin{cases}\frac{t}{2}-\frac{3}{2}\sigma &\text{ if }-3\sigma\leq t\leq -\sigma,\\
    t-\sigma &\text{ if }-\sigma \leq t\leq 0,  \\
        t+\sigma &\text{ if }0 <t\leq  \sigma,\\
        \frac{t}{2}+\frac{3}{2}\sigma &\text{ if } \sigma\leq t\leq 3\sigma,
    \end{cases}
\end{equation*}
and consider the function defined on $M'_\ell(3\sigma)$  by
\begin{equation*}
v_n^{\sigma,\ell}(y+t\nu_\ell)\coloneq v_n(y+\phi^\sigma(t)\nu_\ell)\quad \text{ for every } y\in M'_\ell \text{ and }t\in [-3\sigma, 3\sigma].
\end{equation*}
We observe that $v_n^{\sigma,\ell}=v_n$ on the top and bottom parts of the boundary  (see Figure \ref{fig: subdivision}) \EEE of $M'_\ell(3\sigma)$, given by $M'_\ell\pm3\sigma \nu_\ell$. To deal with the lateral part of the boundary, namely $\partial M'_\ell+[-3\sigma,3\sigma]\nu_\ell$, we consider a further cut-off function $\gamma_\ell\in C^1_c(M'_\ell;[0,1])$   with $\gamma_\ell= 1$ in  \BBB the \EEE neighbourhood $M''_\ell$ \BBB (recall that \EEE $K_\ell\subset \subset M''_\ell\subset \subset M'_\ell$). We set 
\begin{equation}\label{def unsigmaell}
u_n^{\sigma,\ell}(y+t\nu_\ell)\coloneq \gamma_\ell(y)v_n^{\sigma,\ell}(y+t\nu_\ell)+(1-\gamma_\ell(y))v_n(y+t\nu_\ell)\quad \text{ for  all \EEE } y\in M'_\ell \text{ and }t\in [-3\sigma, 3\sigma].
\end{equation}
Since $u_n^{\sigma,\ell}=v_n$ on the full boundary of $M'_\ell(3\sigma)$, we can define a function $u^{\sigma,\e}_n$ on $O'$ by setting  
\begin{equation}\label{def unsigmaeps}
  u^{\sigma,\e}_n\coloneq  \begin{cases}  u^{\sigma,\ell}_n \quad &\text{ on }M'_\ell(3\sigma)\,\, \text{ for $\ell\in\{1,\dots,\ell_\e\}$},\\
        v_n &\text{ on }O'\setminus M^\e(3\sigma).
    \end{cases}
\end{equation}
 It follows directly from the definition that $u_n^{\sigma,\e}\in {\rm SBV}(O')\cap L^\infty(O')$, \BBB that \EEE $u_n^{\sigma,\e}$ is a  smooth   Lipschitz function on $O'\setminus \big(\bigcup_{\ell=1}^{\ell_\e} M'_\ell\big)$, and \BBB that \EEE for every $n\in\N$ we have
\begin{gather}\label{jump set modifiche}
    J_{u_n^{\sigma,\e}}\subset \bigcup_{\ell=1}^{\ell_\e} M'_\ell\subset\subset O',\\
     \notag\nu_{u_n^{\sigma,\e}}= \MMM \nu_{M_\ell} \EEE \quad \Hd\text{-a.e. on }M'_\ell\cap J_{u_n^{\sigma,\e}},\\  \label{jump modifiche}[u_n^{\sigma,\e}]=\big(v_n(\cdot+\sigma\nu_\ell)-v_n(\cdot-\sigma\nu_\ell)\big)\gamma_\ell\quad \text{on }  M'_\ell\cap J_{u_n^{\sigma,\e}},
\end{gather}
where  we have used \BBB  $M_\ell \subset O'$, \EEE the inequality $\nu_\ell\cdot\nu_{M_\ell}>0$, \MMM and the definitions of  $\phi^\sigma$ and  $v_n^{\sigma,\ell}$, see   Figure  \ref{fig: subdivision}\EEE.  

\medskip
\noindent \emph{Step 3.} ($L^1$-convergence) We \MMM  estimate  \EEE the $L^1$-distance of $u_n^{\sigma,\e}$ from $u$ on $O'$. We have
\begin{align}\notag
\int_{O'}|u_n^{\sigma,\e}-u|\,{\rm d}x& \le 
\int_{O'}|u_n^{\sigma,\e}-v_n|\,{\rm d}x\ + \int_{O'}|v_n-u|\,{\rm d}x\
\\\label{L1 distance}
&= 
\sum_{\ell=1}^{\ell_\e}\int_{M'_{\ell}(3\sigma)}|u_n^{\sigma,\e}-v_n|\,{\rm d}x
+  \int_{O'} \EEE |v_n-u| \,{\rm d}x.
\end{align}
 Recalling the definition of $u_n^{\sigma,\ell}$, \EEE we observe that  
\begin{equation*}
\int_{M'_{\ell}(3\sigma)}|u_n^{\sigma,\e}-v_n|\,{\rm d}x
\leq \int_{M'_{\ell}(3\sigma)}\big(|v_n^{\sigma,\ell}|+|v_n|\big) \, {\rm d}x.
\end{equation*}
After a change of variables we obtain a constant $C_\ell \ge 1$ such that 
\begin{equation}\label{new delta0}
\int_{M'_{\ell}(3\sigma)}|v_n^{\sigma,\ell}|\, {\rm d}x\leq
C_\ell\int_{M'_{\ell}(3\sigma)}|v_n| \, {\rm d}x,
\end{equation}
hence,
\begin{equation}\label{def deltaell}
\int_{M'_{\ell}(3\sigma)}|u_n^{\sigma,\e}-v_n|\,{\rm d}x
\leq 2 \EEE C_\ell\int_{M'_{\ell}(3\sigma)}|v_n|\,{\rm d}x
\leq  2\EEE C_\ell\int_{M'_{\ell}(3\sigma)}|v_n-u|\,{\rm d}x
+ 2\EEE C_\ell\int_{M'_{\ell}(3\sigma)}|u|\,{\rm d}x.
\end{equation}
Since the sets $M'_{\ell}(3\sigma)$ are pairwise disjoint and contained in  $O'$ and $\{v_n\}_n$ converges to $u$ strongly in $L^1(O')$, \BBB it holds 
\begin{equation}\label{def omegaeps}
\lim_{\sigma\to 0^+} \sum_{\ell=1}^{\ell_\e}\omega_\ell(\sigma)=0, \quad \MMM \text{ where }\ \  \EEE\omega_\ell(\sigma)\coloneq \lim_{n\to+\infty}\int_{M'_\ell(3\sigma)}|v_n-u|\,{\rm d}x+\MMM \int_{M'_\ell(3\sigma)} \EEE |u|\,{\rm d}x.
\end{equation}
Finally, combining \eqref{L1 distance}--\eqref{def omegaeps} and using again that $\{v_n\}_n$ converges to $u$ strongly in $L^1(O')$, 
 we obtain, for every $0<\sigma<\sigma^\e$,  
\begin{equation}\label{usinman-u}
\limsup_{n\to+\infty}\int_{O'}|u_n^{\sigma,\e}-u|\,{\rm d}x\le 
2\EEE \sum_{\ell=1}^{\ell_\e}C_\ell\omega_\ell(\sigma).\EEE
\end{equation}

\medskip
\noindent \emph{Step 4.} (Estimate on $\mathcal{V}$)  We estimate $\mathcal{V}(u_n^{\sigma,\e},O')$, given by
\begin{equation}\label{da stimare}
    \mathcal{V}(u_n^{\sigma,\e},O')=\int_{O'}|\nabla u_n^{\sigma,\e}|\,{\rm d}x+\int_{O'\cap J_{u_n^{\sigma,\e}}}|[u_n^{\sigma,\e}]|\land 1\,{\rm d}\Hd.
\end{equation}
As for the first term, we have   by \eqref{stima convolution} \MMM and \eqref{def unsigmaeps}  \EEE
\begin{align}\label{stimella}
 \limsup_{n\to+\infty}   \int_{O'}|\nabla u_n^{\sigma,\e}|\,{\rm d}x&=\limsup_{n\to+\infty} \Big(\int_{O'\setminus M'(3\sigma)}|\nabla v_n|\,{\rm d}x+\sum_{\ell=1}^{\ell_\e}\int_{M'_{\ell}(3\sigma)}|\nabla u_n^{\sigma,\e}|\,{\rm d}x\Big)\notag \\ 
    &  \leq \mathcal{V}(u,O\setminus  J_u\EEE)  + \e \EEE +\limsup_{n\to+\infty}\Big(\sum_{\ell=1}^{\ell_\e}\int_{M'_{\ell}(3\sigma)}|\nabla u_n^{\sigma,\e}|\,{\rm d}x\Big).
\end{align}

To estimate the last term in the previous inequality, we note that by \eqref{def unsigmaell} and \eqref{def unsigmaeps} we have
\begin{equation*}
    \nabla u_n^{\sigma,\e}=\gamma_\ell \nabla v^{\sigma,\ell}_n+(1-\gamma_\ell)\nabla v_n+\nabla \gamma_\ell\big(v_{n}^{\sigma,\ell}-v_n\big)\quad \text{$\Lb^d$-a.e.\ in $M'_\ell(3\sigma)$,}
\end{equation*}

\begin{figure}[b]
\centering\hspace{-1 cm}
\begin{tikzpicture}[
  >=Latex,
  line cap=round,
  line join=round
]

\def\xL{0.0}
\def\xR{8.0}
\def\xpL{1.2}
\def\xpR{6.8}
\def\xppL{3.15}
\def\xppR{5.65}
\def\xkL{3.45}
\def\xkR{5.35}

\def\sig{0.84}

\pgfmathdeclarefunction{f}{1}{%
  \pgfmathparse{0.55 + 0.09*#1 + 0.22*sin(55*#1)}%
}

\newcommand{\filltube}[3]{%
  \fill[#3]
    plot[domain=#1:#2, samples=80] (\x,{f(\x)+\sig})
    --
    plot[domain=#2:#1, samples=80] (\x,{f(\x)-\sig})
    -- cycle;
}

\filltube{\xL}{\xpL}{green!20}
\filltube{\xpR}{\xR}{green!20}

\def\xa{1.2}
\def\xb{1.9}
\def\xc{2.55}
\def\xd{3.15}

\filltube{\xa}{\xb}{cyan!30}
\filltube{\xb}{\xc}{cyan!18}
\filltube{\xc}{\xd}{cyan!30}

\def\xe{5.65}
\def\xf{6.05}
\def\xg{6.45}
\def\xh{6.8}

\filltube{\xe}{\xf}{cyan!30}
\filltube{\xf}{\xg}{cyan!18}
\filltube{\xg}{\xh}{cyan!30}

\filltube{\xppL}{\xppR}{orange!30}

\foreach \xx in {\xb,\xc,\xf,\xg}{
  \draw[dashed, blue!70!black]
    (\xx,{f(\xx)-\sig}) -- (\xx,{f(\xx)+\sig});
}

\foreach \xx in {\xa,\xd,\xe,\xh}{
  \draw[blue!60!black]
    (\xx,{f(\xx)-\sig}) -- (\xx,{f(\xx)+\sig});
}

\draw[black, thick, domain=\xL:\xR, samples=160]
  plot (\x,{f(\x)});

\draw[blue!70!black, very thick, domain=\xpL:\xpR, samples=140]
  plot (\x,{f(\x)});

\draw[orange!85!black, very thick, domain=\xppL:\xppR, samples=140]
  plot (\x,{f(\x)});

\draw[red!80!black, line width=2.7 pt, domain=\xkL:\xkR, samples=120]
  plot (\x,{f(\x)});

\draw[->, thick]
  (4.4,{f(4.4)}) -- ++(0,1.45)
  node[above] {$\nu_\ell$};

\node[below,red!80!black] at (4.45,{f(-3.45)+0.24}) {$K_\ell$};

\node[green!50!black] at (-0.2,1.85) {$M_\ell(\sigma)\setminus M'_\ell(\sigma)$};
\node[cyan!70] at (2.25,-0.60) {$M'_\ell(\sigma)\setminus M''_\ell(\sigma)$};
\node[orange!80!black] at (5.4,2.45) {$M''_\ell(\sigma)$};

\node[cyan!70] at (6.2,-0.55) {\scriptsize -\,-supp({$\gamma_\ell$)}-\,-};

\end{tikzpicture}

\caption{Subdivision of $M_\ell(\sigma)$}
    \label{fig: subdivision}
\end{figure}
\noindent so that
\begin{align}\notag
    \int_{M'_{\ell}(3\sigma)}|\nabla u_n^{\sigma,\e}|\,{\rm d}x&\leq    \int_{M'_{\ell}(3\sigma)}| \nabla v^{\sigma,\ell}_n|\,{\rm d}x+   \int_{M'_{\ell}(3\sigma)\setminus M''_{\ell}(3\sigma)}|\nabla v_n|\,{\rm d}x\\&\quad \label{stima m3sigma}+\|\nabla \gamma_\ell\|_{L^\infty(M'_\ell(3\sigma))}  \int_{M'_{\ell}(3\sigma)}|v_{n}^{\sigma,\ell}-v_n|\,{\rm d}x
\end{align}
for all $\ell\in\{1,\dots,\ell_\e\}$. \BBB We \EEE estimate each term separately. 
For the first term, a change of variables shows that \EEE for every $\ell\in\{1,\dots,\ell_\e\}$ there exists a constant $C_\ell>0$ such that   
\begin{equation}\label{estimate with Ck} 
 \int_{M'_{\ell}(3\sigma)}|\nabla  v_n^{\sigma,\ell}\EEE|\,{\rm d}x\leq C_\ell\int_{M'_\ell(3\sigma)\setminus M'_\ell(\sigma)}|\nabla v_n|\,{\rm d}x.
\end{equation}
We   observe that $M'_\ell(3\sigma)\setminus M'_\ell(\sigma)\subset \subset M_\ell(4\sigma)\BBB \setminus M_\ell \EEE $.  Thus, we can apply once again \cite[Theorem~4$^\prime$]{GoffmanSerrin}, and  we obtain \begin{gather}
\label{stima con lambda}\limsup_{n\to+\infty}\int_{M'_{\ell}(3\sigma)\setminus M'_\ell(\sigma)} |\nabla v_n|\,{\rm d}x\leq \int_{\BBB M_\ell(4\sigma) \setminus M_\ell \EEE}|\nabla u|\,{\rm d}x+|D^su|(\BBB M_\ell(4\sigma) \setminus M_\ell \EEE)=:\lambda_\ell(\sigma)
\end{gather}
for every $\ell\in\{1,\dots,\ell_\e\}$.  
Note that $\lambda_\ell(\sigma)\to 0^+$ as $\MMM \sigma \EEE\to 0^+$ because  $\BBB M_\ell(4\sigma) \setminus M_\ell \EEE \searrow \emptyset$ as $\sigma\to 0^+$.  Similarly, we can use \cite[Theorem~4$^\prime$]{GoffmanSerrin} to estimate the second term in \eqref{stima m3sigma} as 
\begin{equation}\label{forgotaboutit}
\limsup_{n\to+\infty}\int_{M'_{\ell}(3\sigma)\setminus M''_{\ell}(3\sigma)}|\nabla v_n|\,{\rm d}x\leq \int_{ M_{\ell}(4\sigma)\setminus K_{\ell}}\!\!\!\!\!\!\!\!\!\!|\nabla u|\,{\rm d}x+|D^su|\big(M_{\ell}(4\sigma)\setminus \MMM K_{\ell}\EEE \big)=:\mu_\ell(\sigma)
\end{equation}  
for every $\ell\in \{1,\dots, \ell_\e\}$. We observe that, since \MMM $M_{\ell}(4\sigma)\setminus K_{\ell}\searrow M_\ell\setminus K_\ell$,  \EEE  it holds 
\begin{equation}\label{small mueps}
  \limsup_{\sigma\to 0^+}\mu_\ell(\sigma)\leq \int_{M_\ell\setminus K_\ell}|[u]|\,{\rm d}\Hd =0,
\end{equation}
where we used that  $\mathcal{H}^{d-1}(J_u \cap (M_\ell \setminus K_\ell)) = 0$. 
Finally, for the last term in \eqref{stima m3sigma}, we use  \eqref{new delta0}, the second inequality in \eqref{def deltaell}, and \eqref{def omegaeps} \EEE  to get
\begin{equation}\label{mbublè}
   \limsup_{n\to+\infty} \|\nabla \gamma_\ell\|_{L^\infty(M'_\ell(3\sigma))}  \int_{M'_{\ell}(3\sigma)}|v_{n}^{\sigma,\ell}-v_n|\,{\rm d}x\leq C_{\ell} \omega_\ell(\sigma)
\end{equation}
for a constant $C_{\ell} \MMM > 0 \EEE$ depending only on  $\ell$. 
Combining the previous inequality \MMM with \eqref{stima m3sigma}--\eqref{forgotaboutit}, \EEE we obtain, for all $\ell\in\{1,\dots,\ell_\e\}$,
\begin{equation}\label{max pezzali}
    \limsup_{n\to+\infty} \int_{M'_{\ell}(3\sigma)}|\nabla u_n^{\sigma,\e}|\,{\rm d}x\leq C_\ell\lambda_\ell(\sigma)+\mu_\ell(\sigma)+C_{\ell}\omega_\ell(\sigma),
\end{equation}
 where $C_\ell>0$ is (the maximum of) the  constant(s) appearing in \eqref{estimate with Ck} and \eqref{mbublè},  respectively. \EEE
 
We now estimate the second term  on \EEE the right-hand side of \eqref{da stimare}. To this end, we observe that by \eqref{jump set modifiche} and \eqref{jump modifiche} we have      
\begin{equation}\label{uguale g}
\int_{J_{u_n^{\sigma,\e}}\cap M_\ell'}|[u_n^{\sigma,\e}]|\land 1\,{\rm d}\Hd= \int_{ M'_\ell}|(v_n(\cdot+\sigma\nu_\ell)-v_n(\cdot-\sigma\nu_\ell))\gamma_\ell \MMM | \EEE \land 1\,{\rm d}\Hd.
\end{equation}
Let $ \partial^{\pm \sigma}_\ell \EEE\coloneq M_\ell\pm \sigma\nu_\ell$.   As $|[u]|\Hd\mres J_u$ is a finite measure and the sets $\partial^{\pm \sigma}_\ell \EEE$ are pairwise disjoint, there exists a set $S\subset (0,+\infty)$, with $(0,+\infty)\setminus S$ at most countable, such that $\Hd( \partial^{\pm \sigma}_\ell \EEE\cap J_u)=0$ for every $\sigma\in S$. By well-known properties of convolutions of ${\rm BV}$-functions, this implies that for every $\sigma \in S$  the trace of $v_n$ on $ \partial^{\pm \sigma}_\ell \EEE$ converges in $L^1_{\Hd}( \partial^{\pm \sigma}_\ell \EEE)$ to the trace of $u$ on $ \partial^{\pm \sigma}_\ell \EEE$, which is well-defined because the traces on both sides coincide. By the Dominated Convergence Theorem, this implies 
\begin{gather*}
\limsup_{n\to+\infty}\int_{M_\ell}|(v_n(\cdot+\sigma\nu_\ell)-v_n(\cdot-\sigma\nu_\ell))\gamma_\ell|\land 1\,{\rm d}\Hd 
    =
\int_{M_\ell}|(u(\cdot+\sigma\nu_\ell)-u(\cdot-\sigma\nu_\ell))\gamma_\ell|\land 1 \, \MMM {\rm d} \EEE \Hd.
\end{gather*}
Since  $(u(\cdot+\sigma\nu_\ell)-u(\cdot-\sigma\nu_\ell))\gamma_\ell  \to [u]\gamma_\ell$ in $L^1_{\Hd}(M_\ell)$ as $\sigma \to 0^+$ (with $\sigma \in S$), \EEE  
using again the Dominated Convergence Theorem we deduce that  
\begin{gather*}\notag 
   \lim_{\substack{\sigma\to 0^+\\\sigma \in S}} \int_{M_\ell}|(u(\cdot+\sigma\nu_\ell)-u(\cdot-\sigma\nu_\ell))\gamma_\ell|\land 1\,{\rm d}\Hd =\int_{  M_\ell}|[u]\gamma_\ell|\land 1\,{\rm d}\Hd.
\end{gather*}
Therefore, recalling \eqref{uguale g}, for every $\ell\in\{1,\dots,\ell_\e\}$ there exists a function $ \eta_\ell\EEE\colon S\to [0,+\infty)$, with $\eta_\ell\EEE(\sigma)\to 0$ as $\sigma\to 0^+$, such that for every $\sigma\in S$ we have   
\begin{align}\notag
\limsup_{n\to+\infty}\int_{ M'_\ell}|[u_n^{\sigma,\e}]|\land 1\,{\rm d}\Hd &\leq \int_{ K_\ell}\!\!\!\!|[u]|\land 1\,{\rm d}\Hd+\int_{M_\ell\setminus K_\ell}\!\!\!|[u]|\land 1\,{\rm d}\Hd+ \eta_\ell(\sigma)\EEE,\\ \label{uguale g2}
& =\EEE \int_{  K_\ell}\!\!\!\!|[u]|\land 1\,{\rm d}\Hd + \eta_\ell(\sigma),\EEE
\end{align}
 where in  the equality \EEE we have used $\mathcal{H}^{d-1}(J_u \cap (M_\ell \setminus K_\ell)) = 0$. 
 
Combining the last inequality with   \eqref{da stimare}, \eqref{stimella}, and \eqref{max pezzali}  we obtain   \EEE 
\begin{align}\notag 
    \limsup_{n\to +\infty}\mathcal{V}(u_n^{\sigma,\e},O'\EEE)&\leq \mathcal{V}(u, O\setminus J_u \EEE)+\!\sum_{\ell=1}^{\ell_\e}\int_{K_\ell}\!\!|[u]|\land 1\,{\rm d}\Hd\!+  \e   +\!\sum_{\ell=1}^{\ell_\e}\big( C_\ell  \lambda_\ell(\sigma)+\mu_\ell(\sigma)+C_{\ell}\omega_\ell(\sigma)+\eta_\ell(\sigma)\big) \EEE\\&\leq  \mathcal{V}(u,O)+\sum_{\ell=1}^{\ell_\e}\big(C_\ell  \lambda_\ell(\sigma)+\mu_\ell(\sigma)+C_{\ell}\omega_\ell(\sigma)+\eta_\ell(\sigma)\EEE\big)+   \e. \EEE
    \label{Fusigman}
\end{align}
\EEE

\medskip
\noindent \emph{Step 5.} (Conclusion of the \MMM proof \EEE in $O'$)
We now fix two sequences of positive numbers $\{\e_j\}_{j}$ and $\{\sigma_j\}_{j}$ converging to $0$ such that for every $j\in\N$ we have $0<\sigma_j<\sigma^{\e_j}$
(see \eqref{sigmaepsilon}) and
$$
{\sum_{\ell=1}^{\ell_{\e_j}}\big( C_\ell  \lambda_\ell(\sigma_j)+\mu_\ell(\sigma_j)\EEE+ 2 C_{\ell} \omega_\ell(\sigma_j)+\eta_\ell(\sigma_j)\EEE\big)\MMM < \EEE \e_j}
$$
(see  \MMM  \eqref{def omegaeps}--\eqref{usinman-u}, \EEE \eqref{stima con lambda}, \eqref{small mueps}, \EEE and \eqref{uguale g2}).  
From this inequality and from \eqref{usinman-u} and \eqref{Fusigman},   we obtain for every $j\in\N$ that
$$
\limsup_{n\to+\infty }\int_{O'}|u^{\sigma_j,\e_j}_{n}-u|\,{\rm d}x\MMM < \EEE \e_j\quad \text{ and }\quad \limsup_{n\to +\infty}\mathcal{V}(u^{\sigma_j,\e_j}_n,O')\MMM < \EEE
\mathcal{V}(u,O)+  2\e_j. \EEE
$$
Therefore, there exists $n_j\in \N$ such that 
\begin{equation}\label{stima finale}
\int_{O'}|u^{\sigma_j,\e_j}_{n}-u|\,{\rm d}x\leq \e_j\quad \text{ and }\quad \mathcal{V}(u^{\sigma_j,\e_j}_n,O')\leq 
\mathcal{V}(u,O)+  2\e_j  \EEE
\end{equation}
for every $n\ge n_j$. It is not restrictive to assume that $n_j<n_{j+1}$ for every $j\in \N$. We set $u_{n}\coloneq u^{\sigma_j,\e_j}_{n_j}$ for $n_j\leq n<n_{j+1}$ and conclude observing that by construction $u_n\in {\rm SBV}(O')\cap L^\infty(O')$ and that from \eqref{stima finale} it follows that $\{u_n\}_{n}$ converges strongly in $L^1(O')$ as $n\to+\infty$ and that \eqref{limsup leq F} holds true.

\medskip
\noindent \emph{Step 6.} ($O$ with Lipschitz boundary.) Assume now that $O$ has Lipschitz boundary and let us prove that there exists $\{u_n\}_n\subset {\rm SBV}(O)\cap L^\infty(O)$ converging to $u$ in $L^1(O)$ such that \eqref{full convergence} holds. To this end, we let  $\{\e_j\}_j$ be a sequence of positive numbers converging to $0$ as $j\to+\infty$ and let $\{O_j\}_j$  be an increasing sequence of open sets with Lipschitz boundary, compactly contained in $O$, such that $O_j\nearrow O$ as $j\to +\infty$ and 
\begin{equation}\label{derivationve on layer}
   |D u|\big(O\setminus \overline{O_j}\big)\leq \e_j\quad \text{for all $j\in\N$}.
\end{equation}  As $J_u$ is $\sigma$-finite with respect to $\Hd$, we may choose the sequence $\{O_j\}_j$ in such a way that $\Hd(\partial O_j\cap J_u)=0$ for all $j\in\N $. 
We also recall that (see the comment below \eqref{sigmaepsilon})  
\begin{equation}\label{positive distance}
{\rm dist} (M^{\e_j}(\sigma^{\e_j}),\partial O_j)>0
\end{equation} for every $j\in\N$, where   $M^{\e_j}(\sigma)$ is given by \eqref{def Mell sigma}.

For every $0<\sigma<\sigma^{\e_j}$, we consider the functions $u^{\sigma,j}_n\coloneq u^{\sigma,\e_j}_n$ constructed in Step 2 and given by \eqref{def unsigmaeps}. 
Note that \eqref{positive distance} implies
that in a neighbourhood of $\partial O_j$,  independent of $n$, the function $u_n^{\sigma,j}$ is given by $v_n\coloneq u*\rho_n$, where $\rho_n$ is a smooth convolution kernel. Therefore, letting $(u^{\sigma,j}_n)_{\partial O_j^-}$ (resp.\ \MMM $u_{\partial O^-_j}$) \EEE be the trace from the interior of $u^{\sigma,j}_n$ (resp.\ $u$) on $\partial O_j$, recalling that $\Hd(J_u\cap \partial O_j)=0$, by the same properties of the convolutions of ${\rm BV}$-functions invoked \BBB below \EEE \eqref{uguale g} we get
\begin{equation}\label{L1 traces}
\{(u^{\sigma,{j}}_n)_{\partial O_j^-}\}_n\text{  converges to $u_{\partial O_j^-}$ strongly in $L^1_{\Hd}(\partial O_j)$ as $n\to +\infty$.}
\end{equation}

Next, we apply Anzellotti-Giaquinta's Theorem \cite[Theorem~1]{AnzellottiGiaquinta} on $O\setminus \overline{O_j}$ and obtain a sequence $\{w^j_n\}_n\subset C^\infty(O\setminus \overline{O_j})\cap L^\infty(O\setminus \overline{O_j})\EEE$  (recall that we assumed $u\in L^\infty(O)$) \EEE such that \begin{equation}\label{L1 convergence outside}
   \text{$\{w^j_{n}\}_n$ converges to $u$ strongly in $L^1(O\setminus \overline{O_j})$ as $n\to+\infty$}
\end{equation}and $|Dw^j_n|(O\setminus \overline{O_j})$ converges to $|Du|(O\setminus \overline{O_j})$ as $n\to+\infty$. Thus, by \eqref{derivationve on layer} 
\begin{equation}\label{stima V fuori}
  \limsup_{n\to+\infty}\mathcal{V}(w_n^j,O\setminus \overline{O_j})  \leq  \limsup_{n\to+\infty}|Dw^j_n|(O\setminus \overline{O_j})\leq \e_j \quad \text{for all $j\in\N$}.
\end{equation}
Moreover,  letting $(w^{j}_n)_{\partial O_j^+}$ (resp.\ \MMM $u_{\partial O^+_j}$) \EEE be the trace from the interior of $w^{j}_n\EEE$ (resp.\ $u$) on $\partial O_j^+$, by the continuity of the trace operator under strict convergence, we obtain 
\begin{equation}\label{convergence traces outside}
    \{(w^{j}_n)_{\partial O_j^+}\}_n\text{  converges to $u_{\partial O_j^+}$ strongly in $L^1_{\Hd}(\partial O_j)$ as $n\to +\infty$.}
\end{equation}

We let $\{z^{\sigma,j}_n\}_n\subset {\rm SBV}(O;\Rk)$ be the functions defined by \begin{equation*}
   z_n^{\sigma,j}\coloneq  \begin{cases}u^{\sigma,j}_n&\text{on $O_j$},\\
        w^j_n &\text{on $O\setminus \overline{O_j}$},
    \end{cases}
\end{equation*}
and observe that, by \eqref{L1 traces} and \eqref{convergence traces outside}, we have
\begin{equation}\label{no artificial jumps}\limsup_{n\to+\infty}\int_{J_{z_n^{\sigma,j}}\cap \partial O_j}|[z^{\sigma, j}_n]|\,{\rm d}\Hd=\limsup_{n\to+\infty}\int_{\partial O_j}|(w^{j}_n)_{\partial O_j^+}-(u^{\sigma,{j}}_n)_{\partial O_j^-}|\,{\rm d}\Hd=0
\end{equation}
for all $j\in\N$, where in the last equality we have used that $\Hd(J_u\cap \partial O_j)=0$ for all $j\in\N$. Finally, combining \eqref{usinman-u}, \eqref{Fusigman}, \eqref{L1 convergence outside}, \eqref{stima V fuori}, and \eqref{no artificial jumps} the same diagonal argument employed in Step 5 allows to define a sequence $\{u_n\}_n\subset {\rm SBV}(O) \MMM \cap L^\infty(O) \EEE $ converging to $u$ strongly in $L^1(O)$ such that  
\begin{equation*}
    \limsup_{n\to+\infty}\mathcal{V}(u_n,O)\leq \mathcal{V}(u,O).
\end{equation*}
In view of the lower semicontinuity of $\mathcal{V}$ (see Remark~\ref{remark: lowersemicontinuity}), this inequality implies \eqref{full convergence}, concluding the proof.
 \EEE 
\qed 
\medskip

\noindent\emph{Proof of Lemma \ref{lemma piecewise constants}.} \EEE
We first give the proof in the scalar case $k=1$ and for $u\in {\rm BV}(O)\cap L^\infty(O)$. Without loss of generality, we may assume that $\mathcal{V}(u, O\EEE\setminus J^1_u)>0$, as otherwise the function $u$ is already piecewise constant, see \cite[Theorem~4.23]{AFP}.   Moreover, we can assume that $u \ge 0$.

\medskip
\noindent \emph{Step 1.} (Scalar case $k=1$)
We let $M\coloneq \lceil\|u\|_{L^\infty(O)}\rceil<+\infty$, where for $a\in\R$ the symbol $\lceil a \rceil$ denotes the smallest integer greater than or equal to \EEE $a$. For $t\in (-\infty,+\infty)$ we define $E_{t}\coloneq \{x\in O: u(x)>t\}$. By the coarea formula we have 
 \begin{equation}\label{coarea lemma}
  \int_{0}^{M}|D\chi_{{E_t}}|(O\setminus J^{1}_{u}) \,{\rm d}t=\int_{0}^{M}\Hd\big(\partial^*E_t\cap (O\setminus J^1_{u})\big)\,{\rm d}t =\mathcal{V}(u,O\setminus J^{1}_{u}).
 \end{equation}
 Thanks to the well-known approximation properties of Lebesgue integrals by means of Riemann sums (see, for instance, \cite[Lemma~3.1]{DM-DonCantor})  we can find a sequence $\{\delta_n\}_n$ converging to $0$ as $n\to+\infty$ and $s\in  [0,1]\EEE$ such that, setting $t_n^{j}\coloneq s+\frac{j}{n}$ for   $j\in \mathcal{J}^n\coloneq \{j\in \mathbb{Z}: t_n^{j}\in (0,M)\}$, \EEE  we have that  $E_{t_n^{j}}$ \EEE is of finite perimeter in $O$ and  
 \begin{gather} \label{riemann integral}
    \Big| \sum_{j\in \mathcal{J}^n\EEE}\frac{1}{n}|D\chi_{E_{t_n^{j}}}|(O\setminus J^{1}_{u})- \int_{0}^{M}|D\chi_{{E_t}}|(O\setminus J^{1}_{u})\,{\rm d}t \Big|\leq \delta_n
 \end{gather}
for every $n\in\N$. We then define  
\begin{equation}\label{defpc}
u_n\coloneq \frac{1}{n}\sum_{ j\in \mathcal{J}^n\EEE}\chi_{E_{t_{n}^{ j\EEE}}}\in {\rm BV}(O)\cap L^\infty(O)
\end{equation}
and observe that 
\begin{equation*}
    |u_n(x)-u(x)|\leq \frac{1}{n}\quad \text{ for $\Lb^d$-a.e.\ }x\in O \text{ and for all $n\in\N$},
\end{equation*}
which implies  strong $L^1(O)$-convergence of 
$\{u_n\}_n$  to $u$ as $n\to+\infty$. 

We want to estimate $\mathcal{V}(u_n,O)$ by means of $\mathcal{V}(u,O)$. 
We write 
 \begin{equation}\label{first decomposition}
 \mathcal{V}(u_n,O)=\mathcal{V}(u_n, J^{1}_u)+\mathcal{V}(u_n,O\setminus J^{1}_u).
\end{equation} The first term can be bounded from above as 
\begin{equation}\label{estimate J1u constant}
    \mathcal{V}(u_n, J^{1}_u)\leq \Hd(J^{1}_u) = \mathcal{V}(u,J^{1}_u).
\end{equation}
As for the second  term, \EEE we observe that  by \eqref{defpc} \EEE  
\begin{align*}
    \mathcal{V}(u_n,O\setminus J^{1}_u)& \le  \int_{(J_{u_n}\setminus J^{1}_{u})\cap O}|[u_n]|\,{\rm d}\Hd= |Du_n|(O\setminus J^{1}_u)  \leq \sum_{j\in \mathcal{J}^n\EEE}\frac{1}{n}|D\chi_{ E_{t_{n}^{j}\EEE}}|(O\setminus J^{1}_{u}),
\end{align*}
which by \eqref{riemann integral} and \eqref{coarea lemma} implies 
\begin{equation*}
    \limsup_{n\to+\infty} \mathcal{V}(u_n,O\setminus J^{1}_u)\leq \mathcal{V}(u,O\setminus J^1_u).
\end{equation*}
Combining the previous inequality with \eqref{first decomposition} and \eqref{estimate J1u constant}, we obtain  
\begin{equation*}
     \limsup_{n\to+\infty} \mathcal{V}(u_n,O)\leq \mathcal{V}(u,O\setminus J^1_u)+ \mathcal{V}(u,J_u^{1}) =\mathcal{V}(u,O).
\end{equation*}
Moreover, as $\{u_n\}_n$ converges to $u$ strongly in $L^1(O)$, the lower semicontinuity of $\mathcal{V}(\cdot,O)$  with respect to the strong $L^1$-convergence (see Remark~\ref{remark: lowersemicontinuity}) implies  $
    \lim_{n\to+\infty}\mathcal{V}(u_n,O)=\mathcal{V}(u,O)$.
This concludes the proof in the case $u\in {\rm BV}(O)\cap L^\infty(O)$. 

 If $u\in {\rm BV}(O)$, for every $m\in\N$ we consider the truncation $u^{(m)}\coloneq (u\lor -m)\land m$ and use the previous step to obtain a sequence $\{u_{m,n}\}_n$ of piecewise constant functions converging to $u^{(m)}$ strongly in $L^1(O)$ as $n\to+\infty$ and such that 
\begin{equation*}
\lim_{n\to+\infty}\mathcal{V}(u_{m,n},O)=\mathcal{V}(u^{(m)},O).
\end{equation*}
Since   by \cite[Remark~2.7]{DalToaConvex} 
\begin{equation*}
    \lim_{m\to+\infty}\mathcal{V}(u^{(m)},O)=\mathcal{V}(u,O) \quad \text{ and }\quad u^{(m)}\to u\text{ strongly in $L^1(O)$ as $m\to+\infty$},
\end{equation*}
a diagonal argument allows us to conclude.

\MMM \medskip
\noindent \emph{Step 2.} (Vector-valued case $k>1$)  \EEE
In the vector-valued case $k\geq 2$, we apply the result we have just proved in the scalar case to each component. 
For each $i\in\{1,\dots,k\}$ and $n\in\N$ this yields  a finite collection of
sets of finite perimeter  $\{E^{j}_{n,i}\}_{j}$,   and numbers  $\{t^{j}_{n,i}\}_{j}$ \EEE such that $ u_{n,i}\coloneq \sum_{j}  t^{j}_{n,i}\chi E^{j}_{n,i} \EEE \in {\rm BV}(O)\cap L^\infty(O)$ converges to $u_i$ strongly in $L^1(O)$ as $n\to+\infty$ and 
\begin{equation}\label{component V piecewise}
    \lim_{n\to+\infty}\mathcal{V}(u_{n,i}\EEE,O)=\mathcal{V}(u_i,O).
\end{equation}
 For every $n\in\N$,  we let $u_n\in {\rm BV}(O)\cap L^\infty(O)$ be the function whose $i$-th component is given by $u_{n,i}$, which is  clearly piecewise constant.  \EEE 
By \eqref{component V piecewise} we then have  
\begin{equation*}
   \lim_{n\to+\infty} \mathcal{V}(u_n,O) =\mathcal{V}(u,O).
\end{equation*}
As  $\{u_{ n,i\EEE}\}_n$ converges to $u_i$ strongly in $L^1(O)$  for all $i\in\{1,\dots,k\}$, the proof  is concluded.\qed

\section{Integral representation of  bulk and surface parts:  Proof of Theorem~\ref{thm: integral star}}\label{section: Global Method}

 This section is devoted to the proof of Theorem~\ref{thm: integral star}, that is, the integral representation of the bulk and surface parts   of functionals in $\F^p_\star$. The arguments we  employ here are those of the Global Method of Bouchitté, Fonseca, \& Mascarenhas  \cite{BFM1998} \MMM  (see also \cite{BFML2002}),  \EEE  later adapted to the setting of structured deformations  in several different works, see for instance \cite {FonsHafParo,FriMatZap,BarrosoMatiasZappale1,BarrosoMatiasZappale2}.   Due to the different growth conditions and underlying function spaces considered here, we cannot always rely on the theory developed in the above mentioned works.   Therefore,   at several points, significant modifications are necessary and, in particular,  Poincar\'e inequalities and blow-up limits  tailor-made for our functional setting  are required. These arguments are presented in the appendices, see  Lemmas  \ref{products with smooth functions}, \ref{lemma blow up ac}, and   \ref{lemma: blow up jump points}, and Corollary  \ref{cor:small sets}. \EEE

\subsection{The global method}
 For notational convenience, \EEE in this section we  depart from the notation introduced in \MMM Section \ref{setting and main results}, \EEE and will use $(u,U)$, $(v,V)$, and $(w,W)$ to denote structured deformations in place of the usual $(g,G)$. We keep  $\mathcal{F}\colon \SD^p_\star(\Omega;\Rk)\times \B(\Omega)\to [0,+\infty)$ fixed throughout the section, and we specify at each step the properties we assume on $\cF$. 
We begin by proving that $\cF$ satisfies a suitable version of the Fundamental Estimate, which will be used to \MMM alter \EEE boundary conditions for the displacement fields $g$ in an energetically convenient way. 

\begin{lemma}[Fundamental Estimate\EEE]\label{lemma: Fundamental Estimate}
Let $\eta>0$ and let $O,O',O''\in\Op(\Omega)$  be \EEE with Lipschitz boundary and with $O'\subset \subset O$. Assume that $\cF$ satisfies conditions {\rm (H1), (H2),} and {\rm (H4)}.  Then, for every $u\in {\rm GBV}_\star(O;\Rk)$ and $v\in{\rm GBV}_\star(O'';\Rk)$, with $u\in L^1(O\setminus O';\Rk)$ and $v\in L^1((O\setminus O')\cap O'';\Rk)$, there exists a function $\varphi\in C^\infty(\Rd;[0,1])$ such that, setting $w\coloneq \varphi u+(1-\varphi)v$, we have $w\in {\rm GBV}_\star(O'\cup O'';\Rk)$, 
    \begin{equation}\label{thebc}
w=u \quad \text{ $\Lb^d$-a.e.\ in }O' \quad \text{ and }\quad w=v \quad \text{ $\Lb^d$-a.e.\ in } O''\setminus O,
\end{equation}
and
\begin{align}
\label{claim fundamental estimate} 
   \hspace{-0.3 cm}      \cF(w,W,O'\cup O'')&\leq (1+\eta)\big(\cF(u,W,O)+\cF(v,W,O'')\big) +M\|u-v\|_{L^1((O\setminus O')\cap O'')}+\eta \Lb^d(O'\cup O'') 
    \end{align}
    for every $W\in L^p(O\cup O'';\Rkd)$, where $M=M(O,O',O'',d,\MMM \alpha, \beta, \eta, \EEE k)$ is a positive constant independent of $u$ and $v$. Moreover, if $\rho>0$ and $x\in\Rd$ are such that $O_{\rho,x},O'_{\rho,x},\, O''_{\rho,x}\subset \Omega$ {\rm (}see \eqref{def blow up set}{\rm)}, then 
    \begin{equation*}
    M(O_{\rho,x},O'_{\rho,x},\, O''_{\rho,x},d,\alpha,\beta,\eta\EEE,k)=\frac{1}{\rho}M(O,O',O'',d, \MMM \alpha, \beta,  \eta, \EEE k).
    \end{equation*}
\end{lemma}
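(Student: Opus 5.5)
We follow the classical De Giorgi slicing argument for the fundamental estimate, adapted to the nonlinear quantity $\mathcal{V}$. Fix $\delta\coloneq{\rm dist}(O',\Rd\setminus O)>0$. Choose $N\in\N$, depending only on $\eta$, $\alpha$, $\beta$, and split the layer $\{x\colon 0<{\rm dist}(x,O')<\delta\}$ into $N$ concentric strips
\[
S_i\coloneq\{x\colon (i-1)\delta/N<{\rm dist}(x,O')<i\delta/N\},\qquad i=1,\dots,N.
\]
For each $i$, take cut-offs $\varphi_i\in C^\infty_c(\Rd;[0,1])$ with $\varphi_i=1$ on $\{{\rm dist}(\cdot,O')\le (i-1)\delta/N\}$, $\varphi_i=0$ outside $\{{\rm dist}(\cdot,O')< i\delta/N\}$, and $\|\nabla\varphi_i\|_\infty\le 2N/\delta$. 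Set $w_i\coloneq\varphi_i u+(1-\varphi_i)v$. Then \eqref{thebc} holds for every $i$. The fact that $w_i\in{\rm GBV}_\star(O'\cup O'';\Rk)$ follows from the vector-space structure of ${\rm GBV}_\star$ together with a product rule for a smooth function times a ${\rm GBV}_\star$-function. On the strip, both $u$ and $v$ are $L^1$, so this is the tool deferred to Lemma~\ref{products with smooth functions}.

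The core estimate is a pointwise bound for $\mathcal{V}(w_i,\cdot)$ on $S_i$. Away from $\overline{S_i}$, $w_i$ coincides with $u$ or with $v$, so the locality properties \ref{hyp:H1}--\ref{hyp:H2} give
\[
\cF(w_i,W,O'\cup O'')\le \cF(u,W,O)+\cF(v,W,O'')+\cF(w_i,W,\overline{S_i}\cap O'').
\]
For the last term we use the upper bound in \eqref{upper bound} and estimate $\mathcal{V}(w_i,\overline{S_i})$ componentwise. The approximate gradient satisfies $\nabla w_i=\varphi_i\nabla u+(1-\varphi_i)\nabla v+(u-v)\otimes\nabla\varphi_i$. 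The Cantor part is a convex combination, and $J_{w_i}\subset J_u\cup J_v$ with $|[w_i]|\le\varphi_i|[u]|+(1-\varphi_i)|[v]|$ componentwise. The truncated jump term requires the elementary inequality
\[
(\varphi a+(1-\varphi)b)\wedge 1\le |a|\wedge1+|b|\wedge1 .
\]
Altogether,
\[
\mathcal{V}(w_i,\overline{S_i})\le C_k\big(\mathcal{V}(u,\overline{S_i})+\mathcal{V}(v,\overline{S_i})\big)+\tfrac{2N\sqrt{k}}{\delta}\int_{S_i\cap O''}|u-v|\,{\rm d}x .
\]
The $W$-term and the $\Lb^d$-term are controlled directly. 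We then invoke the lower bound in \eqref{upper bound}: $\mathcal{V}(u,\overline{S_i})+\|W\|^p_{L^p(\overline{S_i})}\le \alpha^{-1}\cF(u,W,\overline{S_i})$, and similarly for $v$.

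Averaging over $i$ and choosing a good index $i_0$ (the strips are disjoint, up to $\mathcal{H}^{d-1}$-negligible boundaries whose measure can be avoided by a slight shift of the levels) gives
\[
\cF(w_{i_0},W,\overline{S_{i_0}})\le \frac{C\beta}{\alpha N}\big(\cF(u,W,O)+\cF(v,W,O'')\big)+\frac{C\beta N}{\delta}\|u-v\|_{L^1((O\setminus O')\cap O'')}+\beta\Lb^d(\overline{S_{i_0}}\cap O'').
\]
Choosing $N$ with $C\beta/(\alpha N)\le\eta$ yields \eqref{claim fundamental estimate} with $M\coloneq C\beta N/\delta$. The $\Lb^d$-term is handled by also imposing that the chosen strip has measure at most $\eta/\beta$ times $\Lb^d(O'\cup O'')$, which is again possible by averaging.

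The scaling claim follows because under $x+\rho(\cdot-x)$ the distance $\delta$ scales by $\rho$ while $N$ is unchanged, so $M$ scales like $1/\rho$. The main obstacle I expect is the ${\rm GBV}_\star$ product rule, namely identifying $\nabla w$, $D^cw$, and $J_w$ with the bounds above when $u$ and $v$ are only ${\rm GBV}$ away from the strip. I would handle this via truncation (Proposition~\ref{prop:truncation}) and the lower semicontinuity of $\mathcal{V}$, reducing to ${\rm BV}$-functions where the chain rule is classical.
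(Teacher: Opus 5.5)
Your proposal is correct and follows the paper's own argument: De Giorgi averaging over $N$ transition layers with cut-offs satisfying $|\nabla\varphi_i|\le 2N/\delta$, the Leibniz rule of Lemma~\ref{products with smooth functions}, then the upper and the lower bound in (H4), and finally the choice of a good index (in both versions this index, and hence $\varphi$, is chosen after $W$ is fixed, which is how the lemma is used later). The differences are cosmetic: the paper takes compactly nested open sets $O'\subset\subset O_1\subset\subset\cdots\subset\subset O_{N+1}\subset\subset O$, with $\varphi^i=1$ near $\overline{O_i}$ and ${\rm supp}\,\varphi^i\subset O_{i+1}$, so the layers $S_i=O''\cap(O_{i+1}\setminus\overline{O_i})$ are disjoint open sets and no shift of levels is needed; moreover, since the factor $N$ cancels after averaging, the paper can take $M=2\beta k/\delta$, whereas your $M=C\beta N/\delta$ is a cruder but equally valid choice.
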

\begin{proof}
Let  us fix $N\in\N$ and consider open subsets $O_1,\dots, O_{N+1}$    of $\Rd$ with 
\begin{equation*}
O'\subset \subset O_1\subset \subset \dots\subset \subset O_{N+1}\subset \subset O.
\end{equation*}
For $i\in\{1,\dots, N\}$ we consider functions $\varphi^i\in C^\infty_c(O_{i+1};[0,1])$ with $\varphi^i=1$ in a neighbourhood of  $\overline{O_{i}}$  \EEE  and with $|\nabla\varphi^i|\leq 2N/\delta$, where $\delta\coloneq {\rm dist}(O',\Rd\setminus O)>0$. Thanks to Lemma~\ref{products with smooth functions}, for every $i\in\{1,\dots,N\}$ the function $w^i\coloneq \varphi^i u +(1-\varphi^i)v$ belongs to ${\rm GBV}_\star(O'\cup O'';\Rk)$, where we have extended $u$ and $v$ arbitrarily outside  \MMM of $O$ and $O''$, \EEE respectively.   

 We set $S_i\coloneq O''\cap (O_{i+1}\setminus \overline{O_i})$ and fix $i\in\{1,\dots,N\}$. By the measure property \ref{hyp:H1} and locality \ref{hyp:H2} we see that 
\begin{equation}\label{intial estimate fundamental}
    \cF(w^i,W,O'\cup O'')\leq \cF(u,W,O)+\cF(v,W,O'')+\cF(w^i,W,S_i)
\end{equation}
for every $W\in L^p(O\cup O'';\Rkd)$. 
To estimate the term $\cF(w^i, W, \EEE S_i),$ we exploit the upper bounds in \ref{hyp:H4} to get 
\begin{equation}\label{estimate on the strip}
    \cF(w^i,W,S_i)\leq \beta\sum_{j=1}^k\Big(\int_{S_i} \MMM | \EEE \nabla w^i_j|\,{\rm d}x+|D^cw^i_j|(S_i)+\int_{J_{w^i_j}\cap S_i}|[w^i_j]|\land 1\,{\rm d}\Hd\Big)+\beta\int_{S_i}(|W|^p+1)\,{\rm d}x.
\end{equation}
By Lemma~\ref{products with smooth functions} we have 
\begin{align*}
    &\nabla w^i=\nabla\varphi^i\otimes (u-v)+\varphi^i\nabla u+(1-\varphi^i)\nabla v\quad \text{ $\Lb^d$-a.e. in $S_i$},
    \\
    & D^cw^i=\varphi^i D^cu+(1-\varphi^i)D^cv\quad \text{ as Borel measures on $S_i$},
    \\
    &[w^i]=\varphi^i[u]+(1-\varphi^i)[v] \quad \text{ $\Hd$-a.e.\  on $S_i\cap J_{w^i}$},
\end{align*}
so that 
\begin{align*}
    \int_{S_i}|\nabla w^i_j|\,{\rm d}x & \leq \int_{S_i}(|\nabla u_j|+|\nabla v_j|)\,{\rm d}x+\|\nabla \varphi^i\|_{L^\infty(\Omega)}\int_{S_i}|u-v|\,{\rm d}x,\\
    |D^cw^i_j|(S_i) & \leq |D^cu_j|(S_i)+|D^cv_j|(S_i),\\
    \int_{S_i\cap \MMM J_{w^i_j} \EEE }|[w^i_j]|\land 1\,{\rm d}\Hd& \leq  \int_{S_i\cap J_{u_j}}|[u_j]|\land 1\,{\rm d}\Hd+ \int_{S_i\cap J_{v_j}}|[v_j]|\land 1\,{\rm d}\Hd
\end{align*}
for all $j\in\{1,\dots,k\}$.
Combining the previous inequalities with \eqref{estimate on the strip}, recalling that $|\nabla\varphi^i|\leq 2N/\delta$, and using the lower bounds in \ref{hyp:H4}, we get   
\begin{align*}
    \cF(w^i,W,S_i)&\leq \beta\sum_{j=1}^k\Big(\int_{S_i}(|\nabla u_j|+|\nabla v_j|)\,{\rm d}x+\frac{2N}{\delta}\int_{S_i}|u-v|\,{\rm d}x +|D^cu_j|(S_i)+|D^cv_j|(S_i)\\
    &\quad +\int_{S_i\cap J_{u_j}}|[u_j]|\land 1\,{\rm d}\Hd+ \int_{S_i\cap J_{v_j}}|[v_j]|\land 1\,{\rm d}\Hd\Big)+\beta\int_{S_i}|W|^p\,{\rm d}x+\beta\Lb^d(S_i)\\
    &\leq \frac{\beta}{\alpha}\Big(\cF(u,W,S_i)+\cF(v,W,S_i)\Big)+\frac{2\beta N k}{ \delta}\int_{S_i}|u-v|\,{\rm d}x+\beta\Lb^d(S_i).
\end{align*}
We set $S\coloneq (O\setminus O')\cap O''$,  sum the previous inequality over $i\in\{1,\dots,N\}$  and obtain   
\begin{equation*}
  \sum_{i=1}^N\cF(w^i,W,S_i)\leq \frac{\beta}{\alpha }\Big(\cF(u,W,S)+\cF(v,W,S)\Big)+\frac{2\beta Nk}{\delta}\int_{S}|u-v|\,{\rm d}x+\beta\Lb^d(S).
\end{equation*}
Therefore, there exists $i^*\in\{1,\dots,N\}$ such that  
\begin{align*}
   \cF(  w^{i^*}, \EEE W,S_{i^*})\leq \frac{\beta}{\alpha N }\Big(\cF(u,W,S)+\cF(v,W,S)\Big)+\frac{2\beta k}{\delta}\int_{S}|u-v|\,{\rm d}x+\frac{\beta}{N}\Lb^d(S).
\end{align*}
Plugging this inequality back in \eqref{intial estimate fundamental}, we deduce  
\begin{align*}
     \cF( w^{i^*}, \EEE W,O'\cup O'')&\leq \cF(u,W,O)+\cF(v,W,O'')\\&\quad +\frac{\beta}{\alpha N }\Big(\cF(u,W,S)+\cF(v,W,S)\Big)+\frac{2\beta k}{\delta}\int_{S}|u-v|\,{\rm d}x+\frac{\beta}{N}\Lb^d(S).
\end{align*}
We now let $N\in\N$ be so large that $\max\{\beta/(\alpha N),\beta/N\}\leq \eta$   and set $M\coloneq  2\beta k /\delta$, so that the previous inequality yields \eqref{claim fundamental estimate}. \MMM Note that by construction $w^{i^*}$ satisfies \eqref{thebc}. \EEE

Finally, the remark concerning the behaviour of the constant $M$ under rescaling can be proved by using as cut-off functions  $\hat{\varphi}^i(y)\coloneq \varphi^i(x+\frac{1}{\rho}(y-x))$ for all $y\in(O_{i+1})_{\rho,x}$. 
\end{proof}

 We now prove a lemma that allows to compare the value of $\mathcal{F}$ on two nested open sets. This observation will be used to modify, in an energetically convenient way, the matrix-valued fields $U$ so as to impose suitable integral constraints.  
 
 \begin{lemma}\label{lemma: smaller larger}
Let $(u, U) \in {\rm SD}^p_\star(\Omega;\Rk)$ and let $O_1, O_2\in\Op(\Omega)$ with $O_1\subset \subset O_2$. Assume that $\cF$ satisfies {\rm \ref{hyp:H1}} and {\rm \ref{hyp:H4}}. Then 
\begin{equation}\label{claim lemma easy}
\cF(u, U,O_2) \leq   \cF(u,U,O_1)  +\beta\Big(\mathcal{V}(u,O_2 \setminus O_1) + \|U\|_{L^p(O_2\setminus O_1)}^p+ \Lb^d(O_2 \setminus O_1)\Big).
\end{equation}
\end{lemma}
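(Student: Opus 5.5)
The plan is to split $O_2$ into $O_1$ and the layer $O_2\setminus O_1$, and then bound the layer term by the upper growth bound. By the measure property \ref{hyp:H1}, $\cF(u,U,\cdot)$ is a (bounded, by \ref{hyp:H4}) Radon measure on $\B(\Omega)$. Since $O_1\subset O_2$, and both $O_1$ and $O_2\setminus O_1$ are Borel sets, additivity gives
\begin{equation*}
\cF(u,U,O_2)=\cF(u,U,O_1)+\cF(u,U,O_2\setminus O_1).
\end{equation*}
It therefore suffices to estimate the second term.

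For the layer we apply the upper bound in \eqref{upper bound} with $B=O_2\setminus O_1$. This yields
\begin{equation*}
\cF(u,U,O_2\setminus O_1)\leq \beta\Big(\mathcal{V}(u,O_2\setminus O_1)+\|U\|^p_{L^p(O_2\setminus O_1)}+\Lb^d(O_2\setminus O_1)\Big),
\end{equation*}
where $\mathcal{V}(u,O_2\setminus O_1)$ is understood in the Borel sense of \eqref{finalVdef}, exactly as in \ref{hyp:H4}. Plugging this into the decomposition gives \eqref{claim lemma easy}.

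There is essentially no obstacle. The only point to check is that \ref{hyp:H4} is stated for arbitrary Borel sets, so it applies to the non-open set $O_2\setminus O_1$ with the Borel extension of $\mathcal{V}$. No compact containment $O_1\subset\subset O_2$ is actually needed beyond $O_1\subset O_2$, and it plays no role in the argument.
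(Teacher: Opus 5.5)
Your proposal is correct and is exactly the paper's argument, which simply says that it suffices to use \ref{hyp:H1} and \ref{hyp:H4}: split $\cF(u,U,O_2)$ by additivity of the measure, then apply the upper bound in \eqref{upper bound} to the Borel set $O_2\setminus O_1$. Your observation that only $O_1\subset O_2$ is needed, and not $O_1\subset\subset O_2$, is also correct.
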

\begin{proof} It is enough to use properties \ref{hyp:H1} and \ref{hyp:H4}. \EEE
\end{proof}

With the previous lemma at our disposal, we can prove a result that allows to compare the value of the minimisation problems $\m$ defined \MMM in \EEE  \eqref{minimisation problem} on cubes of different size.
\begin{lemma}\label{easy}
 Assume that $\cF$ satisfies properties {\rm (H1)} and {\rm (H4)}. Then for every $(u, U)\in {\rm SD}^p_\star(\Omega;\Rk)$   the following two conditions hold:  
\begin{itemize}
	\item [{\rm (}a{\rm)}]if $p>1$ and  $Q_\nu(x, \rho) \subset \Omega$,  we have 
	\begin{equation*}
	\limsup_{\theta \to 0^+} \m(u, U,Q_\nu(x,(1-\theta) \rho))
	\leq \m(u, U,Q_\nu(x,\rho));
	\end{equation*}

	\item[{\rm (}b{\rm)}] if $p=1$ and  $O \in \mathcal O(\Omega)$ with Lipschitz boundary,  we have 
	\begin{equation*}
	\limsup_{\theta \to 0^+} \m(u, U,O_\theta)
	\leq \m(u, U, O),
	\end{equation*}
	where $ O_\theta=\{x \in O\colon\, {\rm dist}(x, \partial O) > \theta\}$.
	\end{itemize}
\end{lemma}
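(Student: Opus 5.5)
We argue directly from the definition of $\m$, comparing each competitor on the large set with an explicitly modified competitor on the smaller one. The same construction will cover both (a) and (b): in (a) set $O\coloneq Q_\nu(x,\rho)$ and $O_\theta\coloneq Q_\nu(x,(1-\theta)\rho)$, and in (b) use the inner parallel sets $O_\theta$ of the statement. In both cases $O_\theta\nearrow O$, the sets $O_\theta$ are open with Lipschitz boundary (at least for small $\theta$ in case (b)), and $\Lb^d(O\setminus O_\theta)\to0$.

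\emph{Setup.} Fix $\e>0$ and pick $(v,V)\in\C^p_\star(u,U,O)$ with $\cF(v,V,O)\le \m(u,U,O)+\e$. By definition $v=u$ near $\partial O$, so $v=u$ on $O\setminus K$ for some compact $K\subset O$. Hence for $\theta$ small (so that $K\subset O_{2\theta}$) we still have $v=u$ near $\partial O_\theta$. The displacement $v$ therefore already satisfies the boundary condition on the smaller set. The only difficulty is the average constraint: $\int_{O_\theta}(U-V)\,{\rm d}x=-\int_{O\setminus O_\theta}(U-V)\,{\rm d}x$ need not vanish.

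\emph{Correction of the matrix field.} Let $L_\theta\coloneq O_\theta\setminus\overline{O_{2\theta}}$ (for cubes, the analogous frame between $Q_\nu(x,(1-2\theta)\rho)$ and $Q_\nu(x,(1-\theta)\rho)$). Define $V_\theta\coloneq V+c_\theta\chi_{L_\theta}$ with
\begin{equation*}
c_\theta\coloneq \frac{1}{\Lb^d(L_\theta)}\int_{O\setminus O_\theta}(V-U)\,{\rm d}x .
\end{equation*}
Then $\int_{O_\theta}(U-V_\theta)\,{\rm d}x=0$, so $(v,V_\theta)\in\C^p_\star(u,U,O_\theta)$. Since $V_\theta=V$ on $O_{2\theta}$, Lemma~\ref{lemma: smaller larger} applied to the pair $O_{2\theta}\subset\subset O_\theta$, together with locality and monotonicity of the measure $\cF(v,V,\cdot)$, gives
\begin{equation*}
\m(u,U,O_\theta)\le \cF(v,V,O)+\beta\Big(\mathcal{V}(v,L_\theta)+\|V_\theta\|^p_{L^p(L_\theta)}+\Lb^d(L_\theta)\Big).
\end{equation*}

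\emph{Estimating the error.} First, $\mathcal{V}(v,L_\theta)\to0$ and $\Lb^d(L_\theta)\to0$ as $\theta\to0$, since $\mathcal{V}(v,\cdot)$ is a finite measure and $L_\theta$ shrinks to $\emptyset$. Second, by Hölder's inequality,
\begin{equation*}
\|c_\theta\|^p_{L^p(L_\theta)}\le \Big(\frac{\Lb^d(O\setminus O_\theta)}{\Lb^d(L_\theta)}\Big)^{p-1}\|V-U\|^p_{L^p(O\setminus O_\theta)} .
\end{equation*}
For cubes, and for Lipschitz $O$ at small $\theta$, the ratio $\Lb^d(O\setminus O_\theta)/\Lb^d(L_\theta)$ is bounded; for $p=1$ it does not appear at all. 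Meanwhile $\|V-U\|_{L^p(O\setminus O_\theta)}\to0$ by absolute continuity of the integral. Together with $\|V\|_{L^p(L_\theta)}\to0$, this shows $\|V_\theta\|^p_{L^p(L_\theta)}\to0$. Taking $\limsup_{\theta\to0}$ and then letting $\e\to0$ gives the claim.

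\emph{Main obstacle.} The step requiring care is the bound on the volume ratio $\Lb^d(O\setminus O_\theta)/\Lb^d(L_\theta)$ for $p>1$. For cubes it is explicit. For a general Lipschitz $O$ it follows from the uniform bi-Lipschitz structure of the layers near $\partial O$, but this is exactly why the statement is restricted to cubes when $p>1$, while for $p=1$ no ratio enters and general $O$ is allowed.
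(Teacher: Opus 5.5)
Your proof is correct and follows essentially the route the paper takes by importing \cite[Lemma~3.2]{FriMatZap}: you keep $v$ (which already equals $u$ near $\partial O_\theta$ for small $\theta$), restore the average constraint by a constant correction of $V$ on a thin inner layer, and pay for that layer with Lemma~\ref{lemma: smaller larger} and (H4), using H\"older's inequality and the explicit cube volume ratio when $p>1$. The one caveat is that the identity $\cF(v,V_\theta,O_{2\theta})=\cF(v,V,O_{2\theta})$ uses locality (H2), which is not among the stated hypotheses. This cannot be avoided: for instance, $\cF(g,G,B)\coloneq\mathcal{V}(g,B)+\|G\|^p_{L^p(B)}+\lambda(G)\Lb^d(B)$, with $\lambda(G)=0$ if $G=W_0$ a.e.\ and $\lambda(G)=1$ otherwise, satisfies (H1) and (H4) with $\alpha=\beta=1$. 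Yet for $(u,U)=(0,0)$ and any $W_0$ with $\|W_0\|^p_{L^p(O)}<\Lb^d(O)$ and $\int_O W_0\,{\rm d}x=0\neq\int_{O_\theta}W_0\,{\rm d}x$ for all small $\theta$, one has $\m(0,0,O)\le\|W_0\|^p_{L^p(O)}$ but $\m(0,0,O_\theta)\ge\Lb^d(O_\theta)$, so the conclusion fails. The omission is harmless, since the lemma is only applied to $\cF\in\F^p_\star$.
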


\begin{proof}  
It is enough to repeat the arguments of \cite[Lemma~3.2]{FriMatZap}, replacing their \MMM Remark~3.1 \EEE by our Lemma~\ref{lemma: smaller larger},  the total variation of the symmetric distributional gradients with the reference functional $\mathcal{V}$,   and the use of \cite[Proposition~2.5]{FonsecaToader} by our Lemma~\ref{lemma: Extension of GBV}.\EEE  
\end{proof}
 
To proceed with the Global Method, it is convenient to introduce the collection of cubes 
\begin{equation*}
\mathcal{Q}(\Omega)\coloneq  \big\{Q_\nu (x, \rho)\subset \Omega\colon \, x \in \Omega,\,\, \nu \in \Sd, \text{ and } \rho>0\big\}.
\end{equation*}
Given $(u,U) \in {\rm SD}^p_\star(\Omega;\Rk)$, we introduce the measure  
\begin{equation}\label{def mu}
\mu \coloneq  \Lb^d\mres \Omega + |D^cu|+\mathcal{V}(u,\cdot)\mres J_u,
\end{equation}
and consider the minimisation problem $\m^\delta(u,U,O)$ defined for $\delta>0$ and $O\in\Op(\Omega)$ by
\begin{align*}
\m^\delta(u,U,O) \coloneq  \inf &\Big\{\sum\nolimits_{i=1}^\infty \m(u, U, Q_i)\colon \, Q_i \in  \MMM \mathcal Q(\Omega), \EEE \,\,Q_i \subset  O,\,\,  \\&\text{ with }Q_i \cap Q_j= \emptyset \,\,\, {\rm if } \, i \neq j,\,\, 
  	 {\rm diam}(Q_i) < \delta, \text{ and }  \  \mu\Big(O \setminus \bigcup\nolimits_{i=1}^\infty Q_i\Big)=0\Big\}.
\end{align*}
Clearly, the function $\delta \mapsto \m^\delta(u,U,O)$ is non-increasing, so that the quantity
\begin{align*}
\widetilde{\m} (u, U,O)\coloneq  \sup_{\delta >0} \m^\delta (u, U,O)= \lim_{\delta \to 0^+} \m^\delta(u, U,O)
\end{align*}
is well-defined.

The first step of the Global Method  consists in proving that  $\cF(u,U,O)=\widetilde{\m}(u,U,O)$. This is accomplished in the following lemma, whose proof is   along the lines of   \cite[Lemma~4.4]{CriFriSolo} and \cite[Lemma~4.2]{FonsHafParo}.  
\begin{lemma}
	\label{Lemma delta=tilde} Assume that $\cF\in \F^p_\star$.  Then,
	\begin{equation*}
	\cF(u, U,O) = \widetilde{\m}(u, U, O)
	\end{equation*}
    for all $(u, U)\in {\rm SD}_\star^p(\Omega;\Rk)$  and all 
	$O \in \mathcal O(\Omega)$.
     \begin{proof}
       Let us fix $(u,U)\in {\rm SD}_\star^p(\Omega;\Rk)$ and $O\in\Op(\Omega)$. Since for each cube $Q\in \mathcal{Q}(\Omega)$ with $Q\subset O$, we have $\m(u,U,Q)\leq \cF(u,U,Q)$ by definition, the measure property \ref{hyp:H1} then implies that $\m^\delta(u,U,O)\leq \cF(u,U,O)$ for all $\delta>0$, and, as a consequence, $\widetilde{\m}(u,U,O)\leq \cF(u,U,O)$.

       We now prove the converse inequality. To this end, let us fix $\delta>0$ and a family of cubes $\{Q^\delta_i\}_i\subset \cR(\Omega)$ as in the definition of $\m^\delta(u,U,O)$ such that 
    \begin{equation}\label{cubi minimali}
\sum_{i=1}^{\infty}\m(u,U,Q^\delta_i)\leq \m^\delta(u,U,O)+\delta.
       \end{equation}
By definition of $\m(u,U,Q^\delta_i)$, for each   $i\in\N$ we find   $(v_i^\delta, V^\delta_i)\in \SD^p_\star(Q^\delta_i;\Rk)$ such that $v_i^\delta = u$ near $\partial Q_i^\delta$,  $\int_{Q^\delta_i}(V^\delta_i-U)\,{\rm d}x=0$,  \EEE and 
\begin{align}\label{Quasiminimo su cubo}
\mathcal{F}(v_i^\delta,V^\delta_i,Q_i^\delta) \leq \m(u,U,Q_i^\delta) + \delta \mathcal{L}^d(Q_i^\delta).
\end{align}
For every   $n\in\N$ we then define 
\begin{align}\label{def of vdelta}
 &v^{\delta,n}  \coloneq  \sum_{i=1}^n v^\delta_i \chi_{Q^\delta_i} +  u \chi_{N^{\delta,n}}  \quad \text{ and } \quad v^\delta \coloneq  \sum_{i=1}^{\infty} v^\delta_i \chi_{Q^\delta_i} + u \chi_{N^\delta}, 
 \\ \notag 
 &V^{\delta,n}  \coloneq  \sum_{i=1}^n V^\delta_i \chi_{Q^\delta_i} +  U\chi_{N^{\delta,n}}  \quad \text{ and } \quad V^\delta \coloneq  \sum_{i=1}^{\infty} V^\delta_i \chi_{Q^\delta_i} + U\chi_{N^\delta},
\end{align}
 where we have set $N^{\delta,n} \coloneq  \Omega \setminus \bigcup_{i=1}^n Q^\delta_i$   and $N^\delta \coloneq  \Omega \setminus \bigcup_{i=1}^{\infty} Q^\delta_i$.  
 Since $v^\delta_i=u$ near $\partial Q^\delta_i$, it follows that $ (v^{\delta,n},V^{\delta,n})\in \SD^p_\star(O;\Rk)$ \BBB by Lemma~\ref{lemma: Extension of GBV}. \EEE  Moreover, by \eqref{cubi minimali}, \eqref{Quasiminimo su cubo}, the measure property \ref{hyp:H1},  the \EEE locality property \ref{hyp:H2}, and the upper and lower bounds in \ref{hyp:H4} we have     
 \begin{align} \notag
     \sup_{n\in\N} \Big(\mathcal{V}(v^{\delta,n},\Omega)+\|V^{\delta,n}\|^p_{L^p(\Omega)}\Big)&\leq \frac{1}{\alpha}\sup_{n\in\N}\Big(\cF(u,U,N^{\delta,n})+\sum_{i=1}^n\cF(v^{\delta,n}_i,V^{\delta,n}_i,Q^\delta_i)\Big)   
    \\ \notag & \leq \frac{1}{\alpha}\sup_{n\in\N}\Big(\cF(u,U,\Omega)+\m^\delta(u,U,O)+\delta\big(1+\Lb^d(O)\big) \Big)
     \\ &\leq \frac{ 2 \EEE \beta}{\alpha}\sup_{n\in\N}\Big(\mathcal{V}(u,\Omega)+\|U\|^p_{L^p(\Omega)}+ 2 + \EEE \Lb^d(\Omega)\Big)=:S<+\infty. \label{estimate V+||}
 \end{align}
    As $ v^{\delta,n}  \to v^\delta$  and $ V^{\delta,n} \to V^{\delta}$ pointwise $\Lb^d$-a.e.\ in $\Omega$ as $n\to+\infty$, by \cite[Theorem~4.7]{DonatiGBV} and \eqref{estimate V+||} we deduce that $\{(v^{\delta,n},V^{\delta,n})\}_n$ converges to $(v^\delta,V^\delta)$ in ${\rm SD}^p_\star(\Omega;\Rk)$ as $n\to+\infty$  if $p>1$\EEE.  To obtain the same property for $p=1$, it is enough to observe that computations similar to those that led to \eqref{estimate V+||} give
    \begin{equation*}
        \int_{O}|V^{\delta,n}-V^\delta|\,{\rm d}x=\sum_{i=n+1}^{\infty}\int_{Q_i^\delta}|V^{\delta}_i|\,{\rm d}x+\int_{N^\delta}|U|\,{\rm d}x\leq \frac{2\beta}{\alpha}\Big(\mathcal{V}(u,N^{\delta,n+1})+\|U\|_{L^1(N^{\delta,n+1})}+ \EEE \Lb^d(N^{\delta,n+1})\Big)
    \end{equation*}
      for every $n\in\N$, where we have also used that $(u,U)$ is a competitor for the minimisation problems $\m(u,U,Q^\delta_i)$. As $ \lim_{n\to \infty} \EEE \mu(N^{\delta,n})=\mu(N^{\delta})=0$, this implies that $\{V^{\delta,n}\}_n$ converges  to $V^\delta$ strongly in $L^1(O;\Rkd)$.\EEE
    
    Therefore, by locality \ref{hyp:H2}, the  lower semicontinuity property \ref{hyp:H3}, \eqref{cubi minimali}, and \eqref{Quasiminimo su cubo}
  it follows that 
\begin{align}\notag 
\mathcal{F}(v^\delta,V^\delta,O) & \leq  \liminf_{n\to+\infty}\cF(v^{\delta,n},V^{\delta,n},O)
= \liminf_{n\to+\infty}\Big( \sum_{i=1}^{n}\mathcal{F}(v_i^\delta,V^\delta_i,Q_i^\delta)  + \mathcal{F}(u,U,N^{\delta,n} \cap O)\Big) \\&\leq \notag \sum_{i=1}^{\infty}\mathcal{F}(v_i^\delta,V^\delta_i,Q_i^\delta)  + \mathcal{F}(u,U,N^\delta \cap O) 
\leq  \sum_{i=1}^{\infty }\big(\m(u,U,Q_i^\delta) + \delta \mathcal{L}^d(Q_i^\delta)\big) \notag\\ 
\label{eq: vdelta bound}
&\leq \m^\delta(u,U,O) + \delta(1+\mathcal{L}^d(O)),    
\end{align}
 where in the second step we used  $\mu(\partial Q^\delta_i)=0$  for all $i \in \N$    and in the fourth step \EEE we have used  that $\mu(N^\delta \cap O)  =  \mathcal{F}(u,U,N^\delta \cap O) = 0$, which are a simple consequence of the definition of $\{Q^\delta_i\}_i$  and the upper bound in  \ref{hyp:H4}. 

To conclude the proof,  it \EEE is now enough to show that $\{(v^\delta,V^\delta)\}_\delta$ converges to $(u,U)$ in $\SD^p_\star(O;\Rk)$ as $\delta\to 0^+$, since passing to the liminf for $\delta\to 0^+$ in \eqref{eq: vdelta bound} and using the lower semicontinuity \ref{hyp:H3} we would then conclude $\cF(u,U,O)\leq \widetilde{\m}(u,U,O)$.

We observe that by Remark~\ref{remark: lowersemicontinuity} and   \eqref{estimate V+||} there exists a subsequence,  still denoted by $\delta$ for convenience, \EEE such that
\begin{equation}\label{estimate on the difference}
  \MMM   \lim_{\delta \to 0^+} \EEE \mathcal{V}(u-v^{\delta},O)\leq 2S,
\end{equation}
which, up to \MMM passing to a further subsequence, \EEE implies 
\begin{equation}\label{estimate jump set}
    \MMM   \sup_{\delta>0} \EEE\Hd(J^1_{u-v^{\delta}}\cap O)\leq 3S.
\end{equation}
 We now show  that \MMM $\{v^{\delta}\}_{\delta}$ \EEE  converges to  $u$ in $L^0(O;\Rk)$ as \MMM $\delta\to 0^+$. \EEE  To this end, \EEE we apply  Corollary~\ref{cor:small sets} and \EEE  Remark~\ref{remark: scaling p=1}  on each cube $Q^\delta_i$ to the function $u- v_i^{\delta}$, which equals zero near $\partial Q^{\delta}_i$. We get Borel sets $\omega^{\delta}_i \subset Q^{\delta}_i$ such that  
\begin{gather}\label{small sets u-vdelta}\big(\mathcal{L}^d(\omega^{\delta}_i)\big)^{(d-1)/d} \leq  C_{\rm Poin}    \mathcal{H}^{d-1}\big( J^1_{u- v^{\delta}}  \cap Q^{\delta}_i \big), \\
\label{u-vdelta}\| u - v^{\delta}_i \EEE \|_{L^{1}(Q^{\delta}_i \setminus \omega^{\delta}_i)}\leq C_{\rm Poin}  {\delta} \mathcal{V}(u- v^{\delta}, \EEE  Q^{\delta}_i \EEE \setminus J^1_{u-v^{\delta}}) 
\end{gather}
 for a constant $C_{\rm Poin}>0$, independent of $i$ and of $\delta$.  In light of \eqref{def of vdelta} and the fact that  $\Lb^d(N^\delta)=0$, we can estimate
\begin{align}\label{titire1}
\int_O|u-v^{\delta}|\land 1\, {\rm d}x &  = \sum_{i=1}^{\infty} \int_{Q^{\delta}_i} |u-v_i^{\delta}|\land 1\, {\rm d}x \leq \sum_{i=1}^{\infty} \Big( \| u -v^{\delta}_i \|_{L^{1}(Q^{\delta}_i \setminus \omega^{\delta}_i)} + \mathcal{L}^d(\omega^{\delta}_i)    \Big).
\end{align}
Recalling that the sets $Q^{\delta}_i$ are pairwise disjoint, we may use \eqref{u-vdelta}   and \eqref{estimate on the difference} to get
\begin{align}\label{titire2}
\sum_{i=1}^{\infty} \| u -v^{\delta}_i \|_{L^{1}(Q^{\delta}_i \setminus \omega^{\delta}_i)} \leq   C\delta
\end{align}
for a constant $C$ independent of $\delta$.
Finally, since  $\omega^{\delta}_i \subset Q^i_{\delta}$ and ${\rm diam}(Q^{\delta}_i) \leq \delta$, we may use \eqref{estimate jump set},  \eqref{small sets u-vdelta}, and H\"older's inequality to obtain
\begin{align}\label{titire3}
\sum_{i=1}^{\infty} \mathcal{L}^d(\omega^{\delta}_i) \le \delta\sum_{i=1}^{\infty} \big(\mathcal{L}^d(\omega^{\delta}_i)\big)^{(d-1)/d}\le   C_{\rm Poin}  \delta \mathcal{H}^{d-1}\big( J^1_{u-v^{\delta}} \EEE \cap O \big)\leq C\delta. 
\end{align}
Putting together \eqref{titire1}--\eqref{titire3}, we get
\begin{equation*}
   \MMM   \lim_{\delta \to 0^+} \EEE \int_{O}|u-v^{\delta}|\land 1\,{\rm d}x=0.
\end{equation*}
  Since the equality above is equivalent to the $L^0(O;\Rk)$-convergence of $\{v^{\delta}\}_\delta $ to $u$ and the limit function $u$ does not depend on the chosen   subsequence of  $\delta$,   by Urysohn's principle we get    that the entire sequence \EEE $\{v^\delta\}_\delta$ converges to $u$ in $L^0(O;\Rk)$ as $\delta\to 0^+$.
  
  To conclude, we are left with showing that $\{V^\delta\}_\delta$ converges to $U$ weakly in $L^p(O;\Rkd)$ if $p>1$ and weakly$^*$ in $\mathcal M_b(O;\Rkd)$ if $p=1$. To prove this,  exploiting $\int_{Q^\delta_i}(V^\delta_i-U)\,{\rm d}x=0$, \EEE it is enough to use \cite[Lemma~3.3]{FonsHafParo} (note that the arguments \MMM  work also for   $\Rkd$ in place of $\R^{d\times N\times N}$), \EEE which implies that  $\{V^\delta\}_\delta$ converges to $U$ weakly$^*$  in $\mathcal{M}_b(O;\Rkd)$. This gives the desired convergence if $p=1$. If $p>1$, it is enough to note that by \eqref{estimate V+||} the sequence $\{V^\delta\}_\delta$ is uniformly bounded in $L^p(O;\Rkd)$, and thus converges weakly in $L^p(O;\Rkd)$ as well. This concludes the proof of the lemma. 
     \end{proof}
\end{lemma}

The following result shows that for $\mu$-a.e.\ $x\in \Omega$, the Radon-Nikodým derivative of $\cF$ with respect to $\mu$ can be characterised by means of the minimum values of the problems $\m$ on small cubes.

\begin{theorem}\label{thm: minimum=F}
Assume that $\mathcal{F}\in\F^p_\star$.  Then, for every $\nu \in \Sd$ and   $(u,U) \in \SD^p_\star(\Omega;\Rk)$, 
	we have 
	\begin{equation*}
	\lim_{\rho \to 0^+}\frac{{\mathcal F}(u, U, Q_\nu(x, \rho))}{\mu( Q_\nu(x,\rho))}= \lim_{\rho \to 0^+} \frac{\m (u, U, Q_\nu(x, \rho))}{\mu (Q_\nu(x,\rho))}
	\end{equation*}
	for $\mu$-a.e.\ $x \in \Omega$, where $\mu$ is the measure introduced in \eqref{def mu}.
	\end{theorem}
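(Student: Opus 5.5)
This follows the standard argument of the Global Method, as in \cite[Theorem~4.5]{FonsHafParo} and \cite[Lemma~3.5]{BFM1998}. Fix $\nu\in\Sd$ and $(u,U)$, and let $\mu$ be as in \eqref{def mu}. Since $\cF(u,U,\cdot)$ is a bounded Radon measure absolutely continuous with respect to $\mu$ (by \ref{hyp:H1} and the upper bound in \ref{hyp:H4}), the Besicovitch differentiation theorem shows that the left-hand limit exists and is finite for $\mu$-a.e.\ $x$. It equals ${\rm d}\cF(u,U,\cdot)/{\rm d}\mu(x)$, and cubes $Q_\nu(x,\rho)$ form a differentiation basis with $\mu(\partial Q_\nu(x,\rho))=0$ for all but countably many $\rho$. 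The inequality $\m(u,U,Q)\le \cF(u,U,Q)$ is trivial, since $(u,U)$ is itself a competitor. So it remains to show
\[
\lim_{\rho\to0^+}\frac{\cF(u,U,Q_\nu(x,\rho))}{\mu(Q_\nu(x,\rho))}\le \liminf_{\rho\to0^+}\frac{\m(u,U,Q_\nu(x,\rho))}{\mu(Q_\nu(x,\rho))}\quad\text{for $\mu$-a.e.\ $x$.}
\]

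I will argue by contradiction. Suppose the set $E$ of points where this fails has $\mu(E)>0$. Then there are $t>0$ and a Borel $E_t\subset E$ with $\mu(E_t)>0$ such that each $x\in E_t$ has arbitrarily small $\rho$ with
\[
\m(u,U,Q_\nu(x,\rho))+t\,\mu(Q_\nu(x,\rho))\le \cF(u,U,Q_\nu(x,\rho)).
\]
Choose an open $O\supset E_t$ with $\mu(O\setminus E_t)$ small. Fix $\delta>0$ and use the Vitali–Besicovitch covering theorem for $\mu$ to extract disjoint cubes $Q_i=Q_\nu(x_i,\rho_i)\subset O$ with $x_i\in E_t$, ${\rm diam}\,Q_i<\delta$, the inequality above holding on each, and $\mu(E_t\setminus\bigcup Q_i)=0$. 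Next, complete this family into an admissible covering of $O$ for $\m^\delta$. Cover $O\setminus\bigcup_i\overline{Q_i}$, which is open, by further disjoint small cubes up to a $\mu$-null set; this uses that $\mu(\partial Q)=0$ for a.e.\ choice of sizes. On these extra cubes $\m\le\cF$ trivially. Summing gives
\[
\m^\delta(u,U,O)\le \sum_i\cF(u,U,Q_i)-t\sum_i\mu(Q_i)+\cF\Big(u,U,O\setminus\bigcup_i Q_i\Big)\le \cF(u,U,O)-t\,\mu(E_t).
\]
Letting $\delta\to0$ and invoking Lemma~\ref{Lemma delta=tilde}, which gives $\widetilde{\m}(u,U,O)=\cF(u,U,O)$, yields $\cF(u,U,O)\le \cF(u,U,O)-t\mu(E_t)$. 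This contradicts $\mu(E_t)>0$ together with the finiteness of $\cF$.

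The main obstacle is the bookkeeping in the covering step. The covering must satisfy $\mu(O\setminus\bigcup Q_i)=0$ exactly as required in the definition of $\m^\delta$, not merely cover $E_t$. The boundaries of the chosen cubes must be $\mu$-negligible, which I will arrange by restricting to radii outside the countable set of atoms of $\rho\mapsto\mu(\partial Q_\nu(x,\rho))$. One also needs measurability of $E_t$, or at least working with outer measure. I expect these points to be handled exactly as in \cite[Lemma~3.5]{BFM1998}, since only \ref{hyp:H1}, \ref{hyp:H4} and Lemma~\ref{Lemma delta=tilde} enter. Finally, the existence of the right-hand limit follows once the two sides are shown to agree, because the left-hand limit exists.
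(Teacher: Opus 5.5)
Your strategy is the one the paper relies on: the covering-by-contradiction argument of \cite[Theorem~4.3]{FonsHafParo}, using Lemma~\ref{Lemma delta=tilde}. However, the step where you make the covering admissible fails as written, and the ingredient that repairs it is missing. An admissible family for $\m^\delta(u,U,O)$ consists of disjoint open cubes with $\mu(O\setminus\bigcup_i Q_i)=0$. A point of $\partial Q_i$ cannot belong to an open cube disjoint from $Q_i$, so this forces $\mu(\partial Q_i\cap O)=0$ for every cube used. Your set $E_t$ only provides, for each $x\in E_t$, \emph{some} sequence $\rho_n\to0$ with $\m(u,U,Q_\nu(x,\rho_n))+t\mu(Q_\nu(x,\rho_n))\le\cF(u,U,Q_\nu(x,\rho_n))$. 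Nothing in your argument prevents all these $\rho_n$ from lying in the countable set $\{\rho\colon\mu(\partial Q_\nu(x,\rho))>0\}$, which is in general nonempty since $\mathcal V(u,\cdot)\mres J_u$ may charge faces of cubes. Discarding those radii can then leave no cubes at $x$, and you no longer have a fine cover of $E_t$.

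To pass from such a radius $\rho$ to nearby admissible ones you need upper semicontinuity from inside of $\rho\mapsto\m(u,U,Q_\nu(x,\rho))$. This does not follow from \ref{hyp:H1}, \ref{hyp:H4} and Lemma~\ref{Lemma delta=tilde} alone. It is exactly Lemma~\ref{easy}, the second ingredient the paper feeds into the argument of \cite{FonsHafParo}. With it, $\limsup_{\theta\to0^+}\m(u,U,Q_\nu(x,(1-\theta)\rho))\le\m(u,U,Q_\nu(x,\rho))$; for $p=1$ use part (b) with $O_\theta$. Moreover, $\cF(u,U,Q_\nu(x,(1-\theta)\rho))\to\cF(u,U,Q_\nu(x,\rho))$ and $\mu(Q_\nu(x,(1-\theta)\rho))\to\mu(Q_\nu(x,\rho))$ by inner continuity of measures. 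Since $\mu(Q_\nu(x,\rho))>0$, the inequality persists with $t/2$ for all small $\theta>0$, and you can choose $(1-\theta)\rho$ outside the countable exceptional set.

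A smaller point: for a countable family, $O\setminus\bigcup_i\overline{Q_i}$ need not be open. The cubes may accumulate on a subset of $O\setminus E_t$ of positive $\mu$-measure, which then cannot be covered by cubes disjoint from all $Q_i$. To fix this, keep only finitely many $Q_1,\dots,Q_N$ with $\sum_{i\le N}\mu(Q_i)\ge\mu^*(E_t)/2$. Then cover the open set $O\setminus\bigcup_{i\le N}\overline{Q_i}$, up to a $\mu$-null set, by small cubes with $\mu$-negligible boundaries via Vitali--Besicovitch. Run your estimate with $t\mu^*(E_t)/2$ in place of $t\mu(E_t)$; working with the outer measure also removes the measurability concern. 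With these two changes the argument is complete.
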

    \begin{proof}
        The result can be proved arguing exactly as in \cite[Theorem~4.3]{FonsHafParo}, replacing \cite[Lemmas 4.1 and 4.2]{FonsHafParo}  by our Lemmas \ref{easy} and \ref{Lemma delta=tilde}, respectively.  Indeed, \EEE \MMM the proof in \cite{FonsHafParo} \EEE relies on measure theoretical arguments alone and  does not require the use of \EEE the specific structure of the underlying function space.
    \end{proof}

\subsection{Characterisation of bulk and surface densities} In the rest of the section we prove Theorem~\ref{thm: integral star}. We prove separately equalities \eqref{repr a star thm} and \eqref{repre j star thm}, addressing first the integral representation of  the absolutely continuous part of $\cF$.

\begin{proposition}\label{prop: repre AC}
    Assume that   $\cF\in\mathfrak{F}^p_\star$.   Then  for every $(u,U)\in \SD^p_\star(\Omega;\Rk)$ and $B\in\B(\Omega)$ we have 
    \begin{equation}\label{repr a star prop}
	\mathcal{F}^a(u,U,B)= \int_Bf_{\rm bulk}\big(x,u,   \nabla  u, U\big)\,{\rm d} x,
    \end{equation}
    where $f_{\rm bulk}$ is the function defined by \eqref{def f}.
\end{proposition}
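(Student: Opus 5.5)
We follow the standard pattern of the global method. By \ref{hyp:H1} and \ref{hyp:H4}, $\cF(u,U,\cdot)\ll\mu$. By Besicovitch differentiation, for $\Lb^d$-a.e.\ $x_0\in\Omega$ we have
\[
\frac{{\rm d}\cF^a(u,U,\cdot)}{{\rm d}\Lb^d}(x_0)=\lim_{\rho\to0^+}\frac{\cF(u,U,Q(x_0,\rho))}{\rho^d},
\]
and since $\frac{{\rm d}\mu}{{\rm d}\Lb^d}(x_0)=1$ at such points, Theorem~\ref{thm: minimum=F} with $\nu=e_d$ gives
\[
\frac{{\rm d}\cF^a}{{\rm d}\Lb^d}(x_0)=\lim_{\rho\to0^+}\rho^{-d}\,\m(u,U,Q(x_0,\rho)).
\]
The goal is therefore to show that, for $\Lb^d$-a.e.\ $x_0$,
\[
\lim_{\rho}\rho^{-d}\,\m(u,U,Q(x_0,\rho))=\limsup_\rho\rho^{-d}\,\m\big(\ell_{x_0,u(x_0),\nabla u(x_0)},U(x_0),Q(x_0,\rho)\big)=f_{\rm bulk}(x_0,u(x_0),\nabla u(x_0),U(x_0)).
\]
We fix $x_0$ to be a Lebesgue point of $U$ and of $|U|^p$, a point of approximate differentiability of $u$, and a point where $\rho^{-d}\mathcal{V}(u,Q(x_0,\rho))\to\sum_i|\nabla u_i(x_0)|$. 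The last condition means in particular that the Cantor and jump parts of $\mathcal{V}(u,\cdot)$ have zero density at $x_0$. We also require the blow-up conclusions of Lemma~\ref{lemma blow up ac} to hold at $x_0$.

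Next we compare the two minimisation problems in both directions using the Fundamental Estimate, Lemma~\ref{lemma: Fundamental Estimate}, on the cubes $Q(x_0,(1-\theta)\rho)\subset\subset Q(x_0,\rho)$. Take an almost minimiser $(v,V)$ for $\m(\ell,U(x_0),Q(x_0,(1-\theta)\rho))$, where $\ell\coloneq\ell_{x_0,u(x_0),\nabla u(x_0)}$. Glue $v$ to $u$ in the annulus, and shift the matrix field by a constant on the annulus so that the mean constraint $\int_Q(W-U)=0$ holds. The cost of this gluing is controlled by three quantities:
\begin{itemize}
\item the term $M\|u-\ell\|_{L^1}$ from the Fundamental Estimate, whose constant scales like $1/\rho$;
\item $\mathcal{V}$ and $\|U\|^p$ on the annulus, which are $O(\theta)\rho^d$ by the choice of $x_0$ together with Lemma~\ref{lemma: smaller larger};
\item the correction to $V$, which is small because $\rho^{-d}\int_{Q}|U-U(x_0)|\to0$, so that \ref{hyp:H4} yields an $o(\rho^d)$ error. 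Here we use continuity of $t\mapsto|t|^p$ on bounded sets, and for $p>1$ the elementary inequality $|a+b|^p\le(1+\eta)|a|^p+C_\eta|b|^p$.
\end{itemize}
Letting $\rho\to0$ and then $\theta,\eta\to0$, with Lemma~\ref{easy} handling the passage from $(1-\theta)\rho$ back to $\rho$, gives one inequality. The reverse inequality is obtained symmetrically, gluing a competitor for $u$ to $\ell$.

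The main obstacle is the term $\rho^{-d-1}\|u-\ell\|_{L^1(Q(x_0,\rho)\setminus Q(x_0,(1-\theta)\rho))}$. For ${\rm GBV}_\star$ functions this need not vanish, and $u$ need not even be $L^1$ near $x_0$, since only approximate differentiability is available. We handle it with the blow-up Lemma~\ref{lemma blow up ac}. Based on the Poincaré inequality Theorem~\ref{Thm: Poincaré inequality}, this lemma should provide a modified function $\hat u_\rho$ with three properties: it coincides with $u$ outside an exceptional set $\omega_\rho$ with $\Lb^d(\omega_\rho)=o(\rho^d)$; its $\mathcal{V}$ on the cube exceeds that of $u$ by at most $o(\rho^d)$; and $\rho^{-d-1}\|\hat u_\rho-\ell\|_{L^1(Q(x_0,\rho))}\to0$. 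This is obtained by a local truncation of $u-\ell$ at small levels, which costs only jumps of small total $\Hd$-measure because the jump part has zero density at $x_0$.

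We therefore run the gluing with $\hat u_\rho$ in place of $u$, and choose the annulus where $\hat u_\rho=u$ in a neighbourhood, or otherwise absorb the discrepancy through the locality \ref{hyp:H2} of $\m$ in the boundary datum. This argument needs some care: the boundary condition "$u=g$ near $\partial O$" is sensitive to modifications on small sets. If a direct choice fails, we instead average over a family of nested annuli, as in the proof of Lemma~\ref{lemma: Fundamental Estimate}, to find one on which $\omega_\rho$ contributes negligibly.
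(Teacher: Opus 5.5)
The overall architecture matches the paper: Theorem~\ref{thm: minimum=F}, gluing via Lemma~\ref{lemma: Fundamental Estimate}, correcting the mean of the matrix field by a constant on the annulus, and the blow-up Lemma~\ref{lemma blow up ac}. You also correctly isolate the real obstacle: near $x_0$ the function $u$ need not be integrable, so the Fundamental Estimate cannot be applied to $u$ itself, yet the boundary datum must stay exactly $u$ near $\partial Q(x_0,\rho)$. However, the proposal stops at this point, and none of the remedies you list works as stated.
\begin{itemize}
\item The set $\omega_\rho=\{\hat u_\rho\neq u\}$ can be scattered throughout the cube, so there need not be any annulus on which $\hat u_\rho=u$ in a neighbourhood.
\item Both \ref{hyp:H2} and the constraint ``$u=g$ near $\partial O$'' in $\C^p_\star$ only see a.e.\ equality on open sets, so they cannot absorb a discrepancy on a set of positive measure.
\item Averaging smooth cut-offs between $\hat u_\rho$ and $u$, as in Lemma~\ref{lemma: Fundamental Estimate}, reintroduces the error $M\|\hat u_\rho-u\|_{L^1}$ with $M\sim(\theta\rho)^{-1}$, which is exactly the uncontrolled quantity.
\end{itemize}
Note also that the truncation itself creates no new jumps. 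The only new jump cost arises where you switch from $\hat u_\rho$ back to $u$.

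The missing idea is a \emph{sharp} cut along a well-chosen cube boundary. In the paper's notation, $\hat u_\rho=\widetilde u(x_0)+\rho\,\overline u_\rho$. Set $z=\hat u_\rho$ in $Q(x_0,s)$ and $z=u$ outside; then $z$ is in $\GBVs$ by Lemma~\ref{lemma: Extension of GBV}. Because $\mathcal V$ uses the truncation $|[\cdot]|\wedge 1\le 1$, the new jump on $\partial Q(x_0,s)\cap\omega_\rho$ costs at most $k\,\Hd(\partial Q(x_0,s)\cap\omega_\rho)$, however large $u$ is there. By the coarea formula there is $s\in((1-\theta)\rho,\rho)$ with $\Hd(\partial Q(x_0,s)\cap\omega_\rho)\le 2\Lb^d(\omega_\rho)/(\theta\rho)$. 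This is $o(\rho^d)$ only if $\Lb^d(\omega_\rho)=o(\rho^{d+1})$. The rate $\Lb^d(\omega_\rho)=o(\rho^d)$ that you require of $\hat u_\rho$ only gives an interface of $\Hd$-measure $o(\rho^{d-1})/\theta$, which is not negligible against $\rho^d$.

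The stronger rate is \eqref{claim 2 blowup ac}. It follows from the Poincar\'e-type bound $\Lb^d(\omega_\rho)\le C_{\rm Poin}\Hd(J^r_u\cap Q(x_0,\rho))^{d/(d-1)}=o(\rho^{d^2/(d-1)})$ together with $d^2/(d-1)\ge d+1$. The paper uses $r=0$; with $r=1$ the bound follows directly from your zero-density assumption on the jump part of $\mathcal V(u,\cdot)$.

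With $z$ in hand, the paper takes the almost minimiser on $Q(x_0,(1-3\theta)\rho)$. It applies the Fundamental Estimate on the annulus $Q(x_0,(1-\theta)\rho)\setminus\overline{Q(x_0,(1-2\theta)\rho)}$, which lies inside $Q(x_0,s)$. There $z=\hat u_\rho$ is bounded and $o(\rho^{d+1})$-close to $\ell$ in $L^1$, while $z=u$ near $\partial Q(x_0,\rho)$.

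The reverse inequality needs the same device, so calling it ``symmetric'' hides the key step. One picks $s\in((1-4\theta)\rho,(1-3\theta)\rho)$ by the same Fubini argument and takes an almost minimiser for $\m(u,U,Q(x_0,s))$. One then extends it by the truncated function outside $Q(x_0,s)$, and only afterwards glues to $\ell$ further out.
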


\begin{proof} 
To prove \eqref{repr a star prop}, we will characterise the Radon-Nikodým derivative of $\cF$ with respect to the Lebesgue measure, by showing that
	\begin{align}\label{fproof}
	\frac{{\rm d} \mathcal F(u, U,\cdot)}{{\rm d} \mathcal L^d} (x)=	
	f_{\rm bulk}\big(x, \widetilde{u}(x),   \nabla   u(x), \widetilde{U}(x)\big)
	\end{align}
     for $\Lb^d$-a.e.\ $x\in\Omega$, \MMM where here and in the following $\sim$ refers to the approximate limit, see \eqref{aplim} below. \EEE  Let us fix a point $x\in\Omega$ of approximate continuity of $u$, \BBB satisfying   \begin{align}\label{Lebesgue gradients0}
        &\lim_{\rho\to 0^+}\frac{1}{\rho^{d}}\int_{Q(x,\rho)}|\nabla u(y)-\nabla u(x)|\,{\rm d}y=0,\\\label{non singular0}
       & \lim_{\rho\to 0^+}\frac{1}{\rho^d}\Hd(J_{u}\cap Q(x,\rho))=0 \quad \text{ and } \lim_{\rho\to 0^+}\frac{1}{\rho^d}|D^cu|(Q(x,\rho))=0,
    \end{align}     
    and 
\begin{gather}\label{Besicovitch}  \frac{{\rm d}\cF (u,U,\cdot)}{{\rm d} \Lb^d} (x) =
\lim_{\rho \to 0^+}\frac{\cF(u, U,Q(x,\rho))}{\rho^d} =
\lim_{\rho \to 0^+}\frac{\m(u, U,Q(x,\rho))}{\rho^d},\\
\label{Lebesgue G}
\lim_{\rho\to 0^+}\frac{1}{\rho^d}\int_{Q(x,\rho)}|U(y)-\widetilde{U}(x)|\,{\rm d}x=0,
\end{gather}
\BBB and such that the statement of Lemma~\ref{lemma blow up ac} holds. \EEE 
Note that these conditions are satisfied by  $\Lb^d$-a.e.\ point of $\Omega$ thanks to Proposition~\ref{Prop:fine prop GBV}, Besicovitch and Lebesgue Derivation Theorems, and Theorem~\ref{thm: minimum=F}, so that it will be sufficient to show \eqref{fproof} for such a point $x$.

 Recalling the definition \eqref{def f} of the bulk density \MMM  $f_{\rm bulk}$, \EEE equality \eqref{Besicovitch} implies that \eqref{fproof} is equivalent to 
\begin{align}\label{bulk term to bound}
	 \lim_{\rho \to 0^+}\frac{\m(u,U, Q (x, \rho))}{\rho^d}=\lim_{\rho \to 0^+}
	\frac{\m(\ell_{x},\widetilde{U}(x), Q (x, \rho))}
	{\rho^d},
\end{align}
 where, for brevity,  we write  $\ell_{x} \coloneq  \ell_{x,\widetilde{u}(x), \nabla   u(x)}= \widetilde{u}(x)+\nabla  u(x)(\cdot-x)$. 
 This equality  is proved in \EEE two steps, dealing with the two inequalities in \eqref{bulk term to bound} separately. Before doing so, we  introduce \EEE some further notation related to \EEE the statement of Lemma~\ref{lemma blow up ac}. We   let $u_\rho$ and $\overline{u}_\rho$ be the functions defined for $y\in Q(x,\rho)$ by $u_\rho(y)\coloneq ({u}(y)-\widetilde{u}(x))/\rho$ and $\overline{u}_\rho(y)\coloneq (u_\rho(y)\lor \tau'(u_\rho,Q(x,\rho))\land \tau''(u_\rho,Q(x,\rho))$. Thanks to the  lemma, we have    
    \begin{align}
    \label{blowup ac 2}
    & \lim_{\rho\to 0^+}\frac{1}{\rho^{d+1}}\Lb^d(\{u_\rho\neq \overline{u}_\rho\})=0,
 \\ \label{blowup ac 3 u}  &  \lim_{\rho\to 0^+}\frac{1}{\rho^{d}}\int_{Q(x,\rho)}|\overline{u}_\rho(y)-\nabla u(x)\frac{(y-x)}{\rho}|\,{\rm d}y=0,\\ \label{blowup ac 4 u}
    &\lim_{\rho\to 0^+}\frac{1}{\rho^d}\int_{Q(x,\rho)}|\rho \nabla \overline{u}_\rho(y)-\nabla u(x)|\,{\rm d}y=0,\\\label{blowup ac 5 u}
    &\lim_{\rho\to 0^+}\frac{1}{\rho^d}\Hd(J_{\overline{u}_\rho}\cap Q(x,\rho))=0 \quad \text{ and } \lim_{\rho\to 0^+}\frac{1}{\rho^{d-1}}|D^c\overline{u}_\rho|(Q(x,\rho))=0.
    \end{align}

\medskip

\noindent \emph{Step 1.} ($\leq$) \MMM In this step we prove that \EEE  
 \begin{equation}\label{claim leq bulk}
      \lim_{\rho \to 0^+}\frac{\m(u,U, Q (x, \rho))}{\rho^d}\leq \lim_{\rho \to 0^+}
	\frac{\m(\ell_{x},\widetilde{U}(x), Q (x, \rho))}
	{\rho^d}.
\end{equation}
To this end, let us fix $\theta\in (0,1/4)$, set $Q^{\theta,\rho}\coloneq Q(x,(1-3\theta)\rho)$, and consider  $(v_\rho,V_\rho)$ belonging to $\C^p_\star(\ell_x,\widetilde{U}(x),Q^{\theta,\rho})$ such that 
\begin{equation}\label{quasiminimiser 1-3theta}
    \cF\big(v_\rho,V_\rho,Q^{\theta,\rho}\big)\leq \m\big(\ell_x,\widetilde{U}(x),Q^{\theta,\rho}\big)+\rho^{d+1}.
\end{equation}
\MMM We extend the function by setting  $v_\rho=\ell_x$  on  $Q(x,\rho)\setminus Q^{\theta,\rho}$.   
In view of \eqref{blowup ac 2},  Fubini's Theorem   implies that for every $\rho>0$  there exists  $s_\rho\in ((1-\theta) \rho,\rho)$ such that 
\begin{equation}\label{good shell ac}
    \lim_{\rho\to 0^+}\frac{1}{\rho^d}\Hd(\partial Q(x,s_\rho)\cap \{\overline{u}_\rho\neq u_\rho\})=0.
\end{equation}
We introduce the functions $\{z_\rho\}_\rho$  defined by \EEE
\begin{equation}\label{blowup ac 1} 
    z_{\rho}(y)\coloneq 
    \begin{cases}  \overline{u}_\rho \EEE (y) &  \text{ if $y\in Q(x,s_\rho)$,}\\
    u_\rho(y) &\text{ if $y\in Q(x,\rho)\setminus Q(x,s_\rho)$,}
    \end{cases}
\end{equation}
which, thanks to Lemma~\ref{lemma: Extension of GBV}, belong to $ \GBVs(Q(x,\rho);\Rk)$.  \EEE\EEE 
We note that by \eqref{Lebesgue gradients0}--\eqref{non singular0},     \eqref{blowup ac 3 u}--\eqref{blowup ac 5 u}, and \eqref{good shell ac}  they satisfy 
\begin{align}
    \label{blowup ac 3}  &  \lim_{\rho\to 0^+}\frac{1}{\rho^{d}}\int_{Q(x,(1-\theta)\rho)}|{z}_\rho(y)-\nabla u(x)\frac{(y-x)}{\rho}|\,{\rm d}y=0,\\ \label{blowup ac 4}
    &\lim_{\rho\to 0^+}\frac{1}{\rho^d}\int_{Q(x,\rho)}|\rho \nabla {z
    }_\rho(y)-\nabla u(x)|\,{\rm d}y=0,\\\label{blowup ac 5}
    &\lim_{\rho\to 0^+}\frac{1}{\rho^d}\Hd(J_{{z}_\rho}\cap Q(x,\rho))=0 \quad \text{ and } \lim_{\rho\to 0^+}\frac{1}{\rho^{d-1}}|D^c{z}_\rho|(Q(x,\rho))=0.
\end{align}
 Indeed, property  \eqref{blowup ac 3} is an immediate consequence of  \eqref{blowup ac 3 u} and of \eqref{blowup ac 1},  and   \eqref{blowup ac 4} follows from \eqref{Lebesgue gradients0} and  \eqref{blowup ac 4 u}. \EEE
As for the remaining property \eqref{blowup ac 5}, we use  \eqref{non singular0}, \eqref{blowup ac 5 u},   \eqref{good shell ac}, and 
\begin{equation*}
     J_{z_\rho}\subset \big(J_{u}\cap Q(x,\rho)\big)\cup  \big(\partial Q(x,s_\rho)\cap \{\overline{u}_\rho\neq u_\rho\}\big).
\end{equation*}
We set   
 \begin{equation}\label{def sets O O' O''}
    O'\coloneq Q(x,{1-2\theta}),\quad O\coloneq Q(x,1-\theta),\quad \text{ and }O''\coloneq Q(x,1)\setminus \overline{ \MMM Q(x,{1-3\theta}) \EEE },
\end{equation}
 and  
\begin{equation}\label{def Wrho 1} W_\rho(y)\coloneq  
\begin{cases}
 V_\rho(y) &\text{ in }Q^{\theta,\rho},\\
\displaystyle \frac{1}{\mathcal L^d(Q(x,\rho)\setminus Q^{\theta,\rho})}\Big(\int_{Q(x,\rho)} U(y)\, {\rm d} y-  \mathcal{L}^d(Q^{\theta,\rho}) \widetilde{U}(x)\EEE \Big) &\text{ in }Q(x,\rho)\setminus Q^{\theta,\rho}.
\end{cases}
\end{equation}
 Since \EEE  $(v_\rho,V_\rho)\in\C^p_\star(\ell_x,\widetilde{U}(x),Q^{\theta,\rho})$, we have
\begin{equation*}
    \int_{Q^{\theta,\rho}}  V_\rho(y) \, {\rm d} y = \int_{Q^{\theta,\rho}} \widetilde{U}(x) \,{\rm d} y =  \mathcal{L}^d(Q^{\theta,\rho}) \widetilde{U}(x), \EEE
\end{equation*}    
so that 
\begin{equation}\label{wrho average}
    \int_{Q(x,\rho)}W_\rho(y) \, {\rm d}y=\int_{Q(x,\rho)}{U}(y)\,{\rm d}y.
\end{equation}

We now want to apply Lemma~\ref{lemma: Fundamental Estimate} to $v_\rho$  with \EEE $u$ as a boundary datum near $\partial Q(x,\rho)$. We begin by observing that $v_\rho\in L^1(O_{\rho,x}\setminus O'_{\rho,x};\Rk)$ since $v_\rho=\ell_x$  $\Lb^d$-a.e.\ in $Q(x,\rho)\setminus Q^{\theta,\rho}$. Moreover, from the definition \eqref{blowup ac 1} of $z_\rho$, the fact that $s_\rho\geq (1-\theta)\rho$,  and the fact that $\bar{u}_\rho$ is bounded \EEE it follows immediately that $\widetilde{u}(x)+\rho z_\rho\in L^1(O_{\rho,x}\setminus O'_{\rho,x};\Rk)$. Hence,
for fixed $\eta>0$, we may apply Lemma~\ref{lemma: Fundamental Estimate} to $v_\rho$ (in place of $u$) and to $\widetilde{u}(x)+\rho z_\rho$ (in place of $v$) and with $O',O'',O$ as in \eqref{def sets O O' O''}.  We \EEE obtain a constant $M>0$, depending on $\theta$ but independent of $\rho$, and $w_\rho\in {\rm GBV}_\star(Q(x,\rho);\Rk)$ such that 
\begin{align}\notag 
\cF(w_\rho,W_\rho,Q(x,\rho))&\leq(1+\eta)\big(\cF(v_\rho,W_\rho,O_{\rho,x})+\cF(\widetilde{u
}(x)+\rho z_\rho,W_\rho, O''_{\rho,x})\big)
   \\& \label{fundamental applied 1} \quad +\frac{M}{\rho}\|(\widetilde{u}(x)+\rho z_\rho)-v_\rho\|_{L^1(O_{\rho,x}\setminus O'_{\rho,x})}+\eta\rho^d.
\end{align}
  Moreover, the same lemma and  \eqref{blowup ac 1} ensure that  
 \begin{equation}\label{wrho uguale cosa}
   w_\rho=v_\rho \quad \text{ $\Lb^d$-a.e.\ in $O_{\rho,x}'$} \quad \text{ and }\quad w_\rho=\widetilde{u}(x)+\rho z_\rho=u \quad \text{ $\Lb^d$-a.e.\ in $Q(x,\rho)\setminus Q(x,s_\rho)$}.
\end{equation}
From the second equality in \eqref{wrho uguale cosa} and \eqref{wrho average} we deduce that $(w_\rho,W_\rho)\in \C^p_\star(u,U,Q(x,\rho))$. Therefore,  by  \eqref{fundamental applied 1} \MMM and the fact that $v_\rho=\ell_x$  $\Lb^d$-a.e.\ in $Q(x,\rho)\setminus Q^{\theta,\rho}$ \EEE we infer 
\begin{align}\notag 
\m(u,U,Q(x,\rho))&\leq(1+\eta)\big(\cF(v_\rho,W_\rho,O_{\rho,x})+\cF(\widetilde{u}(x)+\rho z_\rho,W_\rho, O''_{\rho,x})\big)\\\label{fundamental applied with minimum}
&\quad +\frac{M}{\rho}\|\rho z_\rho-\nabla u(x)(\cdot-x)\|_{L^1(O_{\rho,x}\setminus O'_{\rho,x})}+\eta\rho^d.
\end{align}
We now estimate each term  on \EEE the right-hand side of \eqref{fundamental applied with minimum}.  First, note that   \eqref{blowup ac 3} implies
\begin{equation}\label{L1 term}
    \lim_{\rho\to 0^+}\frac{M}{\rho^{d+1}}\|\rho z_\rho-\nabla u(x)(\cdot-x)\|_{L^1(O_{\rho,x}\setminus O'_{\rho,x})}=0.
\end{equation}
We  use Lemma~\ref{lemma: smaller larger}, with $O_1=Q^{\theta,\rho}$ and $O_2=O_{\rho,x}$, locality \ref{hyp:H2},   and \eqref{def Wrho 1} to get \begin{align} \notag
    \cF(v_\rho,W_\rho,O_{\rho,x})&\leq \cF(v_\rho,V_\rho,Q^{\theta,\rho})+\beta \mathcal{V}(v_\rho,Q(x,\rho)\setminus Q^{\theta,\rho})\\\label{estimate on Orhox bulk}
    & \quad +\beta\|W_\rho\|_{L^p(Q(x,\rho)\setminus Q^{\theta,\rho})}^p+\beta(1-(1-3\theta)^d)\rho^d.
\end{align}
Recalling \eqref{quasiminimiser 1-3theta} and that 
$v_\rho=\ell_x$ on $ Q(x,\rho)\setminus Q^{\theta,\rho}$, we then get
\begin{align}\label{altrastima}
&\cF(v_\rho,V_\rho,Q^{\theta,\rho})\leq \m(\ell_x,\widetilde{U}(x),Q^{\theta,\rho})+\rho^{d+1}  \le \beta k(|\nabla u(x)|+|\widetilde{U}(x)|^p+1)\rho^d +\rho^{d+1}, \EEE \\
\label{stima zrho on Vbk}
&\mathcal{V}(v_\rho,Q(x,\rho)\setminus Q^{\theta,\rho})\leq (1-(1-3\theta)^d)k|\nabla u(x)|\rho^d.
\end{align}
 Here, in the second inequality of \eqref{altrastima} we used that  $(\ell_x,\widetilde{U}(x))$ is a competitor for $\m(\ell_x,\widetilde{U}(x),Q^{\theta,\rho})$ and thus $  \m(\ell_x,\widetilde{U}(x),Q^{\theta,\rho})\leq \beta k(|\nabla u(x)|+|\widetilde{U}(x)|^p+1)\rho^d$ by the upper bound in   (H4).    \EEE
As for the remaining term in \eqref{estimate on Orhox bulk}, we observe that 
\begin{align*}
 &\int_{Q(x,\rho)\setminus Q^{\theta,\rho}} | W_\rho  |^p \,{\rm d} y \leq 
\frac{1}{\rho^{d(p-1)}(1-(1-3\theta)^d)^{p-1}}\Big|\int_{Q(x,\rho)} U(y) \,{\rm d}y-\int_{Q^{\theta,\rho}} \widetilde{U}(x) \,{\rm d}y\Big|^p  \nonumber\\
&\hspace{0,3cm}\leq\frac{C}{\rho ^{d(p-1)}(1-(1-3\theta)^d)^{p-1}}\Big( \Big|\int_{Q(x,\rho)\setminus Q^{\theta,\rho}} \hspace{-0.5 cm}U(y) \, {\rm d}y\Big|^p+ \Big|\int_{Q^{\theta,\rho}}(U(y)-\widetilde{U}(x)) \,{\rm d} y\Big|^p\Big) \nonumber \\\notag 
&\hspace{0,3cm}\leq\frac{C \rho^{dp}}{\rho ^{d(p-1)}(1-(1-3\theta)^d)^{p-1}}\Big( \Big|\fint_{Q(x,\rho)}\hspace{-0,1cm} U(y) \,{\rm d} y- (1-3\theta)^d\fint_{Q^{\theta,\rho}}\hspace{-0,1cm}U(y) \,{\rm d} y \Big|^p\\
& \hspace{5 cm}+ \Big|(1-3\theta)^d\fint_{Q^{\theta,\rho}}\hspace{-0,1cm}(U(y) -\widetilde{U}(x))\,{\rm d}y\Big|^p\Big)
\end{align*}
for a   constant $C>0$ depending only on $p$. 
In view of \eqref{Lebesgue G}, this implies
\begin{equation}\label{asymptotic Wrho}
    \limsup_{\rho\to 0^+}\frac{1}{\rho^d}\int_{Q(x,\rho)\setminus Q^{\theta,\rho}} | W_\rho  |^p \,{\rm d}y \leq {C}|(1-(1-3\theta)^d)|\widetilde{U}(x)|^p. 
\end{equation}
In a similar fashion, observing that $\Lb^d(O''_{\rho,x})=\MMM (1-(1-3\theta)^d)\rho^d \EEE $   and that $\mathcal{V}$ is invariant by adding constants, we may estimate 
\begin{equation} \label{stima rurho}\
    \cF(\widetilde{u}(x)+\rho z_\rho,W_\rho, O''_{\rho,x})\leq \beta \mathcal{V}(\rho z_\rho,O''_{\rho,x})+\beta\|W_\rho\|_{L^p(Q(x,\rho)\setminus Q^{\theta,\rho})}^p
     +  \beta(1-(1-3\theta)^d)\rho^d,
\end{equation}
and observe that
\begin{equation*}
   \mathcal{V}(\rho z
   _\rho,O''_{\rho,x})\leq k\Big(\int_{O''_{\rho,x}}|\rho\nabla z_\rho|\,{\rm d}x+\rho|D^cz_\rho|(O''_{\rho,x})+\Hd(J_{z_\rho}\cap O''_{\rho,x})\Big).
\end{equation*}
Note that by \eqref{blowup ac 4} and \eqref{blowup ac 5} we have 
\begin{equation}\label{ultima step 1 ac}
   \limsup_{\rho\to 0^+}   \frac{1}{\rho^d} \mathcal{V}(\rho z_\rho,O''_{\rho,x})\leq k(1-(1-3\theta)^d)|\nabla u(x)|.
\end{equation}

We now combine   \eqref{fundamental applied with minimum}--\eqref{stima zrho on Vbk} and \eqref{asymptotic Wrho}--\eqref{ultima step 1 ac}\EEE,  \EEE divide by $\rho^{d}$, pass to the limsup as $\rho\to 0^+$, and obtain
\begin{align*}
\limsup_{\rho\to 0^+} &\frac{\m(u,U,Q(x,\rho))}{\rho^d}\leq(1-3\theta)^d\limsup_{\rho '\to 0^+}\Big( \frac{\m(\ell_x,\widetilde{U}(x),Q(x,\rho'))}{(\rho')^d}\Big)+\eta\beta k(|\nabla u(x)|+|\widetilde{U}(x)|^p+1)\\ & \quad \quad +(1+\eta)\Big(\beta(1-(1-3\theta)^d)(2+2k|\nabla u(x)|)
 +\frac{2C((1-(1-3\theta)^d)^p}{(1-(1-3\theta)^d)^{p-1}} |\widetilde{U}(x)|^p\Big)+\eta,
\end{align*}
where we have set $\rho'\coloneq (1-3\theta)\rho$.
Letting $\theta \to 0^+$ and $\eta\to 0^+$, we obtain \eqref{claim leq bulk}. 

\medskip
\medskip

\noindent \emph{Step 2.} ($\geq$)  The proof of this inequality  follows along the same lines of Step 1 by interchanging the roles of $(u,U)$ and $(\ell_{x},\widetilde{U}(x))$. The essential difference is that the Fubini-type argument in \eqref{good shell ac} is performed for some $s_\rho\in ((1-4\theta)\rho,(1-3\theta)\rho)$, we choose a competitor $(v_\rho,V_\rho)\in \C^p_\star(u,U,\MMM Q(x,s_\rho)) \EEE $ satisfying 
\begin{equation*}
    \cF(v_\rho, V_\rho,Q(x,s_\rho))\leq \m(u,U,Q(x,s_\rho))+\rho^{d+1},
\end{equation*}
 \MMM  we extend    $v_\rho$ \EEE  to the whole $Q(x,\rho)$ by setting  $v_\rho=  \MMM \widetilde{u}(x) + \rho \EEE \overline{u}_\rho$  outside of $Q(x,s_\rho)$   and \BBB we define $W_\rho$ outside of $Q(x,s_\rho)$ by \EEE
$${\MMM W_\rho \EEE  = \frac{1}{\mathcal L^d(Q(x,\rho)\setminus Q(x,s_\rho))}\Big(\mathcal{L}^d(Q(x,\rho)) \widetilde{U}(x) -  \int_{Q(x,s_\rho)} U(y)\,{\rm d}y\Big)}$$ 
This ensures that the Fundamental Estimate can again be employed to adjust the boundary values. We omit the details.  
\end{proof}

We \MMM proceed by \EEE addressing the integral representation of the jump part of $\cF$, concluding the proof of Theorem~\ref{thm: integral star}.
\begin{proposition}\label{prop:repre surf}
 Assume that  $\cF\in\mathfrak{F}^p_\star$.    Then  for every $(u,U)\in \SD^p_\star(\Omega;\Rk)$ and $B\in\B(\Omega)$ we have 
    \begin{equation}\label{repr j star prop}
	\mathcal{F}^j(u, U, B)= \int_{J_u\cap  B} \!f_{\rm surf}\big(x,u^+,u^-,\nu_{u})\,{\rm d}\Hd.
    \end{equation}
    where $f_{\rm surf}$ is the function defined by \eqref{def Psi}.
\end{proposition}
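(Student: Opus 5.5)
The plan follows the proof of Proposition~\ref{prop: repre AC}, with blow-ups at jump points in place of Lebesgue points. Since $\mathcal{F}^j(u,U,\cdot)=\cF(u,U,\cdot)\mres J_u$ is absolutely continuous with respect to $\Hd\mres J_u$, it suffices to show
\[
\frac{{\rm d}\cF(u,U,\cdot)}{{\rm d}\Hd\mres J_u}(x)=f_{\rm surf}\big(x,u^+(x),u^-(x),\nu_u(x)\big)\quad\text{for $\Hd$-a.e.\ } x\in J_u .
\]

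First I fix a good point $x\in J_u$ and record the properties it must have; all of them hold $\Hd$-a.e.\ on $J_u$. Write $\nu=\nu_u(x)$.
\begin{itemize}
\item[(i)] Theorem~\ref{thm: minimum=F} applies with $\mu$ as in \eqref{def mu}. Since $\mu\mres J_u=\mathcal{V}(u,\cdot)\mres J_u$ has density $\sum_i|[u_i]|\wedge1$ with respect to $\Hd\mres J_u$, we get
\[
\frac{{\rm d}\cF}{{\rm d}\Hd\mres J_u}(x)=\lim_{\rho\to0^+}\frac{\cF(u,U,Q_\nu(x,\rho))}{\rho^{d-1}}=\lim_{\rho\to0^+}\frac{\m(u,U,Q_\nu(x,\rho))}{\rho^{d-1}} .
\]
\item[(ii)] The blow-up Lemma~\ref{lemma: blow up jump points} holds. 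It should give truncated rescalings $\overline u_\rho$ of $u$ on $Q_\nu(x,\rho)$ that agree with $u$ outside a set of measure $o(\rho^d)$, converge in $L^1$ (after rescaling) to the step function $v_{x,u^+,u^-,\nu}$, and satisfy $\mathcal{V}(\overline u_\rho,Q_\nu(x,\rho)\setminus J_u)=o(\rho^{d-1})$ and $\Hd\big((J_{\overline u_\rho}\triangle \Pi_\nu)\cap Q_\nu(x,\rho)\big)=o(\rho^{d-1})$, where $\Pi_\nu$ is the hyperplane through $x$ orthogonal to $\nu$.
\item[(iii)] The matrix field is small at this scale: $\rho^{1-d}\int_{Q_\nu(x,\rho)}|U|^p\,{\rm d}y\to0$. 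This holds $\Hd$-a.e.\ because $|U|^p\Lb^d$ is a finite measure absolutely continuous with respect to $\Lb^d$, so its $(d-1)$-dimensional upper density vanishes $\Hd$-a.e.
\end{itemize}

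Next I prove $\lim_\rho \m(u,U,Q_\nu(x,\rho))/\rho^{d-1}\le f_{\rm surf}(x,u^+,u^-,\nu)$, copying Step~1 of Proposition~\ref{prop: repre AC}. Take an almost-minimiser $(v_\rho,V_\rho)\in\C^p_\star(v_{x,u^+,u^-,\nu},0,Q^{\theta,\rho})$ on the shrunken cube $Q^{\theta,\rho}=Q_\nu(x,(1-3\theta)\rho)$ and extend it by the step function. Use Fubini to pick a shell radius $s_\rho$ on which $\overline u_\rho=u$ up to an $\Hd$-small set. Then glue $v_\rho$ to $\overline u_\rho$ (and to $u$ outside the shell) with the Fundamental Estimate Lemma~\ref{lemma: Fundamental Estimate}. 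The rescaled constant is $M/\rho$, and the $L^1$ error term $\frac{M}{\rho}\|\overline u_\rho-v_{x,u^+,u^-,\nu}\|_{L^1}$ is $o(\rho^{d-1})$ by (ii).

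To enforce the mean constraint I set $W_\rho=V_\rho$ on $Q^{\theta,\rho}$ and $W_\rho=\frac{1}{\Lb^d(\text{annulus})}\int_{Q_\nu(x,\rho)}U\,{\rm d}y$ on the annulus; this works because $V_\rho$ has zero mean. The cost is bounded with Hölder: $\|W_\rho\|^p_{L^p}\le C_\theta\,\rho^{-d(p-1)}\big(\int|U|\big)^p\le C_\theta\int|U|^p=o(\rho^{d-1})$ by (iii).

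The annular energies are controlled by Lemma~\ref{lemma: smaller larger} and \ref{hyp:H4}. On the annulus, the step function contributes $\le\beta\sum_i|[u_i]|\wedge1\cdot(1-(1-3\theta)^{d-1})\rho^{d-1}$, and the other terms are controlled by (ii). Dividing by $\rho^{d-1}$, and letting $\rho\to0$, then $\theta\to0$, then $\eta\to0$, yields the inequality. The reverse inequality is obtained symmetrically, as in Step~2 of Proposition~\ref{prop: repre AC}: swap the roles of $(u,U)$ and $(v_{x,u^+,u^-,\nu},0)$, choose $s_\rho\in((1-4\theta)\rho,(1-3\theta)\rho)$, and compensate the average of $U$ on the outer annulus.

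The main obstacle is (ii) and its use. In ${\rm GBV}_\star$ the function $u$ need not be integrable near $x$, and the jump $[u]$ may be large while only $|[u]|\wedge1$ is controlled. The $L^1$ term in the Fundamental Estimate is therefore meaningful only after truncation, so the truncated $\overline u_\rho$ must stay within $o(\rho^{d-1})$ in energy and $o(\rho^d)$ in $L^1$ of the step function. I would obtain this from the Poincaré inequality of Theorem~\ref{Thm: Poincaré inequality} applied separately on the two half-cubes, where $J_u$ is small. This is exactly what Lemma~\ref{lemma: blow up jump points} is designed to provide, and the gluing along a shell (keeping the artificial jump created on $\partial Q_\nu(x,s_\rho)$ of size $o(\rho^{d-1})$) is where care is needed.
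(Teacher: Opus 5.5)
Your proposal follows the paper's proof essentially step by step: the same choice of good points (Theorem~\ref{thm: minimum=F}, Lemma~\ref{lemma: blow up jump points}, and $\rho^{1-d}\int_{Q_\nu(x,\rho)}|U|^p\,{\rm d}y\to0$), the same gluing of a quasi-minimiser for the step datum on $Q_\nu(x,(1-3\theta)\rho)$ to the truncated blow-up via a Fubini shell and the Fundamental Estimate, the same average correction of $W_\rho$ on the annulus, and the same symmetric argument for the reverse inequality. One small correction to your item (ii): the lemma only gives the upper bound $\limsup_{\rho\to0^+}\rho^{1-d}\Hd(J_{u_\rho}\cap E_{\rho,x})\le\Hd(\Pi^\nu_x\cap E)$, not $\Hd\big((J_{\overline u_\rho}\triangle\Pi_\nu)\cap Q_\nu(x,\rho)\big)=o(\rho^{d-1})$. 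The latter fails whenever $J_u$ is curved near $x$, because $J_u$ lies close to a $C^1$ graph tangent to $\Pi_\nu$ rather than to $\Pi_\nu$ itself; the upper bound, however, is all your annulus estimate uses.
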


\begin{proof}
To prove \eqref{repr j star prop}, we characterise the Radon-Nikodým derivative of $\cF$ with respect to the measure $\Hd\mres J_u$ by showing that
\begin{align}\label{psiproof}
	\frac{{\rm d} \mathcal F(u, U,\cdot)}{{\rm d} \Hd\mres J_u} (x)=	
	f_{\rm surf}\big(x, u^+(x), u^-(x), \nu_u(x)\big),
	\end{align}
     for $\Hd$-a.e.\ $x\in J_u$.   
Let us fix a point $x\in J_u$ \BBB satisfying 
\begin{align}\label{blow up jump 20}
   &\lim_{\rho\to 0^+}\frac{1}{\rho^{d-1}}\int_{Q_{\MMM \nu_u(x)}(x,\rho)}|\nabla u|\,{\rm d}y=0\quad \text{ and }\quad    \lim_{\rho\to 0^+}\frac{1}{\rho^{d-1}}|D^cu|(Q_{\MMM \nu_u(x)}(x,\rho))=0,\\
 \label{blow up jump 50}
 &\lim_{\rho\to 0^+}  \frac{1}{\rho^{d-1}} \Hd(J_u\cap  E_{\rho,x}  )= \Hd( \MMM \Pi^{\nu_u(x)}_x \EEE \cap E) \quad \text{for all Borel sets } E \subset Q_{\MMM \nu_u(x)}( \MMM x  \EEE ,1),
\end{align}
\MMM where     $\Pi^\nu_{x}\coloneq \{y \in \R^d\colon \, (y-x)\cdot\nu=0\}$ for all $\nu \in \mathbb{S}^{d-1}$, \EEE 
and  
\begin{gather}\label{Besicovitch jump}  \frac{{\rm d} \mathcal F(u, U,\cdot)}{{\rm d} \Hd\mres J_u} (x) =
\lim_{\rho \to 0^+}\frac{\mathcal F(u, U,Q_{\nu_u(x)}(x,\rho))}{\rho^{d-1}} =
\lim_{\rho \to 0^+}\frac{\m(u, U,Q_{\nu_u(x)}(x,\rho))}{\rho^{d-1}},\\
\label{Lebesgue G jump}
\lim_{\rho\to 0^+}\frac{1}{\rho^{d-1}}\int_{Q_{\nu_u(x)}(x,\rho)}|U(y)|^p\,{\rm d}y=0,
\end{gather}
\BBB and such that the statement of  Lemma~\ref{lemma: blow up jump points} holds. \EEE
Note that these conditions are satisfied by  $\Hd$-a.e.\ point of $J_u$ thanks to Proposition~\ref{Prop:fine prop GBV},  \BBB  \cite[Theorem~2.56]{AFP}, \EEE Besicovitch and Lebesgue Derivation Theorems, and Theorem~\ref{thm: minimum=F}, \BBB where for \eqref{blow up jump 50} we can argue as in \eqref{blow up jump 4}--\eqref{blow up jump 5} below. \EEE Thus, it is sufficient to show \eqref{psiproof} for such a point $x$.

 Recalling the definition \eqref{def Psi} of the surface density $f_{\rm surf}$, equality \eqref{Besicovitch jump} implies that \eqref{psiproof} is equivalent to 
\begin{align*}
	 \lim_{\rho \to 0^+}\frac{\m(u,U, Q_\nu(x, \rho))}{\mathcal \rho^{d-1}}=\lim_{\rho \to 0^+}
	\frac{\m(v_{x},0, Q _\nu(x, \rho))}
	{\mathcal \rho^{d-1}},
\end{align*}
 where, for brevity,  we set  $v_{x}\coloneq v_{x,u^+(x),u^-(x),\nu_u(x)}$ and $\nu=\nu_u(x)$. 
 This equality is proved in two steps, dealing with the two inequalities separately.   To this end, we apply \EEE  Lemma~\ref{lemma: blow up jump points} to  get  $u_\rho\in {\rm  BV}(Q_\nu(x,\rho);\Rk)$   which  satisfy  
\begin{align}\label{blow up jump 1 applied}
 & \lim_{\rho\to 0^+}\frac{1}{\rho^{d}}\Lb^d(\{u_\rho\neq u\}\cap Q_\nu(x,\rho))=0,  
 \\ \label{blow up jump 2 applied}&  \lim_{\rho \to 0^+}   \frac{1}{\rho^{d}} \int_{Q_\nu(x,\rho)}  |u_\rho - v_{x} |\, {\rm d}y = 0,\\\label{blow up jump 3 applied}
 & \lim_{\rho \to 0^+} \   \frac{1}{\rho^{d-1}}\,\mathcal{H}^{d-1}(J_{u_\rho}\cap E_{\rho,x})\leq  \Hd(\Pi_x^\nu \cap E) \quad \text{for all Borel sets } E \subset Q_\nu(x,1),\\\label{blow up jump 4 applied}
 &    \lim_{\rho \to 0^+} \ \frac{1}{\rho^{d-1}} \int_{Q_\nu(x,\rho)}  | \nabla u_\rho | \, \mathrm{d}y = 0\quad \text{and}\quad \lim_{\rho \to 0^+} \ \frac{1}{\rho^{d-1}}|D^cu_\rho|(Q_\nu(x,\rho))=0.
\end{align}
 \EEE

\medskip

\noindent \emph{Step 1.} ("$\leq$")
\MMM In this step we prove that \EEE  
\begin{equation}\label{claim step 1 jump}
    \lim_{\rho \to 0^+}\frac{\m(u,U, Q_\nu(x, \rho))}{\mathcal \rho^{d-1}}\leq \lim_{\rho \to 0^+}
	\frac{\m(v_{x},0, Q _\nu(x, \rho))}
	{\mathcal \rho^{d-1}}.
\end{equation}
To this end, we fix $\theta\in (0,1/4)$, set  $Q^{\theta,\rho}_\nu\coloneq Q_\nu(x,(1-3\theta)\rho)$, and  consider $(v_\rho,V_\rho)\in \C^p_\star(v_x,0,Q^{\theta,\rho}_\nu)$ such that 
\begin{equation}\label{quasi minimiser step 1 jump}
\cF( v_\rho, V_\rho, Q^{\theta,\rho}_\nu)\leq  \m(v_x,0, Q^{\theta,\rho}_\nu)+\rho^{d}.  
\end{equation}
\MMM We extend $v_\rho$   to the whole cube $Q_\nu(x,\rho)$  by setting $v_\rho=v_x$ on $Q_\nu(x,\rho)\setminus Q^{\theta,\rho}_\nu$.  Moreover, we set \EEE   
\begin{equation}\label{def Wrho 1 jump }\hspace{-0.4 cm} W_\rho(y)\coloneq  
\begin{cases}
 V_\rho(y) &\text{ in }Q^{\theta,\rho}_\nu,\\
\displaystyle \frac{1}{\mathcal L^d(Q_\nu(x,\rho)\setminus Q^{\theta,\rho}_\nu)}\int_{Q_\nu(x,\rho)} U(y)\, {\rm d} y &\text{ in }Q_\nu(x,\rho)\setminus Q^{\theta,\rho}_\nu.
\end{cases}
\end{equation}
Since $(v_\rho,V_\rho)\in\C^p_\star(v_x,0,Q^{\theta,\rho}_\nu)$, we have
\begin{equation}\label{wrho average jump}
\int_{Q_\nu(x,\rho)}W_\rho(y)\,{\rm d}y=\int_{Q_\nu(x,\rho)}U(y)\,{\rm d}y.
\end{equation}
In light of \eqref{blow up jump 1 applied}, Fubini's Theorem implies that for every $\rho>0$ there exists  $s_\rho\in ((1-\theta) \rho,\rho)$ such that 
\begin{equation}\label{good shell}
    \lim_{\rho\to 0^+}\frac{1}{\rho^{d-1}}\Hd(\partial Q_\nu(x,s_\rho)\cap \{{u}_\rho\neq u\})=0.
\end{equation}
We also introduce   $z_\rho$  defined by    
\begin{equation}\label{blowup vrho jump} 
    z_{\rho}(y)\coloneq 
    \begin{cases}u_\rho(y) &  \text{ if $y\in Q_\nu(x,s_\rho)$,}\\
    u(y) &\text{ if $y\in Q_\nu(x,\rho)\setminus Q_\nu(x,s_\rho)$,}
    \end{cases}
\end{equation}
 which by Lemma~\ref{lemma: Extension of GBV} belong to $\GBVs(Q_\nu(x,\rho);\Rk)$. Note \EEE
 that \BBB by \eqref{blow up jump 20}--\eqref{blow up jump 50},  \EEE  \eqref{blow up jump 2 applied}--\eqref{blow up jump 4 applied}, and \eqref{good shell} it holds that 
\begin{align}
    \label{blowup jump 3 applied}  &  \lim_{\rho\to 0^+}\frac{1}{\rho^{d}}\int_{Q_\nu(x,(1-\theta)\rho)}|{z}_\rho(y)-v_x|\,{\rm d}y=0,\\ \label{blowup jump 4 applied}
    &\lim_{\rho\to 0^+}\frac{1}{\rho^{d-1}}\mathcal{V}(z_\rho,  E_{\rho, x} \EEE \cap J_{z_\rho})\leq \MMM k \EEE \Hd(\Pi^\nu_x\cap E) \quad \text{ for all Borel sets $E\subset Q_\nu(x,1)$},\\\label{blowup jump 5 applied}
    & \lim_{\rho\to 0^+}\frac{1}{\rho^{d-1}} 
    \int_{Q_\nu(x,\rho)}| \nabla {z
    }_\rho(y)|\,{\rm d}y=0 \quad \text{ and } \lim_{\rho\to 0^+}\frac{1}{\rho^{d-1}}|D^c{z}_\rho|(Q_\nu(x,\rho))=0.
\end{align}
We set 
\begin{equation}\label{def sets O O' O'' jumps}
    O'\coloneq Q_\nu(x,{1-2\theta}),\quad O\coloneq Q_\nu(x,1-\theta),\quad \text{ and }O''\coloneq Q_\nu(x,1)\setminus \overline{ \MMM Q_\nu(x,1-3\theta)\EEE }.
\end{equation}
Observe that $v_\rho\in L^1(O_{\rho,x}\setminus O'_{\rho,x};\Rk)$ since $v_\rho=v_x$  $\Lb^d$-a.e.\ in $Q_\nu(x,\rho)\setminus Q^{\theta,\rho}_\nu$. Moreover, from definition \eqref{blowup vrho jump} and the fact that $s_\rho\geq (1-\theta)\rho$, it follows that $z_\rho\in L^1(O_{\rho,x}\setminus O'_{\rho,x};\Rk)$. Hence,
for fixed $\eta>0$, we may apply Lemma~\ref{lemma: Fundamental Estimate} to $v_\rho$ (in place of $u$) and to $z_\rho$ in place of $v$ and with $O',O'',O$  \MMM as in \eqref{def sets O O' O'' jumps}. \EEE We obtain a constant $M>0$ depending on $\theta$ but independent of $\rho$,  and functions $w_\rho\in {\rm GBV}_\star(Q_{ \nu\EEE}(x,\rho);\Rk)$ such that 
\begin{align}\label{fundamental applied 1 jump}
 \cF(w_\rho,W_\rho,Q(x,\rho))&\leq(1+\eta)\big(\cF(v_\rho,W_\rho,O_{\rho,x})+\cF(z_\rho,W_\rho, O''_{\rho,x})\big)
     +\frac{M}{\rho}\| z_\rho-v_\rho\|_{L^1(O_{\rho,x}\setminus O'_{\rho,x})}+\eta\rho^{d},\notag\\&=(1+\eta)\big(\cF(v_\rho,W_\rho,O_{\rho,x})+\cF(z_\rho,W_\rho, O''_{\rho,x})\big)     +\frac{M}{\rho}\|z_\rho-  v_x\|_{L^1(O_{\rho,x}\setminus O'_{\rho,x})}+\eta\rho^{d},
\end{align}
where in the equality we have used that $v_\rho=v_x$ $\Lb^d$-a.e.\ in $Q_\nu(x,\rho)\setminus Q^{\theta,\rho}_\nu$.
Moreover, the same lemma and \eqref{blowup vrho jump} gives 
\begin{equation}\label{wrho uguale cosa jump}
   w_\rho=v_\rho \quad \text{ $\Lb^d$-a.e.\ in $O_{\rho,x}'$} \quad \text{ and }\quad w_\rho=u \quad \text{ $\Lb^d$-a.e.\ in $Q_\nu(x,\rho)\setminus Q_\nu(x,s_\rho)$}.
\end{equation}
From the second equality in \eqref{wrho uguale cosa jump} and \eqref{wrho average jump} we deduce that $(w_\rho,W_\rho)\in \C^p_\star(u,U,Q_\nu(x,\rho))$. Therefore,  by  \eqref{fundamental applied 1 jump} we infer 
\begin{align}\label{fundamental applied with minimum jump} 
\hspace{-0.15 cm} \m(u,U,Q_\nu(x,\rho))&\leq(1+\eta)\big(\cF(v_\rho,W_\rho,O_{\rho,x})+\cF(z_\rho,W_\rho, O''_{\rho,x})\big)  +\frac{M}{\rho}\| z_\rho- v_x\|_{L^1(O_{\rho,x}\setminus O'_{\rho,x})}+\eta\rho^{d}.
\end{align}

We estimate each term \MMM on \EEE  the right-hand side of \eqref{fundamental applied with minimum jump}.   First, we note that \EEE  \eqref{blowup jump 3 applied} implies
\begin{equation}\label{L1 term jump}
    \lim_{\rho\to 0^+}\frac{M}{\rho^d}\| z_\rho- v_x\|_{L^1(O_{\rho,x}\setminus O'_{\rho,x})}=0.
\end{equation}
We  use Lemma~\ref{lemma: smaller larger}, with $O_1=Q^{\theta,\rho}_\nu$ and $O_2=O_{\rho,x}$, locality (H2),  and \EEE \eqref{def Wrho 1 jump } to get 
\begin{align} \notag
\cF(v_\rho,W_\rho,O_{\rho,x})&\leq \cF(v_\rho,\MMM V_\rho, \EEE Q^{\theta,\rho}_\nu)+ \beta \mathcal{V}(v_\rho,Q_\nu(x,\rho)\setminus Q^{\theta,\rho}_\nu)\\\label{estimate on Orhox}
    & \quad +\beta\|W_\rho\|_{L^p(Q_\nu(x,\rho)\setminus Q^{\theta,\rho}_\nu)}^p+\beta(1-(1-3\theta)^d)\rho^d.
\end{align}
Recalling \eqref{quasi minimiser step 1 jump} and that 
$v_\rho=v_x$ on $ Q_\nu(x,\rho)\setminus Q^{\theta,\rho}_\nu$, we have 
\begin{gather}
\cF( v_\rho, V_\rho, Q^{\theta,\rho}_\nu)\leq  \m(v_x,0, Q^{\theta,\rho}_\nu)+\rho^{d}  \leq \beta\rho^d+\beta k\rho^{d-1} +\rho^{d}, \EEE
\label{stima zrho on VXXX} \\
\label{stima zrho on V}
\mathcal{V}(v_\rho,Q_\nu(x,\rho)\setminus Q^{\theta,\rho}_\nu)\leq (1-(1-3\theta)^{d-1})k\rho^{d-1}.
\end{gather}
 Here, in the second inequality of \eqref{stima zrho on VXXX} we used that  $(v_x,0)$ is a competitor for $\m(v_x,0,Q^{\theta,\rho}_\nu)$ and thus $ \m(v_x,0,Q^{\theta,\rho}_\nu)\leq \beta\rho^d+\beta k\rho^{d-1}$ by the upper bound in   (H4).    \EEE 
As for the remaining term in \eqref{estimate on Orhox}, using  H\"older's inequality, \MMM we get \EEE 
\begin{align}\notag
\int_{Q_\nu(x,\rho)\setminus Q^{\theta,\rho}_\nu} 
| W_\rho  |^p \, {\rm d} y &\leq 
\frac{1}{\rho ^{d(p-1)}(1-(1-3\theta)^d)^{p-1}}\Big|\int_{Q_\nu(x,\rho)} U(y)\, {\rm d}y \Big|^p \\
&\leq \frac{\rho^{d(p-1)}}{\rho ^{d(p-1)}(1-(1-3\theta)^d)^{p-1}} \|U\|^p_{L^p(Q_\nu(x,\rho))} \le  \MMM C \EEE \|U\|^p_{L^p(Q_\nu(x,\rho))} \label{mest4}
\end{align}
\MMM for some universal $C>0$, where we used $\theta <1/4$. \EEE 
Similarly, we may estimate 
\begin{equation} \label{stima rurho jump}
    \cF(z_\rho,W_\rho, O''_{\rho,x})\leq  \beta \mathcal{V}(z_\rho,O''_{\rho,x}) +\beta\|W_\rho\|_{L^p(Q_\nu(x,\rho)\setminus Q^{\theta,\rho}_\nu)}^p+\beta(1-(1-3\theta)^d)\rho^d
\end{equation}
and observe that 
\begin{equation}\label{ultima step 1 jump}
    \mathcal{V}( z_\rho,O''_{\rho,x})\leq k\Big(\int_{O''_{\rho,x}}|\nabla z_\rho|\,{\rm d}x+|D^cz_\rho|(O''_{\rho,x})+\Hd(J_{z_\rho}\cap O''_{\rho,x})\Big).
\end{equation}
Hence, using \eqref{blowup jump 4 applied}    with $E =O'' $ and \eqref{blowup jump 5 applied}, we get
\begin{equation}\label{Vrd-1 strip}
     \limsup_{\rho \to 0^+} \frac{1}{\rho^{d-1}}\mathcal{V}(
    z_\rho,O''_{\rho,x})\leq (1-(1-3\theta)^{d-1})k.
\end{equation}
Putting together   \eqref{fundamental applied with minimum jump}--\eqref{Vrd-1 strip}, dividing by $\rho^{d-1}$, letting $\rho\to 0^+$, and using \eqref{Lebesgue G jump}  we get 
\begin{align*}
\limsup_{\rho\to 0^+} \frac{\m(u,U,Q_\nu(x,\rho))}{\rho^{d-1}}&\leq (1-3\theta)^{d-1}\limsup_{\rho' \to 0^+}\Big( \frac{\m(v_x,0,Q_\nu(x,\rho'))}{(\rho')^{d-1}}\Big)+\eta\beta k\\ &\quad+ (1+\eta)\Big(\beta(1-(1-3\theta)^{d-1})2k\Big),
\end{align*}
where we have set $\rho'\coloneq (1-3\theta)\rho$.
Letting $\theta \to 0^+$ and $\eta\to 0^+$, we obtain \eqref{claim step 1 jump}.

\medskip

\noindent \emph{Step 2.} ($\geq$)  As in the previous  proof, the other inequality can be proven along similar lines. We therefore omit the full proof and indicate  only the essential changes. We follow Step 1 by interchanging the roles of $(u,U)$ and $(v_{x},0)$. The   Fubini-type argument in \eqref{good shell} is performed for some $s_\rho\in ((1-4\theta)\rho,(1-3\theta)\rho)$, we choose a competitor $(v_\rho,V_\rho)\in \C^p_\star(u,U,\MMM Q_{\nu\EEE}(x,s_\rho)) \EEE $ satisfying 
\begin{equation*}
    \cF(v_\rho, V_\rho,Q_{\nu\EEE}(x,s_\rho))\leq \m(u,U,Q_{\nu\EEE}(x,s_\rho))+\rho^{d},
\end{equation*}
\MMM and we extend    $v_\rho$ to the whole $Q_{\nu\EEE}(x,\rho)$ by setting  $v_\rho= u_\rho$  outside of $Q_{\nu\EEE}(x,s_\rho)$, as well as \EEE  
$$ \MMM W_\rho \EEE  =  - \frac{1}{\mathcal L^d(Q_{\nu\EEE}(x,\rho)\setminus Q_{\nu\EEE}(x,s_\rho))}   \int_{Q_{\nu\EEE}(x,s_\rho)} U(y)\,{\rm d}y$$ outside of $Q_{\nu\EEE}(x,s_\rho)$. 
\end{proof}

\MMM
 \begin{proof}[Proof of Theorem \ref{thm: integral star}]
The proof follows by combining Propositions \ref{prop: repre AC} and \ref{prop:repre surf}.
\end{proof}

\EEE

\begin{remark}
In the recent papers \cite{DalToaConvex}  and \cite{dal2025homogenization}, the integral representation of a family of functionals related to $\F^p_\star$ is addressed. In particular, in the latter  work, the authors study a class  $\E_{\rm sc}$ of functionals $E\colon L^0(\Omega;\Rk)\times\mathcal{B}(\Omega)\to [0,+\infty]$ whose domain is ${\rm GBV}_\star(\Omega;\Rk)$ and that satisfy, among others, the following property: there exists a constant $C_{\rm Tr}>0$ such that  for every $N\in\N$, $u\in L^0(\Omega;\Rk)$, $B\in\mathcal{B}(\Omega)$, $w\in W^{1,1}(\Omega;\Rk)$, and $R>0$ it holds (see \eqref{def PhiR})
    \begin{align*}
        \nonumber \frac{1}{N}\sum_{i=1}^NE(w+\Phi_{R_i}\circ (u-w),B)& \leq E(u,B) + C_{\rm Tr}\int_{B^R_{u,w}}|\nabla w|\, {\rm d}x+C_{\rm Tr}\Lb^d(B^R_{u,w}) \\
      & \quad +\frac{C_{\rm Tr}}{N}\Big(E(u,B) + \Lb^d(B) + \int_B|\nabla w|\, {\rm d}x \Big),
        \end{align*}
       where  $R_i\coloneq \tau^{i-1}R$ \BBB (see \eqref{Properties of Phi}, for some $\tau$ large enough)   and $B^R_{u,w}\coloneq \{x\in B\colon |u(x)-w(x)|\geq R\}$. \EEE
  Relying on truncation arguments based on the previous property\EEE, it is  shown \EEE  in  \cite[Theorem~5.16]{dal2025homogenization} that every functional $E\in\E_{\rm sc}$ satisfies a property similar to the one described  in \EEE  Theorem~\ref{thm: integral star} for functionals in $\F^p_\star$, in the sense that, for every bounded open set $O\subset \Omega$,  the bulk and surface parts of  $E(\cdot,O)$ can be represented as integral functionals on ${\rm GBV}_\star(\Omega;\Rk)$. 

\BBB 
The methods used in this section show that  the result \cite[Theorem~5.16]{dal2025homogenization} can actually be derived \emph{without} the above truncation property. 
\end{remark}

\section{Relaxation of bulk and surface energies:  Proof of Theorems \ref{thm: relaxation partial}, \ref{thm:cell formulas},   \ref{thm:relaxation full}}\label{section Relaxation}

This section is devoted to the proof of the Relaxation Theorems \ref{thm: relaxation partial}, \ref{thm:cell formulas}, and  \ref{thm:relaxation full}, concerning the integral representation of the relaxed functional $I^p_\star$ defined by \eqref{def Istar localised} and \eqref{def Borel relax}.

\subsection{Proof of Theorem~\ref{thm: relaxation partial}}
We  start \EEE  the section \MMM by \EEE presenting a useful consequence of the Approximation Theorem~\ref{thm:approximation SDstar}, which will be exploited in the proof of Theorem~\ref{thm: relaxation partial}. Recall the structural constants  $c_\Psi$, $C_\Psi$, $c_\gamma$, and $C_\gamma$ in the assumptions on $\Psi$ and $\gamma$, as well as  $\overline{C}_\Psi$ in \eqref{bound above W}.

\begin{lemma}\label{lemma: upper bound}
    Let $(g,G)\in {\rm SD}^p_\star(\Omega;\Rk)$ and \MMM let $O\in\Op(\Omega)$. \EEE  Assume that $\Psi$ and $\gamma$ satisfy {\rm \ref{(W1)_p}--\ref{W4}}, {\rm\ref{(gamma1)}}, and {\rm \ref{(gamma2)}}. Then 
    \begin{equation*}
    \alpha\big(\mathcal{V}(g,O)+\|G\|^p_{L^p(O)}-\Lb^{d}(O)\big)\leq  I^p_{\star}(g,G,O)\leq \beta\big(\mathcal{V}(g,O)+\|G\|^p_{L^p(O)}+\Lb^d(O)\big)
    \end{equation*}
    for two suitable positive constants $\alpha,\beta$,  depending only on $c_\Psi$, $C_\Psi$, $\overline{C}_\Psi$, $c_\gamma$,  $C_\gamma$, and $p$.  
\end{lemma}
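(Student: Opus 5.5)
The two bounds will be proved separately. The upper bound follows from the Approximation Theorem~\ref{thm:approximation SDstar}. The lower bound follows from the growth assumptions on $\Psi$ and $\gamma$ together with lower semicontinuity of $\mathcal{V}$ and of the $L^p$ norm.

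\emph{Lower bound.} We may assume $I^p_\star(g,G,O)<+\infty$. Take $\{u_n\}_n\subset{\rm SBV}(O;\Rk)$ with $u_n\SDtostar(g,G)$ in $O$, along which the $\liminf$ of $\mathcal{E}(u_n,O)$ is attained and finite. By \ref{W4} and \ref{(gamma2)}, and since $|\zeta|\land 1$ controls each componentwise term $|\zeta_i|\land 1$, we have
\[
\mathcal{E}(u_n,O)\ge c_\Psi\|\nabla u_n\|^p_{L^p(O)}-\tfrac{1}{c_\Psi}\Lb^d(O)+\tfrac{c_\gamma}{k}\sum_{i=1}^k\int_{J_{u_{n,i}}\cap O}|[u_{n,i}]|\land 1\,{\rm d}\Hd .
\]
We then use Young's inequality $|A|\le |A|^p+1$, valid for $p\ge1$, to bound $\int_O|\nabla u_{n,i}|\,{\rm d}x$ by $\|\nabla u_n\|^p_{L^p}+\Lb^d(O)$. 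This gives $\mathcal{E}(u_n,O)\ge c\big(\mathcal{V}(u_n,O)+\|\nabla u_n\|^p_{L^p(O)}\big)-C\Lb^d(O)$; for this step it is convenient to split off half of the $L^p$ term. Next we pass to the limit. Remark~\ref{remark: lowersemicontinuity} gives $\liminf_n\mathcal{V}(u_n,O)\ge\mathcal{V}(g,O)$. For the matrix field, weak lower semicontinuity of the norm gives $\liminf_n\|\nabla u_n\|^p_{L^p}\ge\|G\|^p_{L^p}$ when $p>1$. When $p=1$, lower semicontinuity of the total variation under weak$^*$ convergence of measures gives the same inequality, with total variation in place of the norm. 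Choosing $\alpha$ small absorbs the constants. The bound for Borel sets is not needed, since the statement is only for open $O$.

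\emph{Upper bound.} This is the main step. Apply Theorem~\ref{thm:approximation SDstar} on $O$ to $(g,G)$. Because $O$ is only assumed open, we first work on Lipschitz subsets $O'\subset\subset O$, or we note that the proof of Theorem~\ref{thm:approximation SDstar} goes through there. The theorem yields $u_n\in{\rm SBV}(O;\Rk)$ with $u_n\to g$ in $L^0$, $\nabla u_n=G$ a.e.\ in $O$, and $\limsup_n\mathcal{V}(u_n,O)\le C(\mathcal{V}(g,O)+\|G\|_{L^1(O)})$. Since $\nabla u_n=G$ for every $n$, the convergence of the matrix fields is trivial, so $u_n\SDtostar(g,G)$. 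By \eqref{bound above W} the bulk term equals $\int_O\Psi(x,G)\,{\rm d}x\le\overline{C}_\Psi(\|G\|^p_{L^p}+\Lb^d(O))$. By \ref{(gamma2)} and $|\zeta|\land1\le\sum_i|\zeta_i|\land1$, the surface term is at most $C_\gamma\,\mathcal{V}(u_n,O)$. Finally, $\|G\|_{L^1}\le\|G\|^p_{L^p}+\Lb^d(O)$, and combining these estimates gives the claim with a suitable $\beta$.

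The main obstacle is the Lipschitz regularity of $O$ needed to invoke Theorem~\ref{thm:approximation SDstar}. To handle a general open $O$, I would exhaust $O$ by Lipschitz open sets $O_j\nearrow O$. If the constructions on the $O_j$ can be glued into a sequence defined on all of $O$, a diagonal argument then concludes. If not, I would instead run the Goffman--Serrin and Alberti construction directly on $O$. That construction needs no boundary regularity apart from the Lipschitz-boundary step of Theorem~\ref{thm: Goffmann-Serrin}, and only the $\limsup$ inequality \eqref{limsup leq F} is required here. The cleanest route is to use Theorem~\ref{thm: Goffmann-Serrin} with $O'\subset\subset O$ and to treat the thin layer $O\setminus O'$ by Anzellotti--Giaquinta smoothing, with error at most $C\,\mathcal{V}$ of the layer.
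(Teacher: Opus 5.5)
Your route is the paper's: the lower bound comes from \ref{W4}, \ref{(gamma2)} and lower semicontinuity of $\mathcal{V}(\cdot,O)$ and of the $L^p$-norm, and the upper bound from Theorem~\ref{thm:approximation SDstar}, \eqref{bound above W}, \ref{(gamma2)} and $|A|\le 1+|A|^p$. Your separate treatment of $p=1$, via weak$^*$ lower semicontinuity of the total variation, is more careful than the paper's wording.

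\textbf{The absorption step fails.} The step ``choosing $\alpha$ small absorbs the constants'' does not work, and it fails equally in the paper's one-line lower bound. From $\mathcal{E}(u_n,O)\ge c\big(\mathcal{V}(u_n,O)+\|\nabla u_n\|^p_{L^p(O)}\big)-C\mathcal{L}^d(O)$ you would need $\alpha\le c$ and $\alpha\ge C$ simultaneously, because the same $\alpha$ multiplies $-\mathcal{L}^d(O)$. Making $\alpha$ smaller makes that term harder to control, not easier. In fact no $\alpha>0$ works in general. Take $k=1$, $\Psi(x,A)=(|A|^p-1)^+$ (which satisfies \ref{(W1)_p}--\ref{W4} with $c_\Psi=1/2$), $\gamma(x,\zeta,\nu)=|\zeta|\wedge 1$, $g(x)=e\cdot x$ with $|e|=1$, and $G\equiv e$. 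The constant sequence $u_n=g$ gives $I^p_\star(g,G,O)=0$, whereas $\mathcal{V}(g,O)+\|G\|^p_{L^p(O)}-\mathcal{L}^d(O)=\mathcal{L}^d(O)>0$. What your estimate, like the paper's, actually proves is
\[
I^p_\star(g,G,O)\ge \alpha\big(\mathcal{V}(g,O)+\|G\|^p_{L^p(O)}\big)-C\,\mathcal{L}^d(O)
\]
with two independent constants. This is all that is used later, where $I^p_\star$ belongs to $\F^p_\star$ only ``up to a shift'' (Theorem~\ref{thm: relaxation partial}).

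\textbf{The Lipschitz issue.} Your concern is legitimate: the paper applies Theorem~\ref{thm:approximation SDstar}, stated for Lipschitz sets, to a general open $O$ without comment. However, your remedies are left conditional and are heavier than necessary. For a bound with a constant you can bypass Theorem~\ref{thm: Goffmann-Serrin}, which is where the Lipschitz assumption is used.
\begin{enumerate}
\item Take $h$ from Theorem~\ref{Alberti's Theorem} on $O$.
\item Apply Lemma~\ref{lemma piecewise constants}, which holds on any open set, to $\Phi_n\circ g-h\in{\rm BV}(O;\mathbb{R}^k)$.
\item Use subadditivity of $\mathcal{V}(\cdot,O)$, together with $\mathcal{V}(\Phi_n\circ g,O)\le \mathcal{V}(g,O)$ (Proposition~\ref{prop:truncation}) and $\mathcal{V}(h,O)\le k|Dh|(O)\le C\|G\|_{L^1(O)}$.
\end{enumerate}
The functions $h+z$, with $z$ piecewise constant, lie in ${\rm SBV}(O;\mathbb{R}^k)$ and have gradient $G$. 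A diagonal argument in $n$ then gives the required sequence on $O$ itself, with no regularity assumption on $\partial O$.
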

\begin{proof} 
Using  \ref{W4} we find  $  \Psi(x,A) \ge  \frac{c_\Psi}{2} (|A|^{p} + |A| ) - C_\Psi'$  for $\Lb^d$-a.e.\ $x \in \Omega$ and all $A \in \Rkd$ 
 for some $C_\Psi'>0$ sufficiently large.  Then, \EEE the lower bound follows by  \ref{(gamma2)}, together with the lower semicontinuity of $\mathcal{V}$ (see Remark~\ref{remark: lowersemicontinuity}) and of $\|G\|^p_{L^p(O)}$ under weak convergence.   
    
    To prove the upper bound, we use the Approximation Theorem~\ref{thm:approximation SDstar} to obtain a sequence $\{u_n\}_{n}\subset {\rm SBV}(O;\Rk)$ with $\nabla u_n=G$ $\Lb^d$-a.e.\ in $O$ and such that \begin{equation*}
        \limsup_{n\to+\infty}\mathcal{V}(u_n,O)\leq \MMM C\big( \mathcal{V}(g,O) + \Vert G \Vert_{L^1(O)} \big). \EEE
    \end{equation*}
    Thanks to Remark~\ref{remark: Improved upper bound},  the previous inequality,  and  \ref{(gamma2)}, \EEE  we then have 
    \begin{align*}
        I^p_\star(g,G,O)\leq \liminf_{n\to+\infty} \mathcal{E}(u_n,O)&\leq \beta \big(\liminf_{n\to+\infty}\mathcal{V}(u_n,O)+\|G\|^p_{L^p(O)}+\Lb^d(O)\big)\\
         &\leq \MMM C \beta \big(\mathcal{V}(g,O)+ \|G\|_{L^1\EEE(O)} + \|G\|^p_{L^p(O)}+\Lb^d(O)\big). \EEE
    \end{align*}
\MMM    Using that $|A| \le 1 +|A|^p$ for all $A \in \R^{k\times d}$, the proof is concluded. \EEE  
\end{proof}

We are now ready to prove Theorem~\ref{thm: relaxation partial}.
\medskip
 
\noindent\emph{Proof of Theorem \ref{thm: relaxation partial}.} 
   Thanks to Theorem~\ref{thm: integral star}, to conclude the proof, it is enough to show  that \EEE    $I^p_\star\in\F^p_\star$.

Let us fix $(g,G)\in {\rm SD}^p_\star(\Omega;\Rk)$. 
Given \MMM $O_1,O_2,O_3 \in \mathcal{O}(\Omega)\EEE$   \EEE with 
$O_1 \subset\subset  O_2 \subset O_3\subset \Omega$,  we claim that 
\begin{equation}\label{weaksub}I^p_\star(g,G,O_3) \leq I^p_\star(g,G,O_2) + I^p_\star(g,G,O_3\setminus \overline {O_1}).
\end{equation}
This inequality can be obtained with an argument similar to, \MMM for instance,   \cite[Theorem~2.8]{FriMatZap}, \EEE
but for the sake of \MMM completeness \EEE we present here \MMM the \EEE  full proof.
 Without loss of generality, we prove this inequality only in the case $p=1$ as the case $p>1$ follows by similar, but easier, arguments. Let $\lbrace u_n \rbrace_n \subset  {\rm SBV}(O_2;\Rk)$ and 
$\lbrace v_n  \rbrace_n \subset {\rm SBV}(O_3\setminus \overline{O_1};\Rk)$ be two sequences satisfying the following properties: $u_n\SDtostar (g,G)$ in $O_2$, $v_n\SDtostar(g,G)$ \EEE in $O_3\setminus \overline{O_1}$,  and 
\begin{equation}\label{recoveries weak}
I^p_\star(g,G,O_2)=\lim_{n\to +\infty}\mathcal{E}(u_n,O_2)\quad \text{ and }\quad I^p_\star(g,G,O_3\setminus \overline{O_1}) = \lim_{n\to +\infty}\mathcal{E}(v_n,O_3\setminus \overline{O_1}).
\end{equation}
Note also that  
\begin{equation}  \label{measure convergence}
 u_n - v_n\to \EEE 0 \text{ in }\; 
L^0( O_2 \setminus \overline{O_1}; {\mathbb{R}}^k) \text{ as $n\to+\infty$}.
\end{equation}
Let us fix $\delta>0$ and consider $O_{\delta}\coloneq  \{ x \in O_2 \colon   \, \mbox{dist}(x, O_1) < \delta\}$.  We let ${\rm d}(x)\coloneq  \mbox{dist}(x, O_1)$ be the distance function from $O_1$, which we recall   is Lipschitz continuous. Therefore,  by the Area Formula  \cite[(2.47)]{AFP} \EEE  we have  
\begin{equation*}
\int_{O_{\delta}\setminus \overline{O_1}} |u_n(x) - v_n(x)|\land 1 \,  J{\rm d}(x)  \, {\rm d} x =
\int_{0}^{\delta}\Big( \int_{{\rm d}^{-1}(y)} |u_n(x) - v_n(x)|\land 1 \, 
{\rm d}\Hd(x)\Big)\, {\rm d} y,
\end{equation*}
 where we let $J{\rm d}$ denote the Jacobian of  ${\rm d}$. In light of \eqref{measure convergence} and of the fact that $J{\rm d}$ is bounded, we may apply Fatou's  Lemma \EEE to obtain,  for $\Lb^1$-a.e.\  $\rho \in [0, \delta]$,   
\begin{equation}  \label{shellconvergence}
\liminf_{n\rightarrow +\infty} 
\int_{{\rm d}^{-1}(\rho)} |u_n - v_n|\land 1\, {\rm d}\mathcal{H}^{d-1}(x) 
= \liminf_{n\rightarrow +\infty} \int_{\partial O_{\rho}}
|u_n - v_n |\land 1\, {\rm d}\mathcal{H}^{d-1}(x) = 0.
\end{equation}
Let us fix $\delta_0 \in [0, \delta]$  such that \eqref{shellconvergence} holds   and $\Hd(\partial O_{\delta_0})<+\infty$.  The latter can  be assumed \EEE  by the Coarea Formula. By passing to a non-relabelled subsequence of $\{u_n-v_n\}_n$, we may also assume that the liminf in \eqref{shellconvergence} is actually a limit. Since $O_{\delta_0}$ is a set with
locally Lipschitz boundary (being the level set of a Lipschitz function),   we can consider  $u_n, v_n$ 
on $\partial O_{ \delta_0}$ in the sense of traces of ${\rm GBV_\star}(\Omega;\Rk)$-functions, and define 
\begin{align*}
w_n\coloneq  
\begin{cases}
u_n & \text{ in}\; \overline{O_{\delta_0}}, \\ 
v_n & \text{ in}\; O_3\setminus \overline{O_{\delta_0}}.
\end{cases}
\end{align*}
It follows from Lemma~\ref{lemma: Extension of GBV}  that $\{w_n\}\subset {\rm GBV}_\star(O_3;\Rk)$, 
 $|[w_n]|=|u_n-v_n|$ $\Hd$-a.e.\ on $\partial O_{\delta_0}$, and 
  $w_n\SDtostar (g,G)$ \EEE in $O_3$. Hence, by  {\rm \ref{(gamma2)}},
 \eqref{shellconvergence}, and \eqref{recoveries weak} we get  
\begin{align*}
I^p_\star(g,G,O_3) &\leq  \liminf_{n\to +\infty} 
\Big(\int_{O_3}\Psi(x, \nabla w_n) \, {\rm d} x +
\int_{J_{w_{n}}\cap O_3} \gamma(x,[w_n],\nu_{w_n})\, {\rm d}\Hd\Big) \\
&\leq  \liminf_{n\to +\infty} 
\Big(_{O_2}\Psi(x, \nabla  u_n) \, {\rm d} x +
\int_{J_{u_{n}}\cap O_2} \gamma(x,[u_n],\nu_{u_n})\, {\rm d}\Hd \\
&  \hspace{1cm} + \int_{O_3\setminus \overline{O_1}}\Psi(x, \nabla  v_n) \, {\rm d}x  + \int_{J_{v_{n}}\cap (O_3 \setminus \overline{O_1})}
\gamma(x,[v_n],\nu_{v_n})\, {\rm d}\Hd \\
&   \hspace{1cm} + \int_{J_{w_n} \cap \partial O_{\delta_0}} 
 C_\gamma    |u_n- v_n| \land 1\EEE \, {\rm d}\Hd \Big) \\
& =  I^p_\star(g,G,O_2) + I^p_\star(g,G,O_3 \setminus \overline{O_1}).
\end{align*}
 This concludes the proof of \eqref{weaksub}.  
 
 Next,   one can check that \EEE $O\mapsto I^p_\star(g,G,O)$ coincides with the restriction to $\Op(\Omega)$ of a bounded Radon measure on $\Omega$.  We omit the exact argument which can be found \EEE in \cite[Proposition~2.22,~Step 1]{ChoksiFonseca}, using Lemma~\ref{lemma: upper bound}  and \eqref{weaksub} \EEE  \MMM  in place of \EEE \cite[Lemma~2.18,  Lemma~2.21]{ChoksiFonseca}.     This yields (H1)  
 and proves that definition \eqref{def Borel relax} is well-posed (see the comment after \eqref{def Borel relax})\EEE.

 Moreover, by inner regularity on open sets, we have that $I^p_\star$  satisfies the upper bound \eqref{upper bound} with $B$ replaced by $O$, for every $O\in\Op(\Omega)$. Combining this observation with \eqref{def Borel relax} \MMM and Lemma~\ref{lemma: upper bound} \EEE we get that $I^p_\star$ satisfies \ref{hyp:H4} as well  (up to a  shift\EEE).  

To prove that $I^p_\star$ satisfies the lower semicontinuity property \ref{hyp:H3}, it is enough to repeat \MMM exactly the \EEE  arguments used in \cite[Proposition~5.1]{ChoksiFonseca}. Finally, locality property \ref{hyp:H2} is a simple consequence of lower semicontinuity property \ref{hyp:H3} as observed in \cite[Remark~2.1]{BFM1998}.   This concludes the proof of    $I^p_\star\in \F^p_\star$ and thus of the theorem.
\qed
\medskip

\medskip

\subsection{Proof of Theorem \ref{thm:cell formulas}}  
 Throughout the rest of the section, we let $\Phi\in C^\infty_c(\Rk;\Rk)$ be a function satisfying \eqref{Properties of Phi}, for a fixed $\tau\geq \max \{3,\beta_\gamma+1\}$, where $\beta_\gamma$ is the  constant appearing in \ref{(gamma5)}. For $R>0$, we let $\Phi_R$ be the function associated to $\Phi$ via \eqref{def PhiR}.    \EEE
The next truncation result will be   useful  for \EEE proving Theorem~\ref{thm:cell formulas} as    it  allows  to truncate    competitors appearing in suitable minimisation problems. \EEE    It will also be crucial in the proof of Lemma~\ref{lemma:I truncated} below,    a key ingredient in the proof of  Theorem~\ref{thm:relaxation full}.\EEE
 
\begin{lemma}\label{lemma:CagnettiDetFree W}
Assume that $\Psi$ and $\gamma$ satisfy conditions  {\rm \ref{(W1)_p}--\ref{W4}}, { \rm    \ref{(gamma1)}, \ref{(gamma2)}\EEE}, and {\rm \ref{(gamma5)}}. 
Let $O\in\Op(\Omega)$. Then there exists a constant $C>0$, depending only on the structural constants   $c_\Psi,\bar{C}_\Psi, c_\gamma, C_\gamma,$   and $\beta_\gamma$, such that for every  $N\in\N$, $R>0$,   $u\in {\rm SBV}(O;\Rk)$, and $B\in \B(O)$ it holds
\begin{align}\label{claim truncation} 
     \frac{1}{N}\sum_{i=1}^N\mathcal{E}(\Phi_{R_i}\circ u, B)\leq \Big(1+\frac{C}{N}\Big)\mathcal{E}(u,B)+ \Lb^d(\MMM \{|u|\geq R\} \EEE )+\frac{C}{N}\Lb^d(B),
\end{align}
 where we have set $R_i\coloneq \tau^{i-1}R$.   If, in addition, $\gamma$ satisfies {\rm \ref{(gamma6)}} as well, \BBB then \EEE for every $x\in\Omega$ and $\rho>0$ the same property holds with the same constant $C$, for the functional  $\mathcal{E}^{x}_\rho$  defined  by 
    \begin{align}
\label{def Erhox}
    \mathcal{E}^x_\rho(u,B)&\coloneq \int_{B}\Psi(x+\rho y,\nabla u(y))\,{\rm d}y+\frac{1}{\rho}\int_{J_u\cap B}\gamma(x+\rho y,\rho[u](y),\nu_u(y))\,{\rm d}\Hd(y)
    \end{align}
    for every $u\in{\rm SBV}(O;\Rk)$ and $B\in \B(O)$.\EEE
\end{lemma}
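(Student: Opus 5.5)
We follow the classical truncation argument of Cagnetti, Dal Maso, Scardia \& Zeppieri, adapted to the bounded surface growth \ref{(gamma2)}. The properties of $\Phi$ from \eqref{Properties of Phi} that we use are: $\Phi_R(z)=z$ for $|z|\le R$, $|\Phi_R(z)|\le \tau R$ (more precisely ${\rm supp}$ of $\nabla\Phi_R$ in $\{|z|\le\tau R\}$ and $\Phi_R$ constant outside), and ${\rm Lip}(\Phi_R)\le 1$. Set $u^i\coloneq \Phi_{R_i}\circ u\in{\rm SBV}(O;\Rk)$. By the chain rule, $\nabla u^i=\nabla\Phi_{R_i}(u)\nabla u$ a.e., and $J_{u^i}\subset J_u$ with $[u^i]=\Phi_{R_i}(u^+)-\Phi_{R_i}(u^-)$. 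We split $B$ into the layers
\[
A_0\coloneq \{|u|<R\},\qquad A_i\coloneq \{R_i\le |u|<R_{i+1}\},\quad i=1,\dots,N,\qquad A_\infty\coloneq\{|u|\ge R_{N+1}\},
\]
intersected with $B$.

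For the bulk term, $u^i=u$ on $\{|u|<R_i\}$, $|\nabla u^i|\le|\nabla u|$ on $\{R_i\le|u|<\tau R_i\}=A_i$, and $\nabla u^i=0$ a.e.\ on $\{|u|\ge \tau R_i\}$. Hence
\[
\Psi(x,\nabla u^i)\le \Psi(x,\nabla u)\ \text{on}\ \{|u|<R_i\}.
\]
On $A_i$ we have $\Psi(x,\nabla u^i)\le \overline C_\Psi(|\nabla u|^p+1)\le C(\Psi(x,\nabla u)+1)$ by \eqref{bound above W} and \ref{W4}. On $\{|u|\ge\tau R_i\}$ we get $\Psi(x,0)\le \overline C_\Psi$. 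Averaging over $i$, each layer $A_i$ is a bad layer for only one index, so
\[
\frac1N\sum_i\int_B\Psi(x,\nabla u^i)\le \int_B\Psi(x,\nabla u)+\frac CN\big(\mathcal E(u,B)+\Lb^d(B)\big)+C'\Lb^d(B\cap\{|u|\ge R\}).
\]
The constant in front of $\Lb^d(\{|u|\ge R\})$ should be $1$ as stated. If it comes out larger, we normalise by noting that $\Psi(x,0)$ is bounded by $\overline C_\Psi$ and absorb it, or we refine the estimate as in \cite{CagnettiDet}. This bookkeeping is routine but must be checked.

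The surface term is the main obstacle; here we use \ref{(gamma5)} with $\tau\ge\beta_\gamma+1$. At $x\in J_u$ we distinguish cases according to $m\coloneq\min(|u^+|,|u^-|)$ and $M\coloneq\max(|u^+|,|u^-|)$.

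If $M<R_i$, then $[u^i]=[u]$. If $m\ge\tau R_i$, then $[u^i]=0$. The remaining case is when the index $i$ "cuts" the jump. We claim $|[u^i]|\le \beta_\gamma^{-1}$-comparable to $|[u]|$ except for at most two indices $i$. Indeed, if $m<R_i$ and $M\ge \tau R_i$, then $|[u]|\ge M-m\ge(\tau-1)R_i\ge\beta_\gamma R_i$, while $|[u^i]|\le 2\tau R_i$. This requires the sharper choice of $\Phi$ so that $|[u^i]|\le \beta_\gamma^{-1}|[u]|$ in the cutting regime. We will verify it from \eqref{Properties of Phi} and otherwise fall back on the bound $\gamma\le C_\gamma$ combined with \ref{(gamma2)}: where $|[u]|\ge\beta_\gamma R_i$ with $R_i$ not small, $\gamma(x,[u],\nu)\ge c_\gamma(|[u]|\wedge 1)$ controls $C_\gamma(|[u^i]|\wedge1)$ up to a constant. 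Then \ref{(gamma5)} gives $\gamma(x,[u^i],\nu)\le\gamma(x,[u],\nu)$. The indices $i$ with one trace in the transition layer $\{R_i\le|\cdot|<\tau R_i\}$ number at most two per point, since the layers are disjoint for the two traces. For those indices we use $\gamma(x,[u^i],\nu)\le C\gamma(x,[u],\nu)$, which follows from ${\rm Lip}\,\Phi_R\le1$ and \ref{(gamma2)} together with $(|a|\wedge1)\le|a|\wedge1$ monotonicity. Summing, the average surface energy is at most $(1+\frac CN)\int_{J_u\cap B}\gamma$.

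For $\mathcal E^x_\rho$ the same argument applies verbatim. The integrand $\Psi(x+\rho y,\cdot)$ satisfies the same bounds with the same constants. The surface integrand $\zeta\mapsto\frac1\rho\gamma(x+\rho y,\rho\zeta,\nu)$ is still symmetric and quasi-monotone with the same $\beta_\gamma$, and the comparison $\gamma(\cdot,\rho a)\le C\gamma(\cdot,\rho b)$ for $|a|\le|b|$ is uniform in $\rho$. For that comparison we use the two-sided bounds of \ref{(gamma2)} at every scale, with \ref{(gamma6)} controlling the behaviour near $0$ so that the ratio constant does not depend on $\rho$. This is why \ref{(gamma6)} is assumed for the second claim.
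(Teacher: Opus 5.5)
\textbf{Verdict.} Your route is the standard layered truncation, which is exactly what the paper relies on (it defers to the proof of property (g) in \cite[Proposition~3.14]{dal2025homogenization}). Your bulk estimate is correct, and so is your count that at most two indices $i$ have a trace modulus in the transition layer $[R_i,\tau R_i)$.

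\textbf{The gap: the cutting regime.} The gap is in the regime $m<R_i$, $M\ge\tau R_i$. This is the only place where \ref{(gamma5)} enters, and it decides whether you get the $(1+C/N)$ structure.
\begin{itemize}
\item You only record $|[u^i]|\le 2\tau R_i$. This does not give $\beta_\gamma|[u^i]|\le|[u]|$.
\item Your fallback via \ref{(gamma2)} cannot work. The number of cutting indices at a point is not bounded independently of $N$: if $|u^-|<R$ and $|u^+|\ge\tau^N R$, every $i\in\{1,\dots,N\}$ cuts. Any factor $C_\gamma/c_\gamma>1$ on those indices survives the averaging and yields $\frac{C_\gamma}{c_\gamma}\gamma(\cdot,[u],\nu_u)$ rather than $(1+\frac CN)\gamma(\cdot,[u],\nu_u)$. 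On cutting indices you need coefficient exactly $1$.
\end{itemize}

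\textbf{The fix needs no sharper $\Phi$.} By \eqref{Properties of PhiR}, suppose $|u^-|=m<R_i$ and $|u^+|=M\ge\tau R_i$. Then $\Phi_{R_i}(u^-)=u^-$ and $\Phi_{R_i}(u^+)=0$, so $[u^i]=-u^-$ and $|[u^i]|=m<R_i$. On the other hand, $|[u]|\ge M-m\ge(\tau-1)R_i\ge\beta_\gamma R_i$, since $\tau\ge\beta_\gamma+1$. Hence $\beta_\gamma|[u^i]|\le|[u]|$, and \ref{(gamma5)} gives $\gamma(\cdot,[u^i],\nu_u)\le\gamma(\cdot,[u],\nu_u)$. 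Now combine this with the other cases:
\begin{itemize}
\item $[u^i]=[u]$ when $M<R_i$;
\item $[u^i]=0$ when $m\ge\tau R_i$;
\item $\gamma(\cdot,[u^i],\nu_u)\le\frac{C_\gamma}{c_\gamma}\gamma(\cdot,[u],\nu_u)$ on the at most two transition indices, from $|[u^i]|\le|[u]|$ and \ref{(gamma2)}.
\end{itemize}
Together these give the pointwise bound $\frac1N\sum_{i=1}^N\gamma(\cdot,[u^i],\nu_u)\le\big(1+\frac{2C_\gamma}{c_\gamma N}\big)\gamma(\cdot,[u],\nu_u)$ on $J_u\cap B$.

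\textbf{The coefficient in front of $\Lb^d(\{|u|\ge R\})$.} The coefficient $1$ cannot be obtained, so do not try to absorb $\overline C_\Psi$. Take $\Psi(x,A)=|A-A_0|^p$ with $|A_0|^p=2$, and a ball $B\subset\subset O$. Set $u=A_0y+c$ on $B$ and $u=0$ on $O\setminus B$, with $|c|$ so large that $|u|\ge\tau^N R$ on $B$. Then $\mathcal E(u,B)=0$ and $\mathcal E(\Phi_{R_i}\circ u,B)=2\Lb^d(B)$ for all $i\le N$, while $\{|u|\ge R\}=B$. The estimate you derived, with $\overline C_\Psi\Lb^d(B\cap\{|u|\ge R\})$, is the correct one. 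It is also the form in which the lemma is actually used in the proof of Lemma~\ref{lemma:I truncated}.

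\textbf{The role of \ref{(gamma6)}.} It is not what makes the rescaled case work. \ref{(gamma5)} is invariant under $\zeta\mapsto\rho\zeta$. For $|a|\le|b|$, \ref{(gamma2)} alone gives, uniformly in $\rho$,
\[
\frac1\rho\gamma(\cdot,\rho a,\nu)\le\frac{C_\gamma}{\rho}(\rho|a|\wedge1)\le\frac{C_\gamma}{c_\gamma}\,\frac1\rho\gamma(\cdot,\rho b,\nu).
\]
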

\begin{proof}
 The proof of inequality \eqref{claim truncation} for $\mathcal{E}$ and $\mathcal{E}^x_\rho$ can be obtained arguing exactly as in the section of the proof of \cite[Proposition~3.14]{dal2025homogenization}   devoted to property (g) of \cite[Definition~3.8]{dal2025homogenization}. 
 \end{proof}

Later, given a sequence $\{u_n\}_n\subset {\rm SBV}(\Omega;\Rk)$ converging in $\SD^p_\star$, it \BBB will be essential \EEE to determine the limit of the truncated sequence $\Phi_R\circ u_n$ in a related topology. This is taken care of in the next lemma. We remark that the proof below cannot be reproduced in the case $p=1$, unless we replace in Definition \ref{def: convergences} weak$^*$-convergence  in $\mathcal{M}_b(\Omega;\Rkd)$ with weak $L^1$-convergence.  
\begin{lemma}\label{lemma:convergence truncations}
   Let $p>1$, let $O\in\Op(\Omega)$, let $(g,G)\in \SD^p_\star(\Omega;\Rk)$, let $ \{u_n\}_n\EEE\subset  {\rm SBV}(O;\Rk)$ with  $u_n \to_{SD_\star}  (g,G)$ \EEE in $O$, and let $R>0$.  Consider the functions  $g^R\coloneq \Phi_R\circ g$,  $u^R_n\coloneq \Phi_R\circ u_n$, and $G^R\coloneq \nabla \Phi_R(g)G$. Then   $\{u_n^R\}_n$ converges to $g^R$ strongly in $L^1(O;\Rk)$ and $\{\nabla u^R_n\}_n$ converges weakly in $L^p(O;\Rkd)$ to $G^R$.  In particular, if $g\in L^\infty(O;\Rk)$ and  $R>2\|g\|_{L^\infty(O;\Rk)}$, then $g^R=g$ and $G^R=G$. \EEE
\end{lemma}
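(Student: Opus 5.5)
The proof will be split into three claims: $L^1$ convergence of the truncations, $L^p$ bounds on their gradients, and identification of the weak limit through the chain rule. Throughout we use the properties of $\Phi$ in \eqref{Properties of Phi}: $\Phi_R$ is smooth and Lipschitz with bounded image, $\Phi_R(z)=z$ for $|z|\le R$, and $\nabla\Phi_R$ is bounded and has compact support.

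\emph{$L^1$ convergence.} Since $u_n\to g$ in $L^0(O;\Rk)$ and $\Phi_R$ is continuous, $\Phi_R\circ u_n\to\Phi_R\circ g$ in measure. The functions $u^R_n$ are uniformly bounded by $\|\Phi_R\|_\infty$ and $O$ is bounded, so dominated convergence (applied along a.e.-convergent subsequences, then Urysohn's principle) gives $u^R_n\to g^R$ strongly in $L^1(O;\Rk)$.

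\emph{Gradient bounds.} By the chain rule in ${\rm SBV}$, $\nabla u^R_n=\nabla\Phi_R(u_n)\nabla u_n$ $\Lb^d$-a.e. Hence $\|\nabla u^R_n\|_{L^p}\le \|\nabla\Phi_R\|_\infty\|\nabla u_n\|_{L^p}$, and the right-hand side is bounded because $\nabla u_n\rightharpoonup G$ weakly in $L^p$ (here we use $p>1$). Therefore, up to subsequences, $\nabla u^R_n\rightharpoonup H$ weakly in $L^p(O;\Rkd)$, and the task reduces to showing $H=G^R$.

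\emph{Identification of the limit.} Set $a_n\coloneq\nabla\Phi_R(u_n)$ and $a\coloneq\nabla\Phi_R(g)$. These are uniformly bounded, and $a_n\to a$ in measure by continuity of $\nabla\Phi_R$. By dominated convergence, $a_n\to a$ strongly in $L^q(O)$ for every $q<\infty$, in particular for $q=p'$. Fix a test function $\varphi\in L^{p'}(O;\Rkd)$ with $\varphi\in L^\infty$; such functions are dense. Write
\begin{equation*}
\int a_n\nabla u_n:\varphi \;=\; \int \nabla u_n:(a_n^T\varphi) .
\end{equation*}
Since $a_n^T\varphi\to a^T\varphi$ strongly in $L^{p'}$ and $\nabla u_n\rightharpoonup G$ weakly in $L^p$, the product of a strongly and a weakly converging sequence gives
\begin{equation*}
\int \nabla u_n:(a_n^T\varphi)\;\to\;\int G:(a^T\varphi)\;=\;\int G^R:\varphi .
\end{equation*}
Density together with the uniform $L^p$ bound then yields $H=G^R$. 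Because the limit does not depend on the subsequence, the whole sequence converges.

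This identification step is the main point, and it is exactly where $p>1$ is needed. For $p=1$ with only weak$^*$ convergence of measures, the product of a merely bounded (discontinuous in the limit) $a_n$ with measures concentrating is not controlled.

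\emph{The case $g\in L^\infty$.} If $R>2\|g\|_\infty$, then $|g|\le R$ a.e. This gives $\Phi_R(g)=g$, and $\nabla\Phi_R(g)=I$ a.e., since $\Phi_R$ is the identity on a neighbourhood of $\{|z|\le R\}$ (or simply on the ball of radius $R$, an open set containing a.e. value of $g$ with margin). Hence $g^R=g$ and $G^R=G$.
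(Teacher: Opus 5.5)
Your proof is correct and follows the same route as the paper. Both arguments use the chain rule $\nabla u_n^R=\nabla\Phi_R(u_n)\nabla u_n$, obtain strong $L^q$ convergence of $\nabla\Phi_R(u_n)$ to $\nabla\Phi_R(g)$ by dominated convergence, split the pairing into a weakly converging part and a strongly vanishing part, and pass from subsequences to the whole sequence. The only difference is cosmetic: the paper tests directly against arbitrary $\varphi\in L^{p'}$, applying dominated convergence to $|\varphi|^{p'}|\nabla\Phi_R(u_n)-\nabla\Phi_R(g)|^{p'}$ along an a.e.-convergent subsequence, whereas you restrict to bounded test functions and invoke density.
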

\begin{proof}
   By the Chain Rule for ${\rm BV}$-functions \MMM  (see \cite[Proposition~3.96]{AFP}) \EEE we have \begin{equation}\label{chain rule}
    \nabla u^R_n =\nabla \Phi_{R}(u_n)\nabla u_n
\end{equation} 
for every $n\in\N$. 
We pass to a subsequence $\{u_{n_\ell}\}_\ell$  that converges pointwise a.e.\ to $g$ as $\ell\to+\infty$. As $\Phi_{R}$ is a $C^1$-function, from the pointwise convergence of $\{u_{n_\ell}\}_\ell$ and the Dominated Convergence Theorem it follows that $\{u^R_{n_\ell}\}_\ell$ converges to $g^R$ strongly in $L^1(O;\Rk)$ and 
$\{\nabla \Phi_{R}(u_{n_\ell})\}_\ell$ converges to $\nabla \Phi_{R}(g)$ strongly in $L^q(\Omega; \MMM \R^{k\times k}\EEE )$ for every $1\leq q<+\infty$ and pointwise a.e.\ in $O$.

 We now prove that $\{\nabla \Phi_{R}(u_{n_\ell})\nabla u_{n_\ell}\}_\ell$ converges to $G^R$ weakly in $L^p(O;\Rkd)$. To this end, let us fix   $\varphi\in L^{q}(O;\Rkd)$, where $q\coloneq p/(p-1)$ \EEE denotes the conjugate exponent of $p$. We have
\begin{align*}
    \int_{O}\varphi:(\nabla \Phi_{R}(u_{n_\ell})\nabla u_{n_\ell}-\nabla \Phi_{R}(g)G)\,{\rm d}x&= \int_{O}\varphi:(\nabla \Phi_{R}(g)\nabla u_{n_\ell}-\nabla \Phi_{R}(g)G)\,{\rm d}x\\&\quad +\int_{O}\varphi:(\nabla \Phi_{R}(u_{n_\ell})\nabla u_{n_\ell}-\nabla \Phi_{R}(g)\nabla u_{n_\ell}\big)\,{\rm d}x,
\end{align*}
where for $A,L\in\Rkd$ the symbol $A:L$ denotes the Frobenius scalar product between $A$ and $L$. By weak convergence in $L^p(O;\Rkd)$ of $\{\nabla u_{n_\ell}\}_\ell$ to $G$, the first term converges to $0$ as $\ell\to+\infty$. For the second term, we note instead that 
\begin{align*}
    \Big|\int_{O}\varphi:(\nabla \Phi_{R}(u_{n_\ell})\nabla u_{n_\ell}&-\nabla \Phi_{R}(g)\nabla u_{n_\ell}\big)\,{\rm d}x\Big| \leq\|\nabla u_{n_\ell}\|_{L^p(O)}\Big( \int_O|\varphi|^{q}|\nabla \Phi_{R}(u_{n_\ell})-\nabla \Phi_{R}(g)|^q\,{\rm d}x\Big)^{1/q}.
    \end{align*} \EEE
Since $\sup_\ell\|\nabla u_{n_\ell}\|_{L^p(O)}<+\infty$ by weak convergence,  $|\nabla \Phi_{R}|\leq 1$ by \eqref{Properties of PhiR}\EEE, and $\{\nabla \Phi_R(u_{n_\ell})\}_\ell$ converges to $\nabla\Phi_R(g)$ pointwise a.e.\ in $O$, an application of the Dominated Convergence Theorem shows  that \EEE 
$\{\nabla \Phi_{R}(u_{n_\ell})\nabla u_{n_\ell}\}_\ell$ converges to $G^R$ weakly in $L^p(O;\Rkd)$ as $\ell\to +\infty$. As the limit does not depend on the chosen subsequence $\{n_\ell\}_\ell$, by \eqref{chain rule} the whole sequences $\{u^R_n\}_n$ and $\{\nabla u^R_n\}_n$   converge to $g^R$  and $G^R$, respectively. \MMM This concludes the proof. \EEE  
\end{proof}

 We are now ready to prove Theorem \ref{thm:cell formulas}. 
\medskip

\noindent \emph{Proof of Theorem \ref{thm:cell formulas}.}  Throughout the proof, given $x\in \Omega$, we let $\mathcal{E}^x_0$ be the functional defined by 
\begin{equation*} 
\mathcal{E}^x_0(u,B)\coloneq \int_{B}\Psi(x,\nabla u(y))\,{\rm d}y+\int_{J_u\cap B}\BBB \gamma^0 \EEE(x,[u](y),\nu_u(y))\,{\rm d}\Hd(y)
\end{equation*}
for every $u\in{\rm SBV}(Q;\Rk)$ and $B\in \B(Q)$. We  split \EEE the proof  into \EEE two \BBB parts, \EEE addressing first   the statement concerning the bulk energy density.  We divide its proof into several steps.  

\EEE

\medskip

 \noindent \emph{Step 1.} (Bulk: Auxiliary cell formula)  \EEE
For every $x\in\Omega$ and $A,L\in\Rkd$ we set
\begin{align}\notag
\widetilde{H}_p(x,A,L)\coloneq \inf &\Big\{\liminf_{n\to+\infty}\mathcal{E}^x_0(u_n,Q)\colon \,
\{u_n\}_n\subset{\rm SBV}(Q;\Rk),\\ &  \label{def Htilde}\,\text{ }u_n\to \ell_{0,0,A} \text{ strongly in $L^1(Q;\Rk)$},\,\,  
\nabla u_n \rightharpoonup  L\text{ \MMM weakly \EEE  in $L^p(Q;\Rkd)$}\Big \}.
\end{align}
 \EEE
 We will prove that for $\Lb^d$-a.e.\ $x\in\Rd$  and for all $A,L\in \Rkd$ it holds 
\begin{equation}\label{step 1 cell}
  \widetilde{H}_p(x,A,L)\leq f_{\rm bulk}(x,A,L) \leq H_p(x,A,L).\EEE
\end{equation}
 Since by \ref{(W2)_p}, \eqref{bounds gammazero}, and Remark~\ref{remark:gamma zero uniform} the densities $\Psi(x,\cdot)$ and $\BBB \gamma^0 \EEE(x,\cdot)$ satisfy assumptions ${\rm (\mathscr{H}1)}_p$, ${\rm (\mathscr{H}2)}$, and ${\rm (\mathscr{H}4)}$  of \cite{ChoksiFonseca},  it follows from \cite[Proposition~3.1]{ChoksiFonseca} that $\widetilde{H}_p(x,A,L)={H}\EEE_p(x,A,L)$ for every $x\in\Omega$ and every $A,L\in\Rkd$.  Therefore, proving
 \eqref{step 1 cell} is sufficient to show that in \eqref{repr bulk relax} we can write $H_p$ in place of $f_{\rm bulk}$.\EEE

By definition of $f_{\rm bulk}$ \MMM (see also Remark~\ref{remark: invariance}) \EEE and by Theorem~\ref{thm: minimum=F} applied with $\cF=I^p_\star$ we have  
\begin{equation}\label{equalities Ipstar}
    f_{\rm bulk}(x,A,L)=\limsup_{\rho\to 0^+}\frac{\m(\ell_{x,0,A},L, Q(x,\rho))}{\rho^d}=\limsup_{\rho\to 0^+}\frac{I^p_\star(\ell_{x,0,A},L, Q(x,\rho))}{\rho^d}  <+\infty \EEE
\end{equation}
for $\Lb^d$-a.e.\ $x\in\Omega$ and for all $A,L\in\Rkd$.  Therefore, to prove
 \eqref{step 1 cell} it suffices to show that 
\begin{equation}\label{fgeq Hp}
 \widetilde{H}_p(x,A,L)\leq   f_{\rm bulk}(x,A,L)  \leq    {H}_p\EEE(x,A,L)
\end{equation}
for all $x\in\Omega$ such that  \eqref{equalities Ipstar} holds and for  all $A,L\in\Rkd$.

  \medskip 

\noindent \emph{Step 2.} (Bulk: $\widetilde{H}_p\leq f_{\rm bulk}$)  \EEE We let $x\in\Omega$ be such that \eqref{equalities Ipstar} holds, and let  $A,L\in\Rkd$.
For any $\rho>0$ we consider a sequence $\{u^\rho_n\}_n\subset {\rm SBV}(\Omega;\Rk)$ with   $u^\rho_n \to_{SD_\star}  (\ell_{x,0,A},L)$ \EEE in $Q(x,\rho)$, $\{u^\rho_n\}_n$ converging to \MMM $\ell_{x,0,A}$ \EEE   pointwise a.e.\ in $Q(x,\rho)$, 
and 
\begin{equation}\label{quasiminimal Hp}
\lim_{n\to+\infty} \mathcal{E}(u^\rho_n,Q(x,\rho))=  I^p_\star(\ell_{x,0,A},L,Q(x,\rho)).
\end{equation}
We introduce the functions $v^\rho_n\in {\rm SBV}(Q;\Rk)$ defined by $v_n^\rho \MMM (y) \EEE \coloneq \frac{1}{\rho}u^\rho_n(x+\rho y)$ \MMM for $y \in Q$, \EEE which  satisfy  
\begin{gather}
\label{changes 0}\text{$v_n^\rho\SDtostar (\ell_{0,0,A}, L)$ in $Q$ } \text{ and }  v_n^\rho\to \ell_{0,0,A} \text{ pointwise a.e.\   as $n\to+\infty$},\\
\label{change 1}
    \nabla v_n^\rho(y)=\nabla { u_n^\rho\EEE}(x+\rho y) \quad \text{for $\Lb^d$-a.e.\ $y\in Q$},\\\label{change 2}
    [v_n^\rho](y)=\frac{1}{\rho}[u_n^\rho](x+\rho y)\quad \text{for $\Hd$-a.e.\ $y\in J_{v^n_\rho}=(J_{u^\rho_n}-x)/\rho$}.
\end{gather}
 Using \eqref{change 1} and \eqref{change 2}, a change of variables then shows that for all  $n\in\N$ and $\rho>0$ we have
\begin{equation}\label{change variables bulk}
    \mathcal{E}(u^\rho_n,Q(x,\rho))=\rho^d\mathcal{E}_\rho^x(v_n^\rho,Q),
\end{equation}
where $\mathcal E^{\rho}_x$ is the functional given by \eqref{def Erhox}.

We set $R\coloneq 2\|\ell_{0,0,A}\|_{L^\infty(Q;\Rk)}\leq 2\sqrt{d}|A|$ and $R_i\coloneq \tau^{i-1}R$ for all $i\in\N$. Given $N\in\N$, we may apply Lemma~\ref{lemma:CagnettiDetFree W} to the functional $\mathcal{E}_x^\rho$  and to $v^\rho_n$. We find $i_n=i(n,N,\rho)\in \{1,\dots,N\}$ such that 
\begin{equation}\label{truncation Hp}
    \mathcal{E}_\rho^x(\Phi_{R_{i_n}}\circ v_n^\rho, Q)\leq \Big(1+
     \frac{C}{N}\Big)\mathcal{E}_\rho^x(v_n^\rho,Q)+\Lb^d(\{|v^\rho_n|\geq R\})+\frac{C}{N}\Lb^d(Q)
\end{equation}
for   $C>0$ independent of $n$, $\rho$, $N$, $A,$ and $L$. We remark that by \eqref{changes 0} we have 
$\limsup_{n\to+\infty}\chi_{\{|v_n^\rho|\geq R\}}(y)$ $\leq \chi_{\{|\ell_{0,0,A}|\geq R\}}(y)$ 
for $\Lb^d$-a.e.\ $y\in Q$, so that by Fatou's lemma
\begin{equation}\label{vanishing super level sets}
    \limsup_{n\to+\infty}\Lb^d\big(\{|v^\rho_n|\geq R\}\big)\leq \Lb^d\big(\{|\ell_{0,0,A}|\geq R\} \BBB \cap Q \EEE \big)=0 \quad \text{ for all $\rho>0$}.
\end{equation}
We set $w^\rho_n\coloneq \Phi_{R_{i_n}}\circ v_n^\rho\in {\rm SBV}(Q;\Rk)$. Then, \BBB in view of \EEE \eqref{changes 0}, Lemma~\ref{lemma:convergence truncations} gives 
\begin{gather*}
\text{$\{w^\rho_n\}_n$ converges to $\ell_{0,0,A}$ strongly in $L^1(Q;\Rk)$ as $n\to+\infty$},
\\
    \text{$\{\nabla w^\rho_n\}_n$ converges to $L$ weakly in $L^p(Q;\Rkd)$ as $n\to+\infty$,}
\end{gather*}
so that, for each $\rho>0$, the sequence $\{w^\rho_n\}_n$ is a competitor for the minimisation problem in \eqref{def Htilde}. Therefore,
\begin{equation}\label{ultimo step htilde}
    \widetilde{H}_p(x,A,L)\leq \liminf_{n\to+\infty} \mathcal{E}^x_0(w^\rho_n,Q)\quad \text{ for all $\rho>0$}.
\end{equation}

In order to use \eqref{change variables bulk} we still need to compare \EEE $\MMM \mathcal{E}^x_0 \EEE (w^\rho_n,Q)$ and $\mathcal{E}_\rho^x(w^\rho_n,Q)$. To this end, we begin by observing that, \BBB by the fact that \EEE $\vartheta$ is nondecreasing,  \ref{(gamma6)}, \MMM \eqref{def Erhox}, \EEE \eqref{truncation Hp},  and the fact that $\|w^\rho_n\|_{L^\infty(Q)}\leq \tau^NR$,  we obtain 
\begin{align}\notag 
    \Big|\int_{ J_{w^\rho_n}\cap Q}&\gamma^0\big(x+\rho y, [w^\rho_n](y),\nu_{w^\rho_n}(y)\big)\,{\rm d}\Hd(y)-\int_{\ J_{w^\rho_n}\cap Q}\frac{1}{\rho}\gamma\big(x+\rho y,\rho [w^\rho_n](y),\nu_{w^\rho_n}(y)\big)\,{\rm d}\Hd(y)\Big|\\\notag
    &\leq \vartheta\big(\tau^N2R\rho \big)\int_{ J_{w^\rho_n}\cap Q}\frac{1}{\rho}\gamma\big(x+\rho y,\rho [w^\rho_n](y),\nu_{w^\rho_n}(y)\big)\,{\rm d}\Hd(y)\\ 
    &\leq \vartheta\big(\tau^N2R\rho\big)\Big(\Big(1+\frac{C}{N}\Big)\mathcal{E}_\rho^x(v_n^\rho,Q)+\Lb^d(\{|v^\rho_n|\geq R\})+\frac{C}{N}\Lb^d(Q)\Big)
    \label{|-|leq}.
\end{align}
In light of \eqref{quasiminimal Hp}, \MMM \eqref{change variables bulk}, \EEE  and \eqref{vanishing super level sets},  we have 
\begin{align}\notag 
\limsup_{\rho\to 0^+}&\limsup_{n\to+\infty}\,\,\vartheta\big(\tau^N2R\rho\big)\Big(\Big(1+\frac{C}{N}\Big) \MMM \mathcal{E}_\rho^x(v_n^\rho,Q) \EEE +\Lb^d(\{|v^\rho_n|\geq R\})+\frac{C}{N}\Lb^d(Q)\Big)\\\label{chacanery}
&\leq \limsup_{\rho \to 0^+}\vartheta\big(\tau^N2R\rho\big)\Big(\Big(1+\frac{C}{N}\Big)\frac{I^{p}_\star(\ell_{x,0,A},L,Q(x,\rho))}{\rho^d}+\frac{C}{N}\Lb^d(Q)\Big)=0,
\end{align}
\EEE
where in the last equality we have used  that $\vartheta$ is a continuous function with $\vartheta(0)=0$, and that $x$ is such that \eqref{equalities Ipstar} holds. 
   Exploiting \MMM \eqref{uniform gamma zero}, \eqref{equalities Ipstar}, the fact that $\|w^\rho_n\|_{L^\infty(Q)}\leq \tau^NR$, and \ref{(gamma2)} \EEE    with a similar argument  we can prove that 
\begin{align}\notag
    \limsup_{\rho\to 0^+}\limsup_{n\to+\infty}\Big| \int_{ J_{w^\rho_n}\cap Q}\gamma^0\big(x+\rho y, &[w^\rho_n](y),\nu_{w^\rho_n}(y)\big)\,{\rm d}\Hd(y)\\
    &-\int_{J_{w^\rho_n}\cap Q }\gamma^0\big(x, [w^\rho_n](y),\nu_{w^\rho_n}(y)\big)\,{\rm d}\Hd(y)\Big|=0.
\end{align}
In a similar fashion, using \ref{Wcontinuous} we can \BBB also \EEE show that  
\begin{equation}\label{uniform x Hp}
 \limsup_{\rho\to 0^+}\limsup_{n\to+\infty}\Big| \int_{Q}\Psi\big(x+\rho y, \nabla w^\rho_n\big)\,{\rm d}y-\int_{Q}\Psi\big(x,\nabla w^\rho_n\big)\,{\rm d}y\Big|=0.
\end{equation}

We are now ready to conclude the proof of  \MMM Step 2. \EEE
Combining \eqref{equalities Ipstar}, \eqref{quasiminimal Hp}, and \eqref{change variables bulk}--\eqref{uniform x Hp}, we obtain 
\begin{align*}
   \widetilde{H}_p(x,A,L)&\leq  \limsup_{\rho\to 0^+}\limsup_{n\to+\infty}\mathcal{E}^x_0(w_n^\rho,Q)=\limsup_{\rho\to 0^+}\limsup_{n\to+\infty}\mathcal{E}^x_\rho(w_n^\rho,Q) \\
   &\leq\limsup_{\rho\to 0^+}\limsup_{n\to+\infty}  \Big(1+
     \frac{C}{N}\Big)\mathcal{E}_\rho^x(v_n^\rho,Q)+\frac{C}{N}\Lb^d(Q)\\
     &\leq \limsup_{\rho\to 0^+} \Big(1+
     \frac{C}{N}\Big)\frac{I^p_\star(\ell_{x,0,A},L,Q(x,\rho))}{\rho^d}+\frac{C}{N}\Lb^d(Q) =\Big(1+\frac{C}{N}\Big)\BBB f_{\rm bulk} \EEE (x,A,L)+\frac{C}{N}\Lb^d(Q).
\end{align*}
Letting $N\to +\infty$, we obtain the first inequality in  \eqref{fgeq Hp}.

 \medskip \noindent \emph{Step 3.} (Bulk: $f_{\rm bulk}\leq {H}_p$)    The second inequality \EEE in \eqref{fgeq Hp} follows by an  argument similar to the one presented in \emph{Step 2}.  Let $\eta>0$ and let $u\in \C^{\rm bulk}_{p,\star}(A,L)  \EEE $ (see \eqref{bulk competitors})    be such that 
\begin{equation}\label{quasiminimiser cell}
    \int_{Q}\Psi\big(x,\nabla u(y)\big)\,{\rm d}y+ \int_{J_u\cap Q}\gamma^0\big(x,[u](y),\nu_u(y)\big)\,{\rm d}\Hd(y)\leq  {H}_p\EEE(x,A,L)+\eta.
\end{equation}
For every $\rho>0$, we consider the function  $v^\rho \coloneq \rho u(\frac{\MMM \cdot \EEE -x}{\rho})\in {\rm SBV}(Q(x,\rho);\Rk)$ and observe that 
\begin{gather}\label{change 1 bis}
    \nabla v^\rho(z)=\nabla u(\tfrac{z-x}{\rho}) \quad \text{for $\Lb^d$-a.e.\ $z\in Q(x,\rho)$},\\\label{change 2 bis}
    [v^\rho](z)=\rho[u](\tfrac{z-x}{\rho})\quad \text{for $\Hd$-a.e.\ $z\in J_{v^\rho}$}.
\end{gather}
In particular, from \eqref{change 1 bis} and the fact that $u\in C^{\rm bulk}_{p,\star}(A,L)$ we deduce that 
$v^\rho=\ell_{x,0,A}$ close to $\partial  Q \EEE  (x,\rho)$ and, by a change of    variables, \EEE 
\begin{equation*}
    \int_{Q(x,\rho)}\nabla v^\rho\,{\rm d}y=\int_{Q(x,\rho)}L\,{\rm d}y,
\end{equation*}
so that $v^\rho$ is a competitor for the minimisation problem $\m(\ell_{x,0,A},L,Q(x,\rho))$. As such,
\begin{equation}\label{inequality sad}
  \m(\ell_{x,0,A},L,Q(x,\rho))\leq   \mathcal{E}(v^\rho,Q(x,\rho)).
\end{equation}
Using \eqref{change 1 bis} and  \eqref{change 2 bis},   a  change of  variables   gives 
\begin{equation*}
    \mathcal{E}(v^\rho,Q(x,\rho))=   \rho^{d}\int_{Q}\Psi\big(x+\rho y,\nabla u(y)\big)\,{\rm d}y+ \rho^{d-1}\int_{J_u\cap Q}\gamma\big(x+\rho y,\rho[u](y),\nu_u(y)\big)\,{\rm d}\Hd(y).
\end{equation*}
From \ref{Wcontinuous}, \ref{(gamma4)}, and \eqref{bound above W} it follows that 
\begin{align}\notag 
  \Big|\int_{Q}\Psi\big(x+\rho y,\nabla  u(y)\big)\,{\rm d}y- &\frac{1}{\rho}\int_{J_u\cap Q}\gamma\big(x +\rho y,\rho[u](y),\nu_u(y)\big)\,{\rm d}\Hd(y)\\\notag &-\int_{Q}\Psi\big(x,\nabla u(y)\big)\,{\rm d}y+ \frac{1}{\rho}\int_{J_u\cap Q}\gamma\big(x,\rho[u](y),\nu_u(y)\big)\,{\rm d}\Hd(y)\Big|\\
  &\quad \leq\omega_\Psi(\rho)\int_{Q}2\overline{C}_\Psi(|\nabla u|^p+1)\,{\rm d}y+ \omega_\gamma(\rho)\int_{J_u\cap Q}|[u](y)|\,{\rm d}\Hd(y).\label{i thought we finished}
\end{align}\EEE
 By inequality \ref{(gamma2)} we  also get
\begin{align}\notag 
\frac{1}{\rho}\gamma\big(x,\rho [u](y),\nu_{u}(y)\big)\leq C_\gamma|[u](y)|
\end{align} for $\Hd$-a.e.\ $y\in J_u\cap Q$. \EEE
 Hence,  combining  \eqref{inequality sad} with  \eqref{i thought we finished}, dividing by $\rho^d$, letting $\rho\to 0^+$, and using \eqref{existence gamma zero}, the Dominated Convergence Theorem, and the fact that  $\omega_\Psi$ and $\omega_\gamma$ are continuous functions with $\omega_\Psi(0)=0$ and $\omega_\gamma(0)=0$, we  get
\begin{equation*}
    f_{\rm bulk}(x,A,L)\leq  \int_{Q}\Psi\big(x,\nabla u(y)\big)\,{\rm d}y+\int_{ J_{u}\cap Q}\gamma^0\big(x, [u](y),\nu_{u}(y)\big)\,{\rm d}\Hd(y).
\end{equation*}\EEE
   \EEE    Finally\EEE, recalling \eqref{quasiminimiser cell}, we conclude $
    f_{\rm bulk}(x,A,L)\leq {H}_p(x,A,L)+\eta$. 
Letting $\eta\to 0^+$ we conclude the proof of the second inequality in \eqref{fgeq Hp}  and thus of the part of the statement concerning the bulk energy density.\EEE

\medskip

\noindent \emph{Step 4.}  (Surface: Auxiliary   cell \EEE formula)   We are left with proving the part of the statement concerning the surface energy density. The proof can be obtained arguing similarly to \cite[Theorem~4.5]{ChoksiFonseca},   taking into account some modifications due to the different growth conditions.

The first step is  to define \EEE an auxiliary cell formula. For every $x\in\Omega$, $\zeta\in\Rk$, and $\nu\in\Sd$ \EEE we set 
\begin{align*}
    \widetilde{h}_p(x,\zeta,\nu)\coloneq \inf&\Big\{\liminf_{n\to+\infty}\int_{J_{ u_n\EEE}\cap Q_\nu}\gamma(x,[u_n](y),\nu_{u_n}(y))\,{\rm d}\Hd(y)\colon \, \{u_n\}_n\subset {\rm SBV}(Q_\nu;\Rk)\\&
    \quad u_n\to v_{\zeta,\nu} \text{ in } L^1(Q_\nu;\Rk),\,\, \nabla u_n\to 0 \text{ strongly in $L^p(Q_\nu;\Rk)$}
    \Big\},
\end{align*}
where we have set $v_{\zeta,\nu}\coloneq  \BBB v_{0,\zeta,0,\nu} \EEE $ (see \eqref{stepfun}).
Using arguments very similar to \cite[Proposition~4.2]{ChoksiFonseca}, properly taking into account also the $\nu$-dependence (see   \cite[Remark~3.5, Point 5]{MorandottiBook}\EEE),   one can \EEE show that
\begin{equation*}
\widetilde{h}_p(x,\zeta,\nu)=h_p(x,\zeta,\nu)\quad \text{ for all $x\in\Omega$, $\zeta\in\Rk$, and $\nu\in\Sd$.}   
\end{equation*}
Hence, in view of Theorem~\ref{thm: minimum=F} \MMM and the definition of $f_{\rm surf}$ (see also Remark~\ref{remark: invariance}), \EEE  to conclude, it is sufficient to show that 
\begin{equation}\label{to prove surf cell}
    \widetilde{h}_p(x,\zeta,\nu) \leq f_{\rm surf}(x,\zeta,\nu)\leq h_p(x,\zeta,\nu)\quad \
\end{equation}
 for every $x\in\Omega$, $\zeta\in\Rk$, and $\nu\in\Sd$ such that
\begin{equation}\label{fsur at point}
     f_{\rm surf}(x,\zeta,\nu)=\limsup_{\rho\to 0^+}\frac{\m(v_{x,\zeta,0,\nu},0, Q_\nu(x,\rho))}{\rho^{d-1}}=\limsup_{\rho\to 0^+}\frac{I^p_\star(v_{x,\zeta,0,\nu},0, Q_\nu(x,\rho))}{\rho^{d-1}}<+\infty.
\end{equation}

\medskip
\noindent \emph{Step 5.}  (Surface: $\widetilde{h}_p \leq f_{\rm surf}\leq h_p$) Let us fix $x\in\Omega$, $\zeta\in\Rk$, and $\nu\in\Sd$ as above. Since for any  $u\in \C^{\rm surf}_{p,\star}(\zeta,\nu)$ (see \eqref{surf competitors}) the function defined by $v(y)\coloneq u((y-x)/\rho)\EEE$ \MMM  for \EEE  $y\in Q_{\nu\EEE}(x,\rho)$ is a competitor for $\m(v_{x,\zeta,0,\nu},0, Q_\nu(x,\rho))$, the first equality in \eqref{fsur at point} and a change of variables prove the second inequality in \eqref{to prove surf cell}.

To prove $\widetilde{h}_p \leq f_{\rm surf}$, we argue as follows. For every $\rho>0$ we let  $\{u^\rho_n\}_n\subset {\rm SBV}(Q_\nu(x,\rho);\Rk)$ be \MMM such that \EEE $u_n^\rho\SDtostar (v_{x,\zeta,0,\nu},0)$ in $Q_\nu(x,\rho)$ and 
\begin{equation}\label{eq:truncation surf}
    \lim_{n\to+\infty}\mathcal{E}(u_n^\rho,Q_\nu(x,\rho))=I^p_\star(v_{x,\zeta,0,\nu},0,Q_\nu(x,\rho)).
\end{equation}
We let $R\coloneq 2|\zeta|$,   $N\in\N$, and \MMM  apply Lemma~\ref{lemma:CagnettiDetFree W} \EEE  to obtain $i_n=i(n,N,\rho)$ such that, setting $v_n^\rho\coloneq \Phi_{\MMM R_{i_n}}\circ u_n\in {\rm SBV}(Q_\nu(x,\rho);\Rk)$, we have  
\begin{equation}\label{truncated energy}
    \mathcal{E}(v_n^\rho,Q_\nu(x,\rho))\leq \Big(1+\frac{C}{N}\Big)\mathcal{E}(u_n^\rho,Q_\nu(x,\rho))+C\rho^{d}
\end{equation}
and $\|v^\rho_n\|_{L^\infty(Q_\nu(x,\rho))}\leq \tau^N R$.   Thanks to Lemma~\ref{lemma:convergence truncations},   the sequence \EEE $\{v_n^\rho\}_n$ converges to $v_{x,\zeta,0,\nu}$ strongly in $L^1(Q_\nu(x,\rho);\Rk)$ and $\{\nabla v^\rho_n\}_n$ converges to $0$ weakly in $L^p(Q_\nu(x,\rho))$. Moreover, from \eqref{fsur at point}--\eqref{truncated energy} we deduce that  
\begin{equation*}
     \limsup_{\rho\to 0^+} \limsup_{n\to+\infty}\frac{1}{\rho^{d-1}} \MMM \int_{J_{v^\rho_n}\cap Q_\nu(x,\rho)} \EEE \gamma(y,[v^\rho_n](y),\nu_{v^\rho_n}(y))\,{\rm d}\Hd(y)\leq \Big(1+\frac{C}{N}\Big)f_{\rm surf}(x,\zeta,\nu),
\end{equation*}
which by  \ref{(gamma2)}, \ref{(gamma4)},  and \eqref{fsur at point} implies
\begin{equation}\label{pezzo superficie}
     \limsup_{\rho\to 0^+} \limsup_{n\to+\infty}\frac{1}{\rho^{d-1}}\MMM \int_{J_{v^\rho_n}\cap Q_\nu(x,\rho)} \EEE \gamma(x,[v^\rho_n](y),\nu_{v^\rho_n}(y))\,{\rm d}\Hd(y)\leq \Big(1+\frac{C}{N}\Big)f_{\rm surf}(x,\zeta,\nu).
\end{equation}

We now consider the functions $\{w^\rho_n\}_n\subset {\rm SBV}(Q_\nu;\Rk)$ defined by $w^\rho_n(y)\coloneq  v^\rho_n(x+\rho y)$ \MMM  for \EEE $y\in Q_\nu$, such  that 
\begin{gather*}
    \nabla w^\rho_n(y)=\rho\nabla v^\rho_n(x+\rho y)\quad \text{ for $\Lb^d$-a.e. $y\in Q_\nu$},\\
    [w^\rho_n](y)=[v^\rho_n\EEE](x+\rho y)\quad \text{ for $\Hd$-a.e.\ $y\in Q_\nu\cap J_{w^\rho_n}={( \MMM J_{v^\rho_n}} \EEE -x)/\rho$}.
\end{gather*}
\BBB As $\{v_n^\rho\}_n$ converges to $v_{x,\zeta,0,\nu}$, \EEE  for every $\rho>0$ the sequence $\{w_n^\rho\}_n$ converges to $v_{\zeta,\nu}$ strongly in $L^1(Q_\nu;\Rk)$.
A change of variables, \BBB \ref{W4}, and \eqref{fsur at point}--\eqref{truncated energy} then show that 
\begin{align*}
    \limsup_{\rho\to 0^+}\limsup_{n\to+\infty}\int_{Q_\nu}|\nabla w^\rho_n|^p\,{\rm d}y&=\limsup_{\rho\to 0^+}\limsup_{n\to+\infty} \MMM \frac{\rho^{p}}{\rho^{d}} \EEE \int_{Q_\nu(x,\rho)}|\nabla v^\rho_n|^p\,{\rm d}y\\
    &\leq \limsup_{\rho\to 0^+}\limsup_{n\to+\infty}\frac{\rho^{p-1}}{c_W}\Big(\frac{\mathcal{E}(v^\rho_n,Q_\nu(x,\rho))}{\rho^{d-1}}+\BBB \frac{\rho}{c_W} \EEE \Big)\\
    &\leq \Big(1+\frac{C}{N}\Big) \limsup_{\rho\to 0^+}\frac{\rho^{p-1}}{c_W}f_{\rm surf}(x,\zeta,\nu)=0.
\end{align*}
Finally, with a diagonal argument we can find sequences $\{\rho_\ell\}_\ell$ and $\{n_\ell\}_\ell$, with $\rho_\ell\to 0^+$ and $n_\ell\to +\infty$ as $\ell\to+\infty$, such  that the functions defined by $z_\ell\coloneq w^{\rho_\ell}_{n_\ell}$ are competitors for the minimisation problem defining $\widetilde{h}_p(x,\zeta,\nu)$. 
Therefore, \eqref{pezzo superficie} and a change of variables give
\begin{equation*}
    \widetilde{h}_p(x,\zeta,\nu)\leq \Big(1+\frac{C}{N}\Big)f_{\rm surf}(x,\zeta,\nu). 
\end{equation*}
Letting $N\to+\infty$, we obtain the first inequality in \eqref{to prove surf cell}. \MMM This concludes the proof.  \EEE
\qed
\medskip

We now prove Proposition~\ref{prop:integrands}.
\medskip

\noindent\emph{Proof of Proposition \ref{prop:integrands}}.
The properties concerning the bulk energy density $H_p$ are a consequence of  \cite[Theorem~2.10]{J.Elasticity}.
It is immediate to check that $h_p$ \EEE satisfies \ref{(gamma1)}--\ref{(gamma3)}, while \ref{(gamma4)} can be obtained arguing exactly as in \cite[Theorem~2.10]{J.Elasticity}.

We now check that $h_p$ satisfies \ref{(gamma5)}.  To this end,  we let $x\in \Omega$, $\zeta_1,\zeta_2\in\Rk$, with $\beta_\gamma|\zeta_1|\leq |\zeta_2|$ and $\zeta_2\neq 0$, and $\nu\in\Sd$. We consider $u_2\in \C^{\rm surf}_{p,\star}(\zeta_2,\nu)$ (recall \eqref{surf competitors}), set $\lambda\coloneq |\zeta_1|/|\zeta_2|$, and let $R\in {\rm SO}(k)$ be such that $R(\lambda\zeta_2)=\zeta_1$.  \MMM Then, \EEE $u_1\coloneq \lambda R(u_2)\in \C^{\rm surf}_{p,\star}(\zeta_1,\nu)$, and clearly $\beta_\gamma|[u_1]|\leq \beta_\gamma \lambda |R[u_2]|\leq |[u_2]|$, where we have used that $\beta_\gamma\lambda\leq 1$. Since $\gamma$ satisfies \ref{(gamma5)}, this implies 
\begin{equation*}
  h_{p}(x,\zeta_1,\nu)\leq   \MMM \int_{J_{u_1}\cap Q_\nu} \EEE \gamma(x,[u_1](y),\nu_{u_1}(y))\,{\rm d}\Hd(y)\leq \MMM \int_{J_{u_2}\cap Q_\nu} \EEE \gamma(x,[u_2](y),\nu_{u_2}(y))\,{\rm d}\Hd(y).
\end{equation*}
From the arbitrariness of $u_2\in \C^{\rm surf}_{p,\star}(\zeta_2,\nu)$, we conclude that $h_p$ satisfies \ref{(gamma5)}.
\EEE
 \qed
\EEE

\medskip

\subsection{Proof of Theorem \ref{thm:relaxation full}}
The rest of the section is devoted to the proof of Theorem \ref{thm:relaxation full}.   We have already shown in Theorems \ref{thm: relaxation partial} and \ref{thm:cell formulas} that the bulk and surface parts $(I^p_\star)^a$ and $(I^p_\star)^j$ admit an integral representation, so that we  are  left with proving that $(I^p_\star)^c$ can be represented as an integral functional, as well (see Definition \ref{decomposition}). This is done in three steps. We first use Lemma~\ref{lemma: Lipschitz} below to deduce  that $(I^p_\star)^c(g,G,\cdot)=(I^p_\star)^c(g,0,\cdot)$ for all $(g,G)\in \SD^p_\star(\Omega;\Rk)$. Then, for $\delta>0$ we will consider a perturbed functional $ I^{p,\delta}_{\star}(g,B)$  defined for $g\in{\rm BV}(\Omega;\Rk)$ and $B\in\B(\Omega)$ which satisfies all the hypotheses of \cite[Theorem~3.12]{BFM1998}, and thus
admits a complete integral representation, including its Cantor part. Letting $\delta\to 0^+$ and using Lemma~\ref{lemma: recession}, we recover an integral representation for $(I^p_{\star})^c(g,0,B)$ for all $g\in {\rm BV}(\Omega;\Rk)$, which can then be extended to all  $g\in {\rm GBV}_\star(\Omega;\Rk)$ by a truncation argument based on Lemma~\ref{lemma:I truncated}  below.

We now present Lemma~\ref{lemma: Lipschitz}, which states that $I^p_\star$ is \MMM locally Lipschitz. \EEE  Note that both in this lemma and in Lemma~\ref{lemma:I truncated} below we do not assume that $\Psi$ and $\gamma$ are independent of the $x$-variable, a property that will only be used  to apply \cite[Theorem~3.12]{BFM1998} in the proof of Theorem~\ref{thm:relaxation full}.

\begin{lemma}\label{lemma: Lipschitz}
 Assume that $\Psi$ and $\gamma$ satisfy  {\rm \ref{(W1)_p},\ref{(W2)_p}, \ref{W4}} \EEE
 \EEE and {\rm \ref{(gamma1)}--\ref{(gamma3)}}.      Let $(g_1,G_1),(g_2,G_2)\in {\rm SD}^p_\star(\Omega;\Rk)$.
    Then for every $B\in\B(\Omega)$ it holds 
    \begin{align} 
\big|I^p_\star(g_1,G_1,B)-I^p_\star(g_2,G_2,B)\big| \leq   C_\gamma \EEE \mathcal{V}(g_1-g_2,B) \label{claim Lipschitz} +\MMM  C\|G_1-G_2\|^p_{L^p(B)}+\widehat{C}C\|G_1-G_2\|_{L^p(B)} \EEE, \EEE 
    \end{align}
  where $\widehat{C}\geq 0$ is given by    \begin{equation}\label{constant chat}
\widehat{C}\coloneq  I^{p}_\star(g_1,G_1,B)^{(p-1)/p}+I^{p}_\star(g_2,G_2,B)^{(p-1)/p}+(\Lb^d(B))^{(p-1)/p} 
    \end{equation}
 \MMM  and  $C>0$ is a constant \EEE depending only on   $c_\Psi$, ${C}_\Psi$, $C_\gamma$, $p$,  and $d$.
\end{lemma}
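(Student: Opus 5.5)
By symmetry it suffices to bound $I^p_\star(g_2,G_2,B)-I^p_\star(g_1,G_1,B)$ from above. We first treat an open set $B=O\in\Op(\Omega)$ and then pass to Borel sets using \eqref{def Borel relax} together with the measure property established in Theorem~\ref{thm: relaxation partial}; outer regularity gives the Borel estimate once the constants are controlled. On $O$ we take a recovery sequence $\{u_n\}_n\subset{\rm SBV}(O;\Rk)$ with $u_n\SDtostar(g_1,G_1)$ and $\mathcal{E}(u_n,O)\to I^p_\star(g_1,G_1,O)$. We then apply the Approximation Theorem~\ref{thm:approximation SDstar} to the difference pair $(g_2-g_1,G_2-G_1)$. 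Here we use that ${\rm GBV}_\star$ is a vector space, so the pair lies in $\SD^p_\star$.

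The Approximation Theorem by itself is too weak for us, because \eqref{claim main approximation theorem} loses the factor $C(d)$ and the $L^1$ norm. We therefore split the construction. For the jump part of $g_2-g_1$ we use Theorem~\ref{thm: Goffmann-Serrin} (after truncation, Remark~\ref{remark: vVec}(i)) followed by Lemma~\ref{lemma piecewise constants}. This produces piecewise constant $z_m\to g_2-g_1$ in $L^0$ with $\mathcal{V}(z_m,O)\to\mathcal{V}(g_2-g_1,O)$. Because $z_m$ is piecewise constant, $\nabla z_m=0$ and its jump energy equals $\mathcal V(z_m,O)$. For the matrix part we keep the gradient exactly: we choose $h\in{\rm SBV}$ with $\nabla h=G_2-G_1$ by Alberti's Theorem~\ref{Alberti's Theorem}, and then, following the proof of Theorem~\ref{thm:approximation SDstar}, replace $h$ by $h-\tilde z_j$, where $\tilde z_j$ are piecewise constant approximations of $h$ itself. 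The resulting functions $h_j\to0$ in $L^0$ satisfy $\nabla h_j=G_2-G_1$. Their jump part contributes at most $\mathcal V(h-\tilde z_j)$, which tends to $\mathcal V(h-h)=0$ in the limit of Lemma~\ref{lemma piecewise constants} applied to $h$.

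Setting $w_n=u_n+z_{m}+h_j$ and choosing indices diagonally, we get $w_n\SDtostar(g_2,G_2)$ and $\nabla w_n=\nabla u_n+G_2-G_1$. For the surface term we use subadditivity \ref{(gamma3)} and \ref{(gamma2)} on the union of the jump sets:
\[
\gamma(x,[w_n],\nu)\le\gamma(x,[u_n],\nu)+C_\gamma(|[z_m]|\wedge1)+C_\gamma(|[h_j]|\wedge1).
\]
This yields the term $C_\gamma\mathcal V(g_2-g_1,O)$ plus a vanishing error. For the bulk term we use \ref{(W2)_p}:
\[
\Psi(x,\nabla u_n+D)-\Psi(x,\nabla u_n)\le C_\Psi|D|\big(1+|\nabla u_n|^{p-1}+|\nabla u_n+D|^{p-1}\big),\qquad D=G_2-G_1 .
\]
Hölder's inequality bounds the right-hand side by $C\|D\|_{L^p}\big(\Lb^d(O)^{(p-1)/p}+\|\nabla u_n\|_{L^p}^{p-1}\big)+C\|D\|^p_{L^p}$. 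The coercivity \ref{W4} gives $\|\nabla u_n\|_{L^p}^{p}\le C(\mathcal{E}(u_n,O)+\Lb^d(O))$, hence $\|\nabla u_n\|_{L^p}^{p-1}\le C(I^p_\star(g_1,G_1,O)^{(p-1)/p}+\Lb^d(O)^{(p-1)/p})$ in the limit. This produces exactly the form of $\widehat C$ in \eqref{constant chat}.

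The main obstacle is the surface estimate. It needs the jump contribution of the $G$-correction to vanish rather than contribute $C\|G_1-G_2\|_{L^1}$, and it needs the constant in front of $\mathcal V(g_1-g_2)$ to be exactly $C_\gamma$. A related difficulty is that $\mathcal V$ is measured componentwise while $\gamma$ is controlled by $|\zeta|\wedge1$; the latter is bounded by $\sum_i|\zeta_i|\wedge1$, so this is harmless. If the Alberti correction cannot be made to have vanishing jump energy, the fallback is to absorb it into the constant $C$ multiplying $\|G_1-G_2\|_{L^p}$, enlarging $C$ via $\|\cdot\|_{L^1}\le\Lb^d(B)^{(p-1)/p}\|\cdot\|_{L^p}$.
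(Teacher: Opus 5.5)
\textbf{The main line has a false step.} You claim that $h_j=h-\tilde z_j$ contributes jump energy at most $\mathcal V(h-\tilde z_j,O)$, and that this tends to $\mathcal V(h-h,O)=0$. Lemma~\ref{lemma piecewise constants} only gives $\tilde z_j\to h$ and $\mathcal V(\tilde z_j,O)\to\mathcal V(h,O)$. Since $\mathcal V$ is lower semicontinuous but not continuous, nothing of that kind follows. In fact $\nabla(h-\tilde z_j)=G_2-G_1$, so $\mathcal V(h-\tilde z_j,O)\ge\sum_i\|(G_2-G_1)_i\|_{L^1(O)}$ for every $j$.

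The goal itself is also unattainable. Suppose $h_j\to0$ in $L^0$ and $\nabla h_j=D$. In one dimension, truncate at level $\varepsilon$ and apply the fundamental theorem of calculus on subintervals; this gives $\liminf_j\sum_{J_{h_j}}|[h_j]|\wedge1\ge\|D\|_{L^1}$. By slicing, the jump energy of $h_j$ is then bounded below by $c\|D\|_{L^1(O)}$ in any dimension. This is the unavoidable cost of disarrangements, the same reason $\gamma^0$ appears in the cell formula for $H_p(A,L)$ and penalises $A\neq L$. So you cannot get ``$C_\gamma\mathcal V(g_2-g_1,O)$ plus a vanishing error'' by adding an independent correction and applying \ref{(gamma3)}. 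The jump energy of the $G$-correction is of order $\|G_1-G_2\|_{L^1}$ and has to be paid for.

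\textbf{Your fallback is the paper's proof, and it is not optional.} In your split construction you must still keep $h_j=h-\tilde z_j$, since with $h$ alone you get $w_n\to g_2+h$. Subadditivity and Alberti's bound then give $\limsup_j\int_{J_{h_j}}|[h_j]|\wedge1\,{\rm d}\Hd\le\int_{J_h}|[h]|\,{\rm d}\Hd+\mathcal V(h,O)\le C\|G_1-G_2\|_{L^1(O)}$. H\"older turns this into $C\,\Lb^d(O)^{(p-1)/p}\|G_1-G_2\|_{L^p(O)}\le C\widehat C\|G_1-G_2\|_{L^p(O)}$; this is exactly why $\widehat C$ contains $(\Lb^d(B))^{(p-1)/p}$. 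The coefficient of $\mathcal V(g_1-g_2)$ stays exactly $C_\gamma$, because only $z_m$ contributes to it.

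The paper avoids the split. It takes $h$ with $\nabla h=G_1-G_2$ from Alberti's theorem and applies Lemma~\ref{lemma piecewise constants} (via Remark~\ref{re:lastonestanding}) directly to $g_1-g_2-h$. This produces piecewise constant $h_n$ with $\lim_n\mathcal V(h_n,O)=\mathcal V(g_1-g_2-h,O)\le\mathcal V(g_1-g_2,O)+(C(d)+1)\|G_1-G_2\|_{L^1(O)}$, and the competitor is $u_n+h+h_n$. Either version works once the $L^1$ term is absorbed as above.

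Your detour through Theorem~\ref{thm: Goffmann-Serrin} is unnecessary. The bulk estimate via \ref{(W2)_p} and H\"older, the use of \ref{W4} to produce $\widehat C$, and the reduction from open to Borel sets are fine as written.
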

\begin{proof}
 As $I^p_\star\in \F^p_\star$, \BBB in view of \eqref{def Borel relax}, \EEE  it is enough to prove \eqref{claim Lipschitz} for a fixed $O\in\Op(\Omega)$. 
Arguing by symmetry, it is enough to show that
\begin{align}\label{claim lipschitz}
  \notag \hspace{-0.3 cm}I^p_\star(g_1,G_1,O)&\leq I^p_\star(g_2,G_2,O)+  C_\gamma \EEE \mathcal{V}(g_1-g_2,O)  \\ &
 \quad  \MMM  +C \|G_1-G_2\|^p_{L^p(O)} + C\Big(I^{p}_\star(g_2,G_2,O)^{(p-1)/p}+(\Lb^d(O))^{(p-1)/p} \Big)\|G_1-G_2\|_{L^p(O)} \EEE
\end{align}
  for   $C>0$ depending only on $c_\Psi$, ${C}_\Psi$, $C_\gamma$, $p$,  and $d$. 
  
We consider a sequence of functions $\{u_n\}_n\subset {\rm SBV}(O;\Rk)$ with $u_n\SDtostar (g_2,G_2)$ in $O$ and 
\begin{equation}\label{Lipschitz recovery}
    \lim_{n\to+\infty}\mathcal{E}(u_n,O)= I^p_\star(g_2,G_2,O).
\end{equation}
 We  use  Alberti's Theorem~\ref{Alberti's Theorem} to \MMM find \EEE  $h\in {\rm SBV}(O;\Rk)$ such that $\nabla h=G_1-G_2$ $\Lb^d$-a.e.\ in $O$.  We then apply Lemma~\ref{lemma piecewise constants} (see Remark~\ref{re:lastonestanding}) \EEE to $g_1-g_2-h$  and obtain a sequence of piecewise constant functions $\{h_n\}_n\subset {\rm SBV}(O;\Rk)$ converging to $g_1-g_2-h$ in $L^0(O;\Rk)$ and such that  
\begin{equation}\label{V Lipschitz}
\lim_{n\to+\infty}\mathcal{V}(h_n,O)= \mathcal{V}(g_1-g_2-h,O)\leq \mathcal{V}(g_1-g_2,O)+ ( C(d) + 1) \|G_1-G_2\|_{L^1(O \EEE )},
\end{equation}
where $C(d)$ is the constant of Theorem~\ref{Alberti's Theorem}.  
We set $v_n\coloneq u_n+h+h_n$ and observe that  $v_n \to g_1$ in $L^0(O;\R^k)$ and   $\nabla v_n \rightharpoonup G_1$ weakly in $ {L}^p(O;\Rkd)$ \EEE as $n\to +\infty$, so that $v_n\SDtostar(g_1,G_1)$ in $O$. In particular,
\begin{equation}\label{O Lipschitz}
    I^p_\star(g_1,G_1,O)\leq \liminf_{n\to+\infty} \mathcal{E}(v_n,O).
\end{equation}
 As   $J_{v_n}\subset J_{u_n}\cup J_{h_n}\cup J_h\EEE$ for every $n\in\N$,  we \EEE  can choose an orientation for the normals  
such that,  for every $n\in\N$, 
 \begin{align*}
     [v_n]=[u_n]+[h_n]+[h]\EEE \quad \text{ $\Hd$-a.e. on $ (J_{u_n}\cup J_{h_n}\cup J_h\EEE)\cap O$}.
 \end{align*}
By the previous observations, using also {\rm \ref{(W2)_p},  \ref{(gamma2)},  \ref{(gamma3)}},   the fact that $\nabla h=G_1-G_2$ $\Lb^d$-a.e.\ in $O$, and H\"older's inequality  we get 
   \begin{align}\notag 
       \mathcal{E}(v_n,O)&=\int_{O}\Psi(x,\nabla u_n+\nabla h)\,{\rm d}x+\int_{J_{v_n}\cap O}\gamma(x,[v_n],\nu_{v_n})\,{\rm d}\Hd\\\notag
       &\leq \int_{O}\Psi(x,\nabla u_n)\,{\rm d}x+  C_\Psi\int_{O}|\nabla h|(1+|\nabla u_n + \nabla h|^{p-1} + |\nabla u_n |^{p-1})\,{\rm d}x\\&\notag \quad + \int_{J_{u_n}\cap O}\gamma(x,[u_n],\nu_{v_n})\,{\rm d}\Hd+\int_{J_{h_n}\cap O}\gamma(x,[h_n],\nu_{h_n})\,{\rm d}\Hd +\int_{J_{h}\cap O}\gamma(x,[h],\nu_{h})\,{\rm d}\Hd 
       \\& \notag
       \leq \int_{O}\Psi(x,\nabla u_n)\,{\rm d}x+\int_{J_{u_n}\cap O}\gamma(x,[u_n],\nu_{u_n})\,{\rm d}\Hd +  C_\gamma\mathcal{V}(h_n,O)\\ 
       &\quad  +C\big(\|G_1-G_2\|^p_{L^p(O)}+\|G_1-G_2\|_{L^1(O)}\big)\EEE\label{zampe}  +C\|\nabla u_n\|^{p-1}_{L^p(O)}\|G_1-G_2\|_{L^p(O)}
   \end{align}
 for   $C>0$ depending only on $c_\Psi$, $C_\Psi$, $C_\gamma$, and  $p$. \EEE In light of \ref{W4} and \eqref{Lipschitz recovery}, we have 
   \begin{align*}
       \|\nabla u_n\|^{p-1}_{L^p( O\EEE )}&\leq \Big(\frac{1}{ c_\Psi\EEE}\int_{ O\EEE}\Big(\Psi(x,\nabla u_n)+\MMM \frac{1}{ c_\Psi\EEE} \EEE \Big)\,{\rm d}x\Big)^{(p-1)/p} \\
       &\leq  \MMM 2\Big(\frac{1}{ c_\Psi\EEE}\Big)^{(p-1)/p} I^{p}_\star(g_2,G_2,O)^{(p-1)/p}+ \Big(\frac{1}{ c_\Psi^2\EEE}\Big)^{(p-1)/p}(\Lb^d(O))^{(p-1)/p} \EEE
   \end{align*}
   for every $n\in\N$ large enough.  
    Hence, in view of \eqref{Lipschitz recovery}, \eqref{V Lipschitz},  \eqref{zampe}, and H\"older's inequality we deduce  
   \begin{align*}
\liminf_{n\to+\infty}\mathcal{E}(v_n,O)&\leq I^p_\star(g_2,G_2,O) +  C_\gamma \EEE \mathcal{V}(g_1-g_2,O) \\&\quad \MMM  +C \|G_1-G_2\|^p_{L^p(O)} + C\Big(I^{p}_\star(g_2,G_2,O)^{(p-1)/p}+(\Lb^d(O))^{(p-1)/p} \Big)\|G_1-G_2\|_{L^p(O)} \EEE 
   \end{align*}
  for a different constant $C>0$, depending only on   $c_\Psi$, ${C}_\Psi$, $C_\gamma$, $p$,  and $d$. \EEE
   By  \eqref{O Lipschitz}, this \BBB leads to \eqref{claim lipschitz}, which \EEE   concludes the proof. 
\end{proof}

We now study truncation properties of $I^p_\star(\cdot,0,\cdot)$.
 \begin{lemma}\label{lemma:I truncated}
 Let $p> 1$   and assume that $\Psi$ and $\gamma$ satisfy  {\rm \ref{(W1)_p}--\ref{W4}} and {\rm \ref{(gamma1)},\ref{(gamma2)}}, and {\rm \ref{(gamma5)}}. Then for every $N\in\N$, $g\in {\rm GBV}_\star(\Omega;\Rk)$,    $B\in\B(\Omega)$, and $R>0$ it holds
 \begin{align}
  \hspace{-0.3 cm}  \frac{1}{N}\sum_{i=1}^NI^p_\star \big(\Phi_{R_i}\circ g,0,B\big)&\leq \Big(1+\frac{C}{N}\Big)I^p_\star(g,0,B)\label{claim truncation estimate relax} + 
  C\Lb^d\big(\MMM \{x\in B\colon \, |{g}(x)|\geq R\} \EEE \big)
  +\frac{C}{N}\Lb^d(B)
 \end{align}
 for   $C>0$ depending only on   $c_\Psi,\overline{C}_\Psi, c_\gamma, C_\gamma,$ and $p$, where we have set $R_i\coloneq \tau^{i-1}R$.
 
Moreover,   for every $g\in {\rm GBV}_\star(\Omega;\Rk)$,   $B\in\B(\Omega)$, and \BBB every \EEE  sequence $\{R_N\}_N\subset (0,+\infty)$, with $R_N\to +\infty$ as $N\to+\infty$, it holds
 \begin{equation}\label{cantor truncation}
     \lim_{N \to+\infty} \frac{1}{N}\sum_{i=1}^N(I^p_{\star})^c(\Phi_{\tau^{i-1}R_N}\circ g,0,B)=(I^p_\star)^c(g,0,B).
 \end{equation} 
\end{lemma}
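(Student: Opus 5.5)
The first estimate \eqref{claim truncation estimate relax} will be proved on a fixed open set $O$ and then extended to Borel sets. For the open-set case we pass to recovery sequences and apply the pointwise truncation estimate of Lemma~\ref{lemma:CagnettiDetFree W}.

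\emph{Proof of \eqref{claim truncation estimate relax} on open sets.} Fix $O\in\Op(\Omega)$, $N$, and $R$.

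1. Take $\{u_n\}_n\subset{\rm SBV}(O;\Rk)$ with $u_n\SDtostar(g,0)$ in $O$ and $\mathcal{E}(u_n,O)\to I^p_\star(g,0,O)$.

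2. Apply Lemma~\ref{lemma:CagnettiDetFree W} to $u_n$ with $B=O$. This gives
\[
\frac1N\sum_{i=1}^N\mathcal{E}(\Phi_{R_i}\circ u_n,O)\le\Big(1+\frac CN\Big)\mathcal{E}(u_n,O)+\Lb^d(\{x\in O\colon |u_n(x)|\ge R\})+\frac CN\Lb^d(O).
\]

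3. Identify the limit of each truncated sequence. By Lemma~\ref{lemma:convergence truncations} (here $p>1$ is used), $\Phi_{R_i}\circ u_n\SDtostar(\Phi_{R_i}\circ g,\nabla\Phi_{R_i}(g)\,0)=(\Phi_{R_i}\circ g,0)$. Since the weak limit of the gradients is $0$, the truncated sequences are admissible for $I^p_\star(\Phi_{R_i}\circ g,0,O)$, so $I^p_\star(\Phi_{R_i}\circ g,0,O)\le\liminf_n\mathcal{E}(\Phi_{R_i}\circ u_n,O)$.

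4. Control the superlevel-set term. Passing to an a.e.\ convergent subsequence, Fatou's lemma gives $\limsup_n\Lb^d(\{|u_n|\ge R\})\le\Lb^d(\{|g|\ge R\}\cap O)$.

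5. Summing over $i$, using superadditivity of $\liminf$, and passing to the limit yields the estimate on $O$.

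\emph{Extension to Borel sets.} For $B\in\B(\Omega)$, choose open sets $O\supset B$ with $I^p_\star(g,0,O)\to I^p_\star(g,0,B)$ and $\Lb^d(O\setminus B)\to0$. This is possible because $I^p_\star(g,0,\cdot)$ and $\Lb^d$ are Radon measures, by Theorem~\ref{thm: relaxation partial}. On the left-hand side we have $I^p_\star(\Phi_{R_i}\circ g,0,B)\le I^p_\star(\Phi_{R_i}\circ g,0,O)$, and on the right-hand side $\Lb^d(\{|g|\ge R\}\cap O)\le\Lb^d(\{|g|\ge R\}\cap B)+\Lb^d(O\setminus B)$. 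Passing to the limit gives \eqref{claim truncation estimate relax}.

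\emph{Proof of \eqref{cantor truncation}.} The strategy is to obtain a $\limsup$ bound by localising the first estimate to the Cantor-supporting set, and a $\liminf$ bound from Cantor-level agreement on most of the set.

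\emph{Upper bound.} The key observation is that the Cantor parts are all carried by a fixed Lebesgue-null Borel set disjoint from the jump sets. Since $D^c(\Phi_{R}\circ g)$ is absolutely continuous with respect to $|D^cg|$ (chain rule, Proposition~\ref{Prop:cantor GBV}) and $J_{\Phi_R\circ g}\subset J_g$, we can find a Borel set $S\subset\Omega\setminus J_g$ with $\Lb^d(S)=0$ such that $(I^p_\star)^c(h,0,\cdot)=I^p_\star(h,0,\cdot\cap S)$ for $h=g$ and for every $h=\Phi_{\tau^{i-1}R_N}\circ g$. This uses that $(I^p_\star)^c(h,0,\cdot)$ is absolutely continuous with respect to $|D^ch|$, as noted in the remark after Definition~\ref{decomposition}. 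Applying \eqref{claim truncation estimate relax} with $B\cap S$ in place of $B$ kills both Lebesgue-measure terms, since $\Lb^d(B\cap S)=0$. This gives
\[
\frac1N\sum_i(I^p_\star)^c(\Phi_{\tau^{i-1}R_N}\circ g,0,B)\le\Big(1+\frac CN\Big)(I^p_\star)^c(g,0,B),
\]
hence the $\limsup$ inequality.

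\emph{Lower bound.} On the set $\{|\widetilde g|<R\}$, where $\Phi_R$ is the identity, the functions $\Phi_R\circ g$ and $g$ agree. The Cantor parts should then coincide $|D^cg|$-a.e.\ there. We would prove this with the locality \ref{hyp:H2} of $I^p_\star\in\F^p_\star$ combined with a blow-up or Radon–Nikodým argument on $\{|\widetilde g|<R\}\setminus J_g$. This set is not open, so we would argue through the differentiation of Theorem~\ref{thm: minimum=F}: the cell minima $\m$ depend only on the values near the blow-up point, and there the two functions coincide on a set of density one. That gives
\[
(I^p_\star)^c(\Phi_R\circ g,0,B)\ge(I^p_\star)^c\big(g,0,B\cap\{|\widetilde g|<R\}\big).
\]
Since $\{|\widetilde g|\ge R_N\}\setminus J_g$ decreases to a $|D^cg|$-null set as $R_N\to\infty$, the $\liminf$ inequality follows.

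\emph{Main obstacle.} I expect the hardest step to be this Cantor-level locality on the non-open set $\{|\widetilde g|<R\}$. I would first attempt it via Theorem~\ref{thm: minimum=F} combined with the Lipschitz estimate of Lemma~\ref{lemma: Lipschitz} applied to $\mathcal{V}(g-\Phi_R\circ g,\cdot)$ on small cubes centred at Cantor points.
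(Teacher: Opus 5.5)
Your proof of \eqref{claim truncation estimate relax} is the paper's argument. You take a recovery sequence for $I^p_\star(g,0,O)$, use Lemma~\ref{lemma:convergence truncations} to see that $\Phi_{R_i}\circ u_n\to(\Phi_{R_i}\circ g,0)$ in the sense of Definition~\ref{def: convergences}, apply Lemma~\ref{lemma:CagnettiDetFree W}, and use Fatou for the superlevel sets. Your passage to Borel sets through a common shrinking sequence of open sets is fine. For \eqref{cantor truncation} the paper only refers to \cite[Lemma~5.2]{dal2025homogenization}. Your upper bound there is correct and neat: the Cantor parts of $g$ and of all $\Phi_{\tau^{i-1}R_N}\circ g$ are carried by one Lebesgue-null Borel set $S$ disjoint from $J_g$, and on $B\cap S$ both Lebesgue terms of \eqref{claim truncation estimate relax} vanish.

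The lower bound, however, is not proved, and the mechanism you offer for it does not work. First, $\m(u,U,Q(x,\rho))$ depends on $u$ through its values near $\partial Q(x,\rho)$, not near $x$. More importantly, at $|D^cg|$-a.e.\ $x$ one has $|D^cg|(Q(x,\rho))/\rho^d\to+\infty$. The Cantor density is therefore read at a scale where knowing that $g$ and $\Phi_R\circ g$ differ only on a set of Lebesgue density zero gives no control on the difference of their energies: a set of measure $o(\rho^d)$ can carry energy of order $|D^cg|(Q(x,\rho))$, just as $|D^cg|$ itself lives on a null set.

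What is actually needed is that $\mathcal{V}(g-\Phi_R\circ g,\cdot)$ is $|D^cg|$-negligible on $\{|\widetilde g|\le R\}$. This holds exactly, with no blow-up:
\begin{itemize}
\item By Proposition~\ref{Prop:Cantor GBV}, together with linearity of $D^c$ (e.g.\ via \eqref{slice cantor}), $D^c(g-\Phi_R\circ g)(E)=0$ for every Borel $E\subset\{|\widetilde g|\le R\}$.
\item Moreover $\Lb^d(S)=0$, and $J_{g-\Phi_R\circ g}\subset J_g$ up to $\Hd$-null sets. Hence $\mathcal{V}(g-\Phi_R\circ g,B\cap S\cap\{|\widetilde g|\le R\})=0$.
\item Lemma~\ref{lemma: Lipschitz} is stated for Borel sets. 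Applying it with $G_1=G_2=0$ on $B\cap S\cap\{|\widetilde g|\le R\}$ gives $(I^p_\star)^c(\Phi_R\circ g,0,\cdot)=(I^p_\star)^c(g,0,\cdot)$ on $\{|\widetilde g|\le R\}$.
\end{itemize}
Your $\liminf$ argument then goes through, provided you use that the set where $\widetilde g$ exists carries $|D^cg|$, rather than only removing $J_g$.

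Your fallback in the last paragraph names the right tool. However, what makes it work is Proposition~\ref{Prop:Cantor GBV}, not density-one coincidence. Together with the bound $(I^p_\star)^c(h,0,E)\le\beta\,\mathcal{V}(h,E)\le\beta k\,|D^cg|(E)$ for $E\subset S$ and $h\in\{g,\Phi_R\circ g\}$ (by Proposition~\ref{prop:Properties of derivatives of GBVstar}(i) and ${\rm Lip}(\Phi_R)\le1$), this in fact gives convergence of each term separately, so the averaging is not even needed.

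One caveat: Lemma~\ref{lemma: Lipschitz} uses \ref{(gamma3)}, which is not among the listed hypotheses. It is available in Theorem~\ref{thm:relaxation full}, where the lemma is applied. It is also already implicit when you (and the paper) invoke $I^p_\star\in\F^p_\star$ through Theorem~\ref{thm: relaxation partial}.
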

\begin{proof}
\DDD As $I^p_\star\in \F^p_\star$, in view of \eqref{def Borel relax}, \EEE  it is enough to prove \eqref{claim truncation estimate relax} and \eqref{cantor truncation} for all $O\in\Op (\Omega)$. 
Let us fix $N\in\N$, $g\in {\rm GBV}_\star(\Omega;\Rk)$, $R>0$, and $O\in\Op (\Omega)$.
   We let $\{u_n\}_{n}\subset {\rm SBV}(O;\Rk)$ be a sequence with $u_n\SDtostar (g,0)$ in $O$ such that
   \begin{equation}\label{recovery for I}
\lim_{n\to+\infty}\mathcal{E}(u_n,O)= I^p_\star(g,0,O).
   \end{equation}
We set $v^i_n\coloneq \Phi_{R_i}\circ u_n$ and $v^i\coloneq \Phi_{R_i}\circ g$.  An application of Lemma~\ref{lemma:convergence truncations}  yields  
\begin{equation}\label{SDstar convergence lemma}
  v^{i}_n \SDtostar (v^{i},0) \quad \text{in $O$ as $n\to+\infty$}
 \end{equation} 
 for all $i\in\{1,\dots, N\}$. \EEE 
  Thanks to Lemma~\ref{lemma:CagnettiDetFree W},  there exists $C>0$, depending only on $c_\Psi,\overline{C}_\Psi, c_\gamma, C_\gamma,$ such that  
\begin{align*}
     \frac{1}{N} \sum_{i=1}^N\mathcal{E}(v^i_n,O)\leq \Big(1+\frac{C}{N}\Big)\mathcal{E}(u_n,O)+C\Lb^d(\{x\in O \colon \,  |{u_n}(x)|\geq R\})+\frac{C}{N}\Lb^d(O)
\end{align*}
     for every $n\in\N$. In light of \eqref{recovery for I}  and of the convergence $u_n$ to $g$ in $L^0(O;\Rk)$\EEE, this implies   
\begin{align*} 
        \frac{1}{N}\sum_{i=1}^N \liminf_{n\to+\infty} \mathcal{E}(v^i_n,O)& \leq\Big(1+\frac{C}{N}\Big) I^p_{\star}(g,0,O) +C\Lb^d(\{x\in O \colon \,  |{g}(x)|\geq R\}\EEE)+\frac{C}{N}\Lb^d(O).
      \end{align*}
Combining this inequality with \eqref{SDstar convergence lemma}, by definition \eqref{def Istar localised} of $I^p_\star$ we infer
 \eqref{claim truncation estimate relax}. The proof of \eqref{cantor truncation} can be obtained arguing exactly as in \cite[Lemma~5.2]{dal2025homogenization}.  
\end{proof}

We are finally ready to prove Theorem~\ref{thm:relaxation full}. \
\medskip
 
\noindent \emph{Proof of Theorem \ref{thm:relaxation full}.\ }  As we have already characterised the bulk and surface parts of $I^p_\star$ in Theorem~\ref{thm:cell formulas}, we are left with proving that (recall Definition \ref{decomposition}) 
\begin{equation}\label{claim cantor repr}
(I^p_\star)^c(g,G,B)=\int_BH_p^\infty\Big(\frac{{\rm d}D^cg}{{\rm d}|D^cg|},0\Big)\,{\rm d}|D^cg|
\end{equation}
for all $(g,G)\in {\rm SD}^p_\star(\Omega;\Rk)$ and  $B\in\B(\Omega)$. 

\medskip

\noindent \emph{Step 1.\ } (Replacement of $G$ by $0$) 
We first show that 
\begin{equation}\label{Cantor=Cantor0}
(I^p_\star)^c(g,G,B)= (I^p_\star)^c(g,0,B)
\end{equation}
for all $(g,G)\in {\rm SD}^p_\star(\Omega;\Rk)$ and  $B\in\B(\Omega)$. Equivalently, we prove that
 for \EEE all $(g,G)\in {\rm SD}^p_\star(\Omega;\Rk)$  it holds \EEE
\begin{equation}\label{wof wof}
    \frac{{\rm d}I^p_\star(g,G,\cdot)}{{\rm d}|D^cg|}(x)= \frac{{\rm d}I^p_\star(g,0,\cdot)}{{\rm d}|D^cg|}(x)\quad \text{for $|D^cg|$-a.e.\ $x\in\Omega$.}
\end{equation}
To prove this, for every $x\in\Omega$ and $\rho>0$ we use Lemma~\ref{lemma: Lipschitz} with $g_1=g_2=g$ and $ B=B(x,\rho)\EEE$ to obtain 
\begin{align*}
\frac{I^p_\star(g,0,B(x,\rho))}{|D^cg|(B(x,\rho))}\leq \frac{I^p_\star(g,G,B(x,\rho))}{|D^cg|(B(x,\rho))}+ \MMM \frac{C}{|D^cg|(B(x,\rho))}\Big(\|G\|^p_{L^p(B(x,\rho))}+\widehat{C}\|G\|_{L^p(B(x,\rho))}\Big),
\end{align*}
\EEE where $\widehat{C}$ is given by \eqref{constant chat}. \EEE 
By the Besicovitch Derivation Theorem this proves that   
\begin{equation*}
    \frac{{\rm d}I^p_\star(g,0,\cdot)}{{\rm d}|D^cg|}(x) \le   \frac{{\rm d}I^p_\star(g,G,\cdot)}{{\rm d}|D^cg|}(x) \quad \text{for $|D^cg|$-a.e.\ $x\in\Omega$.}
\end{equation*}
The reverse inequality is obtained in the exact same way. This concludes the proof of \eqref{wof wof} and thus of \eqref{Cantor=Cantor0}.

\medskip

\noindent  \noindent \emph{Step 2.\ } (Perturbative argument) For $\delta>0$, consider the functional $I^{p,\delta}_\star\colon {\rm BV}(\Omega;\Rk)\times \B(\Omega)\to [0,+\infty)$ defined by 
\begin{equation*}
I^{p,\delta}_\star(g,B)\coloneq I^{p}_\star(g,0,B)+\delta \int_{J_g\cap B}|[g]|\,{\rm d}\Hd \quad \text{ for all }g\in {\rm BV}(\Omega;\Rk) \text{ and }B\in\B(\Omega).
\end{equation*}
Since $I^p_\star\in \F^p_\star$ and $I^p_\star$ satisfies  the invariance property \eqref{independence day}\EEE, the functional $I^{p,\delta}_\star$ satisfies all hypotheses of \cite[Theorem~3.12]{BFM1998}.   Hence, there exist functions $f_{\rm bulk}^\delta\colon\Rkd\to [0,+\infty)$ and $f_{\rm surf}^\delta\colon \Rk\times \Sd\to [0,+\infty)$ such that 
\begin{equation}\label{delta representation}
\MMM I^{p,\delta}_\star(g,  B) \EEE =\int_{B}f_{\rm bulk}^\delta(\nabla g)\,{\rm d}x+\int_{B}f^{\delta,\infty}_{\rm bulk}\Big(\frac{{\rm d}D^cg}{{\rm d}|D^cg|}\Big)\,{\rm d}|D^cg|+\int_{ J_g\cap \EEE B}f_{\rm surf}^\delta([g],\nu_g)\,{\rm d}\Hd
\end{equation}
for all $g\in {\rm BV}(\Omega;\Rk)$ and $B\in\B(\Omega)$. It is \BBB elementary \EEE  to see that $f^{\delta}_{\rm bulk} = H_p$ \MMM by Theorem~\ref{thm:cell formulas}, \EEE and thus, in view of Proposition~\ref{prop:integrands}, we have $f^{\delta}_{\rm bulk}(A)\leq  C_H (|A|+1)$ for all $A\in\Rkd$. Similarly, as $h_p$ satisfies \ref{(gamma2)}, it holds that $f^{\delta}_{\rm surf}(\zeta,\nu)\leq(  C_\gamma\EEE+\delta)|\zeta|$ for all $\zeta\in\Rk$ and $\nu\in\Sd$. \   Moreover, one can check that
  $f^{\delta}_{\rm surf}$ and $f^\delta_{\rm bulk}$ are nondecreasing  in $\delta$, and that the same property holds for $A\mapsto f^{\delta,\infty}_{\rm bulk}(A)$ when $A$ is a rank-one matrix with \MMM unit \EEE  norm.   Thus,  we may define
\begin{gather*}
    \widehat{f}_{\rm bulk}(A)=\lim_{\delta\to 0^+}f^\delta_{\rm bulk}(A)=\inf_{\delta>0}f^\delta_{\rm bulk}(A) \quad \text{ for all $A\in\Rkd$,}\\
\widehat{f}_{\rm Cantor}(A)\coloneq \lim_{\delta\to0^+}f^{\delta,\infty}_{\rm bulk}(A)=\inf_{\delta>0}f^{\delta,\infty}_{\rm bulk}(A) \quad \text{ for all $A\in\Rkd$ with rank $1$ and $|A|=1$,}\\
\widehat{f}_{\rm surf} \MMM (\zeta,\nu) \EEE \coloneq \lim_{\delta\to 0^+}f_{\rm surf}^\delta(\zeta,\nu)=\inf_{\delta>0}f_{\rm surf}^\delta(\zeta,\nu)\quad \text{ for all $\zeta\in\Rk$ and $\nu\in\Sd$}.
\end{gather*}
The function \MMM  $\widehat{f}_{\rm Cantor}$ \EEE is extended by $1$-homogeneity to all rank-one matrices.
Together with \eqref{delta representation}, the Monotone Convergence Theorem then implies that 
\begin{equation*}
I^{p}_\star(g,0,B)=\int_{B}\widehat{f}_{\rm bulk}(\nabla g)\,{\rm d}x+\int_{B}\widehat{f}_{\rm Cantor}\Big(\frac{{\rm d}D^cg}{{\rm d}|D^cg|}\Big)\,{\rm d}|D^cg|+\int_{ J_g\cap \EEE B}\widehat{f}_{\rm surf}([g],\nu_g)\,{\rm d}\Hd
\end{equation*}
for all $g\in {\rm BV}(\Omega;\Rk)$ and $B\in\B(\Omega)$. \MMM Thus, by \EEE 
Theorem~\ref{thm:cell formulas}, we obtain
\begin{equation*}
    \widehat{f}_{\rm bulk}(A)=H_p(A,0) \quad\text{and }\quad \widehat{f}_{\rm surf}(\zeta,\nu)=h_p(\zeta,\nu)
\end{equation*}
for all $A\in\Rkd$, $\zeta\in\Rk$, and $\nu\in\Sd$.

Then, we observe that the corresponding bounds for $f^\delta_{\rm bulk}$ and $f^\delta_{\rm surf}$ also give that $\widehat{f}_{\rm bulk}(A)\leq C_H(|A|+1)$ for all $A\in\Rkd$ and that $\widehat{f}_{\rm surf}(\zeta,\nu)\leq C_\gamma|\zeta|$ for all $\zeta\in\Rk$ and $\nu\in\Sd$.  Since the functional $g\mapsto I^p_\star(g,0,O)$ is $L^1(\Omega;\Rk)$-lower semicontinuous on ${\rm BV}(\Omega;\Rk)$ for all $O\in\Op(\Omega)$, by Lemma~\ref{lemma: recession} we  then deduce that  
\begin{equation*}
H_p^\infty(A,0) =\MMM \widehat{f}_{\rm Cantor} \EEE (A)
\end{equation*}
for all rank-one matrices $A\in\Rkd$ and 
\begin{equation*}
I^{p}_\star(g,0,B)=\int_{B}H_p(\nabla g, \BBB 0 \EEE )\,{\rm d}x+\int_{B}H_p^\infty\Big(\frac{{\rm d}D^cg}{{\rm d}|D^cg|},0\Big)\,{\rm d}|D^cg|+\int_{ J_g\cap \EEE B} h_p([g],\nu_g)\,{\rm d}\Hd
\end{equation*}
for all $g\in {\rm BV}(\Omega;\Rk)$ and $B\in\B(\Omega)$. In light of \eqref{Cantor=Cantor0}, this shows that \eqref{claim cantor repr} holds whenever $(g,G)\in \SD^p_\star(\Omega;\Rk)$, with $g\in {\rm BV}(\Omega;\Rk) $.   

\medskip

\noindent \emph{Step 3.\ } (Conclusion) We now deal with the general case $g\in {\rm GBV}_\star(\Omega;\Rk)$. To this end, for every $R>0$ we consider the set $\widetilde{\Omega}^R_{g}\coloneq \{x\in \Omega\colon\,\widetilde{g}(x) \text{  exists and }|\widetilde{g}(x)|\leq R\}$ . We claim that it is enough to prove \eqref{claim cantor repr} for every $B\in\mathcal{B}(\Omega)$ for which there exists $R>0$ such that $B\subset \widetilde{\Omega}^R_{g}$. Indeed, by Proposition~\ref{prop:Properties of derivatives of GBVstar} we have that $ \widetilde{\Omega}^R_{g}\nearrow \Omega_{\rm reg}\coloneq \{x\in \Omega\colon \widetilde{g}(x) \,\,\text{exists}\}$ and $|D^cg|(\Omega\setminus \Omega_{\rm reg})=0$. Hence, every $B\in\mathcal{B}(\Omega)$ can be written, up to a $|D^cg|$-negligible set, as the increasing union of Borel sets each contained in $ \widetilde{\Omega}^{R_n}_{g}$, for some sequence $R_n \to + \infty$.    This proves the claim.

     Let us fix $R>0$, a Borel set $B\subset  \widetilde{\Omega}^R_{g}$, and   a sequence $\{R_N\}_N$ with $R_N\to+\infty$ and $R_N>R$. Thanks to Lemma~\ref{lemma:I truncated}, we have  
     \begin{equation}\label{pronto}
         \lim_{N\to+\infty}\frac{1}{N}\sum_{i=1}^N(I^p_\star)^c(v^i_N,0,B)=(I^p_\star)^c(g,0,B),
     \end{equation}
     where we have set $v^i_N\coloneq  \BBB \Phi_{\tau^{i-1}R_N\circ g}$.   Since $v^i_N\in {\rm BV}(\Omega;\Rk)$ for all $i\in\{1,\dots,N\}$ and $N\in\N$, 
      the integral representation \eqref{claim cantor repr}  in ${\rm BV}(\Omega;\Rk)$  can  be applied, and thus 
     \begin{equation}\label{spid}
    (I^p_\star)^c(v^i_N, \MMM 0, \EEE B)=\int_B H_p^\infty\Big (\frac{{\rm d}D^c v^i_N}{{\rm d}|D^cv^i_N|},0\Big)\, d|D^cv^i_N|
     \end{equation}
     for every $i\in\{1,\dots,N\}$ and $N\in\N$. As we assumed that  $B\subset  \widetilde{\Omega}^R_{g}$ and \BBB $R_N>R$, \EEE we have $B\subset  \widetilde{\Omega}^{R_N}_{g}$. Thus,  from (ii)   of Proposition~\ref{prop:Properties of derivatives of GBVstar} we get 
     \begin{eqnarray*}
        &\displaystyle \frac{{\rm d}D^c v^i_N}{{\rm d}|D^cv^i_N|}=\frac{{\rm d}D^cg}{{\rm d}|D^cg|} \quad |D^cg|\text{-a.e. in }  B,\\
        &\displaystyle |D^cv^i_N|=|D^cg| \quad \text{as Borel measures on $ B$},
     \end{eqnarray*}
     for every $i\in\{1,\dots,N\}$ and $N\in\N$. Together with \MMM \eqref{Cantor=Cantor0}, \EEE \eqref{pronto}, and \eqref{spid},  these equalities give \eqref{claim cantor repr}. \MMM This \EEE  concludes  the proof. 
\medskip

\smallskip

\noindent\textbf{Acknowledgements.}
The authors wish to thank Gianni Dal Maso, with whom D.D.\  devised the core arguments of Theorem \ref{thm: Goffmann-Serrin} and  of Remark \ref{remark: vVec}   in the context of a different research project.  
This work was carried out as D.D.\ was visiting Johannes Kepler Universit\"at of Linz, as part of the Erasmus+ program. He wishes to thank this institution and, in particular the Institut für
Analysis,  for the warm hospitality he received during this visit. He also wishes to thank SISSA for the financial support.  D.D.\ is a member of GNAMPA of INdAM. \MMM This research was funded by the Deutsche Forschungsgemeinschaft (DFG, German Research Foundation) - 377472739/GRK 2423/2-2023.  M.F.\  is \EEE \MMM very grateful for this support. \EEE

\appendix

\section{Generalised functions of bounded variation}\label{sec: GBV}

 In the sequel, $\Omega$ stands for an open, bounded subset of $\R^d$. We do not assume any regularity on $\Omega$ unless specifically stated.

In this section, we collect some further properties of generalised functions of bounded variation. Before doing so, we recall the definition of the approximate gradient and of the jump set of an $\Lb^d$-measurable function. 

\medskip 

\EEE
\noindent{\textbf{Approximate gradient.}} 
    Let $E$ be an  $\mathcal{L}^d$-measurable subset of $\R^d$ and let $x\in \Rd$ be a point with positive $d$-dimensional density, i.e., 
    \begin{equation}\label{positive density}
        \limsup_{\rho \to 0^+}\frac{\Lb^d(E\cap B_\rho(x))}{\rho^d}>0.
    \end{equation}
    A function $u\in L^0(E;\R^k)$  admits an {\it approximate limit} $\widetilde{u}(x)\in\R^k$ at $x$
    if, for every $\e>0$, we have
    \begin{equation*}
         \lim_{\rho\to 0^+}\frac{\Lb^d(\{y\in E\colon \,  |u(y)-\widetilde{u}(x)|>\e\}\cap B_\rho(x))}{\rho^d}=0; 
    \end{equation*}
    in this case, we write 
    \begin{equation}\label{aplim}
\text{ap}\!\lim_{\substack{y\to x\\y\in E}}u(y)=\widetilde{u}(x).
    \end{equation}
 Throughout the paper, the symbol $\widetilde{u}(x)$  always denotes the approximate limit of $u$ defined by \eqref{aplim}, which,  thanks to \eqref{positive density}, is uniquely defined. We also introduce the set $S_u$ of {\it singular points} of $u$ as the complement in $E$ of the set of points where the approximate limit exists. We recall that for every $u\in L^0(\Rd;\R^k)$ it holds $\Lb^d(S_u)=0$. 

 If $O\in \Op(\Omega)$ and $u\in L^0(O;\Rk)$, a function $\nabla u\in L^0(O;\Rkd)$ is said to be  the approximate gradient of $u$ if, for $\Lb^d$-a.e.\ $x\in O\EEE$, it holds 
\begin{equation}\label{approximate grad}
      {\rm ap}\lim_{\substack{y\to x\\y\in O}}\frac{u(y)-\widetilde{u}(x)-\nabla u(x)(y-x)}{|y-x|}=0.
\end{equation}

\medskip

\noindent{\textbf{Jump set.}}
We recall the definition of jump set of a measurable function. \MMM Let \EEE  $O \in \mathcal{O}(\R^d)$ \EEE and $u\in L^0(O;\R^k)$. The   \textit{jump set} $J_u$ is the Borel set consisting of all points $x\in O$ for which there exists a triple  $(u^+(x),u^-(x),\nu_u(x))\in \R^k\times \R^k\times \mathbb{S}^{d-1}$, with $u^+(x)\neq u^-(x)$, such that 
\begin{align}\label{def Jump set}
       {\rm ap}\!\!\!\!\!\!\!\!\!\lim_{\substack{y\to x\\y\in H^{\pm}(x)\cap O}}u(y)=u^\pm(x),
     \end{align}
where we have set $H^{\pm}(x)\coloneq \{y\in\Rd:\pm(y-x)\cdot\nu_u(x)>0\}$.  Note that the triple $(u^+(x),u^-(x),\nu_u(x))$ is uniquely defined up to swapping the roles of $u^+(x)$ and $u^-(x)$  and changing the sign of $\nu_u(x)$. For brevity of notation, we set $[u](x)\coloneq u^{+}(x)-u^{-}(x)$ for all $x\in J_u$.

 We fix a smooth map $\Phi\in C^\infty_c(\Rk;\Rk)$ satisfying the following conditions (see, for an example, \cite[Section~4]{CagnettiDet}):  
 \begin{equation} \label{Properties of Phi}
 \begin{cases}
    \Phi(y)=y \quad \text{for }y\in \{|y|\leq 1\},\\
    \Phi(y)=0 \quad \text{for }y\in \{|y|\geq  \tau\},\\
    {\rm Lip}(\Phi)\leq 1,
    \end{cases}
 \end{equation}
 for a fixed constant $\tau>2$. \BBB (In our work, we  choose  $\tau\geq \max \{3,\beta_\gamma+1\}$,   see before Lemma~\ref{lemma:CagnettiDetFree W}.) \EEE 
 For every $R>0$ we set 
 \begin{equation}\label{def PhiR}
     \Phi_R(y)\coloneq R\Phi\big(\frac{y}{R}\big)\quad \text{for all $y\in\R^k$},
 \end{equation}  
that satisfies
 \begin{equation} \label{Properties of PhiR}
 \begin{cases}
    \Phi_R(y)=y \quad \text{for }y\in \{|y|\leq R \},\\
    \Phi_R(y)=0 \quad \text{for }y\in \{|y|\geq \tau\EEE R\},\\
    {\rm Lip}(\Phi_R)\leq 1.
    \end{cases}
 \end{equation}
The following proposition collects the main properties of functions in ${\rm GBV}(\Omega;\Rk)$. \MMM For this, observe that $\Phi_R \circ u$ lies in $\BV_{\rm loc}$ and thus admits the usual representation of the distributional derivative $D(\Phi_R \circ u)$. \EEE
\begin{proposition}\label{Prop:fine prop GBV}
    Let $u\in \GBV(\Omega;\Rk)$. The following hold:
    \begin{itemize}
        \item [{\rm (a)}]  $\Hd(S_u\setminus J_u)=0$;
        \item [{\rm(b)}] $u$ admits an approximate gradient  $\nabla u\in L^0(\Omega;\Rkd)$;
moreover, $\nabla u(x)=\nabla (\Phi_R\circ u)(x)$ for $\Lb^d$-a.e.\ $x\in\{|u|\leq R\}$;
    \item [{\rm (c)}] for every $0<R_1< R_2$ we have $J_{\Phi_{{R_1}\circ u}}\subset J_{\Phi_{{R_2}\circ u}}$  up to a set of $\Hd$-measure zero and \begin{equation*}\Hd\big(J_u\setminus \bigcup\nolimits_{R>0}J_{\Phi_R\circ u}\big)=0;
        \end{equation*} moreover, for $\Hd$-a.e.\ $x\in J_u$ we have  $(\Phi_{R}\circ u)^+\EEE(x)=u^+(x)$ and $(\Phi_{R}\circ u)\EEE^-(x)=u^-(x)$, whenever $R>\max \{|u^+(x)|,|u^-(x)|\}$.
        \end{itemize}
       Moreover, if $u\in {\rm GBV}(\Omega)^k$, then
        \begin{itemize}
        \item [{\rm (d)}] there exists a measure  $|D^cu|\in\mathcal{M}^+(\Omega)$, possibly unbounded, such that $|D^cu|(B)=|D^c(\Phi_R\circ u)|(B)$ for every Borel set $B\subset \{|\widetilde{u}|\leq R\}$ and $R>0$, and $|D^cu|(B)=0$ whenever $B\subset \Omega$ is a Borel set $\sigma$-finite with respect to $\Hd$; in addition, we have 
        \begin{equation*}
             |D^cu|(B)=\lim_{R\to +\infty}|D^c(\Phi_R\circ u)|(B) = \sup_{R>0}|D^c(\Phi_R\circ u)|(B) \EEE\in [0,\infty]
        \end{equation*}
     for all Borel sets $B\subset \Omega$.  
    \end{itemize}
\end{proposition}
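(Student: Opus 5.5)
The plan is to reduce every item to the classical fine properties of ${\rm BV}_{\rm loc}$-functions, applied to the truncations $u^R\coloneq\Phi_R\circ u$. By definition of ${\rm GBV}(\Omega;\Rk)$ each $u^R$ belongs to ${\rm BV}_{\rm loc}(\Omega;\Rk)$ (since $\Phi_R$ is Lipschitz and $\nabla\Phi_R$ has compact support). For these functions we have available the Federer--Vol'pert theorem ($\Hd(S_{u^R}\setminus J_{u^R})=0$), the Calder\'on--Zygmund approximate differentiability theorem, and the decomposition $Du^R=\nabla u^R\Lb^d+D^cu^R+[u^R]\otimes\nu_{u^R}\Hd\mres J_{u^R}$ with $D^cu^R$ vanishing on $\Hd$-$\sigma$-finite sets (see \cite{AFP}). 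The key elementary observation, used repeatedly, is the consistency of truncations: for $R_1<R_2$ we have $\Phi_{R_1}=\Phi_{R_1}\circ\Phi_{R_2}$ on $\{|y|\le R_2\}$ and $\Phi_{R}(y)=y$ on $\{|y|\le R\}$. Hence on the set where $|u|\le R$ all truncations $u^{R'}$ with $R'\ge R$ coincide with $u$.

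For (b), I would note that for $\Lb^d$-a.e.\ $x$ the approximate limit $\widetilde u(x)$ exists, because $\Lb^d(S_u)=0$ for any measurable function. If $|\widetilde u(x)|<R$, then $\{u\ne u^R\}$ has density zero at $x$, so approximate differentiability of $u^R$ at $x$ transfers to $u$ with the same gradient. Since $\Omega=\bigcup_R\{|\widetilde u|<R\}$ up to null sets, this defines $\nabla u$, gives the identity $\nabla u=\nabla u^R$ a.e.\ on $\{|u|\le R\}$, and shows that the definition does not depend on $R$. For (c), the same density argument shows the following. If $x\in J_u$ with $|u^\pm(x)|<R$, then the one-sided approximate limits of $u^R$ equal $u^\pm(x)$, so $x\in J_{u^R}$. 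Conversely, if $x\in J_{u^{R_1}}$ with $|(u^{R_1})^\pm(x)|<R_1$, then $x\in J_{u^{R_2}}$. The delicate points are jump points of $u^{R_1}$ whose one-sided values reach the region $|y|\ge R_1$, where $\Phi_{R_1}$ is not injective. I would handle these by showing that at $\Hd$-a.e.\ such point $u$ itself has one-sided approximate limits of modulus at least $R_1$ (or is approximately unbounded on one side), and then use $\Phi_{R_1}=\Phi_{R_1}\circ\Phi_{R_2}$ to get the inclusion $J_{u^{R_1}}\subset J_{u^{R_2}}$ up to $\Hd$-null sets. The statement $\Hd(J_u\setminus\bigcup_R J_{u^R})=0$ is immediate, since every $x\in J_u$ lies in $J_{u^R}$ for $R>\max|u^\pm(x)|$.

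For (a), take $x\in S_u\setminus J_u$. If $x\notin S_{u^R}$ for all rational $R$, I would argue that $u$ has an approximate limit at $x$ whenever $\widetilde{u^R}(x)$ lies in $\{|y|<R\}$ for some $R$. Otherwise $\widetilde{u^R}(x)$ is in the non-injective zone for every $R$; in that case I would show, via the coarea formula for $|u^R|$, that the set of such points has $\Hd$-measure zero. If instead $x\in S_{u^R}\setminus J_{u^R}$ for some rational $R$, then $x$ lies in a countable union of $\Hd$-null sets by Federer--Vol'pert. Finally, if $x\in J_{u^R}$ but $x\notin J_u$, the point falls under the exceptional set already controlled in (c).

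For (d), now assuming $u\in{\rm GBV}(\Omega)^k$, I would work componentwise. I would define $|D^cu|(B)\coloneq\sup_R|D^c(\Phi_R\circ u)|(B)$, or better, use scalar truncations, for which the chain rule $D^c(\psi\circ v)=\psi'(\widetilde v)D^cv$ gives exact monotonicity in $R$. By the chain rule in ${\rm BV}$ (Vol'pert), on $\{|\widetilde u|\le R\}$ the Cantor parts of $u^{R}$ and $u^{R'}$ agree for $R'\ge R$, since $\nabla\Phi_{R'}(\widetilde u)=\nabla\Phi_R(\widetilde u)={\rm Id}$ there. Countable additivity of the limit, and hence the existence of the measure, follow from monotone convergence of the increasing family. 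Vanishing on $\Hd$-$\sigma$-finite sets is inherited from each $D^cu^R$, and the limit formula is then the definition. I expect the main obstacle to be (c) and (a) at the points where the one-sided traces of $u^R$ fall in the non-injective region $\{R\le|y|\le\tau R\}$ of $\Phi_R$. This is exactly where the vector-valued truncation $\Phi_R$, as opposed to componentwise truncation, requires care. Rather than reproving this, I would first try to cite the corresponding statements in \cite{AmbrosioExistence,DalToa22,DonatiGBV}, and only fall back on the coarea argument above if needed.
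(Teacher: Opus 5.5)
The paper gives no proof of this proposition. It is recalled as a collection of known facts (cf.\ the references in Section~\ref{sec: preliminaries}), so your fallback of citing the literature is what the paper does.

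Three parts of your argument are sound up to routine details: (b), the trace part of (c) (every $x\in J_u$ lies in $J_{\Phi_R\circ u}$, with the same traces, once $R>\max\{|u^+(x)|,|u^-(x)|\}$), and (d). In (d), note that $R\mapsto|D^c(\Phi_R\circ u)|$ is not monotone. Build the measure as the increasing limit of $|D^c(\Phi_R\circ u)|\mres\{|\widetilde u|\le R\}$, using that the Cantor parts of two ${\rm BV}_{\rm loc}$ functions agree where their approximate limits agree.

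The genuine gap is in (a) and in the inclusion $J_{\Phi_{R_1}\circ u}\subset J_{\Phi_{R_2}\circ u}$. Your density transfer tacitly assumes that $\Phi_R\circ u$ being close to a value $c\in B_R$ forces $u$ to be close to $c$. This fails for every $c$, not only near the annulus: $\Phi_R^{-1}(0)\supset\{|y|\ge\tau R\}$, and by a degree argument every point of $B_R\setminus\{0\}$ is also attained in $\{R<|y|<\tau R\}$. The following steps are therefore unjustified:
\begin{itemize}
\item the implication \emph{$x\in J_{\Phi_{R_1}\circ u}$ with $|(\Phi_{R_1}\circ u)^\pm(x)|<R_1$ $\Rightarrow$ $x\in J_{\Phi_{R_2}\circ u}$};
\item the claim that $u$ has an approximate limit wherever $\widetilde{\Phi_R\circ u}(x)\in B_R$;
\item the deferral of the case $x\in J_{\Phi_R\circ u}\setminus J_u$ to (c).
\end{itemize}
Also, the identity $\Phi_{R_1}=\Phi_{R_1}\circ\Phi_{R_2}$ holds only on $\{|y|\le R_2\}$, so it gives nothing once a trace of $u$ leaves $B_{R_2}$.

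These failures are not confined to $\Hd$-null sets, so neither your coarea sketch nor a citation can close them.

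For (c), fix a unit vector $e\in\Rk$ and let $\sigma\in[1,\tau]$ be the supremum of the $s\ge0$ with $\Phi(se)\neq0$, so $\Phi(se)=0$ for $s\ge\sigma$. Choose $t\in(\sigma R_1,\sigma R_2)$ with $a\coloneq\Phi_{R_2}(te)$ satisfying $0<|a|<R_1$; such $t$ exist close to $\sigma R_2$. With $b\coloneq te$ one has $\Phi_{R_2}(b)=a=\Phi_{R_2}(a)$ but $\Phi_{R_1}(b)=0\neq a=\Phi_{R_1}(a)$. Take a hyperplane $\{x_d=0\}$ meeting $\Omega$ and the ${\rm BV}$ function $u\coloneq a\chi_{\{x_d<0\}}+b\chi_{\{x_d>0\}}$. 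Then $J_{\Phi_{R_1}\circ u}=\{x_d=0\}\cap\Omega$, with traces $a,0\in B_{R_1}$, while $\Phi_{R_2}\circ u\equiv a$. So the inclusion in (c) fails on a set of positive $\Hd$-measure, for every admissible $\Phi$ and every $R_1<R_2$.

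For (a), take $u\coloneq\chi_{\{x_d>0\}}x_d^{-1}e$. It belongs to ${\rm GBV}(\Omega;\Rk)$, and for every $R$ the function $\Phi_R\circ u$ vanishes on $\{|x_d|<1/(\tau R)\}$. Yet on $\{x_d=0\}\cap\Omega$ the function $u$ has neither a finite approximate limit nor finite one-sided traces, so $\Hd(S_u\setminus J_u)>0$.

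Hence, with the paper's finite-valued conventions, the inclusion in (c) must be replaced by a true statement. One option is the inclusion $J_{u^{(m_1)}}\subset J_{u^{(m_2)}}$ for the monotone truncations of Remark~\ref{remark:monotonic}, which do factor through each other. Likewise, (a) needs an extra ingredient excluding escape to infinity on one side of a rectifiable set. For example, $\mathcal V(u,\Omega)<+\infty$ makes almost every one-dimensional slice a ${\rm BV}$ function.
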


\begin{remark}\label{remark:monotonic}
 
If $k=1$, the space $\GBV(\Omega)$ \MMM may \EEE also be characterised by a different type of truncation. Indeed, it can be shown (see, for instance, \cite{Pallara1990}) that $\GBV(\Omega)$ is the space of all 
$u\in L^0(\Omega)$ such that $u^{(n)}\in \BV_{\rm loc}(\Omega)$ for all $n>0$, where we have set  $u^{(n)}\coloneq (u\lor (-n))\land n$. Moreover, properties (b) and (d) of Proposition~\ref{Prop:fine prop GBV} can be strengthened in this case. Indeed, given $n_1,n_2\in\N$ with $n_1<n_2$, we also have 
    \begin{gather*}
        |\nabla u^{(n_1)}|\leq |\nabla u^{(n_2)}| \quad \text{ $\Lb^d$-a.e.\ in $\Omega$ }\quad \text{ and }\quad  |D^cu^{(n_1)}|(B)\leq |D^cu^{(n_2)}|(B),\\
          J_{u^{(n_1)}}\subset J_{u^{(n_2)}} \quad \text{ and } \quad  J^1_{u^{(n_1)}}\subset J^1_{u^{(n_2)}}\quad \text{up to $\Hd$-negligible sets}, \\   \quad |[u^{(n_1)}]|\leq |[u^{(n_2)}]| \quad \text{$\Hd$-a.e.\ on $J_{u^{(n_1)}}$}\EEE, 
    \end{gather*}
    for all Borel sets $B\subset \Omega$.
\end{remark}
The following proposition, \DDD  see  \cite[Proposition~3.9 and its proof]{DonatiGBV}, characterises ${\rm GBV}_\star(O;\Rk)$ by means of the truncation functions $\Phi_R$.
\begin{proposition}\label{prop:truncation}
   Let $O\in\Op(\Omega)$ and let $u\in L^0(\Omega;\Rk)$. Then $u$ belongs to ${\rm GBV}_\star(O;\Rk)$ if and only if  $\Phi_R\circ u\in {\rm BV}(O;\Rk)$ for every $R>0$ and 
\begin{equation*}
 S\coloneq \sup_{R>0}\mathcal{V}(\Phi_R\circ u,O)<+\infty.
\end{equation*}
In this case, $\mathcal{V}(u,O)=S$. 
\end{proposition}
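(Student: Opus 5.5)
The plan is to prove both directions of the equivalence, together with the identity $\mathcal{V}(u,O)=S$, by reducing everything to the componentwise, scalar truncation structure through Proposition~\ref{Prop:fine prop GBV}.

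\emph{($\Rightarrow$).} Let $u\in{\rm GBV}_\star(O;\Rk)$, so that $u\in{\rm GBV}(O)^k$ and $\mathcal{V}(u,O)<+\infty$. Fix $R>0$.
\begin{enumerate}
\item By definition of ${\rm GBV}$, the map $\Phi_R\circ u$ lies in ${\rm BV}_{\rm loc}$. It is also bounded by $\tau R$, hence lies in $L^1(O;\Rk)$.
\item The chain rule in ${\rm BV}_{\rm loc}$, together with ${\rm Lip}(\Phi_R)\le 1$, gives $|\nabla(\Phi_R\circ u)|\le|\nabla u|$ a.e. This holds at Lebesgue points where $\nabla u$ exists, and by (b) of Proposition~\ref{Prop:fine prop GBV} the gradients coincide on $\{|u|\le R\}$.
\item The Cantor parts are controlled by Proposition~\ref{Prop:fine prop GBV}(d) and the Lipschitz chain rule for the Cantor part.
\item On jumps, $|[\Phi_R\circ u]|\le |[u]|$.
\end{enumerate}
The delicate point is the componentwise sum defining $\mathcal{V}$: $\Phi_R$ mixes components. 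I would therefore bound $\sum_i|D(\Phi_R\circ u)_i|$ by $|D(\Phi_R\circ u)|$ (Frobenius norm) and compare with $\sum_i$ of the component quantities of $u$, which costs at most a constant depending on $k$. Whether this constant can be removed is the main obstacle, discussed below. A finite bound is in any case enough to get $\Phi_R\circ u\in{\rm BV}(O;\Rk)$ with $\sup_R\mathcal{V}(\Phi_R\circ u,O)<+\infty$.

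\emph{($\Leftarrow$).} Assume $\Phi_R\circ u\in{\rm BV}(O;\Rk)$ for all $R$ and $S<+\infty$.
\begin{enumerate}
\item Since $\Phi_R\circ u\to u$ pointwise as $R\to\infty$, the lower semicontinuity of $\mathcal{V}$ in $L^0$ (Remark~\ref{remark: lowersemicontinuity}) gives $\mathcal{V}(u,O)\le\liminf_R\mathcal{V}(\Phi_R\circ u,O)\le S$.
\item This requires knowing $u\in{\rm GBV}(O)^k$. I would obtain it from the characterization of vector ${\rm GBV}$ in \cite{DonatiGBV}: the $\Phi_R$-truncations being in ${\rm BV}$ with uniformly bounded $\mathcal{V}$ yields each $u_i\in{\rm GBV}$. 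Alternatively, one uses that on $\{|u|<R\}$ the truncation equals $u$, and applies scalar truncations to the components of $\Phi_R\circ u$.
\end{enumerate}

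\emph{Identity $\mathcal{V}(u,O)=S$.} The inequality $\le$ is the semicontinuity bound above. For $\ge$, decompose $O$, for fixed $R$, into $\{|\widetilde u|\le R\}$, the jump set, and the rest.
\begin{enumerate}
\item On $\{|\widetilde u|\le R\}$, gradients and Cantor parts of $\Phi_R\circ u$ and $u$ coincide, by Proposition~\ref{Prop:fine prop GBV}(b),(d).
\item On the region where $|u|\ge R$, $\Phi_R$ is $1$-Lipschitz, so gradient and Cantor contributions there are dominated by those of $u$ on the same region.
\item On jumps, $|[\Phi_R\circ u]|\wedge1\le|[u]|\wedge1$ pointwise.
\end{enumerate}
Summing gives $\mathcal{V}(\Phi_R\circ u,O)\le\mathcal{V}(u,O)$.

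The main obstacle is this last step: the pointwise domination must hold \emph{componentwise}, whereas $\Phi_R$ is only $1$-Lipschitz in the Euclidean norm. I would first try to exploit the specific structure of $\Phi$ in \eqref{Properties of Phi}, noting that the statement cites \cite[Proposition~3.9]{DonatiGBV}, where $\Phi$ is chosen compatibly. Failing that, I would argue via the sup over $R$: $\mathcal{V}(\Phi_R\circ u,O)$ converges to $\mathcal{V}(u,O)$ as $R\to\infty$ by monotone convergence on each piece, since the truncation is the identity on growing sets. That yields $S\ge\mathcal{V}(u,O)$ and, with the previous bounds, equality.
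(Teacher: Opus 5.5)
Your proof of the equivalence itself is fine once you accept a $k$-dependent constant. The componentwise chain-rule estimates give $\mathcal V(\Phi_R\circ u,O)\le k\,\mathcal V(u,O)$: a factor $\sqrt k$ on the bulk and Cantor parts, and a factor $k$ on the jump part. In the converse direction, Remark~\ref{remark: lowersemicontinuity} together with the convention $\mathcal V=+\infty$ outside ${\rm GBV}(O)^k$ already gives $u\in{\rm GBV}(O)^k$ and $\mathcal V(u,O)\le S$, so you do not need \cite{DonatiGBV} there.

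The gap is the remaining inequality $S\le\mathcal V(u,O)$, and your fallback does not yield it. The convergence $\mathcal V(\Phi_R\circ u,O)\to\mathcal V(u,O)$ as $R\to+\infty$ is true, by dominated rather than monotone convergence. But it only controls large $R$, whereas $S$ is a supremum over all $R>0$, and for vector-valued truncations $R\mapsto\mathcal V(\Phi_R\circ u,O)$ is not monotone. Combining everything you have, you only obtain $\mathcal V(u,O)\le S\le k\,\mathcal V(u,O)$.

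No choice of $\Phi$ can close this gap. With the componentwise $\mathcal V$ of \eqref{Def V BV}, the inequality $\mathcal V(\Phi_R\circ u,O)\le\mathcal V(u,O)$ is false for $k\ge2$, precisely because of the componentwise issue you flagged. Here is a counterexample.

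Let $k=2$, $O=Q=(-\tfrac12,\tfrac12)^d$, and $\Sigma\coloneq Q\cap\{x_d=0\}$. Set $u\coloneq y^+$ on $\{x_d>0\}$ and $u\coloneq y^-$ on $\{x_d\le 0\}$, with $y^+=(1,1)$ and $y^-=(-b,1)$. Since $u_2$ is constant and $|[u_1]|=1+b\ge1$, we get $\mathcal V(u,Q)=\Hd(\Sigma)=1$.

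Now fix $b\ge\sqrt2\,\tau$ and $R=\sqrt2$. Then $|y^+|=R$ and $|y^-|\ge\tau R$, so \eqref{Properties of PhiR} gives $\Phi_R\circ u=(1,1)\chi_{\{x_d>0\}}$. Both components jump by $1$ on $\Sigma$, so $\mathcal V(\Phi_R\circ u,Q)=2$. Hence $S\ge 2>\mathcal V(u,Q)$ for every admissible $\Phi$. Along the way, $R\mapsto\mathcal V(\Phi_R\circ u,Q)$ takes the values $0$, then $2$, then $1$.

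So, as stated, the identity $\mathcal V(u,O)=S$ holds only for $k=1$. For $k\ge2$ it is true in either of two modified forms:
\begin{itemize}
\item replace $\mathcal V$ by the isotropic quantity $\int_O|\nabla u|\,{\rm d}x+|D^cu|(O)+\int_{J_u\cap O}|[u]|\wedge1\,{\rm d}\Hd$; then the Euclidean $1$-Lipschitz bound on $\Phi_R$ gives exactly the pointwise dominations in your steps 1--3;
\item replace $\Phi_R$ by the componentwise clamping $u^{(m)}$ of Remark~\ref{remark: troncature}.
\end{itemize}
The paper gives no argument of its own here; it only cites \cite[Proposition~3.9]{DonatiGBV}. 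Where it uses the identity, e.g.\ at the start of the proof of Theorem~\ref{thm:approximation SDstar}, the weaker bound $S\le k\,\mathcal V(u,O)$ suffices, at the cost of a $k$-dependent constant.
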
  

\begin{remark}\label{remark: troncature}
The space ${\rm GBV}_\star(\Omega;\Rk)$  can also be characterised by truncations as those considered in Remark~\ref{remark:monotonic}. More precisely, a function $u\in L^0(\Omega;\Rk)$ belongs to ${\rm GBV}_\star( \Omega\EEE;\Rk)$ if and only if  $u^{(m)}\in {\rm BV}(\Omega;\Rk)$ for all $m>0$ and     \begin{equation*}
\sup_{m>0}\mathcal{V}( u^{(m)},\Omega)<+\infty,
\end{equation*}
where  we have set $u^{(m)}\coloneq \big((u_1\lor (-m))\land m,\dots,(u_k\lor (-m))\land m\big)$.

Moreover, given   $a,b\in\Rk$ with $a_i\leq b_i$ for all $i\in\{1,\dots,k\}$ and $u\in {\rm GBV}_\star(\Omega;\Rk)$, we  have $v\coloneq (u\lor a)\land b\in {\rm GBV}_\star(\Omega;\Rk)$, $\mathcal{V}(v,O)\leq \mathcal{V}(u,O)$ for all $O\in\Op(\Omega)$, as well as $|D^c v|(O) \le |D^c u|(O)  $, 
   \begin{gather*}
        |Dv|(O\setminus J^r_u)\leq (r \vee 1 ) \mathcal{V}(u,O\setminus J^r_u) \quad \text{ and }|Dv|( J^r_u \cap O)\leq k|b-a|\Hd(J^r_u \cap O),
   \end{gather*}
  and $J_v^r\subset J^r_u$, up to  an \EEE $\Hd$-negligible set,  for all $r\geq 0$. 
\end{remark}

It is possible to associate to every function $u\in \GBVs(O;\Rk)$ a {\it  matrix-valued} measure $D^cu\in\mathcal{M}_b( O\EEE;\Rkd)$ that generalises the Cantor part of the distributional derivative for functions of bounded variation. Note that its total variation measure coincides with the measure introduced in (d) of Proposition~\ref{Prop:fine prop GBV}. For a proof of this result, we refer the reader to \cite[Lemma~2.10]{AlicandroFocardi}   and to  \cite[Theorem~2.7 and Proposition~2.9]{DalToa22}.  
\begin{proposition}\label{Prop:Cantor GBV}
    Let $u\in {\rm GBV}_{\star}(O;\Rk)$. Then there exists a measure $D^cu\in \mathcal{M}_b(O;\Rkd)$ such that $D^cu(B)=D^c(\Phi_R\circ u)(B)$ for every Borel set $B\subset \{|\widetilde{u}|\leq R\}$, and $D^cu(B)=0$ whenever $B\subset O$ is a Borel set $\sigma$-finite with respect to $\Hd$; in addition, we have
        \begin{gather*}
D^cu(B)=\lim_{R\to+\infty}D^c(\Phi_R\circ u)(B)\quad \text{ for all Borel sets }B\subset O,\\
             |D^cu|(B)=\lim_{R\to+\infty}|D^c(\Phi_R\circ u)|(B)= \sup_{R>0}|D^c(\Phi_R\circ u)|(B)\EEE \in [0,\infty) \quad \text{ for all Borel sets }B\subset O.
        \end{gather*}
    \end{proposition}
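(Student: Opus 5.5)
The statement is Proposition~\ref{Prop:Cantor GBV}. The plan is to build $D^cu$ by gluing the Cantor parts of the truncations $\Phi_R\circ u$, which belong to ${\rm BV}(O;\Rk)$ by Proposition~\ref{prop:truncation}.

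\emph{Consistency.} For $0<R_1<R_2$, the functions $\Phi_{R_1}\circ u$ and $\Phi_{R_2}\circ u$ coincide on $\{|u|\le R_1\}$. Consider $\Phi_{R_1}$ and $\Phi_{R_2}$, each composed with $u$. On the set $\{|\widetilde u|< R_1\}$, outside the $\Hd$-$\sigma$-finite set $S_u$, both are approximately continuous with the same approximate limit $\widetilde u$. By the ${\rm BV}$ chain rule (Vol'pert/Ambrosio--Dal Maso), the Cantor part of $\Phi_{R_i}\circ u$ can be computed as $\nabla\Phi_{R_i}(\widetilde{u})$ applied to the Cantor part of any ${\rm BV}$ function agreeing with $u$ there. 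More directly, the Cantor part is concentrated on points of approximate continuity, and $D^c(\Phi_{R_1}\circ u)=\nabla\Phi_{R_1}(\widetilde{w})D^c w$ with $w=\Phi_{R_2}\circ u$, since $\Phi_{R_1}=\Phi_{R_1}\circ\Phi_{R_2}$ near $\{|y|\le R_1\}$. Because $\nabla\Phi_{R_1}={\rm Id}$ on $\{|y|<R_1\}$, this gives $D^c(\Phi_{R_1}\circ u)(B)=D^c(\Phi_{R_2}\circ u)(B)$ for Borel $B\subset\{|\widetilde u|<R_1\}$. The boundary level $\{|\widetilde u|=R_1\}$ is handled by choosing $R_1$ slightly larger, so that it holds for $B\subset\{|\widetilde u|\le R\}$ for every $R$, using $\Phi_{R}\circ u=\Phi_{R'}\circ u$ there for $R'>R$ close to $R$ once one proves both equal $D^c(\Phi_{R''}\circ u)$ for large $R''$.

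\emph{Definition and boundedness.} Set $D^cu(B)\coloneq\lim_{R\to\infty}D^c(\Phi_R\circ u)(B\cap\{|\widetilde u|\le R\})$. This limit is consistent by the previous step, and it is $\sigma$-additive on sets avoiding $S_u$. Since $|D^c(\Phi_R\circ u)|$ vanishes on $\Hd$-$\sigma$-finite sets, and $S_u\setminus J_u$ is $\Hd$-null (Proposition~\ref{Prop:fine prop GBV}(a)) while $J_u$ is $\sigma$-finite, the set $S_u$ carries no Cantor mass. The total variation bound is where $\GBVs$ enters. We have $|D^c(\Phi_R\circ u)|(O)\le\sum_i|D^c(\Phi_R\circ u)_i|(O)\le C\,\mathcal{V}(\Phi_R\circ u,O)\le C\,\mathcal{V}(u,O)$ by Proposition~\ref{prop:truncation}, uniformly in $R$. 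Hence $|D^cu|(O)<\infty$, and $D^cu\in\mathcal M_b(O;\Rkd)$ by monotone limits on the increasing sets $\{|\widetilde u|\le R\}$.

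\emph{Limit formulas.} For general Borel $B$, split it into $B\cap\{|\widetilde u|\le R_0\}$ and the remainder. On the first piece the values agree for $R\ge R_0$. On the remainder, $|D^c(\Phi_R\circ u)|(B\cap\{|\widetilde u|>R_0\})$ must be bounded uniformly in $R$ by a quantity vanishing as $R_0\to\infty$.

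This is the main obstacle. The truncation $\Phi_R\circ u$ still has Cantor mass in the transition region $\{R\le|\widetilde u|\le\tau R\}$, where $|\nabla\Phi_R|\le1$. The plan is to bound $|D^c(\Phi_R\circ u)|(\{R_0<|\widetilde u|\})$ by $|D^c u|(\{|\widetilde u|>R_0\})$, that is by $\lim_{R'}|D^c(\Phi_{R'}\circ u)|$ on that set, using the chain rule with Lipschitz constant $1$ again. The right-hand side tends to $0$ as $R_0\to\infty$ by finiteness of $|D^cu|$ and $\bigcap_{R_0}\{|\widetilde u|>R_0\}\cap(O\setminus S_u)=\emptyset$. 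This yields both $D^cu(B)=\lim_R D^c(\Phi_R\circ u)(B)$ and the corresponding total variation limit. The supremum formula follows since $|D^c(\Phi_R\circ u)|\le|D^cu|$ by the same $1$-Lipschitz chain rule, with equality in the limit. Finally, the identification with the measure $|D^cu|$ of Proposition~\ref{Prop:fine prop GBV}(d) is immediate from the same formula.
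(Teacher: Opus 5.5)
Your proposal is correct and follows essentially the standard route behind this proposition, which the paper does not prove itself but takes from \cite[Lemma~2.10]{AlicandroFocardi} and \cite[Theorem~2.7 and Proposition~2.9]{DalToa22}: glue the Cantor parts of the ${\rm BV}$ truncations $\Phi_R\circ u$, bound them uniformly via $|D^c(\Phi_R\circ u)|(O)\le\mathcal{V}(\Phi_R\circ u,O)\le\mathcal{V}(u,O)$ (Proposition~\ref{prop:truncation}), and control the transition region through $D^c(\Phi_R\circ u)=\nabla\Phi_R(\widetilde u)\,D^cu$, which gives $|D^c(\Phi_R\circ u)|\le|D^cu|$. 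What needs tightening is that your consistency step and this identity are local statements resting on the ${\rm BV}$ fine property $D^cv=D^cw$ on $\{\widetilde v=\widetilde w\}$ (see \cite{AFP}), not on the chain rule alone, since $\Phi_{R_1}\circ u\neq\Phi_{R_1}\circ\Phi_{R_2}\circ u$ globally; applied on the closed set $\{|\widetilde u|\le R_1\}$, where both truncations have approximate limit $\widetilde u$ (and $\nabla\Phi_{R_1}(\widetilde u)={\rm Id}$ by continuity of $\nabla\Phi_{R_1}$ up to the sphere $\{|y|=R_1\}$), this property covers the level $\{|\widetilde u|=R_1\}$ directly, so your ``slightly larger $R_1$'' device, which is not justified as stated, is unnecessary.
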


 We also state a result concerning the Cantor part of  the \EEE composition of ${\rm GBV}_\star$-functions with smooth functions, whose proof can be found in \cite[Proposition~3.13]{DonatiGBV}.
\begin{proposition}\label{prop:Properties of derivatives of GBVstar}
   Let $u\in {\rm GBV}_\star(\Omega;\Rk)$ and $R>0$. Then, 
    \begin{itemize}
        
        \item[{\rm (i)}]  $D^c(\Phi_R \circ u)=\nabla \Phi_R(\widetilde{u})D^cu$ as Radon measures on $\Omega$;
        \item [{\rm (ii)}]  we have 
        \begin{equation*}
            \frac{{\rm d}D^c(\Phi_R\circ u)}{{\rm d}|D^c(\Phi_R\circ u)|}=\frac{{\rm d}D^cu}{{\rm d}|D^cu|} \quad \text{$|D^cu|$-a.e.\ in $\ \widetilde{\Omega}^R_u$},
        \end{equation*} 
      where $\MMM \widetilde{\Omega}_u^R \EEE \coloneq \{x\in \Omega\colon \widetilde{u}(x)\text{ exists and } |\widetilde{u}(x)|\leq R\}$.
        As a consequence, we have
        \begin{equation*}
           \lim_{R\to+\infty} \frac{{\rm d}D^c(\Phi_R\circ u)}{{\rm d}|D^c(\Phi_R\circ u)|}=\frac{{\rm d}D^cu}{{\rm d}|D^cu|} \quad\text{ $|D^cu|$-a.e.\ in $\Omega$}.
        \end{equation*}
    \end{itemize}
\end{proposition}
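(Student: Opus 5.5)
The plan is to reduce everything to the ${\rm BV}$ case via the truncations $\Phi_R$, using the identities between the Cantor parts of $u$ and of $\Phi_R\circ u$ already recorded in Proposition~\ref{Prop:Cantor GBV}.

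For (i), fix $R>0$ and note that $\Phi_R\circ u\in{\rm BV}(\Omega;\Rk)$ by Proposition~\ref{prop:truncation}. We split $\Omega$ according to the value of $\widetilde u$.

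On the set $\widetilde\Omega^{R'}_u$ with $R'\ge\tau R$, the ${\rm BV}$ chain rule of Ambrosio--Dal Maso (e.g.\ \cite[Theorem~3.96]{AFP}, applied to the ${\rm BV}$ function $w\coloneq\Phi_{R'}\circ u$ and to the smooth Lipschitz map $\Phi_R$) gives
\[
D^c(\Phi_R\circ w)=\nabla\Phi_R(\widetilde w)\,D^cw .
\]
Since $\Phi_R\circ\Phi_{R'}=\Phi_R$ on $\{|y|\le R'\}$, and hence on a neighbourhood of $\{|y|\le\tau R\}$, and since $\nabla\Phi_R$ vanishes outside that ball, the function $\Phi_R\circ w$ coincides with $\Phi_R\circ u$. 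On $\widetilde\Omega^{R'}_u$ we also have $\widetilde w=\widetilde u$ and $D^cw=D^cu$, the latter by Proposition~\ref{Prop:Cantor GBV}. Therefore
\[
D^c(\Phi_R\circ u)=\nabla\Phi_R(\widetilde u)\,D^cu \qquad\text{on } \widetilde\Omega^{R'}_u .
\]
Letting $R'\to\infty$, this covers the whole set $\Omega_{\rm reg}=\bigcup_{R'}\widetilde\Omega^{R'}_u$. It remains to show that both measures vanish on $S_u=\Omega\setminus\Omega_{\rm reg}$. By Proposition~\ref{Prop:fine prop GBV}(a), $S_u$ differs from $J_u$ by an $\Hd$-null set. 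By Proposition~\ref{Prop:fine prop GBV}(c), $J_u$ is, up to such a set, contained in $\bigcup_R J_{\Phi_R\circ u}$ and is therefore $\sigma$-finite with respect to $\Hd$. Cantor parts do not charge $\Hd$-$\sigma$-finite sets, by Proposition~\ref{Prop:Cantor GBV}, so both sides vanish on $S_u$. The delicate point in this step is the identification of $\widetilde{\Phi_{R'}\circ u}$ with $\widetilde u$, which holds on $\widetilde\Omega^{R'}_u$ because $\Phi_{R'}$ is continuous and equals the identity on the closed ball of radius $R'$.

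For (ii), on $\widetilde\Omega^R_u$ we have $\nabla\Phi_R(\widetilde u)=I$, since $\Phi_R$ is the identity on the closed ball $\{|y|\le R\}$. Continuity of $\nabla\Phi_R$ gives this at the boundary values as well, provided $\Phi$ is the identity on a neighbourhood of the closed unit ball; if needed we may assume this for the fixed $\Phi$. By (i), $D^c(\Phi_R\circ u)=D^cu$ on $\widetilde\Omega^R_u$, so the two total variations coincide there and the polar densities agree $|D^cu|$-a.e.\ on that set.

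For the final claim, fix a point $x\in\Omega_{\rm reg}$ at which the densities are defined. For every $R\ge|\widetilde u(x)|$ we have $x\in\widetilde\Omega^R_u$, so the two densities coincide at $x$ for all large $R$. Since $|D^cu|(S_u)=0$, this gives the limit $|D^cu|$-a.e.

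The main obstacle I anticipate is making the chain rule rigorous on the level sets of $\widetilde u$. Concretely, one has to check that the Radon--Nikodým densities are taken with respect to compatible measures, namely that $|D^c(\Phi_R\circ u)|\ll|D^cu|$ follows from (i), so that the $|D^cu|$-a.e.\ statement is meaningful.
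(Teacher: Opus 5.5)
Your overall architecture is reasonable: pass to a bounded truncation, apply the ${\rm BV}$ chain rule, identify the Cantor parts on the sets $\widetilde\Omega^{R'}_u$ via Proposition~\ref{Prop:Cantor GBV}, and dispose of $S_u$ by $\sigma$-finiteness. However, the step linking $\Phi_R\circ w$ to $\Phi_R\circ u$, with $w=\Phi_{R'}\circ u$ and $R'\ge\tau R$, is false.

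The identity $\Phi_R\circ\Phi_{R'}=\Phi_R$ holds on $\{|y|\le R'\}$ but not on all of $\Rk$. Along any ray, $\Phi_{R'}$ moves continuously from $y$ at $|y|=R'$ to $0$ at $|y|=\tau R'$. So somewhere in the annulus $\{R'<|y|<\tau R'\}$ it takes values $z$ with $0<|z|<R$, where $\Phi_R(z)=z\neq0$. At those $y$, however, $\Phi_R(y)=0$, because $|y|>R'\ge\tau R$.

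Hence $\Phi_R\circ w$ and $\Phi_R\circ u$ differ in general on $\{R'<|u|<\tau R'\}$, whatever $R'$ you choose. The vanishing of $\nabla\Phi_R$ outside $\{|y|\le\tau R\}$ is beside the point, since the trouble comes from the values of $\Phi_{R'}$.

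What your argument actually uses is only $D^c(\Phi_R\circ w)=D^c(\Phi_R\circ u)$ on $\widetilde\Omega^{R'}_u$. Since the two functions agree only on $\{|u|\le R'\}$, this is a locality property of Cantor parts that has to be proved or quoted. It is not contained in Proposition~\ref{Prop:Cantor GBV}: that result compares $u$ only with $\Phi_S\circ u$ on $\{|\widetilde u|\le S\}$, which is exactly where $\nabla\Phi_S(\widetilde u)$ is the identity. The real content of (i) is on $\{|\widetilde u|>R\}$.

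Either of the following closes the gap.

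(a) Keep $w$ and use the coarea-based fact that, for scalar $h\in{\rm BV}(\Omega)$, the diffuse part $\widetilde Dh$ vanishes on $\{\widetilde h=0\}$ (see e.g.\ \cite[Proposition~3.92]{AFP}). Apply it to the components of $h=\Phi_R\circ w-\Phi_R\circ u$, whose approximate limit $\Phi_R(\Phi_{R'}(\widetilde u))-\Phi_R(\widetilde u)$ vanishes on $\widetilde\Omega^{R'}_u$.

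(b) Replace $\Phi_{R'}$ by the componentwise truncation $T_m$, with $T_m(y)_i=(y_i\lor -m)\land m$, for which the identity you wanted is true.
\begin{itemize}
\item By Remark~\ref{remark: troncature}, $u^{(m)}=T_m\circ u\in{\rm BV}(\Omega;\Rk)$.
\item $\Phi_S\circ T_m=\Phi_S$ on all of $\Rk$ whenever $\tau S\le m$. Indeed, if $T_m(y)\neq y$, then $|y_i|>m$ for some $i$, so $|y|$ and $|T_m(y)|$ are both at least $m\ge\tau S$, and both sides vanish.
\item The chain rule then gives $D^c(\Phi_S\circ u)=\nabla\Phi_S(T_m(\widetilde u))\,D^cu^{(m)}$, using $\widetilde{u^{(m)}}=T_m(\widetilde u)$.
\item Taking $S=m/\tau$ and comparing with Proposition~\ref{Prop:Cantor GBV} yields $D^cu^{(m)}=D^cu$ on $\widetilde\Omega^{m/\tau}_u$.
\item Taking $S=R\le m/\tau$ then gives (i) on $\widetilde\Omega^{m/\tau}_u$; now let $m\to\infty$.
\end{itemize}

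The rest of your proposal is fine, with two small corrections.
\begin{itemize}
\item Your treatment of $S_u$ and of (ii) is correct. In fact (ii) is immediate from Proposition~\ref{Prop:Cantor GBV}, and your proviso on $\Phi$ is unnecessary: $\nabla\Phi$ is continuous and equals the identity on the open unit ball, hence on its closure.
\item For the final limit, the exceptional null sets in (ii) depend on $R$, so over a continuum of $R$ you must fix representatives. For instance, take the density of $D^c(\Phi_R\circ u)$ equal to ${\rm d}D^cu/{\rm d}|D^cu|$ on $\widetilde\Omega^R_u$, or pass to a sequence $R_n\to\infty$.
\end{itemize}

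(The paper itself gives no argument here and refers to \cite[Proposition~3.13]{DonatiGBV}.)
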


 If $\Omega$ has Lipschitz boundary and $u\in {\rm GBV}(\Omega;\Rk)$, we let $u_{\partial \Omega}$ denote the trace of $u$ on $\partial \Omega$, which is defined for $\Hd$-a.e.\ $x\in\partial\Omega$ in the sense of approximate limits. We conclude the section by proving that it is  always possible to glue functions in ${\rm GBV}_\star(\Omega;\Rk)$ across a Lipschitz boundary. 
 
\begin{lemma}\label{lemma: Extension of GBV} 

Let $O\in\Op(\Omega)$ with $O\subset \subset \Omega$ and with Lipschitz boundary. Let $u\in {\rm GBV}_\star(O;\Rk)$, $v\in {\rm GBV}_\star(\Omega\setminus \overline{O};\Rk)$, and consider the function defined for $x\in \Omega$ by
\begin{equation*}
    w(x)\coloneq \begin{cases}
        u(x)&\text{if $x\in O$},\\
        v(x)& \text{if $x\in\Omega\setminus \overline{O}$}.
    \end{cases}
\end{equation*}
Then $w\in {\rm GBV}_{\star}(\Omega;\Rk)$. Moreover, $|[w]|\coloneq |u_{\partial O}-v_{\partial O}|$  $\Hd$-a.e.\ on $\partial O$.
\EEE
\end{lemma}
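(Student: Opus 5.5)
The plan is to reduce the gluing statement to the corresponding one for truncations and then invoke the truncation characterisation of ${\rm GBV}_\star$ in Proposition~\ref{prop:truncation} (or Remark~\ref{remark: troncature}).

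\emph{Truncated functions.} Fix $R>0$ and set $u^R\coloneq\Phi_R\circ u$, $v^R\coloneq\Phi_R\circ v$, $w^R\coloneq\Phi_R\circ w$. Then $w^R$ equals $u^R$ on $O$ and $v^R$ on $\Omega\setminus\overline O$. By Proposition~\ref{prop:truncation}, $u^R\in{\rm BV}(O;\Rk)\cap L^\infty$ and $v^R\in{\rm BV}(\Omega\setminus\overline O;\Rk)\cap L^\infty$. Since $\partial O$ is Lipschitz and compactly contained in $\Omega$, the classical gluing theorem for ${\rm BV}$ functions (\cite[Theorem~3.84]{AFP}) gives $w^R\in{\rm BV}(\Omega;\Rk)$ and
\[
Dw^R=Du^R+Dv^R+\big(v^R_{\partial O}-u^R_{\partial O}\big)\otimes\nu_O\,\Hd\mres\partial O .
\]
Using this decomposition together with $|[w^R_i]|\land1\le1$, we obtain the uniform bound
\[
\mathcal V(w^R,\Omega)\le \mathcal V(u^R,O)+\mathcal V(v^R,\Omega\setminus\overline O)+k\,\Hd(\partial O)
\le \mathcal V(u,O)+\mathcal V(v,\Omega\setminus\overline O)+k\,\Hd(\partial O).
\]
The right-hand side is finite because $\Hd(\partial O)<+\infty$ for a bounded Lipschitz boundary. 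Here we use that $\mathcal V$ is additive on disjoint sets once the jump set is split as $(J\cap O)\cup(J\cap\partial O)\cup(J\setminus\overline O)$.

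\emph{Conclusion and the expected obstacle.} Taking the supremum over $R$, Proposition~\ref{prop:truncation} yields $w\in{\rm GBV}_\star(\Omega;\Rk)$.

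For the jump identity, recall that ${\rm GBV}$ functions on a Lipschitz domain have traces $u_{\partial O}$, $v_{\partial O}$ defined $\Hd$-a.e.\ as approximate limits. We need to check that these commute with truncation: $(\Phi_R\circ u)_{\partial O}=\Phi_R(u_{\partial O})$. This holds because $\Phi_R$ is continuous, so approximate limits pass through it.

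This is the main delicate point. One has to make sure the ${\rm GBV}$ traces exist $\Hd$-a.e.\ and are finite; this follows from the finiteness of the ${\rm BV}$ traces of $u^R$ for each $R$ and a monotone-in-$R$ argument as in Proposition~\ref{Prop:fine prop GBV}(c).

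Then, at $\Hd$-a.e.\ $x\in\partial O$, choose $R>\max\{|u_{\partial O}(x)|,|v_{\partial O}(x)|\}$. At such a point the one-sided approximate limits of $w$ are $u_{\partial O}(x)$ and $v_{\partial O}(x)$ with normal $\nu_O(x)$, directly from the definition \eqref{def Jump set} of the jump set. Hence $|[w]|=|u_{\partial O}-v_{\partial O}|$ $\Hd$-a.e.\ on $\partial O$, where the jump is understood as zero at points where the two traces coincide.
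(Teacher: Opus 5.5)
Your proof that $w\in{\rm GBV}_\star(\Omega;\Rk)$ is correct and follows the paper's argument. The paper bounds $\mathcal V(w,\Omega)$ by $\mathcal V(w,O)+\mathcal V(w,\Omega\setminus\overline O)$ plus the interface term $\int_{J_w\cap\partial O}|[w]|\land 1\,{\rm d}\Hd\le\Hd(\partial O)$, and calls ${\rm GBV}$ membership immediate. Running the estimate on $\Phi_R\circ w$, gluing in ${\rm BV}$ across $\partial O$, and closing with Proposition~\ref{prop:truncation} is how one makes that rigorous. The paper gives no argument for $|[w]|=|u_{\partial O}-v_{\partial O}|$, and your treatment of that identity has a gap.

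\emph{The gap.} You claim that finite $\Hd$-a.e.\ traces of $u$ (and $v$) follow from the finiteness of the ${\rm BV}$ traces of $\Phi_R\circ u$ plus a monotonicity argument in $R$. This cannot work: those traces are trivially bounded by $\tau R$, and they may all vanish while $u$ blows up. If locally $O=\{x_d>0\}$ and $u(x)=1/x_d$, then $\Phi_R\circ u=0$ on $\{x_d<1/(\tau R)\}$. So every truncation has trace $0$ on $\{x_d=0\}$, whereas $u\to+\infty$ there. This $u$ lies in ${\rm GBV}$, so ``${\rm GBV}$ functions have traces'' is false in finite form, but it does not lie in ${\rm GBV}_\star$.

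\emph{The fix.} The missing ingredient is the bound $\sup_R\mathcal V(\Phi_R\circ u,O)<+\infty$. One way to use it, componentwise ($k=1$):
\begin{itemize}
\item[(i)] Let $t_m$ be the ${\rm BV}$ trace of $u^{(m)}=(u\lor(-m))\land m$. If $|t_m(x)|<m$, then $u$ has approximate limit $t_m(x)$ at $x$ from $O$, because $\{|u-t_m(x)|>\varepsilon\}=\{|u^{(m)}-t_m(x)|>\varepsilon\}$ for $\varepsilon<m-|t_m(x)|$. The set where no such $m$ exists lies in $\{|t_m|=m\}$ for every $m$, so it suffices to show $\Hd(\{|t_m|=m\})\to0$.
\item[(ii)] The function $w_m=\big((|u|\land m)-m/2\big)\lor 0\in{\rm BV}(O)$ has trace $m/2$ on $\{|t_m|=m\}$. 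Its diffuse part is dominated by that of $u$. It jumps only on $J_u\cap\{|u^+|\lor|u^-|>m/2\}$, with $|[w_m]|\le|[u]|\land(m/2)$.
\item[(iii)] Apply the ${\rm BV}$ trace inequality $\int_{\partial O}|f_{\partial O}|\,{\rm d}\Hd\le C\big(|Df|(O)+\|f\|_{L^1(O)}\big)$ to $f=w_m$. This yields
\[
\Hd(\{|t_m|=m\})\le C\Big(\frac2m\,\mathcal V(u,O)+\Hd\big(J^1_u\cap\{|u^+|\lor|u^-|>m/2\}\big)+\Lb^d(\{|u|>m/2\})\Big).
\]
The right-hand side tends to $0$ because $\Hd(J^1_u)<+\infty$ and $u$ is finite a.e.
\end{itemize}

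\emph{A smaller point.} To identify these traces with the one-sided limits on the half-spaces $H^\pm(x)$ in \eqref{def Jump set}, restrict to points where $\partial O$ is differentiable ($\Hd$-a.e.\ by Rademacher). There $O$ and $\Omega\setminus\overline O$ blow up to complementary half-spaces. Once finite traces are available, the choice of $R$ in your last step is unnecessary.
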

\begin{proof}
     It is immediate to check that $w\in {\rm GBV}(\Omega;\Rk)$ and that $\mathcal{V}(w,\Omega\setminus \overline{O})+ \MMM \mathcal{V}(w,O) \EEE <+\infty$. Since 
    \begin{equation*}
      \BBB  \int_{J_w\cap \partial O}|[w]|\land 1\,{\rm d}\Hd \EEE \leq \Hd(\partial O)<+\infty,
    \end{equation*}
  we get  $\mathcal{V}(w,\Omega)<+\infty$, and hence $w\in {\rm GBV}_\star(\Omega;\Rk)$.
    \EEE
\end{proof}

\section{Slicing properties of generalised functions of bounded variation}\label{sec:slicing}

In this section, we present some new results concerning fine properties of  ${\rm GBV}_\star(\Omega;\Rk)$-functions and use
these properties to provide a  crucial result  that is needed \EEE in the proof of the Fundamental Estimate of Lemma~\ref{lemma: Fundamental Estimate} (see Lemma~\ref{products with smooth functions}). In particular, we show that the space $\GBVs(\Omega;\Rk)$ can  be characterised by means of one-dimensional slicing. This characterisation shows that the space ${\rm GBV}_\star(\Omega;\Rk)$ is the analogue to Dal Maso's ${\rm GBD}(\Omega)$ (see \cite{DMJems}) in the context of functions of bounded variation.
 
In order to present the characterisation mentioned above,
we  set some notation and recall some properties of one-dimensional slices of ${\rm GBV}$-functions. Given $\xi\in\Sd$, we denote the hyperplane orthogonal to $\xi$ and passing through the origin by $\Pi^\xi\coloneq \{y\in\Rd\colon\,y\cdot\xi=0\}$, and let $\pi^\xi\colon\Omega\to \Pi^\xi$ be the orthogonal projection onto $\Pi^\xi$. Given a set $E\subset \Omega$, its one-dimensional slice in direction $\xi$ with base point $y\in\Pi^\xi$ is defined as $E^\xi_y\coloneq \{t\in\R\colon\,y+t\xi\in E\}.$ For $k\in\N$,  $u\in L^0(\Omega;\R^k)$, $\xi\in\Sd$, and $y\in\pi^\xi(\Omega)$, we let $u^\xi_y  \colon \Omega^\xi_y \to \R^k \EEE$ be the function defined  by $u^\xi_y(t)\coloneq u(y+t\xi)$ \MMM for \EEE $t\in\Omega^\xi_y$. 
We recall that, if $u\in{\rm GBV}(\Omega)^k$, then for every $\xi\in\Sd$ and for $\Hd$-a.e.\ $y\in\pi^\xi(\Omega)$    we have \EEE $u^\xi_y\in {\rm GBV}(\Omega^\xi_y)^k$ and   (recall \eqref{def Jru}) \EEE
\begin{gather}\label{slice GBV nabla}
    \nabla u^\xi_y=\nabla u(y+t\xi)\xi \quad \text{$\Lb^1$-a.e.\ in $\Omega^\xi_y$,}\\\label{slice GBV jump 3}
      J_{u^\xi_y}=(J_u)^\xi_y\quad \text{ and }\quad J^r_{u^\xi_y}= (J^r_u)^\xi_y\quad \text{ for all $r\geq 0$,}\\\label{slice GBV jumps}
     (u^{\pm})^\xi_y(t)=(u^{\xi}_y)^\pm(t) \quad \text{ for all $t\in (J_u)^\xi_y=J_{u^\xi_y}$},
\end{gather}
where the normals at $J_u$ and at $J_{u^\xi_y}$ are oriented in such a way that $\nu_u\cdot\xi\geq 0$ and $\nu_{u^\xi_y}=1$. For a proof of these properties, we refer the reader to \cite[Proposition~4.35]{AFP}.

The following lemma  shows that, for a function $u\in{\rm GBV}(\Omega)$, it is always possible to \MMM  bound  the integral  of  $\mathcal{V}(u^\xi_y,\Omega^\xi_y)$  with respect to $y\in \pi^\xi(\Omega) $ from above by   $\mathcal{V}(u,\Omega)$.  \EEE
\begin{lemma}\label{slice V}
    Let $u\in {\rm GBV}(\Omega)$ and $\xi\in\Sd$. Then 
    \begin{equation}\label{claim bound slice}
\int_{\pi^\xi(\Omega)} \big(|Du^\xi_y|(\Omega^\xi_y\setminus J^1_{u^\xi_y})+\mathcal{H}^0(\Omega^\xi_y\cap J^1_{u^\xi_y})\big)\,{\rm d}\Hd(y)\leq \mathcal{V}(u,\Omega).
    \end{equation}
Moreover, if $u\in {\rm GBV}_\star(\Omega)$, we have  
\begin{align}\label{slice bulk}
      \int_{B}(\nabla u) \xi\,{\rm d}x&=\int_{\pi^\xi( B)}\int_{B^\xi_y}\nabla u^\xi_y(t)\,{\rm d}t\,{\rm d}\Hd(y),\\\label{slice cantor}
        D^cu(B)\xi&=\int_{\pi^\xi(B)}D^cu^\xi_y(B^\xi_y)\,{\rm d}\Hd(y),\\
        \label{slice jumps}
        \int_{B\cap (J_u\setminus J^1_u)}[u] (\nu_u\cdot\xi)\,{\rm d}\Hd&=\int_{\pi^\xi(B)}Du^\xi_{y}\big((B\cap (J_u\setminus J^1_u))^\xi_y\big)\,{\rm d}\Hd(y)
\end{align}
 for every Borel set  $B\subset \Omega$ and $\xi\in\Sd$.
\end{lemma}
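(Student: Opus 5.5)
The plan is to reduce everything to the truncations $u^{(m)}\coloneq (u\lor(-m))\land m\in{\rm BV}_{\rm loc}(\Omega)$ and apply the classical slicing theory for ${\rm BV}$ functions \cite[Section~3.11]{AFP}. The limit passage in $m$ is justified by the monotonicity recorded in Remark~\ref{remark:monotonic}. We may assume $\mathcal{V}(u,\Omega)<+\infty$, since otherwise \eqref{claim bound slice} is trivial; then $u^{(m)}\in{\rm BV}(\Omega)$ for every $m$ by Remark~\ref{remark: troncature}. For $v\in{\rm BV}(\Omega)$ we have the slicing identity $|D^{\rm diff}v|$-type formula: $|(Dv)\xi|(B)=\int_{\pi^\xi(\Omega)}|Dv^\xi_y|(B^\xi_y)\,{\rm d}\Hd(y)$ for Borel $B$. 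In particular, $\int|Dv^\xi_y|(B^\xi_y)\,{\rm d}\Hd\le |Dv|(B)$, where the Lebesgue, Cantor and jump parts slice separately. Moreover, $J_{v^\xi_y}=(J_v)^\xi_y$ and the traces match, by \eqref{slice GBV jump 3}--\eqref{slice GBV jumps}.

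\emph{Step 1 (bound \eqref{claim bound slice} for truncations).} Fix $m\geq1$ and set $v=u^{(m)}$ and $B_m\coloneq \Omega\setminus J^1_v$. By the area-type estimate above,
\[
\int_{\pi^\xi(\Omega)}|Dv^\xi_y|((\Omega\setminus J^1_v)^\xi_y)\,{\rm d}\Hd\le |Dv|(\Omega\setminus J^1_v)=\mathcal{V}(v,\Omega\setminus J^1_v),
\]
since on $\Omega\setminus J^1_v$ every jump satisfies $|[v]|<1$, so that $|[v]|\land1=|[v]|$. For the large jumps, the Crofton-type inequality $\int \mathcal{H}^0((J^1_v)^\xi_y)\,{\rm d}\Hd=\int_{J^1_v}|\nu_v\cdot\xi|\,{\rm d}\Hd\le\Hd(J^1_v)$ holds, because $J^1_v$ is countably rectifiable. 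Combining these with $(J^1_v)^\xi_y=J^1_{v^\xi_y}$ from \eqref{slice GBV jump 3} gives \eqref{claim bound slice} for $v$, with right-hand side $\mathcal{V}(v,\Omega)\le\mathcal{V}(u,\Omega)$.

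\emph{Step 2 (limit $m\to\infty$).} For $\Hd$-a.e.\ $y$ the slice $u^\xi_y\in{\rm GBV}(\Omega^\xi_y)$ satisfies $(u^{(m)})^\xi_y=(u^\xi_y)^{(m)}$. On the real line, the quantity $|Dw|(I\setminus J^1_w)+\mathcal{H}^0(J^1_w)$ evaluated at $w=(u^\xi_y)^{(m)}$ is nondecreasing in $m$ and converges to the corresponding quantity for $u^\xi_y$. This follows from the monotonicity in Remark~\ref{remark:monotonic} applied in dimension one: the jump sets increase, $J^1$ increases, and the jump amplitudes, gradients and Cantor parts increase. One subtle point is a point that lies in $J^1$ for large $m$ but not for small $m$: it moves from the first term to the counting term, and the contribution at that point is bounded by $1$ in either case. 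We therefore argue with the equivalent integrand $|D^aw|+|D^cw|+\sum_{J_w}(|[w]|\land1)$, which is monotone in $m$. Monotone convergence then yields \eqref{claim bound slice}, up to checking that this integrand coincides with the left-hand side; it does, since $|[w]|\land 1=1$ on $J^1_w$. This bookkeeping, namely showing that the mixed functional is monotone under truncation, is where I expect the main care to be needed.

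\emph{Step 3 (identities for $u\in{\rm GBV}_\star$).} For $v=u^{(m)}$, the ${\rm BV}$ slicing theorem gives \eqref{slice bulk}--\eqref{slice jumps}, with \eqref{slice jumps} read on $J_v\setminus J^1_v$ and using $[v](\nu_v\cdot\xi)$ with the orientation $\nu_v\cdot\xi\ge0$. To let $m\to\infty$, fix $B$. On the left-hand sides we use $\nabla u^{(m)}\to\nabla u$ in $L^1$ by dominated convergence, $D^cu^{(m)}(B)\to D^cu(B)$ by Proposition~\ref{Prop:Cantor GBV}, and convergence of the small-jump term by dominated convergence, since $|[u^{(m)}]|\le|[u]|\in L^1_{\Hd}(J_u\setminus J^1_u)$ and $J^1$ increases. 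On the right-hand sides the same convergences hold slicewise. A dominating function in $y$ is provided by the slice quantity of Step 2, which is integrable by \eqref{claim bound slice}. The dominated convergence theorem in $y$ then concludes.
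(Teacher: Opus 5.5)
Your proposal is correct and follows the same route as the paper. You reduce to the truncations $u^{(m)}\in{\rm BV}(\Omega)$, apply the ${\rm BV}$ slicing theorems, and let $m\to+\infty$, using monotone convergence on the slices (via Remark~\ref{remark:monotonic}) for \eqref{claim bound slice} and dominated convergence for the identities. The differences are cosmetic: the paper treats the gradient, Cantor and $|[\cdot]|\land 1$ jump contributions separately from the outset, so your monotonicity bookkeeping is built in, and it proves \eqref{slice jumps} directly from the area formula rather than through truncations (if you do truncate, dominate the jump term by $|[u]|\land 1$, since points of $J^1_u$ can lie in $J_{u^{(m)}}\setminus J^1_{u^{(m)}}$).
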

\begin{proof}
If $u\notin  {\rm GBV}_\star(\Omega)$  by the definition \eqref{Def V BV} of $\mathcal{V}$ we have $\mathcal{V}(u,\Omega)=+\infty$, \EEE so that we may assume that $u\in {\rm GBV}_\star(\Omega)$.   Let $m\in\N$ and consider the function $u^{(m)}\in {\rm BV}(\Omega)$   (see Remark~\ref{remark:monotonic})\EEE.   By the slicing properties of functions of bounded variation (see \cite[Theorems 3.107 and 3.108]{AFP}) we have  
    \begin{align*}
    \int_{\Omega}|\nabla u^{(m)}|\,{\rm d}x&\geq\int_{\pi^\xi(\Omega)}\Big(\int_{\Omega^\xi_y}|\nabla(u^{(m)})^\xi_y|\,{\rm d}t\Big){\rm d}\Hd(y),
       \\ |D^cu^{(m)}|(\Omega)&\geq \int_{\pi^\xi(\Omega)}|D^c(u^{(m)})^\xi_y|(\Omega^\xi_y)\,{\rm d}\Hd(y),
    \end{align*} 
    which, thanks to Remark~\ref{remark:monotonic},  yields   
    \begin{align}\label{modulo 1}
    \int_{\Omega}|\nabla u|\,{\rm d}x&\geq\int_{\pi^\xi(\Omega)}\Big(\int_{\Omega^\xi_y}|\nabla(u^{(m)})^\xi_y|\,{\rm d}t\Big){\rm d}\Hd(y),
       \\ 
       \label{modulo 2}
       |D^cu|(\Omega)&\geq \int_{\pi^\xi(\Omega)}|D^c(u^{(m)})^\xi_y|(\Omega^\xi_y)\,{\rm d}\Hd(y).
    \end{align} 
    As $u\in {\rm GBV}(\Omega)$, for $\Hd$-a.e. $y\in \pi^\xi(\Omega)$ we have $u^\xi_y\in{\rm GBV}(\Omega^\xi_y)$.  Thus, \EEE applying Remark~\ref{remark:monotonic} to the slices $u^\xi_y$, we obtain 
    \begin{align*}
\int_{\Omega^\xi_y}|\nabla(u^{(m)})^\xi_y|\,{\rm d}t&\to \int_{\Omega^\xi_y}|\nabla u^\xi_y|\,{\rm d}t  \quad \text{ for $\Hd$-a.e.\ $y\in \pi^\xi(\Omega)$ as $m\to+\infty$}, \\
|D^c(u^{(m)})^\xi_y|(B^\xi_y)&\to |D^cu^\xi_y|(B^\xi_y)\quad \text{ for \EEE $\Hd$-a.e.\ $  y\in\EEE\pi^\xi(\Omega)$ as $m\to+\infty$},
    \end{align*}
    with both convergence being monotone. Therefore, by the Monotone Convergence Theorem and \eqref{modulo 1}--\eqref{modulo 2} we infer
    \begin{align}\label{colla}
        \int_{\Omega}|\nabla u|\,{\rm d}x&\geq\int_{\pi^\xi(\Omega)}\Big(\int_{\Omega^\xi_y}|\nabla u^\xi_y|\,{\rm d}t\Big){\rm d}\Hd(y),
       \\ \label{colla 2}
       |D^cu|(\Omega)&\geq \int_{\pi^\xi(\Omega)}|D^cu^\xi_y|(\Omega^\xi_y)\,{\rm d}\Hd(y).
    \end{align}
    As for the jump part, we use the Area Formula (see  \cite[(2.47)]{AFP}\EEE ), \eqref{slice GBV jumps}, and \eqref{slice GBV jump 3} to get
\begin{align}\notag 
\int_{J_{u^{(m)}}}|[u^{(m)}]|\land 1
\, {\rm d}\Hd
&\geq \int_{\pi^{\xi}(\Omega)}\Big(\int_{J_{(u^{(m)})^\xi_y}}|[(u^{(m)})^{\xi}_y](t)|\land 1\,{\rm d}\mathcal{H}^0(t)\Big){\rm d}\Hd(y)\\\notag
&=\int_{\pi^{\xi}(\Omega)}\Big(|D^j(u^{(m)})^\xi_y|(\Omega^\xi_y\setminus J^1_{(u^{(m)})^\xi_y})+\mathcal{H}^0(\Omega^\xi_y\cap J^1_{(u^{(m)})^\xi_y}) \Big){\rm d}\Hd(y).
\end{align}
 Using again Remark \ref{remark:monotonic} and the Monotone Convergence Theorem, we obtain 
\begin{equation}\label{bound jump}
    \int_{J_u}|[u]|\land 1
\, {\rm d}\Hd\geq \int_{\pi^{\xi}(\Omega)}\Big(|D^ju^\xi_y|(\Omega^\xi_y\setminus J^1_{u^\xi_y})+\mathcal{H}^0(\Omega^\xi_y\cap J^1_{u^\xi_y}) \Big)\, {\rm d}\Hd(y).
\end{equation}
\EEE 
Putting together \eqref{colla}--\eqref{bound jump}, we  obtain \eqref{claim bound slice}.

 To prove \eqref{slice bulk}--\eqref{slice jumps}, we use  \cite[Theorems 3.107 and 3.108]{AFP} to get
\begin{align}\label{slice bulk proof}
      \int_{B}(\nabla u^{(m)}) \xi\,{\rm d}x&=\int_{\pi^\xi( B)}\Big(\int_{B^\xi_y}\nabla(u^{(m)} )^\xi_y(t)\,{\rm d}t\Big){\rm d}\Hd(y),\\\label{slice cantor proof}
        D^cu^{(m)}(B)\xi&=\int_{\pi^\xi(B)}D^c(u^{(m)})^\xi_y(B^\xi_y)\,{\rm d}\Hd(y).
\end{align}
Thanks to \eqref{claim bound slice}, for $\Hd$-a.e.\ $y\in \pi^\xi(\Omega)$ the function $u^\xi_y$ belongs to ${\rm BV}_{\rm loc}(\Omega^\xi_y)$, so that   
\begin{align*}
   \int_{B^\xi_y}\nabla (u^{(m)})\EEE^\xi_y(t)\,{\rm d}t &\to  \int_{B^\xi_y}\nabla u^\xi_y(t)\,{\rm d}t\quad \text{ for $\Hd$-a.e.\ $y\in \pi^\xi(\Omega)$ as $m\to+\infty$},\\
 D^c(u^{(m)})\EEE^\xi_y(B^\xi_y)&\to D^cu^\xi_y(B^\xi_y)\quad \text{for  $\Hd$-a.e.\  $y\in\pi^\xi(\Omega)$ as $m\to+\infty$.}
\end{align*}
 Thus, \EEE  we may let $m\to +\infty$ in \eqref{slice bulk proof} and \eqref{slice cantor proof}. Then,  by    Propositions \ref{Prop:fine prop GBV} and \ref{Prop:Cantor GBV},  and  the Dominated Convergence Theorem we obtain \eqref{slice bulk} and \eqref{slice cantor}.

As for \eqref{slice jumps}, it is enough to apply the Area Formula  and \MMM to \EEE  use \eqref{slice GBV jump 3} and \eqref{slice GBV jumps}.
\end{proof}

The following proposition characterises ${\rm GBV}_\star(\Omega;\Rk)$ by means of one-dimensional slicing.
\begin{proposition}\label{prop:characterisation by slicing}
    Let $u\in L^0(\Omega;\Rk)$. Then $u\in{\rm GBV}_\star(\Omega;\Rk)$ if and only if for every $\xi\in\Sd$ the  following two conditions hold:
    \begin{align}\label{slice is BV characterisation} 
        & u^\xi_y\in{\rm BV}_{\rm loc}(\Omega^\xi_y;\Rk) \quad \text{ for $\Hd$-a.e.\ $y\in\pi^\xi(\Omega)$},\\
\label{boundedness slice}&\int_{\pi^\xi(\Omega)} \big(|Du^\xi_y|(\Omega^\xi_y\setminus J^1_{u^\xi_y})+\mathcal{H}^0(\Omega^\xi_y\cap J^1_{u^\xi_y})\big)\,{\rm d}\Hd(y)<+\infty.
    \end{align}
\end{proposition}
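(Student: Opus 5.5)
We prove the two implications separately, and in both we reduce to componentwise statements. Since $u\in{\rm GBV}_\star(\Omega;\Rk)$ if and only if each component $u_i$ lies in ${\rm GBV}_\star(\Omega)$, and the quantities in \eqref{slice is BV characterisation}--\eqref{boundedness slice} for $u$ are comparable to the sums over $i$ of the corresponding quantities for $u_i$, it suffices to treat $k=1$. For the slice quantities this uses $J^1_{(u_i)^\xi_y}\subset J^1_{u^\xi_y}$ together with $|[u]|\le\sum_i|[u_i]|$, so that only harmless constants depending on $k$ appear.

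\emph{Necessity.} Let $u\in{\rm GBV}_\star(\Omega)$ and fix $\xi\in\Sd$. Lemma~\ref{slice V} gives \eqref{boundedness slice} directly, since its left-hand side is bounded by $\mathcal{V}(u,\Omega)<+\infty$. For \eqref{slice is BV characterisation}, finiteness of the integral yields, for $\Hd$-a.e.\ $y$, that $|Du^\xi_y|(\Omega^\xi_y\setminus J^1_{u^\xi_y})<+\infty$ and that $J^1_{u^\xi_y}$ is a finite set. Moreover $u^\xi_y\in{\rm GBV}(\Omega^\xi_y)$ for a.e.\ $y$. In one dimension, a ${\rm GBV}$ function whose diffuse and small-jump variation is finite and which has finitely many large jumps is in ${\rm BV}_{\rm loc}$. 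To see this, work on each connected component (an interval) and use the truncations $u^{(m)}$ of Remark~\ref{remark:monotonic}, whose variation on compact subintervals is monotone in $m$. On a compact subinterval the variation of $u^{(m)}$ is bounded by the finite diffuse and small-jump variation plus the finitely many large jumps, and those large jumps are finite because the one-sided approximate limits exist. Passing to the limit $m\to+\infty$ gives local finite variation.

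\emph{Sufficiency.} This is the harder direction. Assume \eqref{slice is BV characterisation}--\eqref{boundedness slice} for all $\xi$. First I would show $u\in{\rm GBV}(\Omega)$, i.e.\ $u^{(m)}\in{\rm BV}_{\rm loc}(\Omega)$ for all $m$, using the classical slicing characterisation of ${\rm BV}$ (\cite[Remark~3.104]{AFP}). I would apply it in the coordinate directions $\xi=e_1,\dots,e_d$, where $\int_{\pi^\xi(\Omega)}|D(u^{(m)})^\xi_y|(\Omega^\xi_y)\,{\rm d}\Hd(y)<+\infty$ is needed. Since the slice $(u^{(m)})^\xi_y$ is the truncation of the slice $u^\xi_y$, its variation is at most $|Du^\xi_y|(\Omega^\xi_y\setminus J^1_{u^\xi_y})+2m\,\mathcal{H}^0(J^1_{u^\xi_y})$. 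Truncation does not increase the diffuse or small-jump parts, and each large jump contributes at most $2m$. Integrating in $y$ and using \eqref{boundedness slice} gives $u^{(m)}\in{\rm BV}(\Omega)$, with the bound
\begin{equation*}
|Du^{(m)}|(\Omega)\le C\,(1+m)\,I,
\end{equation*}
where $I$ is the maximum over the coordinate directions of the left-hand side of \eqref{boundedness slice}.

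The main obstacle is the $m$-independent bound $\sup_m\mathcal{V}(u^{(m)},\Omega)<+\infty$, after which Remark~\ref{remark: troncature} concludes. For this I would use the BV slicing identities in reverse, as in \cite[Theorems 3.107--3.108]{AFP}, applied to $u^{(m)}$. This gives
\begin{equation*}
\int_\Omega|\nabla u^{(m)}\xi|\,{\rm d}x+|D^cu^{(m)}\xi|(\Omega)+\int_{J_{u^{(m)}}}(|[u^{(m)}]|\wedge 1)\,|\nu\cdot\xi|\,{\rm d}\Hd\le\int_{\pi^\xi(\Omega)}\big(|D(u^{(m)})^\xi_y|(\Omega^\xi_y\setminus J^1)+\mathcal{H}^0(J^1)\big)\,{\rm d}\Hd(y),
\end{equation*}
with the jump part obtained via the Area Formula as in the proof of Lemma~\ref{slice V}. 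The right-hand side is bounded by the left-hand side of \eqref{boundedness slice}, independently of $m$, because truncation decreases these slice quantities. Summing over the directions $e_1,\dots,e_d$ then controls $|\nabla u^{(m)}|$, $|D^cu^{(m)}|$ and the jump term, using $\sum_j|\nu\cdot e_j|\ge 1$ and the polar decomposition $D^cu^{(m)}=a\otimes b\,|D^cu^{(m)}|$ with $\sum_j|b\cdot e_j|\ge|b|$. Altogether $\mathcal{V}(u^{(m)},\Omega)\le C_d\,\max_j I_{e_j}$ uniformly in $m$, and Remark~\ref{remark: troncature} yields $u\in{\rm GBV}_\star(\Omega)$.
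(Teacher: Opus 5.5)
Your proof is correct and follows the paper's skeleton. You reduce to $k=1$. Necessity comes from Lemma~\ref{slice V} plus the one-dimensional inclusion ${\rm GBV}_\star\subset{\rm BV}_{\rm loc}$; the paper cites this from \cite[Remark~3.2]{DalToa22}, while you reprove it via truncations. For sufficiency you first obtain $u^{(m)}\in{\rm BV}(\Omega)$ from the slice bound $|Du^\xi_y|(\Omega^\xi_y\setminus J^1_{u^\xi_y})+2m\,\mathcal{H}^0(J^1_{u^\xi_y})$ and \cite[Remark~3.104]{AFP}. You then get an $m$-independent bound on $\mathcal{V}(u^{(m)},\Omega)$ from the ${\rm BV}$ slicing identities and the fact that truncation decreases the slice quantities.

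The only genuine difference is the jump term. The paper covers $\Sd$ by finitely many caps around directions $\xi_j$, splits $J_u$ into pieces $B_j$ on which $\nu_u\cdot\xi_j\ge 7/8$, and slices each $B_j$ in its own direction $\xi_j$. You instead slice all of $J_{u^{(m)}}$ along each coordinate direction, keep the weight $|\nu\cdot e_j|$ produced by the area formula, and sum using $\sum_j|\nu\cdot e_j|\ge 1$, exactly as you do for the diffuse parts. Your version is shorter and gives the explicit bound $\mathcal{V}(u^{(m)},\Omega)\le\sum_{j=1}^d I_{e_j}$, where $I_\xi$ is the left-hand side of \eqref{boundedness slice}. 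It also shows that the two conditions need only hold along $e_1,\dots,e_d$ for sufficiency, as in the classical slicing characterisation of ${\rm BV}$. The paper's covering argument additionally uses the hypothesis along the extra directions $\xi_j$.
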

\begin{proof}
It is enough to prove the statement in the case $k=1$.

 The fact \EEE that $u\in {\rm GBV}_\star(\Omega)$ implies \eqref{slice is BV characterisation} and \eqref{boundedness slice} is a simple consequence of Lemma~\ref{slice V} and the fact that in dimension $1$ the space ${\rm GBV}_\star$  is contained in \EEE  ${\rm BV}_{\rm loc}$ (see \cite[Remark~3.2]{DalToa22}).   

We now prove that \eqref{slice is BV characterisation} and \eqref{boundedness slice} imply  $u\in {\rm GBV}_\star(\Omega)$.
 To this end, let us fix  $\xi\in\Sd$. By \eqref{slice is BV characterisation}, there exists a Borel set $N^1_\xi\subset \pi^\xi(\Omega)$, with $\Hd(N^1_\xi)=0$, such that $u^\xi_y\in {\rm BV}_{\rm loc}(\Omega^\xi_y)$ for all $y\in\pi^\xi(\Omega)\setminus N^1_\xi$. Let us fix $m>0$.  For every $y\in\pi^\xi(\Omega)\setminus N^1_\xi$, by the Chain Rule for functions of bounded variation  (see \cite[Theorem 3.99]{AFP}) \EEE   we have $(u^{(m)})^\xi_y\in{\rm BV}(\Omega^\xi_y)$ and 
\begin{equation}\label{chain rule slciing}
D (u^{(m)})\EEE^\xi_y=\chi_{\{|\widetilde{u}^\xi_y|\leq m\}}(D^au^\xi_y+D^cu^\xi_y)+\big(((u^\xi_y)^+)^{(m)}-((u^\xi_y)^-)^{(m)}\big)\mathcal{H}^0\mres J_{u^\xi_y}
\end{equation}
as Borel measures on $\Omega^\xi_y$.
In particular, 
\begin{equation*}
|D(u^{(m)})\EEE^\xi_y|(\Omega^\xi_y)\leq |D^au^\xi_y|(\Omega^\xi_y)+|D^cu^\xi_y|(\Omega^\xi_y)+  |D^{j} u^\xi_y|(J_{u^\xi_y}\setminus J^1_{u^\xi_y}) \EEE +2m\mathcal{H}^0 ( J^1_{u^\xi_y}) \EEE
\end{equation*}
  for \EEE every $y\in\pi^\xi(\Omega)\setminus N^1_\xi$ .
Combining the previous inequality with \eqref{boundedness slice}, 
we obtain
\begin{equation*}
\int_{\pi^\xi(\Omega)}|D (u^{(m)})\EEE^\xi_y|(\Omega^\xi_y)\,{\rm d} \Hd(y)<+\infty.
\end{equation*}
By the arbitrariness of $\xi\in\Sd$ and  \cite[Remark~3.104]{AFP}, this implies that $ u^{(m)}\EEE\in{\rm BV}(\Omega)$, whence  $u\in {\rm GBV}(\Omega)$.

To conclude that $u\in {\rm GBV}_\star(\Omega)$, it is then enough to show  (see Remark \ref{remark: troncature}) \EEE that
\begin{gather}
\label{jump finite}
    \int_{ J_{u^{(m)}}\EEE}|[u^{(m)}]|\land 1\, {\rm d} \Hd \le C_1,\\
\label{diffuse slice}
    \int_{\Omega}|(\nabla u^{(m)} )e_i|\,{\rm d}x+|(D^c(u^{(m)})e_i|(\Omega)\leq C_2 \quad \text{ for all $i\in\{1,\dots,d\}$},
\end{gather}
  for constants $C_1$, $C_2>0$ independent of $m$.  
  
Let $\xi\in\Sd$,  let $N^1_\xi\subset \pi^\xi(\Omega)$ be as above, and let $ N^2_\xi \EEE\subset \pi^\xi(\Omega)$ be a Borel set, with $\Hd(N_\xi^2)=0$, such that \eqref{slice GBV jump 3} and \eqref{slice GBV jumps} hold for every $y\in \pi^\xi(\Omega)\setminus N^2_\xi$.  Resorting  to \eqref{chain rule slciing},   we get
\begin{gather*}
J_{ (u^{(m)})\EEE^\xi_y}\subset J_{u^\xi_y}=(J_u)^\xi_y\quad \text{ and }J^1_{ (u^{(m)})\EEE^\xi_y}\subset J^1_{u^\xi_y}=(J^1_u)^\xi_y,
\end{gather*}
and  
\begin{align}\notag
    |D (u^{(m)})\EEE^\xi_y|\big(B^\xi_y \setminus J^1_{ (u^{(m)})\EEE^\xi_y} \big)&\le |D (u^{(m)})\EEE^\xi_y|\big(B^\xi_y \setminus J^1_{u^\xi_y}\big)+\mathcal{H}^0\big(B^\xi_y\cap (J^1_{u^\xi_y}\setminus J^1_{ (u^{(m)})\EEE^\xi_y})\big)\\\label{cgames}
    &\leq |Du^\xi_y|\big(B^\xi_y \setminus J^1_{u^\xi_y} \big)+\mathcal{H}^0\big(B^\xi_y\cap(J^1_{u^\xi_y}\setminus J^1_{ (u^{(m)})\EEE^\xi_y})\big)
\end{align}
for all Borel sets $B\subset \Omega$  \MMM and \EEE for every $y\in \pi^\xi(\Omega)\setminus  (N^1_\xi \cup N^2_\xi)$. \EEE 

We now prove \eqref{jump finite}.
We consider a finite collection of vectors  $\{\xi_j\}_{j=1}^N\subset \Sd$ with the property that the sets  $S_j\coloneq \{\xi\in\Sd\colon \, |\xi-\xi_j|\leq 1/2\}$  cover  $\Sd$. We set  $B_j\coloneq \{x\in J_u\colon\, \nu_u(x)\in S_j\}$ and note that $J_u=\bigcup_{j=1}^NB_j$.  Note also that, if $\xi\in S_j$ for some $j\in\{1,\dots,N\}$, then $\xi\cdot\xi_j=1-|\xi-\xi_j|^2/2\geq7/8$, so that
\begin{equation}\label{normal scalara product}
\nu_u(x)\cdot\xi_j\geq 7/8\quad \text{ for all $x\in B_j$.}    
\end{equation}
By the Area Formula (see  \cite[(2.47)]{AFP}), \eqref{slice GBV jumps},     \eqref{cgames}, and \eqref{normal scalara product}, for all $j\in\{1,\dots,N\}$ we have  
\begin{align*}
\frac{7}{8}\int_{B_j}|[ u^{(m)}]|\land 1\, {\rm d}\Hd
&\leq\int_{B_j}(|[u^{(m)}]|\land 1)(\nu_u\cdot\xi_j)\, {\rm d}\Hd(y)
\\
&=\int_{\pi^{\xi_j}(B_j)}\Big( \MMM \int_{(B_j)^{\xi_j}_y} \EEE |[ (u^{(m)})\EEE^{\xi_j}_y](t)|\land 1\,{\rm d}\mathcal{H}^0(t)\Big)\, {\rm d} \Hd(y)\\&=\int_{\pi^{\xi_j}(B_j)}\Big(|D (u^{(m)})\EEE^{\xi_j}_y|\big((B_j)^{\xi_j}_y\setminus J^1_{ (u^{(m)})\EEE^{\xi_j}_y}\big)+\mathcal{H}^0\big((B_j)^{\xi_j}_y\cap J^1_{ (u^{(m)})\EEE^{\xi_j}_y}\big)\Big) \,  {\rm d}\Hd(y)\\
&\leq \int_{\pi^{\xi_j}(B_j)}\Big(|Du^{\xi_j}_y|\big((B_j)^{\xi_j}_y\setminus J^1_{u^{\xi_j}_y}\big)+\mathcal{H}^0\big((B_j)^{\xi_j}_y\cap J^1_{u^{\xi_j}_y}\big)\Big) \, {\rm d}\Hd(y).
\end{align*}
Summing over $j\in\{1,\dots,N\}$ the previous inequality,    for all $y\in \pi^{\xi_j}(\Omega)\setminus (N^1_{\xi_j}\cup N^2_{\xi_j})$ we find by the Area Formula  
\begin{equation*}
    \int_{J_u}|[u^{(m)}]|\land 1\, {\rm d}\Hd\leq \frac{8}{7}\sum_{j=1}^N\int_{\pi^{\xi_j}(B_j)}\Big(|Du^{\xi_j}_y|\big((\MMM B_j\EEE)^{\xi_j}_y\setminus J^1_{u^{\xi_j}_y}\big)+\mathcal{H}^0\big((\MMM B_j\EEE)^{\xi_j}_y\cap J^1_{u^{\xi_j}_y}\big)\Big) \, {\rm d}\Hd(y)=:C_1.
\end{equation*}
From \eqref{boundedness slice} it  follows $C_1<+\infty$, which proves \eqref{jump finite}.  

To prove \eqref{diffuse slice}, we use the slicing properties of functions of bounded variation (see \cite[Theorem~3.108]{AFP}) to get  
\begin{equation*}
    \int_{\Omega}|(\nabla u^{(m)})e_i|\,{\rm d}x+|(D^c u^{(m)})e_i|(\Omega)=\int_{\pi^{e_i}(\Omega)}\Big(\int_{\Omega^{e_i}_y}|\nabla (u^{(m)})^{e_i}_y|\,{\rm d}t+|D^c(u^{(m)})^{e_i}_y|(\Omega^{e_i}_y)\Big)\, {\rm d}\Hd(y).
\end{equation*}
In view of \eqref{boundedness slice}, of \eqref{cgames} applied with $B=\Omega\setminus J_u$,  and of \eqref{slice GBV jump 3}, \EEE the previous equality proves \eqref{diffuse slice} with  \begin{equation*}
    C_2\coloneq \sum_{i=1}^d\int_{\pi^{e_i}(\Omega)}\Big(\int_{\Omega^{e_i}_y}|\nabla  u^{e_i}_y|\,{\rm d}t+|D^c u^{e_i}_y|(\Omega^{e_i}_y)\Big){\rm d}\Hd(y).
\end{equation*}
This proves that $u\in {\rm GBV}_\star(\Omega)$ and concludes the proof.
\end{proof}

\begin{remark}\label{remark: 1d slices}
    If $\Omega$ has Lipschitz boundary and $u\in {\rm GBV}(\Omega;\Rk)$, then  we may replace ${\rm BV}_{\rm loc}(\Omega^\xi_y;\Rk)$ in  \eqref{slice is BV characterisation} by ${\rm BV}(\Omega^\xi_y;\Rk)$. Indeed, let  $y\in \pi^\xi(\Omega)$ be such that
\begin{align}\label{condicondi}
u^\xi_y\in {\rm BV}_{\rm loc}(\Omega^\xi_y;\Rk), \ \  \mathcal{H}^0(\partial \Omega^\xi_y)<+\infty, \ \  \text{and} \ \ |Du^\xi_y|(\Omega^\xi_y\setminus J^1_{u^\xi_y})+\mathcal{H}^0(\Omega^\xi_y\cap J^1_{u^\xi_y})<+\infty.
\end{align}
  Since $\mathcal{H}^0(J^1_{u^\xi_y})<+\infty$, it follows immediately that $|Du^\xi_y|(\Omega^\xi_y)<+\infty$. Condition   $\mathcal{H}^0(\partial \Omega^\xi_y)<+\infty$ implies that $\Omega^\xi_y$ is a finite union of open intervals, so that by the Fundamental Theorem of Calculus for one-dimensional {\rm BV} functions we also get $u^\xi_y\in L^1(\Omega^\xi_y;\Rk)$. As   the conditions in \eqref{condicondi} \EEE are satisfied by $\Hd$-a.e.\ $y\in \pi^\xi(\Omega)$,   the claim is proved.
\end{remark}

We now show that the measure $D^cu$ enjoys some useful disintegration properties.
\begin{lemma}\label{lemma:disintegration of Dcu}
  Let $u\in {\rm GBV}_\star(\Omega;\Rk)$, let $\varphi\colon\Omega\to \R$ be a bounded Borel function, and let $B\subset \Omega$ be a Borel set. Then  for every $\xi\in\Sd$ we have
\begin{equation}\label{eq:disintegration of Dcu}
    \Big(\int_{B}\varphi\,{\rm d}D^cu\Big)\xi=\int_{\pi^\xi(\Omega)}\Big(\int_{B^\xi_y}\varphi^\xi_y\,{\rm d}D^cu^\xi_y\Big) \, {\rm d}\Hd(y).
\end{equation}
\end{lemma}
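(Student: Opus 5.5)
The plan is to reduce the statement to the case of simple functions and indicator functions, where it follows from \eqref{slice cantor} of Lemma~\ref{slice V}, and then to pass to bounded Borel $\varphi$ by a density argument. We argue componentwise, so we may take $k=1$; since $u\in{\rm GBV}_\star(\Omega)$, Lemma~\ref{slice V} is available for $u$.

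\emph{Step 1 (indicators and simple functions).} Take $\varphi=\chi_E$ with $E\subset\Omega$ Borel. Then $\int_B\varphi\,{\rm d}D^cu=D^cu(B\cap E)$. On the slice side, $(\chi_E)^\xi_y=\chi_{E^\xi_y}$, so the inner integral is $D^cu^\xi_y((B\cap E)^\xi_y)$. Applying \eqref{slice cantor} to the Borel set $B\cap E$ gives \eqref{eq:disintegration of Dcu} in this case.

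The integral over $\pi^\xi(B\cap E)$ may be replaced by one over $\pi^\xi(\Omega)$, because the inner integrand vanishes when the slice is empty. Measurability of $y\mapsto D^cu^\xi_y((B\cap E)^\xi_y)$ should be recorded here. It follows because this quantity is the pointwise $\Hd$-a.e.\ limit as $m\to+\infty$ of the corresponding measurable quantities for the ${\rm BV}$ truncations $u^{(m)}$; for these, measurability is part of \cite[Theorem~3.108]{AFP}. By linearity of both sides in $\varphi$, the identity then holds for all Borel simple functions.

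\emph{Step 2 (bounded Borel $\varphi$).} Choose simple Borel functions $\varphi_n\to\varphi$ uniformly on $\Omega$ with $|\varphi_n|\le\|\varphi\|_\infty$. On the left-hand side, $|D^cu|(\Omega)<+\infty$, so $\int_B\varphi_n\,{\rm d}D^cu\to\int_B\varphi\,{\rm d}D^cu$. On the right-hand side, for $\Hd$-a.e.\ $y$ we have $|D^cu^\xi_y|(\Omega^\xi_y)<+\infty$ by \eqref{colla 2}, so the inner integrals converge pointwise. They are also dominated by $\|\varphi\|_\infty|D^cu^\xi_y|(\Omega^\xi_y)$, which is $\Hd$-integrable in $y$ again by \eqref{colla 2}, i.e.\ by \eqref{claim bound slice}. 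Dominated convergence in $y$ then concludes the proof.

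The only delicate point is the measurability and integrability of $y\mapsto D^cu^\xi_y(\cdot)$ for a ${\rm GBV}_\star$-function rather than a ${\rm BV}$ one. This is handled as in the proof of Lemma~\ref{slice V}: use truncations $u^{(m)}$, the monotone convergence from Remark~\ref{remark:monotonic}, and the bound \eqref{colla 2}.
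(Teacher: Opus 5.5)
Your proposal is correct and follows the paper's proof essentially step for step. You handle indicators via \eqref{slice cantor} applied to $B\cap E$, extend to simple functions by linearity, and treat bounded Borel $\varphi$ through uniformly bounded simple approximants with dominated convergence on the slice side, justified by the $\Hd$-integrability of $y\mapsto|D^cu^\xi_y|(\Omega^\xi_y)$. The paper cites \eqref{boundedness slice} for that integrability, while you use the equivalent \eqref{colla 2}. Your remarks on measurability and on replacing $\pi^\xi(B\cap E)$ by $\pi^\xi(\Omega)$ fill in details the paper leaves implicit.
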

\begin{proof}
     The \MMM case \EEE $\varphi=\chi_{ E\EEE}$ for some Borel set $ E \EEE\subset \Rd$ is exactly \eqref{slice cantor}. This result is clearly extended to simple bounded Borel functions by additivity.  The general case can be obtained by approximating each bounded Borel function by a sequence of uniformly bounded simple Borel functions, taking into account \eqref{boundedness slice}, in order to be able to use the Dominated Convergence Theorem on the right-hand side of \eqref{eq:disintegration of Dcu}.
\end{proof}
The following result, which will be used in Section \ref{section: Global Method}, shows that the product of a smooth function $\varphi$ with a function $u\in {\rm GBV}_\star(\Omega;\Rk)$ still belongs to $\GBVs(\Omega;\Rk)$, provided that $u$ satisfies some additional integrability conditions.
\begin{lemma}\label{products with smooth functions}
Suppose that $\Omega$ has Lipschitz boundary.   Let $\varphi\in C^1_c(\overline{\Omega};[0,1])$ and let  $u\in \GBVs(\Omega;\Rk)$. 
    Assume  that $u\in L^1( K \EEE ;\Rk)$, where  $K\coloneq \lbrace y \in \Omega \colon \,  0 < \varphi < 1 \rbrace$.   
    Then $v\coloneq \varphi u\in{\rm GBV}_{\star}(\Omega;\Rk)$
    and 
    \begin{gather}\label{Liebnitz bulk}
        \nabla v = u\otimes\nabla \varphi    + \varphi \nabla u\quad \text{ $\Lb^d$-a.e.\  in $\Omega$},\\\label{Liebnitz Cantor}
    D^cv=  \varphi D^cu\quad \text{ as Borel measures on $\Omega$},\\\label{Liebnitz Jump}
     J_v\subset J_u\quad \text{up to \MMM an \EEE $\Hd$-negligible set } \text{ and } [v](x)=\varphi(x)[u](x)\quad \text{ for $\Hd$-a.e.\ $x\in J_v$.}
    \end{gather}
    \end{lemma}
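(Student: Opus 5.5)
The plan is to argue through the one-dimensional slicing characterisation of Proposition~\ref{prop:characterisation by slicing}, working componentwise so that $k=1$. The key observation is that on each slice $v^\xi_y=\varphi^\xi_y u^\xi_y$ is a product of a $C^1$ function with a one-dimensional ${\rm BV}_{\rm loc}$ function. By Remark~\ref{remark: 1d slices} (using that $\Omega$ is Lipschitz), for $\Hd$-a.e.\ $y$ we have $u^\xi_y\in{\rm BV}(\Omega^\xi_y)$, and the product rule in one dimension gives
\begin{equation*}
Dv^\xi_y=u^\xi_y(\varphi^\xi_y)'\,{\rm d}t+\varphi^\xi_y D u^\xi_y,
\end{equation*}
where $u^\xi_y$ is taken in its precise representative in the first term (it is absolutely continuous with respect to $\Lb^1$, so the representative is irrelevant there). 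From this I read off the slice quantities: $\nabla v^\xi_y=u^\xi_y(\varphi^\xi_y)'+\varphi^\xi_y\nabla u^\xi_y$, $D^cv^\xi_y=\varphi^\xi_y D^cu^\xi_y$, $J_{v^\xi_y}\subset J_{u^\xi_y}$, and $[v^\xi_y]=\varphi^\xi_y[u^\xi_y]$.

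To verify \eqref{boundedness slice} for $v$, I estimate $|Dv^\xi_y|(\Omega^\xi_y\setminus J^1_{v^\xi_y})+\mathcal{H}^0(J^1_{v^\xi_y})$ slice by slice. Since $0\le\varphi\le1$, we have $J^1_{v^\xi_y}\subset J^1_{u^\xi_y}$. The jump contribution on $J_{v^\xi_y}\setminus J^1_{v^\xi_y}$ is handled as follows: where $|[u]|<1$ it is bounded by $|[u]|$, and where $|[u]|\ge1$ but $\varphi|[u]|<1$ it is bounded by $1$, i.e.\ by a point of $J^1_{u^\xi_y}$. The diffuse part is bounded by $|D^au^\xi_y|+|D^cu^\xi_y|$. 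The remaining term $\int|u^\xi_y||(\varphi^\xi_y)'|\,{\rm d}t$ only lives on $K^\xi_y\cup\{\nabla\varphi\neq0\}$, and $\{\nabla\varphi\ne0\}\subset \overline K$ up to the set where $\varphi$ is constant, where $\nabla\varphi=0$ a.e. Integrating over $y$ with Fubini and $\|\nabla\varphi\|_\infty\|u\|_{L^1(K)}$, together with Lemma~\ref{slice V} for $u$, gives a finite bound. Proposition~\ref{prop:characterisation by slicing} then yields $v\in\GBVs(\Omega;\Rk)$.

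Next I identify $\nabla v$, $D^cv$, and $J_v$. For the gradient, the slice formula combined with \eqref{slice GBV nabla} applied to $v$ for all directions $\xi$, in particular $e_1,\dots,e_d$, identifies $(\nabla v)\xi=(u\otimes\nabla\varphi+\varphi\nabla u)\xi$ a.e., which is \eqref{Liebnitz bulk}. For $D^cv$, I use \eqref{slice cantor} for $v$ together with Lemma~\ref{lemma:disintegration of Dcu} applied to $u$ with the bounded Borel function $\varphi$: both $D^cv(B)\xi$ and $(\int_B\varphi\,{\rm d}D^cu)\xi$ equal $\int\int_{B^\xi_y}\varphi^\xi_y\,{\rm d}D^cu^\xi_y\,{\rm d}\Hd$, which gives \eqref{Liebnitz Cantor}. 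For the jump set, \eqref{Liebnitz Jump} follows directly: at approximate-continuity points of $u$, $v$ is approximately continuous because $\varphi$ is continuous, so $J_v\subset S_u$, and $\Hd(S_u\setminus J_u)=0$ by Proposition~\ref{Prop:fine prop GBV}(a). At points of $J_u$ the one-sided approximate limits of $v$ are $\varphi(x)u^\pm(x)$, which gives $[v]=\varphi[u]$.

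The main obstacle is the integrability bookkeeping in the bound for \eqref{boundedness slice}, specifically controlling the term $u\,(\varphi^\xi_y)'$ using only $u\in L^1(K)$, and checking that $\nabla\varphi$ vanishes a.e.\ outside $K$. That last point holds because on $\{\varphi=0\}\cup\{\varphi=1\}$ the gradient vanishes $\Lb^d$-a.e. A secondary point is the justification of the one-dimensional Leibniz rule with possibly infinitely many jumps. This is standard for ${\rm BV}$ on intervals, and Remark~\ref{remark: 1d slices} upgrades ${\rm BV}_{\rm loc}$ to ${\rm BV}$.
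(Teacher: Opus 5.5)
Your proposal is correct and follows the paper's route. You use the one-dimensional Leibniz rule on slices via Remark~\ref{remark: 1d slices}, the slice-wise bound with $J^1_{v^\xi_y}\subset J^1_{u^\xi_y}$ and $\|\nabla\varphi\|_\infty\|u\|_{L^1(K)}$ to verify \eqref{boundedness slice} and invoke Proposition~\ref{prop:characterisation by slicing}, then \eqref{slice GBV nabla} for the gradient and \eqref{slice cantor} together with Lemma~\ref{lemma:disintegration of Dcu} for the Cantor part. Two small differences: you get \eqref{Liebnitz Jump} directly from approximate limits ($S_v\subset S_u$ plus Proposition~\ref{Prop:fine prop GBV}(a)) rather than from the slice relation \eqref{slice GBV jumps}, which is equally valid and a bit more self-contained; and you state explicitly that $\nabla\varphi$ vanishes off $K$, which the paper uses tacitly (in fact this holds everywhere in $\Omega\setminus K$, since $\varphi\in\{0,1\}$ there means an interior extremum of a $C^1$ function).
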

    
    \begin{proof}
    Let us fix $\xi\in\Sd$.  In view of Remark~\ref{remark: 1d slices}, we have  $u^\xi_y\in {\rm BV}(\Omega^\xi_y)$ for $\Hd$-a.e.\ $y\in\ \pi^\xi(\Omega)$. By the Leibniz rule for one-dimensional ${\rm BV}$-functions we then have that $v^\xi_y\in {\rm BV}(\Omega^\xi_y)$ and  
\begin{gather}\label{slice bulk liebnitz}
        \nabla v^\xi_y= \varphi^\xi_y \nabla u^\xi_y+(\nabla \varphi^\xi_y) u^\xi_y \quad \text{ $\Lb^1$-a.e.\ in $\Omega^\xi_y$}, \\ \label{slice cantor liebnitz}
        D^cv^\xi_y= \varphi^\xi_y D^c u^\xi_y \quad \text{  as Borel measures on $\Omega^\xi_y$\EEE},\\\label{liebnitz jump slice}  
        J_{v^\xi_y}=J_{u^\xi_y}\cap \{t\in \Omega^\xi_y\colon\,\varphi^\xi_y(t)\neq 0\}  \quad\text{ and }\quad (v^\xi_y)^\pm=\varphi^\xi_y(u^\xi_y)^\pm \quad \text{ on }J_{u^\xi_y}
    \end{gather}
    for $\Hd$-a.e.\ $y\in\pi^\xi(\Omega)$.
    In particular, from the previous equality and the fact that $0\leq \varphi\leq 1$, it follows that 
    \begin{equation*}
  |[v^\xi_y]|\leq |[u^\xi_y]|\quad  \text{ on ${J}_{u^\xi_y}$} \quad \text{ and } J^1_{{v}^\xi_y}\subset J^{1}_{{u}^\xi_y}
  \end{equation*}
for $\Hd$-a.e.\ $y\in\pi^\xi(\Omega)$. This inclusion then implies  
\begin{gather}\label{jump 1 on v slice}
    \hspace{-0,4 cm}\int_{\pi^\xi(\Omega)}|Dv^\xi_y|(J_{v^\xi_y} \setminus J^1_{v^\xi_y})\,{\rm d}\Hd(y)\leq  \int_{\pi^\xi(\Omega)}\big(|Du^\xi_y|(J_{u^\xi_y} \setminus J^1_{u^\xi_y})+\mathcal{H}^0(J^1_{u^\xi_y} \setminus J^1_{v^\xi_y})\big)\, {\rm d}\Hd(y)<+\infty,\\
\int_{\pi^\xi(\Omega)}\mathcal{H}^0(J^1_{v^\xi_y})\,{\rm d}\Hd(y)\leq \int_{\pi^\xi(\Omega)}\mathcal{H}^0(J^1_{u^\xi_y})\,{\rm d}\Hd(y)<+\infty.
\end{gather}
In view of  Proposition~\ref{prop:characterisation by slicing}, we get that  $u$ satisfies \eqref{boundedness slice}. Then, by  the fact that $u\in L^1( K;\Rk)$ and Fubini's Theorem we find
    \begin{align}
&\hspace{-0.3 cm}\int_{\pi^\xi(\Omega)}\Big(\int_{\Omega^\xi_y}|\varphi^\xi_y\nabla u^\xi_y|\,{\rm d}t\Big)\,{\rm d}\Hd(y)\leq \int_{\pi^\xi(\Omega)}\Big(\int_{\Omega^\xi_y}|\nabla u^\xi_y\,|{\rm d}t\Big)\,{\rm d}\Hd(y)<+\infty,\\ 
& \hspace{-0.3 cm}
\int_{\pi^\xi(\Omega)}\!\!\!\Big(\int_{\Omega^\xi_y}|(\nabla \varphi^\xi_y) u^\xi_y|\,{\rm d}t\Big)\,{\rm d}\Hd(y)\leq \|\nabla \varphi\|_{L^\infty(\Omega)}\int_{\pi^\xi(\Omega)}\!\!\!\Big(\int_{K^\xi_y}| u^\xi_y|\,{\rm d}t\Big)\,{\rm d}\Hd(y)<+\infty,\\
    \label{cantor v slice}&\hspace{-0.3 cm}\int_{\pi^\xi(\Omega)}\Big(\int_{\Omega^\xi_y}|\varphi^\xi_y|\,{\rm d}|D^cu^\xi_y|\Big)\,{\rm d}\Hd(y)\leq \int_{\pi^\xi(\Omega)}|D^cu^\xi_y|(\Omega^\xi_y)\,{\rm d}\Hd(y)<+\infty  .
    \end{align}
  Recalling \eqref{slice bulk liebnitz} and \eqref{slice cantor liebnitz}, \EEE combining \eqref{jump 1 on v slice}--\eqref{cantor v slice}, we conclude that 
 \begin{equation*}
      \int_{\pi^\xi(\Omega)} \big(|Dv^\xi_y|(\Omega^\xi_y\setminus J^1_{v^\xi_y})+\mathcal{H}^0(\Omega^\xi_y\cap J^1_{v^\xi_y})\big)\,{\rm d}\Hd(y)<+\infty. \end{equation*}
      As $\xi\in\Sd$ was arbitrary, Proposition~\ref{prop:characterisation by slicing} ensures that $v\in {\rm GBV}_\star(\Omega;\Rk)\cap L^1(  K;\Rk).$
  
 Equation \EEE \eqref{Liebnitz Jump} follows immediately from \eqref{slice GBV jumps} and \eqref{liebnitz jump slice}. 
    To prove \eqref{Liebnitz bulk}, it is enough to combine \eqref{slice GBV nabla} and \eqref{slice bulk liebnitz}.
    As for \eqref{Liebnitz Cantor}, we note that by \eqref{slice cantor} applied for $v$  in place of  $u$,  \MMM \eqref{slice cantor liebnitz}, \EEE  and Lemma~\ref{lemma:disintegration of Dcu} we have
     \begin{equation*}
D^cv(B)\xi=\int_{\pi^\xi(\Omega)}D^cv^\xi_y(B^\xi_y)\,{\rm d}\Hd(y)=\int_{\pi^\xi(\Omega)}\Big(\int_{ B^\xi_y}\varphi^\xi_y\,{\rm d}D u^\xi_y \Big)\,{\rm d}\Hd(y)=\Big(\int_{B}\varphi\,{\rm d}D^cu\Big)\xi
     \end{equation*}
for every $\xi\in\Sd$ and every Borel set $B\subset \Omega$. Since the previous equality implies \eqref{Liebnitz Cantor}, the proof is complete.
    \end{proof}

\section{Poincaré inequality for functions with small jump set}\label{sec: Poincarè}

\noindent  This section is devoted \MMM to a \EEE Poincaré-type inequality for functions in ${\rm GBV}_\star(\Omega;\Rk)$,
which will be a key tool for a series of results that we will  present \EEE  in  Appendix \EEE \ref{sec:blow up}. \MMM It is also employed in the proof of Lemma~\ref{Lemma delta=tilde}. \EEE The inequality \MMM is \EEE visibly 
reminiscent of the one  proved by De Giorgi, Carriero, \& Leaci \cite[Theorem~3.1]{CarrieroLeaci}   for functions in ${\rm SBV}(\Omega)$ with small jump set. Indeed, our proof follows closely the argument they devised.

We begin by setting some notation. Throughout this section, we make the further assumption   that $d \ge 2$ and  $\Omega\subset \Rd$ is connected \MMM with \EEE Lipschitz boundary.
Given $u\in {\rm GBV}_\star(\Omega)$, we define
\begin{equation}\label{def ustar}
    u_\star(s,\Omega)\coloneq \inf\big\{t\in[-\infty,+\infty]\colon \, \Lb^d(\{u<t\})\geq  s \big\}
\end{equation}
for all $s\in (0,\Lb^d(\Omega))$. We say that  $m\in\R$ is  a {\it median} of $u$ on $\Omega$ if 
\begin{align}\label{eq: medain}
    \Lb^d(\{u<t\})&\leq   \Lb^d(\Omega)/2\,\,\,\text{ for all $t<m$,}\notag\\  
    \Lb^d(\{u>t\})&\leq \Lb^d(\Omega)/2\,\,\,\text{for all $t>m$}.
\end{align}
It follows immediately from \eqref{def ustar} that $u_\star(\Lb^d(\Omega)/2,\Omega)$ is a median of $u$ on $\Omega$.
In the vector-valued case $u\in {\rm GBV}_\star(\Omega;\Rk)$ with $k>1$, we say that a vector $m\in\Rk$  is a median of $u$ on $\Omega$ if $m_i$ is a median of $u_i$ on $\Omega$ for all $i\in\{1,\dots,k\}$.
For future use, we  report here the statement of the classical relative isoperimetric inequality (see, for instance, \cite[(3.43)]{AFP}).  Recall \eqref{def blow up set}.

\begin{proposition}\label{relative isoperimetric}
    There exists a constant $\gamma=\gamma(\Omega)>0$ such that for  every $\rho>0$, every $x\in\Rd$,  and every set $E\subset\R^d$ \MMM with \EEE $E_{\rho,x} \subset \Omega_{\rho,x\EEE}$ \MMM having \EEE finite perimeter in $\Omega_{\rho,x\EEE}$   it holds  
    \begin{equation*}
        \min\Big\{ \Lb^d(E_{\rho,x}\cap \Omega_{\rho,x}), \Lb^d(\Omega_{\rho,x}\setminus E_{\rho,x})   \Big\}\leq    \big( \gamma\Hd\big(\partial^*E_{\rho,x}\cap \Omega_{\rho,x}\big)\big)^{d/(d-1)}.  
    \end{equation*}
\end{proposition}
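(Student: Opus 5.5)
The plan is to reduce the statement to the unscaled case $\rho=1$, $x=0$ by a homothety, and then derive that case from the Sobolev--Poincaré inequality in ${\rm BV}$ on the connected Lipschitz domain $\Omega$. The constant will come from the unscaled inequality, so it depends only on $\Omega$; its independence of $\rho$ and $x$ is a consequence of how the two sides scale.

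\emph{Scaling.} Let $T_{\rho,x}(y)\coloneq x+\rho(y-x)$, so that $\Omega_{\rho,x}=T_{\rho,x}(\Omega)$ and $E_{\rho,x}=T_{\rho,x}(E)$. The following hold:
\begin{itemize}
\item $\Lb^d$ is multiplied by $\rho^d$ under $T_{\rho,x}$;
\item the reduced boundary is mapped to the reduced boundary, $\partial^*E_{\rho,x}\cap\Omega_{\rho,x}=T_{\rho,x}(\partial^*E\cap\Omega)$;
\item $\Hd$ of this set is multiplied by $\rho^{d-1}$.
\end{itemize}
Raising $\rho^{d-1}$ to the power $d/(d-1)$ gives exactly $\rho^d$. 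Hence both sides of the claimed inequality scale by the same factor $\rho^d$. The statement for general $(\rho,x)$ is therefore equivalent to the statement for $E\subset\Omega$ of finite perimeter in $\Omega$, with the same constant $\gamma$.

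\emph{Unscaled case.} Since $\Omega$ is bounded, connected and Lipschitz, I will invoke the Sobolev--Poincaré inequality in ${\rm BV}$ (compactness of ${\rm BV}(\Omega)\hookrightarrow L^1(\Omega)$ plus the Sobolev embedding; see \cite[Remark~3.50]{AFP}):
\begin{equation*}
\|u-u_\Omega\|_{L^{d/(d-1)}(\Omega)}\le C_\Omega |Du|(\Omega),
\end{equation*}
where $u_\Omega$ is the mean of $u$ over $\Omega$. I apply it to $u=\chi_E$, for which $u_\Omega=t\coloneq \Lb^d(E)/\Lb^d(\Omega)$ and $|Du|(\Omega)=\Hd(\partial^*E\cap\Omega)$. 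With $q\coloneq d/(d-1)$,
\begin{equation*}
\int_\Omega|\chi_E-t|^q\,{\rm d}y=\Lb^d(E)(1-t)^q+\Lb^d(\Omega\setminus E)\,t^q .
\end{equation*}
Suppose $\Lb^d(E)\le\Lb^d(\Omega\setminus E)$. Then $t\le 1/2$, so $1-t\ge 1/2$, and the right-hand side is at least $2^{-q}\Lb^d(E)$. The symmetric case is handled the same way using the term $t^q\ge 2^{-q}$. In both cases,
\begin{equation*}
\min\{\Lb^d(E),\Lb^d(\Omega\setminus E)\}^{(d-1)/d}\le 2C_\Omega\,\Hd(\partial^*E\cap\Omega).
\end{equation*}
Raising this to the power $d/(d-1)$ gives the claim with $\gamma\coloneq 2C_\Omega$.

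The only nontrivial ingredient is the Sobolev--Poincaré inequality on $\Omega$. It genuinely requires both connectedness and the Lipschitz regularity of $\partial\Omega$, and these are exactly the standing assumptions of this section. I take it from the classical theory rather than reprove it. Everything else is elementary scaling.
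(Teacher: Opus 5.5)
Your proof is correct and follows the same classical route the paper relies on: the paper gives no argument of its own and simply cites the relative isoperimetric inequality \cite[(3.43)]{AFP}. Its Remark~\ref{remark: constant isoperimetric} then takes $\gamma$ to be the constant of the ${\rm BV}$ Sobolev embedding, which is exactly your $\gamma=2C_\Omega$, obtained by applying the Sobolev--Poincar\'e inequality on the connected Lipschitz domain $\Omega$ to $\chi_E$ and noting that both sides scale by $\rho^d$ under $y\mapsto x+\rho(y-x)$.
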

 \begin{remark}\label{remark: constant isoperimetric}
The isoperimetric constant $\gamma=\gamma(\Omega)$ can be taken to be the same  as the one \EEE in the  Sobolev Embedding Theorem for functions of bounded variation. In particular, the following property holds:  let $\mathcal{I}\subset \R$ \BBB be \EEE a family of indices, and let   $\MMM \{\Omega_i\} \EEE_{i\in\mathcal{I}}$ \EEE be  a collection of domains with the property that for each $i_1,i_2\in\mathcal{I}$ there exists a bi-Lipschitz homeomorphism $\varphi_{i_1,i_2}$ from $\Omega_{i_1}$ onto $\Omega_{i_2}$ such that, denoting with  $L_{i_1,i_2}$ the Lipschitz constant of $\varphi_{i_1,i_2}$, it holds $\sup_{i_1,i_2}L_{i_1,i_2}<+\infty$. Then it is possible to choose a positive constant $\gamma=\gamma(\{\Omega_i\}_{i\in\mathcal{I}})$, independent of \MMM $i \in \mathcal{I}$, \EEE for which Proposition~\ref{relative isoperimetric} holds true for every $i\in\mathcal{I}$.
\end{remark}
In the following,  $\gamma=\gamma(\Omega)>0$ always denotes  the constant of Proposition~\ref{relative isoperimetric}. We will often assume that the function $u\in {\rm GBV}_\star(\Omega;\Rk)$ \MMM under consideration \EEE satisfies
\begin{equation}\label{smallness per poincaré 1}
\big(2\gamma\Hd(J^r_{u}\cap \Omega)\big)^{d/(d-1)}\leq \frac{\Lb^d(\Omega)}{2}
\end{equation}
for some $r\geq 0$. When \eqref{smallness per poincaré 1} is satisfied, we define the vectors $\tau'(u,\Omega,r),\tau''(u,\Omega,r)\in\Rk$ by setting  
\begin{equation}\label{def tau}
\begin{aligned}
    \tau'_i(u,\Omega,r)&\coloneq (u_i)_\star\big((2\gamma\Hd(J^r_{u_i}\cap \Omega))^{d/(d-1)},\Omega\big),\\ 
    \tau''_i(u,\Omega,r)&\coloneq (u_i)_\star\big(\Lb^d(\Omega)-(2\gamma\Hd(J^r_{u_i}\cap \Omega))^{d/(d-1)},\Omega\big)
    \end{aligned}
    \end{equation}
for all $i\in\{1,\dots, k\}$. For brevity of notation, if $r=0$, we write $\tau'(u,\Omega)$ and $\tau''(u,\Omega)$ in place of $\tau'(u,\Omega,r)$ and  $\tau''(u,\Omega,r)$. Observe that, if $m_i$ is a median of $u_i$, then it  holds 
\begin{equation}\label{leq median leq}
    \tau'_i(u,\Omega,r)\leq m_i\leq \tau''_i(u,\Omega,r).
\end{equation} 
We now present a Poincaré inequality for functions in ${\rm GBV}_\star(\Omega;\Rk)$ with  a \EEE small set of large jumps, whose lines of proof follow closely those of \cite[Theorem~3.1]{CarrieroLeaci} (see also \cite[Theorem~4.14]{AFP}). 
\begin{theorem}\label{Thm: Poincaré inequality}
       There exists a constant $C_{\rm Poin}=C_{\rm Poin}(\Omega,k)>0$ such that, for every $r\geq 0$ and  $u\in {\rm GBV}_\star(\Omega;\Rk)$ satisfying \eqref{smallness per poincaré 1},  denoting by \EEE  $\overline{u}_r\in {\rm BV}(\Omega;\Rk)$  the function  $ \overline{u}_r \coloneq  \big(u \lor \tau'(u,\Omega,r)\big)\land \tau''(u,\Omega,r)$, it holds  
  \begin{align}\label{claim 1 poincaré}
        |D\overline{u}_r|(\Omega)&\leq 2 (r \vee 1)\mathcal{V}(u,\Omega\setminus J^r_u),\\\label{claim 2 poincaré}
       \|\overline{u}_r-m\|_{L^{d/(d-1)}(\Omega)}&\leq C_{\rm Poin}  (r \vee 1) \mathcal{V}(u,\Omega\setminus J^r_u),
\end{align}
for any median $m\in\Rk$ of $u$ on $\Omega$. Moreover, we have
\begin{equation}\label{bad set poincarè}
    \Lb^d(\{\overline{u}_r\neq u\})\leq C_{\rm Poin}\Hd(J^r_u\cap \Omega)^{d/(d-1)}.
\end{equation}
\end{theorem}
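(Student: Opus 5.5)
Since $\mathcal V$, $J^r_u$ and the truncation thresholds are all defined componentwise, and the medians are componentwise, I will prove the scalar case $k=1$ and then sum over $i\in\{1,\dots,k\}$, absorbing $k$ into $C_{\rm Poin}$. For $k=1$ the plan follows De Giorgi--Carriero--Leaci: write $\tau'=\tau'(u,\Omega,r)$, $\tau''=\tau''(u,\Omega,r)$, $\overline u_r=(u\lor\tau')\land\tau''$, and work with the superlevel sets $E_t=\{u>t\}$, $t\in(\tau',\tau'')$. First I check that $\tau'$ and $\tau''$ are finite. This holds because \eqref{smallness per poincaré 1} puts the levels $(2\gamma\Hd(J^r_u\cap\Omega))^{d/(d-1)}$ and $\Lb^d(\Omega)-(2\gamma\Hd(J^r_u\cap\Omega))^{d/(d-1)}$ in $(0,\Lb^d(\Omega))$ when $\Hd(J^r_u)>0$. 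When $\Hd(J^r_u)=0$ the thresholds become $\pm\infty$ and $\overline u_r=u$, and the argument below has to be run with truncations $u^{(n)}$ and passed to the limit. By Remark~\ref{remark: troncature}, $\overline u_r\in{\rm BV}(\Omega)$.

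For \eqref{claim 1 poincaré} I split $|D\overline u_r|(\Omega)=|D\overline u_r|(\Omega\setminus J^r_u)+|D\overline u_r|(J^r_u)$. The first term is at most $(r\vee1)\mathcal V(u,\Omega\setminus J^r_u)$, because outside $J^r_u$ each jump satisfies $|[u]|<r$, so $|[u]|\le (r\vee1)(|[u]|\land1)$. For the second term I use the coarea formula to write $|D\overline u_r|(J^r_u)=\int_{\tau'}^{\tau''}\Hd(\partial^*E_t\cap J^r_u)\,{\rm d}t$. For each $t\in(\tau',\tau'')$, the definition of $u_\star$ gives $\min\{\Lb^d(E_t),\Lb^d(\Omega\setminus E_t)\}\ge(2\gamma\Hd(J^r_u\cap\Omega))^{d/(d-1)}$. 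The relative isoperimetric inequality (Proposition~\ref{relative isoperimetric}) then yields $\Hd(\partial^*E_t\cap\Omega)\ge 2\Hd(J^r_u\cap\Omega)\ge2\Hd(\partial^*E_t\cap J^r_u)$, and hence $\Hd(\partial^*E_t\cap J^r_u)\le\Hd(\partial^*E_t\setminus J^r_u)$. Integrating in $t$ and applying coarea once more, $|D\overline u_r|(J^r_u)\le|D\overline u_r|(\Omega\setminus J^r_u)$, which gives the factor $2$ in \eqref{claim 1 poincaré}.

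For \eqref{claim 2 poincaré} I use that $m$ is also a median of $\overline u_r$, by \eqref{leq median leq}. The classical Sobolev--Poincaré inequality in ${\rm BV}$ around a median, $\|v-m\|_{L^{d/(d-1)}}\le C(\Omega)|Dv|(\Omega)$, which is proved via coarea and the relative isoperimetric inequality with the same constant $\gamma$ (Remark~\ref{remark: constant isoperimetric}), combined with \eqref{claim 1 poincaré} then gives \eqref{claim 2 poincaré}. For \eqref{bad set poincarè}, the set $\{\overline u_r\neq u\}$ is contained in $\{u<\tau'\}\cup\{u>\tau''\}$, up to a null set. By the definition of $u_\star$ each of these has measure at most $(2\gamma\Hd(J^r_u\cap\Omega))^{d/(d-1)}$, which gives the bound.

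The main obstacle is the second step: making the coarea and isoperimetric argument precise when $t$ is exactly a threshold and when atoms of the distribution of $u$ occur, and handling the degenerate case $\Hd(J^r_u)=0$. In that case I would truncate, apply the argument to $u^{(n)}$, and pass to the limit using Remark~\ref{remark:monotonic}.
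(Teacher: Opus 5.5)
Your scalar argument is correct and uses the same ingredients as the paper: the coarea formula for $\overline u_r$, the relative isoperimetric inequality, and the fact that for $t\in(\tau',\tau'')$ both $\{u>t\}$ and its complement have measure at least $(2\gamma\Hd(J^r_u\cap\Omega))^{d/(d-1)}$. Only the bookkeeping differs. The paper normalises $m=0$ and bounds $|D\overline u_r|(\Omega)$ from above by $(r\vee1)\mathcal V(u,\Omega\setminus J^r_u)+(\tau''-\tau')\Hd(J^r_u\cap\Omega)$ and from below by $2(\tau''-\tau')\Hd(J^r_u\cap\Omega)$, then combines the two. You instead compare, level by level, the perimeter of $E_t$ inside and outside $J^r_u$. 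This gives the same factor $2$ without splitting at the median or bounding the jumps of $\overline u_r$. Your treatment of \eqref{claim 2 poincaré}, where you note explicitly that $m$ is a median of $\overline u_r$, and of \eqref{bad set poincarè} matches the paper.

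The step that does not work is the reduction to $k=1$ by summing over components. In \eqref{def tau} the thresholds of $u_i$ are built from $\Hd(J^r_{u_i}\cap\Omega)$. So the scalar result applied to $u_i$ only gives $|D\overline u_{r,i}|(\Omega)\le 2(r\vee1)\mathcal V(u_i,\Omega\setminus J^r_{u_i})$. Here $\mathcal V(u_i,\Omega\setminus J^r_{u_i})=\mathcal V(u_i,\Omega\setminus J^r_u)+\int_{J^r_u\setminus J^r_{u_i}}|[u_i]|\land 1\,{\rm d}\Hd$, and the last term is not controlled by $\mathcal V(u,\Omega\setminus J^r_u)$.

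This is a real failure, not a technicality. Take $k=2$, $r=1$, and $u=(10,\tfrac12)\chi_{B_\varepsilon}$ with $B_\varepsilon\subset\subset\Omega$ small.
\begin{itemize}
\item $J^1_u=\partial B_\varepsilon$, \eqref{smallness per poincaré 1} holds, and $\mathcal V(u,\Omega\setminus J^1_u)=0$.
\item However $J^1_{u_2}=\emptyset$, so the thresholds of $u_2$ do not truncate and $\overline u_{r,2}=u_2$.
\item Hence $|D\overline u_r|(\Omega)\ge\tfrac12\Hd(\partial B_\varepsilon)>0$, violating the first estimate.
\item With the median $m=0$, \eqref{claim 2 poincaré} fails as well.
\end{itemize}
The paper's proof makes the same one-line reduction, so the defect is in the statement as written, not only in your proposal.

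It is repaired by defining all $\tau'_i,\tau''_i$ through the common quantity $\Hd(J^r_u\cap\Omega)$. Then \eqref{leq median leq} still holds. Your level-set argument runs verbatim for each $u_i$ with $J^r_u$ in place of $J^r_{u_i}$, since off $J^r_u$ one has $|[\overline u_{r,i}]|\le|[u_i]|\le|[u]|<r$. Summing over $i$ and using $|D\overline u_r|\le\sum_i|D\overline u_{r,i}|$ gives all three estimates, with constants depending on $k$.
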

\begin{proof}
Since the  proof of the result  in the general  case $k>1$ follows immediately from the case $k=1$,   we may assume $u$ to be scalar-valued. Let $r\geq 0$ and  $u\in {\rm GBV}_\star(\Omega)$ be as in the statement.
For simplicity of notation, throughout   the \EEE  proof we write $\tau'$ and $\tau''$ in place of $\tau'(u,\Omega,r)$ and $\tau''(u,\Omega,r)$. 
Without loss of generality, we also assume that $m=0$, which by \eqref{leq median leq}  implies $\tau'\leq 0$ and $\tau''\geq 0$.
 
 Inequality \eqref{bad set poincarè} is a simple consequence of the fact that 
 $$\{  \overline{u}_r  \neq u\}=\{u<\tau'\}\cup \{u>\tau''\}$$
 and the definition \eqref{def tau} of $\tau'$ and $\tau''$.  We now come to the proof of \EEE \eqref{claim 1 poincaré} and \eqref{claim 2 poincaré}.
We begin  by \EEE observing that, thanks to Remark~\ref{remark: troncature}, $\MMM \overline{u}_r \EEE$ belongs to ${\rm BV}(\Omega)$, and by  the same remark we have 
\begin{equation}\label{first bound poincarè}
        |D\MMM \overline{u}_r \EEE|(\Omega)\leq   (r \vee 1)\mathcal{V}(u,\Omega\setminus J^r_u)+(\tau''-\tau')\Hd(J^r_u \cap \Omega). 
\end{equation}
 As  $\tau'\leq \MMM \overline{u}_r \EEE\leq \tau''$ and  $\MMM \overline{u}_r \EEE=u$  on \EEE $\{\tau'\leq u\leq \tau''\}$, by  the Coarea Formula we get
    \begin{equation}\label{coarea}
        |D\MMM \overline{u}_r \EEE|(\Omega)=\int_{\tau'}^{\tau''}\Hd(\partial^*\{u\geq t\}\cap \Omega)\,{\rm d}t.
    \end{equation}
   By Proposition~\ref{relative isoperimetric} applied with $\rho=1$ and $x=0$, we can estimate from below the right-hand side of the previous equality by    \begin{equation}\label{isoperimetric in poincarè}
\int_{\tau'}^{\tau''}\Hd(\partial^*\{u\geq t\}\cap \Omega)\,{\rm d}t\geq \frac{1}{\gamma}\Big(\int_{\tau'}^0\Lb^d(\{ \MMM u < t \EEE \})^{(d-1)/d}\,{\rm d}t+ \int^{\tau''}_0\Lb^d(\{ \MMM u \ge t \EEE \})^{(d-1)/d}\,{\rm d}t\Big).
    \end{equation}
    From the definitions of $\tau'$ and $\tau''$  \MMM in \EEE \eqref{def tau} it follows that  
    \begin{equation*}
        \begin{aligned}
          &\Lb^d(\{u \MMM < \EEE t\})^{(d-1)/d}\geq 2\gamma\Hd(J^r_{u}\cap \Omega)\quad \text{ for all }t\in (\tau',0),\\
          &\Lb^d(\{u \MMM \ge \EEE t\})^{(d-1)/d}\geq2\gamma\Hd(J^r_{u}\cap \Omega) \quad \text{ for all }t\in (0,\tau''),
        \end{aligned}
    \end{equation*}
which, combined with \eqref{coarea} and \eqref{isoperimetric in poincarè}, yield 
\begin{equation*}
    2(\tau''-\tau')\Hd(J^r_{u}\cap \Omega)\leq |D\MMM \overline{u}_r \EEE|(\Omega).
\end{equation*}
In light of \eqref{first bound poincarè}, this inequality also implies 
\begin{gather*}
     (\tau''-\tau')\Hd(J^r_{u}\cap \Omega)\leq   (r \vee 1) \mathcal{V}(u,\Omega\setminus J^r_u),
\end{gather*}
which in turn gives  \eqref{claim 1 poincaré}. As we assumed $\Omega$ to be connected  with Lipschitz boundary,  the Poincaré-type \EEE inequality \eqref{claim 2 poincaré} then follows from \eqref{claim 1 poincaré}  by applying  to $\MMM \overline{u}_r \EEE$ the classical \EEE Poincaré-Sobolev inequality for functions of bounded variation, see, for instance, \cite[Theorem~3.51]{AFP}. 
\end{proof}

\begin{remark}
    When $r=0$ and $u\in {\rm SBV}(\Omega;\Rk)$, the previous result reduces  exactly to the \EEE Poincaré inequality of De Giorgi, Carriero, \& Leaci \cite[Theorem~3.1]{CarrieroLeaci}. If    $u\in {\rm GBV}_\star(\Omega;\Rk)$ and $\Hd(J^r_u) =0$ for some $r \ge 0$, then $u\in {\rm BV}(\Omega;\Rk)$ and (by convention) $\tau_i'(u,\Omega,r)=-\infty$ and $\tau_i''(u,\Omega,r)=+\infty$, so that \eqref{claim 2 poincaré} reduces to the usual Poincaré inequality for  functions of bounded variation.
\end{remark}

\begin{remark}\label{remark: poincare constant}
    Inspecting the proof of Theorem~\ref{Thm: Poincaré inequality}, it is clear that the constant $C_{\rm Poin}$ depends on $\Omega$ only via the isoperimetric constant $\gamma=\gamma(\Omega)$. In particular,  letting $\MMM\mathcal{I}\EEE$ be a family of indices and $\{\Omega_i\}_{i\in\mathcal{I}}$ be a collection of domains as in Remark~ \ref{remark: constant isoperimetric},  it is possible to choose a \MMM uniform \EEE constant $C_{\rm Poin}$ for which the conclusions of Theorem~\ref{Thm: Poincaré inequality} hold for every $i\in\mathcal{I}$.
\end{remark}

\begin{remark}\label{remark:scaling}
Let $\rho>0$ and $x\in\Rd$, and consider the set $\Omega_{\rho,x}$ given by \eqref{def blow up set}. By H\"older's inequality and a scaling argument it can be shown that for each $u\in {\rm GBV}_\star(\Omega_{\rho,x};\Rk)$ satisfying the hypotheses of Theorem~\ref{Thm: Poincaré inequality} we have    
\begin{equation*}
\| \MMM \overline{u}_r \EEE -m\|_{L^1(\Omega_{\rho,x})}\leq \rho C_{\rm Poin} (r \vee 1) \mathcal{V}(u,\Omega_{\rho,x}\setminus J^r_u),
\end{equation*}
where $C_{\rm Poin}$ is the constant corresponding to $\rho=1$.
\end{remark}
As a simple consequence of the previous result, we show that, in the case \MMM that \EEE $u$ satisfies zero boundary conditions, the  Poincaré-type inequality \EEE \eqref{claim 2 poincaré} holds with $m=0$ \MMM for the original function, \EEE at the expense of removing a set $\omega$ of small Lebesgue measure.
\begin{corollary}\label{cor:small sets}
    Let $\Omega\subset \Rd$ be a bounded Lipschitz domain. Then there exists a constant $\bar{C}_{\rm Poin}=\bar{C}_{\rm Poin}(\Omega,k)>0$ such that for all $r\geq 0$ and $u\in {\rm GBV}_\star(\Omega;\Rk)$, with $u=0$ on $\partial \Omega$, there is a Borel set $\omega\subset \Omega$, with 
\begin{equation*}
    \Lb^d(\omega)\leq \bar{C}_{\rm Poin}\Hd(J^r_u)^{d/(d-1)},
    \end{equation*}
    such that     \begin{equation}\label{poincarè boundary}
        \|u\|_{L^{d/(d-1)}(\Omega\setminus \omega)}\leq \bar{C}_{\rm Poin} (r \vee 1)\mathcal{V}(u,\Omega \setminus J^r_u).
    \end{equation}
\end{corollary}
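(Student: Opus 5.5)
The plan is to reduce Corollary~\ref{cor:small sets} to Theorem~\ref{Thm: Poincaré inequality} by extending $u$ by zero to a larger domain in which zero is automatically a median. Fix a bounded connected Lipschitz domain $\widehat\Omega\supset\supset\Omega$, for instance a large ball, chosen so that $\Lb^d(\widehat\Omega)\ge 2\Lb^d(\Omega)+1$. Set $\hat u\coloneq u$ on $\Omega$ and $\hat u\coloneq 0$ on $\widehat\Omega\setminus\overline\Omega$. By Lemma~\ref{lemma: Extension of GBV} (in the form gluing across $\partial\Omega$), $\hat u\in{\rm GBV}_\star(\widehat\Omega;\Rk)$. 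Since the trace of $u$ on $\partial\Omega$ vanishes, no jump is created on $\partial\Omega$, so $J_{\hat u}=J_u$ and $\mathcal{V}(\hat u,\widehat\Omega\setminus J^r_{\hat u})=\mathcal{V}(u,\Omega\setminus J^r_u)$.

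Each component $\hat u_i$ equals $0$ on a set of measure at least $\Lb^d(\widehat\Omega)/2$. Hence $\Lb^d(\{\hat u_i<t\})\le\Lb^d(\Omega)\le\Lb^d(\widehat\Omega)/2$ for $t<0$, and symmetrically for $t>0$, so $m=0$ is a median of $\hat u$ on $\widehat\Omega$.

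Next I split into cases according to the smallness condition \eqref{smallness per poincaré 1} on $\widehat\Omega$.

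\emph{Case 1: the condition holds.} Apply Theorem~\ref{Thm: Poincaré inequality} to $\hat u$ on $\widehat\Omega$ with $m=0$, and set $\omega\coloneq\{\overline{\hat u}_r\neq\hat u\}\cap\Omega$. Estimate \eqref{bad set poincarè} gives $\Lb^d(\omega)\le C_{\rm Poin}\Hd(J^r_u)^{d/(d-1)}$. On $\Omega\setminus\omega$ we have $u=\overline{\hat u}_r$, so \eqref{claim 2 poincaré} yields
\[
\|u\|_{L^{d/(d-1)}(\Omega\setminus\omega)}\le \|\overline{\hat u}_r\|_{L^{d/(d-1)}(\widehat\Omega)}\le C_{\rm Poin}(r\vee1)\mathcal{V}(u,\Omega\setminus J^r_u).
\]

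\emph{Case 2: the condition fails.} Then $\Hd(J^r_u)^{d/(d-1)}\ge c\,\Lb^d(\widehat\Omega)\ge c\,\Lb^d(\Omega)$ for a constant $c>0$ depending only on $\gamma(\widehat\Omega)$. Take $\omega\coloneq\Omega$: the measure bound holds with $\bar C_{\rm Poin}\ge 1/c$, and \eqref{poincarè boundary} is trivial because the left-hand side vanishes.

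The constant $\bar C_{\rm Poin}$ depends only on $\widehat\Omega$, and $\widehat\Omega$ is determined by $\Omega$, so it depends only on $(\Omega,k)$. The only delicate point is the gluing step: one must check that vanishing boundary trace produces no jump on $\partial\Omega$, so that neither $\mathcal{V}$ nor $J^r$ gains a contribution there. Lemma~\ref{lemma: Extension of GBV} gives $|[\hat u]|=|u_{\partial\Omega}-0|=0$ $\Hd$-a.e.\ on $\partial\Omega$, and Proposition~\ref{Prop:fine prop GBV}(b),(d) show that the approximate gradient and the Cantor part do not charge $\partial\Omega$, a set of zero $\Lb^d$-measure that is $\sigma$-finite with respect to $\Hd$.
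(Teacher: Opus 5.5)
Your proof is correct, but it takes a slightly different route from the paper's.

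\textbf{The paper's route.} The paper extends $u$ by zero to a Lipschitz domain $\Omega'\supset\supset\Omega$ with no size requirement. It applies Theorem~\ref{Thm: Poincaré inequality} there with whatever median $m$ the extension happens to have, and then has to estimate $m$. To do so, it first reduces (taking $\omega=\Omega$ otherwise) to the auxiliary smallness condition \eqref{smallness lemma boundary zero}, which guarantees $\Lb^d(\omega)\le\Lb^d(\Omega'\setminus\Omega)/2$. It then uses that the extension vanishes on $\Omega'\setminus(\Omega\cup\omega)$, a set of measure at least $\Lb^d(\Omega'\setminus\Omega)/2$. This bounds $|m|$ by a multiple of $(r\vee1)\mathcal{V}(u,\Omega\setminus J^r_u)$, and the conclusion follows by the triangle inequality.

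\textbf{Your route.} You instead take the enlargement large enough that the zero region fills more than half of $\widehat\Omega$. Then $0$ is itself a median, and the ``any median'' clause of the theorem applies with $m=0$. This removes the estimate on $m$ altogether. It also lets you split cases directly on \eqref{smallness per poincaré 1}, rather than on an auxiliary condition that must then be reconciled with it (the paper does this by adjusting $\Omega'$). Your strict margin $+1$ even makes $0$ the unique median, so $\tau'\le 0\le\tau''$ is immediate.

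\textbf{What each buys.} The paper's argument gains flexibility in the enlargement: a thin collar around $\Omega$ suffices, because the zero region only needs positive measure, not to dominate. In both arguments the constant depends on $(\Omega,k)$ only through the fixed enlargement.
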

\begin{proof}
Let $r\geq 0$, let $u\in {\rm GBV}_\star(\Omega;\Rk)$, and let $\Omega'$ be a bounded   Lipschitz domain with $\Omega\subset \subset\Omega'$. Since $u=0$ on $\partial \Omega$,   we can extend  (without relabelling) \EEE $u$ to a function ${u}\in {\rm GBV}_\star(\Omega';\Rk)$ by setting ${u}=0$ on $\Omega'\setminus \Omega$.  Moreover, it is not restrictive to assume that 
\begin{equation}\label{smallness lemma boundary zero}
    \Hd(J^r_u)^{d/(d-1)}\leq \frac{\Lb^d(\Omega'\setminus \Omega)}{2C_{\rm Poin}},
\end{equation}
where $C_{\rm Poin}=C_{\rm Poin}(\Omega'\EEE,  k \EEE )$ is the positive constant of Theorem~\ref{Thm: Poincaré inequality}. Indeed, if \eqref{smallness lemma boundary zero} is not satisfied, it is enough to set $\omega=\Omega$ and choose $\bar{C}_{\rm Poin}$ large enough. By possibly choosing a smaller $\Omega'$, we may also assume that 
\begin{equation*}
   \Hd(J^r_u)^{d/(d-1)}\leq \frac{\Lb^d(\Omega'\setminus \Omega)}{2C_{\rm Poin}}\leq  \frac{1}{(2\gamma)^{ d/(d-1)}}\frac{\Lb^d(\Omega')}{2}.
\end{equation*}
In view of this inequality,  ${u}$ \EEE satisfies \eqref{smallness per poincaré 1}, so that we can  apply Theorem~\ref{Thm: Poincaré inequality} to $ {u}$ on $\Omega'$. \MMM We \EEE obtain a vector $m\in\Rk$ and a Borel set $\omega\coloneq \{{u}\neq \MMM \overline{{u}}_r \EEE\}\subset \Omega'$ such that 
\begin{equation}\label{poincarè piccolina}
    \|{u}-m\|_{L^{d/(d-1)}(\Omega'\setminus \omega)}\leq C_{\rm Poin}  (r \vee 1)\mathcal{V}({u},\Omega\setminus J^r_u)
\end{equation}
and 
\begin{equation}
\label{smallness poincarè zero}\Lb^d(\omega)\leq C_{\rm Poin}\Hd(J^r_{{u}})^{d/(d-1)}\leq \frac{\Lb^d(\Omega'\setminus \Omega)}{2},
\end{equation}
where in the   second inequality   \EEE we have used \eqref{smallness lemma boundary zero}.  Next, we note that, since ${u}=0$ on  $\Omega'\setminus \Omega$, from \eqref{poincarè piccolina} it follows  that 
\begin{equation*}
   |m|\leq \frac{C_{\rm Poin}}{\Lb^d(\Omega'\setminus(\Omega\cup \omega))}  (r \vee 1) \mathcal{V}(u,\Omega\setminus J^r_u).
\end{equation*}
Clearly, $\Lb^d(\Omega'\setminus(\Omega\cup \omega))\geq \Lb^d(\Omega'\setminus \Omega)-\Lb^d(\omega)$, so that  by \eqref{smallness poincarè zero} this implies that $\Lb^d(\Omega'\setminus(\Omega\cup \omega))\geq \Lb^d(\Omega'\setminus\Omega)/2$. Hence,  
\begin{equation*}
    |m|\leq\frac{2 C_{\rm Poin}}{\Lb^d(\Omega'\setminus \Omega)}  (r \vee 1)\mathcal{V}(u,\Omega\setminus J^r_u).
\end{equation*}
\MMM This \EEE shows that  \eqref{poincarè boundary} holds for a suitable choice of $\bar{C}_{\rm Poin}$. 
\end{proof}

\begin{remark}\label{remark: scaling p=1}
Let $\rho>0$ and $x\in\Rd$, and consider the set $\Omega_{\rho,x}$ given by \eqref{def blow up set}. By H\"older's inequality and a scaling argument, it can be shown that for each $u\in {\rm GBV}_\star(\Omega_{\rho,x};\Rk)$, with $u = 0$ on $\partial \Omega_{\rho,x}$, we can find $\omega\subset \Omega_{\rho,x}$  such that   
\begin{equation*}
\Lb^d(\omega)\leq \bar{C}_{\rm Poin}\Hd(J^r_u\cap \Omega_{\rho,x})^{d/(d-1)}\quad \text{ and }\quad \|u\|_{L^1(\Omega_{\rho,x}\setminus \omega)}\leq \rho \bar{C}_{\rm Poin} (r \vee 1)\mathcal{V}(u, \Omega_{\rho,x}\EEE\setminus J^r_u),
\end{equation*}
where the constant $\bar{C}_{\rm Poin}$ is the one obtained for $\rho=1$. 
\end{remark}

We conclude the section by presenting a last consequence of the Poincaré-type inequality \MMM in Theorem~\ref{Thm: Poincaré inequality}, \EEE  namely \EEE a compactness result for sequences $\{u_n\}_n$  with vanishing set of large jumps. Here and in the rest of the work, given a function $u\colon \Omega\to \Rk$, we let ${\rm med}(u,\Omega)\in\Rk$ be the vector whose $i$-th component is given by $(u_i)_\star(\Lb^d(\Omega)/2,\Omega)$, which we recall is a median of $u$ on $\Omega$.
\begin{corollary}\label{cor: vanishing compactness}
    Let $r\geq 0$ and let  $\{u_n\}_n\subset {\rm GBV}_\star(\Omega;\Rk)$ be a sequence such that
    \begin{equation}\label{vanishing jump set}
        \sup_{n\in\N}\mathcal{V}(u_n,\Omega\setminus J^r_{u_n})<+\infty\quad \text{ and }\quad \lim_{n\to +\infty}\Hd(J^r_{u_n}\cap \Omega)=0.
    \end{equation}
Let   $\overline{u}_n$   be the function given by $\big((u_n) \lor \tau'(u_n,\Omega,r)\big)\land \tau''(u_n,\Omega,r)$. Then the sequence $\{\overline{u}_n-{\rm med}(u_n,\Omega)\}_n$ is uniformly bounded in ${\rm BV}(\Omega;\Rk)$ and  there exist a subsequence, not relabelled,  and a function $u\in {\rm BV}(\Omega;\Rk)$ such that 
    \begin{align}\label{claim comp 1}
    &\{\overline{u}_n-{\rm med}(u_n,\Omega)\}_n \text{ converges to }u \text{ strongly in }L^1(\Omega;\Rk) \text{ as }n\to+\infty,\\\label{claim comp 2}
    &\{u_n-{\rm med}(u_n,\Omega)\}_n \text{ converges to }u \text{ in }L^0(\Omega;\Rk) \text{ as }n\to+\infty.
    \end{align}
    \begin{proof}
        Thanks to the equality in  \eqref{vanishing jump set},  we may invoke Theorem~\ref{Thm: Poincaré inequality} to find that, for $n$ large enough, the function $\overline{u}_n$ belongs to ${\rm BV}(\Omega;\Rk)$ and satisfies  
        \begin{gather*}
            |D\overline{u}_n|(\Omega)\leq 2 (r \vee 1)\mathcal{V}(u_n,\Omega\setminus J^r_{u_n}),\\
            \|\overline{u}_n-{\rm med}(u_n,\Omega)\|_{L^{d/(d-1)}(\Omega)}\leq C_{\rm Poin}  (r \vee 1) \mathcal{V}(u_n,\Omega\setminus J^r_{u_n}).
        \end{gather*}
         In light of the   inequality \EEE in \eqref{vanishing jump set},  we deduce that $\{\overline{u}_n-{\rm med}(u_n,\Omega)\}_n$ is uniformly bounded in ${\rm BV}(\Omega;\Rk)$. Hence, there exist a subsequence, not relabelled, and a function $u\in {\rm BV}(\Omega;\Rk)$ such that \eqref{claim comp 1} holds.  By  \EEE \eqref{bad set poincarè} applied to $u_n$ and \eqref{vanishing jump set}--\eqref{claim comp 1}  we finally get \EEE  \eqref{claim comp 2}.
    \end{proof}
\end{corollary}

\section{Blow-ups of \texorpdfstring{${\rm GBV}_\star(\Omega;\Rk)$}{GBV*(Ω;Rk)}-functions}\label{sec:blow up}

In this section, we analyse properties of blow-ups of functions in ${\rm GBV}_\star(\Omega;\Rk)$ around a point $x\in\Omega$. The key tool for this analysis will be the Poincaré-type inequality  introduced in the previous section.
We carry out this study in two distinct cases: first when $x$ is a point of approximate differentiability of $u$, and then when $x$ is a jump point of $u$. In both cases, we will show that it is always possible to find modifications of the function $u$ which are close in measure to $u$, but that  behave better in the blow-up limit. \EEE  The tools we  develop  here are crucial in the proof of Propositions \ref{prop: repre AC} and \ref{prop:repre surf}.

We begin by treating the case of points of approximate differentiability. \MMM The proof of the result \EEE borrows ideas from \cite[Lemma~2]{BFML2002}.

\begin{lemma}[Blow-up at points of approximate differentiability]\label{lemma blow up ac}
    Let $u\in {\rm GBV}_\star(\Omega;\Rk)$, and  \BBB for  $x\in \Omega$ \EEE let
         $u_\rho$ and $\overline{u}_\rho$ be the functions defined  by \BBB {\rm (}recall the  notation in Theorem~\ref{Thm: Poincaré inequality} for $r=0${\rm)}   
         $$u_\rho(y)\coloneq ({u}(y)-\widetilde{u}(x))/\rho\quad \text{ and }\quad \overline{u}_\rho(y)\coloneq \big(u_\rho(y)\lor \tau'(u_\rho,Q(x,\rho)\big)\land \tau''(u_\rho,Q(x,\rho) )  \ \ \  \text{for } y\in Q(x,\rho). $$ Then \BBB for $\mathcal{L}^d$-a.e.\ $x \in \Omega$ we have \EEE 
    \begin{align}
   \label{claim 2 blowup ac}
    & \lim_{\rho\to 0^+}\frac{1}{\rho^{d+1}}\Lb^d(\{ \overline{u}_\rho\neq u_\rho\})=0,
 \\ \label{claim 3 blowup ac}  &  \lim_{\rho\to 0^+}\frac{1}{\rho^{d}}\int_{Q(x,\rho)}\Big|\overline{u}_\rho(y)-\nabla u(x)\frac{(y-x)}{\rho}\Big|\,{\rm d}y=0,\\ \label{claim 4 blowup ac}
    &\lim_{\rho\to 0^+}\frac{1}{\rho^d}\int_{Q(x,\rho)}|\rho\nabla \overline{u}_\rho(y)-\nabla u(x)|\,{\rm d}y=0,\\\label{claim 5 blowup ac}
    &\lim_{\rho\to 0^+}\frac{1}{\rho^d}\Hd(J_{\overline{u}_\rho}\cap Q(x,\rho))=0 \quad \text{ and } \lim_{\rho\to 0^+}\frac{1}{\rho^{d-1}}|D^c\overline{u}_\rho|(Q(x,\rho))=0.
    \end{align}
\end{lemma}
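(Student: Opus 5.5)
Since the statement concerns $\Lb^d$-a.e.\ point, I would fix $x\in\Omega$ satisfying simultaneously: $\widetilde u(x)$ exists and $x$ is a point of approximate differentiability (Proposition~\ref{Prop:fine prop GBV}(b)), so that in particular
\begin{equation*}
\lim_{\rho\to0^+}\frac{1}{\rho^d}\int_{Q(x,\rho)}\Big|\frac{u(y)-\widetilde u(x)-\nabla u(x)(y-x)}{\rho}\Big|\land 1\,{\rm d}y=0 ;
\end{equation*}
$x$ is a Lebesgue point of $\nabla u$; and
\begin{equation*}
\lim_{\rho\to0^+}\rho^{-d}\mathcal{V}(u,Q(x,\rho)\cap J_u)=\lim_{\rho\to0^+}\rho^{-d}|D^cu|(Q(x,\rho))=0 ,
\end{equation*}
which holds $\Lb^d$-a.e.\ by Besicovitch derivation since these measures are singular with respect to $\Lb^d$.

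The first step is to rescale $Q(x,\rho)$ to the unit cube: set $w_\rho(z)\coloneq u_\rho(x+\rho z)$ on $Q$. Then $\nabla w_\rho(z)=\nabla u(x+\rho z)$ and $[w_\rho]=[u]/\rho$. The key observation is that $J_{w_\rho}=J^0_{w_\rho}$ has $\Hd(J_{w_\rho}\cap Q)=\rho^{1-d}\Hd(J_u\cap Q(x,\rho))$, and this is bounded by $\rho^{1-d}\mathcal{V}(u,J_u\cap Q(x,\rho))$ only for the jumps with $|[u]|\ge1$. Small jumps are not controlled by $\mathcal{V}$ in $\Hd$-measure, so I would instead apply Theorem~\ref{Thm: Poincaré inequality} with $r=0$ only after checking that $\rho^{-d}\Hd(J_u\cap Q(x,\rho))\to0$. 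Here there is an issue. The statement uses $r=0$, so I would additionally require of $x$ that $\rho^{-d}\Hd(J_u\cap Q(x,\rho))\to0$. This holds a.e.\ because $J_u$ is $\sigma$-finite with respect to $\Hd$, hence $\Hd\mres J_u$ is a $\sigma$-finite measure concentrated on an $\Lb^d$-null set, and the derivation theorem applies on each piece of finite measure. Given this, for small $\rho$ the smallness condition \eqref{smallness per poincaré 1} holds on $Q$ (using Remark~\ref{remark: poincare constant} with a fixed $\gamma$), and \eqref{bad set poincarè} gives $\Lb^d(\{\overline w_\rho\neq w_\rho\})\le C\rho^{d(1-d)/(d-1)}\Hd(J_u\cap Q(x,\rho))^{d/(d-1)}\cdot$(scaling). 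Back in the original variables this yields
\begin{equation*}
\Lb^d(\{\overline u_\rho\neq u_\rho\})\le C\,\Hd(J_u\cap Q(x,\rho))^{d/(d-1)}=o(\rho^{d\cdot d/(d-1)}),
\end{equation*}
and since $d^2/(d-1)>d+1$, this proves \eqref{claim 2 blowup ac}.

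Next, \eqref{claim 5 blowup ac} follows because truncation does not create jumps or Cantor part: $J_{\overline u_\rho}\subset J_u$ and $|D^c\overline u_\rho|\le|D^cu|/\rho$ (Remark~\ref{remark: troncature}), so the choice of $x$ applies. For \eqref{claim 4 blowup ac}, I would use $\nabla\overline u_\rho=\chi_{\{\tau'<u_\rho<\tau''\}}\nabla u/\rho$, so that $\rho\nabla\overline u_\rho-\nabla u(x)$ differs from $\nabla u-\nabla u(x)$ only on $\{\overline u_\rho\ne u_\rho\}$ (up to level sets where the gradient vanishes a.e.). That set has measure $o(\rho^{d+1})$ by the previous step, and it contributes at most $|\nabla u(x)|\,o(\rho^{d+1})$ plus the Lebesgue-point error.

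The main obstacle is \eqref{claim 3 blowup ac}, which asks for $L^1$ convergence of the truncated blow-up rather than convergence in measure. I would apply \eqref{claim 2 poincaré} together with Remark~\ref{remark:scaling} to $\overline u_\rho-\nabla u(x)(\cdot-x)/\rho$. This affine correction is legitimate because truncating $u_\rho$ and subtracting a linear map commute only approximately. So I would instead apply Theorem~\ref{Thm: Poincaré inequality} to $v_\rho\coloneq u_\rho-\nabla u(x)(\cdot-x)/\rho$, whose variation satisfies $\rho^{-d}\mathcal{V}(\rho v_\rho,Q(x,\rho)\setminus J_u)\to0$ by the Lebesgue point and Cantor conditions. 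This gives $\rho^{-d}\|\overline v_\rho-m_\rho\|_{L^1}\to0$. The median $m_\rho$ tends to $0$ because $v_\rho\to0$ in measure on rescaled cubes by approximate differentiability, and $\{\overline v_\rho\ne\overline u_\rho-\nabla u(x)(\cdot-x)/\rho\}$ is contained in the two small bad sets, on which both functions are bounded by $|\tau'|,|\tau''|$. These bounds are controlled since the $\tau$'s are $O(1)$ via \eqref{claim 1 poincaré}. Combining the two bad sets (each $o(\rho^{d+1})$) with these uniform bounds then gives \eqref{claim 3 blowup ac}. If the $\tau$'s of $u_\rho$ turn out not to be bounded, I would fall back on defining $\overline u_\rho$ through $v_\rho$ and adjusting by the affine term.
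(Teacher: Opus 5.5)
Your arguments for \eqref{claim 2 blowup ac}, \eqref{claim 4 blowup ac} and \eqref{claim 5 blowup ac} are the paper's: rescaling to $Q$, Theorem~\ref{Thm: Poincaré inequality} with $r=0$ together with $d^2/(d-1)>d+1$, the gradient formula for the truncation, and Remark~\ref{remark: troncature}.

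For \eqref{claim 3 blowup ac} you take a genuinely different, compactness-free route. You apply the Poincar\'e inequality directly to the affine-corrected $v_\rho$. The paper instead applies Corollary~\ref{cor: vanishing compactness} to the rescaled truncations of $u_\rho$. This gives $L^1$-convergence of their deviation from the median along subsequences, and the limit is identified as $z\mapsto\nabla u(x)z$ (with vanishing medians) via approximate differentiability.

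The first half of your argument works: $\rho^{-d}\|\overline v_\rho-m_\rho\|_{L^1(Q(x,\rho))}\to0$ and $m_\rho\to0$. The comparison step fails as written, however. The levels $\tau'(u_\rho,Q(x,\rho))$ and $\tau''(u_\rho,Q(x,\rho))$ are \emph{not} $O(1)$, and \eqref{claim 1 poincaré} says nothing about them, since it bounds $|D\overline u_\rho|$. For $k=1$ (to which one reduces componentwise), the proof of Theorem~\ref{Thm: Poincaré inequality} only yields
\begin{equation*}
(\tau''-\tau')\,\Hd(J_u\cap Q(x,\rho))\le\mathcal{V}(u_\rho,Q(x,\rho)\setminus J_u)\le C\rho^{d-1}.
\end{equation*}
This degenerates precisely in your regime $\Hd(J_u\cap Q(x,\rho))=o(\rho^d)$. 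The levels are extreme quantiles of $u_\rho$, nothing bounds them, and they are infinite (by convention) when that measure vanishes. So you have no pointwise bound on $\overline u_\rho$ on the bad set. Your fallback of truncating $v_\rho$ instead proves a statement about a different function.

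What is actually needed is smallness of the integral over $B_\rho\coloneq\{\overline u_\rho\neq u_\rho\}\cup\{\overline v_\rho\neq v_\rho\}$, not bounded levels, and this does hold. Since $\tau'\le m(u_\rho)\le\tau''$, the display together with \eqref{bad set poincarè} gives
\begin{equation*}
\int_{B_\rho}|\overline u_\rho-m(u_\rho)|\,{\rm d}y\le(\tau''-\tau')\,\Lb^d(B_\rho)\le C\rho^{d-1}\,\Hd(J_u\cap Q(x,\rho))^{1/(d-1)}=o(\rho^d).
\end{equation*}
Alternatively, use the scale-invariant $L^{d/(d-1)}$ bound \eqref{claim 2 poincaré} and H\"older. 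Combine this with:
\begin{itemize}
\item boundedness of $m(u_\rho)$, since $z\mapsto u_\rho(x+\rho z)$ converges in measure to $z\mapsto\nabla u(x)z$;
\item the trivial bound on the affine term;
\item $\int_{B_\rho}|\overline v_\rho|\le\|\overline v_\rho-m_\rho\|_{L^1}+|m_\rho|\Lb^d(B_\rho)=o(\rho^d)$.
\end{itemize}
This closes your argument with a more quantitative estimate. The paper's compactness route gets the required equi-integrability for free from the uniform ${\rm BV}$ bound.

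Separately, your justification that $\rho^{-d}\Hd(J_u\cap Q(x,\rho))\to0$ for $\Lb^d$-a.e.\ $x$ is not valid. Vanishing density of each of countably many finite pieces of $\Hd\mres J_u$ says nothing about their union. In fact, for $u\in{\rm GBV}_\star$ (even ${\rm BV}$) one can have $\Hd(J_u\cap U)=+\infty$ for every open $U\subset\Omega$. An example is $u=\sum_n 2^{-n}\chi_{B(q_n,r)}$ with $\{q_n\}_n$ dense and $r>0$ fixed. Then the condition fails at every point, and the $r=0$ levels are not even defined, since \eqref{smallness per poincaré 1} fails.

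The paper takes the same condition \eqref{non singular} for granted; it follows from Besicovitch only when $\Hd\mres J_u$ is locally finite. So this is a limitation of the $r=0$ formulation that you share with the paper, not a defect specific to your proof. It cannot, however, be derived from $\sigma$-finiteness.
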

\begin{proof}
We give the proof only in the scalar valued case $k=1$, as the general case $k>1$ is an immediate consequence of the former. \BBB We note that for $\mathcal{L}^d$-a.e.\ $x \in \Omega$ we have
    \begin{align}\label{Lebesgue gradients}
        &\lim_{\rho\to 0^+}\frac{1}{\rho^{d}}\int_{Q(x,\rho)}|\nabla u(y)-\nabla u(x)|\,{\rm d}y=0,\\\label{non singular}
       & \lim_{\rho\to 0^+}\frac{1}{\rho^d}\Hd(J_{u}\cap Q(x,\rho))=0 \quad \text{ and } \lim_{\rho\to 0^+}\frac{1}{\rho^d}|D^cu|(Q(x,\rho))=0,\\\label{approx differentiabilty}
       & \lim_{\rho\to 0^+}\frac{1}{\rho^d}\Lb^d\Big(\Big\{y\in Q(x,\rho)\colon\,\frac{|u(y)-\widetilde{u}(x)-\nabla u(x) (y-x)|}{|y-x|}>\lambda\Big\}\Big)=0 \text{ for all }\lambda>0.
    \end{align}
It suffices to prove the statement for such $x \in \Omega$ satisfying \eqref{Lebesgue gradients}--\eqref{approx differentiabilty}. \EEE   We begin by observing that \eqref{approx differentiabilty} implies  
    \begin{equation}\label{slightly weaker approx diff}
        \lim_{\rho\to 0^+}\frac{1}{\rho^d}\Lb^d\Big(\Big\{y\in Q(x,\rho)\colon\,\frac{|u(y)-\widetilde{u}(x)-\nabla u(x)\cdot(y-x)|}{\rho} >\lambda\Big\}\Big)=0 \text{ for all }\lambda>0,
    \end{equation} 
 that $u_\rho\in {\rm GBV}_\star(Q(x,\rho))$ and, by Remark~\ref{remark:  troncature},  that $\overline{u}_\rho$ belongs to ${\rm BV}(Q(x,\rho))\cap L^\infty(Q(x,\rho))$. By the same remark it also follows that  $J_{\overline{u}_\rho}\subset J_u$ up to   an \EEE $\Hd$-negligible set and that $|D^c\overline{u}_\rho|(Q(x,\rho))\leq \rho^{-1}|D^cu|(Q(x,\rho))$.  Thus, \EEE by \eqref{non singular} we get  
\begin{align*}
    \limsup_{\rho\to 0^+}\frac{1}{\rho^d}\Hd(J_{\overline{u}_\rho}\cap Q(x,\rho)) & \leq\limsup_{\rho\to 0^+}\frac{1}{\rho^d}\Hd(J_{{u}}\cap Q(x,\rho))=0,\\
     \limsup_{\rho\to 0^+}\frac{1}{\rho^{d-1}}|D^c\overline{u}_\rho|(Q(x,\rho)) & \leq  \limsup_{\rho\to 0^+}\frac{1}{\rho^{d}}|D^c{u}|(Q(x,\rho))=0.
\end{align*}
 This \EEE proves \eqref{claim 5 blowup ac},
so that we are left with proving \eqref{claim 2 blowup ac}--\eqref{claim 4 blowup ac}. 

To this end, we
 consider  suitable rescalings of $u_\rho$. More precisely, for $z\in Q$ we define $v_\rho(z)\coloneq u_\rho(x+\rho z)$.  We have \EEE $\{v_\rho\}_\rho\subset {\rm GBV}_\star(Q)$ and \BBB by \MMM  a change of variables \EEE  it holds  
     \begin{equation}\notag
     \mathcal{V}(v_\rho,Q\setminus J_{v_\rho} )\leq \frac{1}{\rho^{d}}\Big(\int_{Q(x,\rho)}|\nabla {u}|\,{\rm d}x+|D^c{u}|\big(Q(x,\rho)\big)\Big).
\end{equation}
     Since $x+\rho J_{v_\rho}=J_{u_\rho}=J_u\cap Q(x,\rho)$, up to $\Hd$-negligible sets,   by a change of variables we get \EEE     \begin{equation*}
    \Hd(Q\cap J_{v_\rho}) = \frac{1}{\rho^{d-1}}\Hd( J_u\cap Q(x,\rho)).
\end{equation*}
Combining the two previous  displayed formulas \EEE with \eqref{Lebesgue gradients} and \eqref{non singular}, we get
\begin{equation}\label{vanishing V}
    \limsup_{\rho\to 0^+} \mathcal{V}(v_\rho,Q\setminus J_{v_\rho} )<+\infty \quad \text{and}\quad \lim_{\rho\to 0^+}\Hd( J_{v_\rho}\cap Q)=0.
\end{equation}
Therefore, for $\rho>0$ small enough we may apply Theorem~\ref{Thm: Poincaré inequality} with $r=0$, and by \eqref{bad set poincarè} we get that the function defined  by $\overline{v}_\rho(z)\coloneq (v_\rho(z)\lor \tau'(v_\rho,Q))\land \tau''(v_\rho,Q)$ for $z\in Q$ satisfies
\begin{equation}\label{bad set weps}
        \Lb^d(\{\overline{v}_\rho\neq v_\rho\})\leq C_{\rm Poin}\Hd(J_{v_\rho}\cap Q)^{d/(d-1)}.
    \end{equation}
For every $0< s\leq \rho^d$ we have the equality $(u_\rho)_\star(s,Q(x,\rho))=(v_\rho)_\star(s/\rho^d,Q)$, so that (recall \eqref{eq: medain})
    \begin{gather*}
        \tau'(v_\rho,Q)= \tau'(u_\rho,Q(x,\rho)) \quad \text{ and }\quad   \tau''(v_\rho,Q)= \tau''(u_\rho,Q(x,\rho)),\\
        \text{med}(v_\rho,Q)=\text{med}(u_\rho,Q(x,\rho)),
    \end{gather*}
and,    as consequence, $\overline{v}_\rho(z)=\overline{u}_\rho(x+\rho z)$. A change of variables and \eqref{bad set weps} then show that 
\begin{equation*}
     \frac{1}{\rho^d}\Lb^d(\{\overline{u}_\rho\neq u_\rho\})\leq  C_{\rm Poin} \EEE\Big(\frac{1}{\rho^{d-1}}\Hd(J_{u_\rho}\cap Q(x,\rho))\Big)^{d/(d-1)}.
\end{equation*}
 Therefore, \EEE
\begin{equation*}
    \frac{1}{\rho^{d^2/(d-1)}}\Lb^d(\{\overline{u}_\rho\neq u_\rho\})\leq C_{\rm Poin}\Big(\frac{1}{\rho^d}\Hd\big(J_{u_\rho}\cap Q(x,\rho)\big)\Big)^{\frac{d}{d-1}}=C_{\rm Poin}\Big(\frac{1}{\rho^d}\Hd\big(J_{u}\cap Q(x,\rho)\big)\Big)^{\frac{d}{d-1}}.
\end{equation*}
Recalling the first equality in  \eqref{non singular}, and observing that $d^2/(d-1)\geq (d+1)$, we have just proved   
\begin{equation}\label{vanishig bad set}
    \lim_{\rho\to 0^+}\frac{1}{\rho^{d+1}}\Lb^d(\{\overline{u}_\rho\neq u_\rho\})=0,
\end{equation}
which is exactly \eqref{claim 2 blowup ac}.  To get \eqref{claim 4 blowup ac}, we simply observe that   
\begin{equation*}
    \nabla  \overline{u}_\rho \EEE (y)=\begin{cases}\displaystyle\rho^{-1}\nabla u(y)& \text{ if $y\in \{\overline{u}_\rho=u_\rho\}$},\\
      0  & \text{ if $y\in Q(x,\rho)\cap \{\overline{u}_\rho\neq u_\rho\}$},
    \end{cases}
\end{equation*}
so that, in view of  \eqref{Lebesgue gradients} and  \eqref{vanishig bad set}, we infer
\begin{align*}
    \limsup_{\rho\to 0^+}\frac{1}{\rho^d}&\int_{Q(x,\rho)}|\rho\nabla \overline{u}_\rho(y)-\nabla u(x)|\,{\rm d}y \\ & \leq \lim_{\rho\to 0^+}\Big(\frac{1}{\rho^d}\int_{Q(x,\rho)}|\nabla u(y)-\nabla u(x)|\,{\rm d}y +\frac{1}{\rho^d}\Lb^d(\{\overline{u}_\rho\neq u_\rho\})|\nabla u(x)|\Big)=0.
    \end{align*}
We  conclude by proving \eqref{claim 3 blowup ac}.
To this end, we observe that, thanks to \eqref{vanishing V}, we may apply Corollary~\ref{cor: vanishing compactness} with $r=0$ to obtain  a sequence $\{\rho_n\}_n$, with $\rho_n\to 0$ as $n\to+\infty$, and a function $v_0\in {\rm BV}(Q)$ such that 
\begin{align}\label{comp w 1}
    &v_{\rho_n}-{\rm med}(v_{\rho_n},Q) \  \text{ converges to }v_0 \text{ in }L^0(Q;\Rk) \text{ as }n\to+\infty,\\\label{comp w 2}
    &\overline{v}_{\rho_n}-{\rm med}(v_{\rho_n},Q) \  \text{ converges to }v_0\text{ strongly in }L^1(Q;\Rk) \text{ as }n\to+\infty.
    \end{align}
 The same arguments portrayed in \cite[Lemma~2, Page~196]{BFML2002} show that, by   using   \eqref{slightly weaker approx diff},
\begin{equation*}
    \lim_{n\to +\infty}\text{med}(v_{\rho_n},Q)=0.
\end{equation*}
 In view of \eqref{slightly weaker approx diff}, \eqref{comp w 1}, and \eqref{comp w 2}, this implies that  $v_0(z)=\nabla u(x) z$ for all $z\in Q$, so that $v_0$ does not depend on the chosen subsequence $\{\rho_n\}_n$. Thus,
\begin{align*}
 \{\overline{v}_{\rho}\}_\rho \text{ converges to }\nabla u(x)z\text{ strongly in }L^1(Q;\Rk) \text{ as }\rho\to0^+.
    \end{align*}
\MMM It is now enough to observe that, by another change of variables, \EEE  this can be rewritten as 
\begin{equation*}
    \lim_{\rho\to 0^+}\frac{1}{\rho^d}\int_{Q(x,\rho)}\Big|\overline{u}_\rho(y) -\nabla u(x)\frac{(y-x)}{\rho}\Big|\,{\rm d}y=0,
\end{equation*}
which is exactly \eqref{claim 3 blowup ac}. This concludes the proof of the lemma.
\end{proof}

In the following, we analyse the behaviour of blow-ups of functions in ${\rm GBV_\star}(\Omega;\Rk)$ around jump points, drawing ideas from   \cite[Lemma~6.1]{CriFriSolo} and \cite[Lemma~3.4]{ContiFocardiIurlanoSBDp}.    The  strategy is  similar to the one employed in Lemma~\ref{lemma blow up ac},  based on the following observation: since the jump set $J_u$ is $(d-1)$-rectifiable, around a jump point $x$ and for $\rho>0$ small enough, $J_u\cap Q(x,\rho)$  is  mostly contained in a smooth hypersurface which subdivides the cube into two connected halves.  As the restrictions of the function $u$ to each of the two halves have small jump sets, it is then possible to perform the modifications proposed in Lemma~\ref{lemma blow up ac} \MMM in each half.  \BBB Recall  \MMM  \eqref{def blow up set} and \EEE \eqref{stepfun}. \EEE  

\begin{lemma}[Blow-up at jump points]\label{lemma: blow up jump points} 
Let $u \in {\rm GBV}_\star(\Omega;\Rk)$. Then, \BBB for $\mathcal{H}^{d-1}$-a.e.\ $x \in J_u$, letting $\nu\coloneq \nu_u(x)$ and $v_{x}\coloneq v_{x,u^+,u^-,\nu}$,  there  exists \EEE $u_\rho\in{\rm BV}(Q_\nu(x,\rho);\Rk)$ such that 
\begin{align}\label{claim blow up jump 1}
 & \lim_{\rho\to 0^+}\frac{1}{\rho^{d}}\Lb^d(\{u_\rho\neq u\}\cap Q_\nu(x,\rho))=0,  
 \\ \label{claim blow up jump 2}&  \lim_{\rho \to 0^+}   \frac{1}{\rho^{d}} \int_{Q_\nu(x,\rho)}  |u_\rho -  {v}_{x}  |\, {\rm d}y = 0,\\\label{claim blow up jump 3}
 & \lim_{\rho \to 0^+}     \frac{1}{\rho^{d-1}}\,\mathcal{H}^{d-1}(J_{u_\rho}\cap E_{\rho,x})\leq \MMM \Hd(\Pi_x^\nu \cap E) \EEE \quad \text{for all Borel sets } E \subset \MMM  Q_\nu(x,1), \EEE \\\label{claim blow up jump 4}
 &    \lim_{\rho \to 0^+} \ \frac{1}{\rho^{d-1}} \int_{Q_\nu(x,\rho)} | \nabla u_\rho| \, \mathrm{d}y = 0\quad \text{and}\quad \lim_{\rho \to 0^+} \ \frac{1}{\rho^{d-1}}|D^cu_\rho|(Q_\nu(x,\rho))=0.
\end{align}
\end{lemma}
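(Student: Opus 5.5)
Work componentwise (the general case $k>1$ follows from $k=1$) and fix a point $x\in J_u$ at which the following hold. The quantities $\rho^{1-d}\int_{Q_\nu(x,\rho)}|\nabla u|\,{\rm d}y$ and $\rho^{1-d}|D^cu|(Q_\nu(x,\rho))$ vanish. The rescaled measures $\rho^{1-d}\Hd\mres((J_u-x)/\rho)$ converge to $\Hd\mres \Pi^\nu_0$, so that $\rho^{1-d}\Hd(J_u\cap E_{\rho,x})\to\Hd(\Pi^\nu_x\cap E)$ for Borel $E$ with negligible boundary and, by monotonicity and regularity arguments, for all Borel $E$ in the $\limsup$ sense. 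The traces $u^\pm(x)$ are attained as one-sided approximate limits. Finally, there is a $C^1$ hypersurface $M$ through $x$ with normal $\nu$ at $x$ such that $\rho^{1-d}\Hd((J_u\setminus M)\cap Q_\nu(x,\rho))\to 0$. All of this holds at $\Hd$-a.e.\ point of $J_u$ by rectifiability, \cite[Theorem~2.56]{AFP}, Besicovitch derivation and Proposition~\ref{Prop:fine prop GBV}. For small $\rho$, $M$ splits $Q_\nu(x,\rho)$ into two regions $Q^\pm_\rho$ whose rescalings $(Q^\pm_\rho-x)/\rho$ are uniformly bi-Lipschitz to the half-cubes $Q^\pm_\nu=\{\pm y\cdot\nu>0\}\cap Q_\nu$. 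By Remark~\ref{remark: constant isoperimetric} and Remark~\ref{remark: poincare constant}, Theorem~\ref{Thm: Poincaré inequality} therefore holds on the rescaled pieces with a uniform constant.

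Next apply Theorem~\ref{Thm: Poincaré inequality} with $r=0$ on each piece to the rescaled function $w_\rho(z)\coloneq u(x+\rho z)$ restricted to $(Q^\pm_\rho-x)/\rho$. The jump set of $w_\rho$ inside each piece has $\Hd$-measure $\rho^{1-d}\Hd((J_u\setminus M)\cap Q_\nu(x,\rho))\to0$, so the smallness assumption \eqref{smallness per poincaré 1} holds for small $\rho$. Moreover $\mathcal{V}(w_\rho,\cdot\setminus J_{w_\rho})\le\rho^{1-d}(\int|\nabla u|+|D^cu|)\to0$. Let $\overline w^\pm_\rho$ denote the truncations given by the theorem, and define $u_\rho(y)\coloneq\overline w^\pm_\rho((y-x)/\rho)$ on $Q^\pm_\rho$. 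By \eqref{bad set poincarè}, $\rho^{-d}\Lb^d(\{u_\rho\ne u\})\le C(\rho^{1-d}\Hd((J_u\setminus M)\cap Q_\nu(x,\rho)))^{d/(d-1)}\to0$, which is \eqref{claim blow up jump 1}. By \eqref{claim 2 poincaré}, $\overline w^\pm_\rho$ is $L^1$-close to a median $m^\pm_\rho$, with error $\lesssim \rho^{1-d}(\int_{Q_\nu(x,\rho)}|\nabla u|+|D^cu|(Q_\nu(x,\rho)))\to0$. Since $w_\rho\to u^\pm(x)$ in measure on the respective half-cubes (one-sided approximate limits), and $\{w_\rho\neq \overline w^\pm_\rho\}$ has vanishing measure, the medians satisfy $m^\pm_\rho\to u^\pm(x)$. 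Combining with the $L^1$ estimate and with the convergence of the rescaled pieces to the half-cubes gives \eqref{claim blow up jump 2}.

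For the derivative estimates, the truncation properties in Remark~\ref{remark: troncature} give $|\nabla u_\rho|\le|\nabla u|$, $|D^cu_\rho|\le|D^cu|$ on each piece, and $J_{u_\rho}\subset J_u\cup (M\cap Q_\nu(x,\rho))$ up to $\Hd$-null sets. This yields \eqref{claim blow up jump 4} directly. For \eqref{claim blow up jump 3}, split $\Hd(J_{u_\rho}\cap E_{\rho,x})\le\Hd(M\cap E_{\rho,x})+\Hd((J_u\setminus M)\cap Q_\nu(x,\rho))$. The second term is $o(\rho^{d-1})$. The first, after rescaling, is the measure of $((M-x)/\rho)\cap E$, which converges to $\Hd(\Pi^\nu_x\cap E)$ because $M$ is $C^1$ and its blow-ups converge in $C^1$ to the tangent plane. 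This holds at least for $E$ with $\Hd(\Pi^\nu\cap\partial E)=0$ (closed $E$ by upper semicontinuity), which suffices for the later use with $E=O''$; for general Borel $E$ one argues via outer regularity. Finally, $u_\rho$ is bounded on each piece and has finite variation, so $u_\rho\in{\rm BV}$.

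The main obstacle I expect is the uniformity of the Poincaré constant on the curved pieces $Q^\pm_\rho$ after blow-up, together with the corresponding isoperimetric smallness condition. I would handle it by flattening $M$ with a $C^1$ graph map whose Lipschitz constants tend to $1$, and then invoking Remarks~\ref{remark: constant isoperimetric} and~\ref{remark: poincare constant}. A secondary delicate point is identifying the limits of the medians as $u^\pm(x)$. For this I would use that medians are stable under convergence in measure when the limit is a constant.
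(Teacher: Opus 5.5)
Your overall scheme is the paper's: a $C^1$ hypersurface $M$ through $x$ carrying $J_u$ up to density zero, truncation on each side by Theorem~\ref{Thm: Poincaré inequality} with a uniform constant, gluing, and identification of the medians. Your median step differs slightly: you use directly that medians are stable under convergence in measure to a constant, whereas the paper goes through the Poincaré estimate and an auxiliary sequence $\lambda_\rho$. Yours is simpler and fine. Two steps, however, fail as written, and the paper's own proof contains the same two gaps.

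\emph{Infinite jump-set measure.} Everything rests on $\rho^{1-d}\Hd((J_u\setminus M)\cap Q_\nu(x,\rho))\to0$, and likewise on the blow-up convergence of $\rho^{1-d}\Hd\mres((J_u-x)/\rho)$. These are density statements for $\Hd\mres J_u$, and Besicovitch derivation requires that measure to be locally finite. For $u\in\GBVs(\Omega;\Rk)$, indeed already for $u\in\BV(\Omega;\Rk)$, only $\Hd(J^1_u)$ and $\int_{J_u}|[u]|\wedge1\,{\rm d}\Hd$ are finite. For instance, the scalar $u(y)=\sum_m2^{-m}\chi_{\{y_1>q_m\}}$, with $\{q_m\}_m$ an enumeration of $\mathbb{Q}$, is in $\BV$ but has $\Hd(J_u\cap U)=+\infty$ for every open $U\subset\Omega$. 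At every jump point both sides of $M$ then carry jump sets of infinite measure. The smallness hypothesis of Theorem~\ref{Thm: Poincaré inequality} with $r=0$ fails, $\tau',\tau''$ are not even defined, and none of its conclusions is available. The paper's density properties \eqref{blow up jump 1} and \eqref{blow up jump 4} fail in the same way.

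A repair inside your scheme is to truncate with $r=1$. At $\Hd$-a.e.\ $x\in J_u\cap M$, the finite measures $\Hd\mres(J^1_u\setminus M)$, $(|[u]|\wedge1)\Hd\mres(J_u\setminus M)$ and $|\nabla u|\Lb^d+|D^cu|$ are singular with respect to $\Hd\mres M$. Hence $\Hd(J^1_u\cap Q^\pm_\rho)=o(\rho^{d-1})$ and $\mathcal{V}(u,Q^\pm_\rho\setminus J^1_u)=o(\rho^{d-1})$. Your argument then gives \eqref{claim blow up jump 1}, \eqref{claim blow up jump 2} and \eqref{claim blow up jump 4}. It gives \eqref{claim blow up jump 3} only with $\Hd(J_{u_\rho}\cap E_{\rho,x})$ replaced by $\int_{J_{u_\rho}\cap E_{\rho,x}}|[u_\rho]|\wedge1\,{\rm d}\Hd$. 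That is the form in which it enters the proof of Proposition~\ref{prop:repre surf}, see \eqref{blowup jump 4 applied}. The literal form would additionally require removing the small jumps on each side, e.g.\ by a Lusin-type Lipschitz approximation of the truncated functions.

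\emph{Arbitrary Borel $E$.} Your extension of \eqref{claim blow up jump 3} to all Borel $E$ ``via outer regularity'' does not work, and neither does the analogous $\limsup$ claim for $J_u$ in your first paragraph. Outer approximation is by open sets, for which weak convergence only gives a lower bound. In fact the inequality is false for your $u_\rho$. Take $u=a\chi_{\{(y-x)\cdot\nu>|\pi^\nu(y-x)|^2\}}$ near $x$ with $a\neq0$, $M=J_u$, and $E=Q_\nu(x,1)\setminus\Pi^\nu_x$. Both sides are jump-free, so $u_\rho=u$ and $\rho^{1-d}\Hd(J_{u_\rho}\cap E_{\rho,x})\to1$, whereas $\Hd(\Pi^\nu_x\cap E)=0$. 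The paper's \eqref{blow up jump 5}, asserted for all Borel $E$, fails on the same example.

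As you observe, only $E=O''$ is used later, and it satisfies $\Hd(\Pi^\nu_x\cap\partial O'')=0$. So you should state \eqref{claim blow up jump 3} only for such $E$, or for closed $E$. Getting it for every Borel $E$ would require $J_{u_\rho}\subset\Pi^\nu_x$ up to $o(\rho^{d-1})$. One way is to reflect the truncated functions across $M$ into the thin region between $M$ and $\Pi^\nu_x$.
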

\BBB Note that the function $u_\rho$ is different from the one constructed in Lemma~\ref{lemma blow up ac}  although we use the same notation.  
\begin{proof}
Clearly, it is enough to prove the result in the scalar case $k=1$. 

\BBB
We observe that for $\mathcal{H}^{d-1}$-a.e.\ $x \in J_u$ the following holds: first, we have \begin{align}\label{blow up jump 1}
   &\lim_{\rho\to 0^+}\frac{1}{\rho^{d-1}}\Hd(J_u\cap Q_\nu(x,\rho))=1,\\ \label{blow up jump 2}
   &\lim_{\rho\to 0^+}\frac{1}{\rho^{d-1}}\int_{Q_\nu(x,\rho)}|\nabla u|\,{\rm d}y=0\quad \text{ and }\quad    \lim_{\rho\to 0^+}\frac{1}{\rho^{d-1}}|D^cu|(Q_\nu(x,\rho))=0,\\\label{blow up jump 3}
   &\lim_{\rho\to 0^+}\frac{1}{\rho^d}\Lb^d(\{y\in  Q_\nu(x,\rho) \EEE \colon \, |u(y)-v_{x}(y)|>\lambda\})=0\quad \text{for all $\lambda>0$}.
\end{align}
Indeed, almost every jump point   is  a point of density $1$ for $J_u$, i.e., \eqref{blow up jump 1} holds.  Condition \eqref{blow up jump 2}   is satisfied for  $\Hd$-a.e.\ point of $\Omega$ since the measures $\nabla u \Lb^d$ and $D^cu$ are diffuse (see, for instance, \cite[Theorem~2.56]{AFP}).  Condition \eqref{blow up jump 3} is a   rephrasing of the definition of jump points given in  \eqref{def Jump set}. 

Moreover,  there exist $\overline{\rho}>0$ and  a $(d-1)$-dimensional surface $\Gamma\subset Q_\nu(x,\bar{\rho})$ of class $C^1$ with $x \in \Gamma$ such that $\Gamma$ is the graph of a function $h$ defined on the hyperplane $\Pi^\nu_{x}=\{(y-x)\cdot\nu=0\}$, the hyperplane $\Pi^\nu_x$ is tangent to $\Gamma$ in $x$, for all $0<\rho<\overline{\rho}$ the surface $\Gamma\cap Q_\nu(x,\rho)$ separates $Q_\nu(x,\rho)$ into two open connected components $Q^{\Gamma,\pm}_{\nu}(x,\rho)$,  and 
\begin{align}
 & \label{blow up jump 4}\lim_{\rho\to 0^+} \frac{1}{\rho^{d-1}} \Hd\big((J_u \triangle \Gamma) \cap Q_\nu(x,\rho)\big)=0,  \\ & \label{blow up jump 5}
 \lim_{\rho\to 0^+}  \frac{1}{\rho^{d-1}} \Hd(\Gamma\cap  E_{\rho,x}  )= \Hd(\Pi^\nu_x \cap E) \quad \text{for all Borel sets } E \subset Q_\nu( \MMM x \EEE ,1).
\end{align}
This property can be justified  by \EEE arguing exactly as in the beginning of the proof of \cite[Theorem~2]{ChambolleApprox} (see also the comments in \cite[Section~3]{AddendumChambolle}), noting that  the   arguments rely only on the rectifiability of the set $J_u$. 

We start with a first observation: the Lipschitz constant of $h \, \mres_{ \Pi^\nu_x\cap Q_\nu(x,\rho)\EEE}$ \EEE  converges to zero as $\rho\to 0^+$.  \EEE  This implies that     
 \begin{equation}\label{splitting}
 \lim_{\rho\to 0^+}\frac{1}{\rho^{d}}\Lb^d\big(Q_\nu^{\Gamma,\pm}(x,\rho) \triangle Q_\nu^\pm(x,\rho)\big)=0,
 \end{equation}
where we have set $Q_\nu^\pm(x,\rho) \coloneq \{ y \in Q_\nu(x,\rho)\colon \pm  (y-x)  \cdot \nu >0\}$.  

In view of   \eqref{blow up jump 4}, the restriction of $u$ to $Q^{\Gamma,\pm}_{\nu}(x,\rho)$ satisfies the hypothesis of Theorem~\ref{Thm: Poincaré inequality} with $\Omega$ replaced by $Q^{\Gamma,\pm}_{\nu}(x,\rho)$. As $\{  \frac{1}{\rho}(\Gamma\cap Q_\nu(x,\rho))\}_\rho$ are graphs of functions whose Lipschitz constant converges to zero as $\rho\to 0^+$, the constant in the Poincar\'e-type inequality can be chosen independently of $\rho$, see  Remarks \ref{remark: poincare constant}--\ref{remark:scaling}.    Hence, letting $\overline{u}^{\pm}_\rho\in {\rm BV}(Q^{\Gamma,\pm}_\nu(x,\rho))$ be the functions defined by 
\begin{equation*}
\overline{u}_\rho^{\pm}\coloneq \big(u\lor \tau'(u,Q_\nu^{\Gamma,\pm}(x,\rho)\big)\land\tau''(u,Q_\nu^{\Gamma,\pm}(x,\rho))
\end{equation*}
and letting $m^\pm_\rho\coloneq {\rm med}(u,Q_\nu^{\Gamma,\pm}(x,\rho))$, recalling also Remark~\ref{remark:scaling}, 
we have that   
\begin{align}
\label{bounded derivative}
&|D\overline{u}^\pm_\rho|(Q_\nu^{\Gamma,\pm})\leq 2\mathcal{V}\big(u,Q^{\Gamma,\pm}_\nu(x,\rho)\setminus J_u\big),\\
\label{poincare blow up jump}&\int_{Q_\nu^{\Gamma,\pm}(x,\rho)}|\overline{u}^\pm_\rho-m^\pm_\rho|\,{\rm d}y\leq \rho C_{\rm Poin} \mathcal{V}\big(u,Q^{\Gamma,\pm}_\nu(x,\rho)\setminus J_u\big),
\\\label{smallness on Gamma}
&\Lb^d(\{\overline{u}^\pm_{\rho}\neq u\})\leq C_{\rm Poin}\Hd\big(J_{u}\cap Q^{\Gamma,\pm}_\nu(x,\rho)\big)^{d/(d-1)},\\
\label{inclusion jump sets truncations}
& J_{\overline{u}^\pm_\rho}\subset J_u\cap Q^{\Gamma,\pm}_\nu(x,\rho) \quad \text{up to an $\Hd$-negligible set}
\end{align}
for a constant $C_{\rm Poin}>0$  independent of $\rho$. Note that from \eqref{blow up jump 4} and \eqref{smallness on Gamma} it follows that 
\begin{equation}\label{smalness ubarrapiu}
    \lim_{\rho\to 0^+}\frac{1}{\rho^d}\Lb^d\big(\{\overline{u}^\pm_{\rho}\neq u\}\cap Q^{\Gamma,\pm}_{\nu}(x,\rho)\big)=0.
\end{equation}

 We now introduce \MMM the \EEE modifications $u_\rho\in {\rm BV}(Q_\nu(x,\rho))$ by setting 
\begin{equation*}u_\rho(y)\coloneq \begin{cases}\overline{u}^{+}_\rho &\text{ if $y\in Q^{\Gamma,+}_\nu(x,\rho)$},\\
    \overline{u}^{-}_\rho &\text{ if $y\in Q^{\Gamma,-}_\nu(x,\rho)$,}        
    \end{cases}
\end{equation*}
and show that $\{u_\rho\}_\rho$ \MMM satisfy \EEE properties \eqref{claim blow up jump 1}--\eqref{claim blow up jump 4}. Property \eqref{claim blow up jump 1} has already been addressed in \eqref{smalness ubarrapiu}. To prove \eqref{claim blow up jump 3}, we first observe that by \eqref{inclusion jump sets truncations} we have  
 \begin{equation*}
 J_{u_\rho} \subset \Gamma \cup J_u\quad \text{ up to an $\Hd$-negligible set},
 \end{equation*}
 which, together with \eqref{blow up jump 4} and \eqref{blow up jump 5}, shows \eqref{claim blow up jump 3}.  
 The equalities in \eqref{claim blow up jump 4} are a consequence of \eqref{blow up jump 2} and of \eqref{bounded derivative}. 

 It remains to prove  \eqref{claim blow up jump 2}.  We note that this would follow from
\begin{equation}\label{median to jumps}
\lim_{\rho\to 0^+}{ m^\pm_\rho}=u^{\pm}(x).
\end{equation}
Indeed, \eqref{blow up jump 2} and \eqref{poincare blow up jump}  give that
\begin{equation*}
\lim_{\rho\to 0^+} \frac{1}{\rho^d} \int_{ Q^{\Gamma,\pm}_\nu(x,\rho)}|  u_\rho   -m^\pm_\rho|\, {\rm d}y=0.
\end{equation*}
Recalling \eqref{splitting}, by the triangle inequality this implies   
\begin{equation*}
\lim_{\rho\to 0^+} \frac{1}{\rho^d} \int_{ Q^{\pm}_\nu(x,\rho)}|  {u}_\rho-m^\pm_\rho|\, {\rm d}y=0.
\end{equation*}
Then, \eqref{median to jumps} and the triangle inequality  yield, in turn, \eqref{claim blow up jump 2}.

We now show \eqref{median to jumps}. In view of \eqref{blow up jump 3}, by a diagonal argument we can find a sequence $\{\lambda_\rho\}_\rho \subset (0,+\infty)$ with $\lambda_\rho\to 0$ as $\rho\to 0^+$ such that the sets
\begin{equation*}
{\omega}^{\pm}_\rho\coloneq \{y \in Q^\pm_\nu(x,\rho) \colon\, |u(y) - u^\pm(x)|> \lambda_\rho\}
\end{equation*}
\MMM satisfy \EEE   
\begin{equation}\label{smallness lambda rho}
\lim_{\rho\to 0^+}\frac{1}{\rho^{d}}\Lb^d(\omega^\pm_\rho)=0.
\end{equation}
Recalling \eqref{poincare blow up jump},  we have that
\begin{equation}\label{poincare u meno cose}
\int_{Q^{\Gamma,\pm}_\nu(x,\rho)\setminus\{\overline{u}^\pm_{\rho}\neq u\}}|u(y) - m^\pm_\rho|\,{\rm d}y  \le \rho C_{\rm Poin} \mathcal{V}\big(u,Q^{\Gamma,\pm}_\nu(x,\rho)\setminus J_u\big).
\end{equation}
By the  definition of ${\omega}^\pm_\rho$, using \eqref{poincare u meno cose} and the  triangle inequality, we then get  
\begin{equation}\label{final countdown}
|u^\pm(x)-m^\pm_\rho|\Lb^d\big(( Q^\pm_\nu(x,\rho) \cap Q_\nu^{\Gamma,\pm}(x,\rho))\setminus (\{\overline{u}^\pm_{\rho}\neq u\}\cup \omega^\pm_\rho)\big)  \le \rho C_{\rm Poin} \mathcal{V}\big(u,Q^{\Gamma,\pm}_\nu(x,\rho)\setminus J_u\big)+\rho^d\lambda_\rho.
\end{equation}
 From \eqref{splitting}, \eqref{smalness ubarrapiu}, and \eqref{smallness lambda rho} we deduce the existence  of \EEE ${\rho}_0>0$ such that 
 \begin{equation*}
     \frac{1}{\rho^d}\Lb^d\big((Q^\pm_\nu(x,\rho) \cap Q_\nu^{\Gamma,\pm}(x,\rho))\setminus (\{\overline{u}^\pm_{\rho}\neq u\}\cup \omega^\pm_\rho)\big) \geq\frac14 
 \end{equation*}
 for every $\rho\in (0,{\rho}_0)$. Then from inequality \eqref{final countdown} we infer   \begin{equation*}
     |u^\pm(x)-m^\pm_\rho| \le \frac{4C_{\rm Poin}}{\rho^{d-1}}  \mathcal{V}\big(u,Q^{\Gamma,\pm}_\nu(x,\rho)\setminus J_u\big)+4\lambda_\rho
 \end{equation*}
 for all $\rho\in (0,{\rho}_0)$. Letting $\rho \to 0^+$ and resorting to \eqref{blow up jump 2} and  to the fact that $\lambda_\rho\to 0$ as $\rho\to 0^+$, we obtain \eqref{median to jumps}. \MMM This  \EEE  concludes the proof of the lemma.
\end{proof}

 \section{A lower semicontinuity lemma}\label{sec:lower}
In this section, we present a well-known property for lower semicontinuous integral functionals on ${\rm BV}(\Omega;\Rk)$ with linear growth,  employed in the proof of Theorem~\ref{thm:relaxation full}. In the following, we denote by $\Rkd_1 \subset \R^{k \times d}$ the subspace of rank-one matrices. 

\EEE 
\begin{lemma}\label{lemma: recession}
    Let $f\colon\Rkd\to [0,+\infty)$, $h\colon\Rkd_1\to [0,+\infty)$,  \BBB and \EEE $g\colon \Rk\times \Sd\to [0,+\infty)$ be Borel integrands such that 
    \begin{gather*}
  0\leq f(A)\leq C(|A|+1) \quad \text{ for all $A\in\Rkd$,}\\
        h \text{ is  positively \EEE $1$-homogeneous},\\
    g(\zeta,\nu)=g(-\zeta,-\nu)    \quad \text{ and }\quad  0\leq g(\zeta,\nu)\leq C|\zeta| \quad \text{for all $\zeta\in\Rk$ and $\nu\in\Sd$}
    \end{gather*}
    for some  $C>0$. Consider the functional $F\colon {\rm BV}(\Omega;\Rk)\times \Op(\Omega)\to [0,+\infty)$ given by 
\begin{equation*}
    F(u,O)\coloneq \int_{O}f(\nabla u)\,{\rm d}x+\int_{O}h\Big(\frac{{\rm d} D^cu}{{\rm d}|D^cu|}\Big)\,{\rm d}|D^cu|+\int_{J_u\cap O}g( \MMM [u], \EEE \nu_u)\,{\rm d}\Hd
\end{equation*}
for all $u\in {\rm BV}(\Omega;\Rk)$ and all $O\in\Op(\Omega)$, and  assume that $F(\cdot,O)$ is $L^1(\Omega;\Rk)$-lower semicontinuous for all $O\in\Op(\Omega)$. Then
\begin{equation*}
h(A)=f^\infty(A)\quad \text{for all $ \BBB A \in \EEE \Rkd_1$,}
\end{equation*}
where $f^\infty(A)\coloneq \limsup_{t\to +\infty}\frac{f(tA)}{t}$.
 
\end{lemma}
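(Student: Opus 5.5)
The strategy is the classical blow-up argument at Cantor points, combined with the one-dimensional structure of rank-one directions. Fix a rank-one matrix $A=a\otimes\nu$ with $|a|=|\nu|=1$; by $1$-homogeneity of $h$ and of $f^\infty$ this suffices. I will prove the two inequalities $h(A)\le f^\infty(A)$ and $h(A)\ge f^\infty(A)$ separately. In both I test the functional on functions depending only on $x\cdot\nu$, of the form $u(x)=a\,\psi(x\cdot\nu)$ with $\psi\colon\R\to\R$ monotone. For such $u$ one has $\nabla u=\psi'\,a\otimes\nu$, $D^cu=a\otimes\nu\,D^c\psi$ and $[u]=a[\psi]$. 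Hence $F(u,O)$ reduces to one-dimensional integrals in which only $f(t\,a\otimes\nu)$, $h(a\otimes\nu)$ and $g(\pm a\,s,\nu)$ appear. We may take $O$ a cube $Q_\nu$, which is allowed since the statement only needs a small cube inside $\Omega$ and translations are harmless.

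For $h(A)\le f^\infty(A)$, I take $\psi$ the Cantor–Vitali function (rescaled to the unit interval), so that $D\psi=D^c\psi$. I approximate it by the piecewise affine functions $\psi_n$ obtained by linear interpolation at the $n$-th stage of the construction. These have slope $(3/2)^n$ on a set of measure $(2/3)^n$ and slope $0$ elsewhere, and $\psi_n\to\psi$ uniformly. Lower semicontinuity of $F(\cdot,Q_\nu)$ in $L^1$ gives
\[
h(A)\le \liminf_n \Big[(2/3)^n f\big((3/2)^nA\big)+\big(1-(2/3)^n\big)f(0)\Big]\le f^\infty(A)+f(0)\cdot 1 ,
\]
up to the normalising cross-section measure. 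To eliminate the additive $f(0)$ term I localise: I use a Cantor function supported on an interval of length $\e$ with total increase $1$, and compare with $u\equiv$ const outside. Alternatively I scale the jump amplitude, taking $\psi$ with increment $\lambda$, and divide. The cleanest route is to take $u=\lambda a\psi$ with $\lambda\to\infty$ on a cube of side $1$. Then the $f(0)$-contribution stays bounded while both sides scale linearly in $\lambda$, so after dividing by $\lambda$ the $\limsup_{t\to\infty}$ over the slopes $\lambda(3/2)^n$ yields $h(A)\le f^\infty(A)$. Here I choose a subsequence realising the $\limsup$ in the definition of $f^\infty$, adjusting the interpolation slopes accordingly.

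For $h(A)\ge f^\infty(A)$, I go the other way: I approximate an affine function with large slope by Cantor-type functions. Given $t$ large, set $u_t(x)=t\,(x\cdot\nu)\,a$ on $Q_\nu$, so $F(u_t,Q_\nu)=f(tA)$. I construct monotone $\psi_{t,k}$ that are pure Cantor, with $\psi_{t,k}\to t\,\mathrm{id}$ uniformly: concatenate $k$ rescaled Cantor functions on subintervals of length $1/k$, each increasing by $t/k$. Then $|D\psi_{t,k}|$ has total mass $t$ and no absolutely continuous or jump part. Lower semicontinuity gives $f(tA)\le\liminf_k F(a\psi_{t,k},Q_\nu)=t\,h(A)$. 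Dividing by $t$ and taking $\limsup$ gives $f^\infty(A)\le h(A)$. The growth bound on $g$ is not even needed here since no jumps appear; it is however used implicitly to guarantee $F$ is finite.

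The main obstacle is the first inequality: a Cantor function's Cantor part cannot be approximated by $W^{1,1}$ functions without additionally paying $f$ on the flat regions. There one must make sure that the contribution of $f(0)$ (or, via the bound $f\le C(|\cdot|+1)$, of the region of small slope) becomes negligible relative to the Cantor mass. It is also delicate to pick approximations whose gradients run exactly along a sequence $t_j\to\infty$ attaining the $\limsup$ defining $f^\infty$. I would handle this by choosing nonuniform Cantor constructions whose stage-$n$ slopes equal prescribed values $t_j$ on sets of measure $\lambda/t_j$, and letting the amplitude $\lambda\to\infty$ only after normalising.
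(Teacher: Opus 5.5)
Your proposal is correct and follows the paper's proof: the Cantor staircase $a\psi(x\cdot\nu)$, approximated by its piecewise-affine construction stages, gives $h(A)\le f^\infty(A)$, and the affine map $ta(x\cdot\nu)$, approximated by concatenations of rescaled Cantor functions, gives $f^\infty(A)\le h(A)$. The only slip is bookkeeping, and it is harmless. For a pure Cantor function the bulk term still contributes $\int_{Q_\nu}f(\nabla u)\,{\rm d}x=f(0)$, since $\nabla u=0$ a.e., so $F(a\psi,Q_\nu)=f(0)+h(A)$ and $F(a\psi_{t,k},Q_\nu)=f(0)+t\,h(A)$. Hence $f(0)$ cancels directly in the first inequality, and because $f^\infty$ is a $\limsup$, the bound $\liminf_n f(t_nA)/t_n\le f^\infty(A)$ holds for any $t_n\to+\infty$, so your amplitude scaling and tailored slope sequences are unnecessary.
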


\begin{proof}
 Let us fix $a\in\Rk$, $\nu\in\mathbb{S}^{d-1}$, and set $A\coloneq a\otimes \nu$. Without loss of generality, we may assume that $Q_\nu\subset \Omega$, where we recall that $Q_\nu$ is a cube \MMM centred at 0 \EEE with two faces orthogonal to $\nu$ and sides of unit length.

We first prove that $h(A)\leq f^\infty(A).$ To this end, we  let $\psi\colon  [-1/2,1/2]\to[-1/2,1/2]\EEE$  \EEE (be  a translation of) \EEE Cantor's staircase function and we consider the function  defined for $x\in Q_\nu$ by $u(x)\coloneq a\psi(x\cdot \nu)$. Clearly,  $u\in {\rm BV}(Q_\nu;\Rk)$, $|Du|(Q_\nu)=|D^cu|(Q_\nu)=|A|$, and 
\begin{equation*}
    \frac{{\rm d}D^cu}{{\rm d}|D^cu|}=\frac{A}{|A|} \quad \text {$|D^cu|$-a.e.\ on $Q_\nu$}. 
\end{equation*}
In particular, $F(u,Q_\nu)=f(0)+h(A)$. Then, we let $\{\psi_n\}_n$ be the sequence of piecewise affine functions   considered in the construction of $\psi$, which   converges to $\psi$ uniformly and satisfies
\begin{equation*}
    \psi'_n=t_n\chi_{E_n}\quad \text{ and }\quad \Lb^1(E_n)=t_n^{-1}
\end{equation*}
for all $n\in\N$, where  $t_n\coloneq (3/2)^n$ and  $E_n$ is the union of the $2^n$ intervals remaining in the $n$-th step of the construction of the Cantor set. \MMM We \EEE set $u_n(x)\coloneq a\psi_n(x\cdot \nu)$. We note that $\{u_n\}_n$ converges to $u$ strongly in $L^1(Q_\nu;\Rk)$ and that $\nabla u_n(x)=t_n\chi_{E_n}(x\cdot \nu)A.$ Thus, 
\begin{equation*}
    F(u_n,Q_\nu)=\int_{Q_\nu}f(\nabla u_n)\,{\rm d}x=(1-t_n^{-1})f(0)+\frac{f(t_n A)}{t_n}.
\end{equation*}
By the $L^1$-lower semicontinuity of $F$, this implies 
\begin{equation*}
    f(0)+h(A)=F(u,Q_\nu)\leq \liminf_{n\to+\infty}F(u_n,Q_\nu) \leq \EEE f(0)+f^\infty(A), 
\end{equation*}
which proves $h(A)\leq f^\infty(A)$.

To prove the converse inequality, we let $t>0$ and consider the affine map defined  by $v(x)=ta(x\cdot \nu)$  for $x\in Q_\nu$. Then, we set
\begin{equation*}
    w_n(s)\coloneq \frac{1}{n}\big(\lfloor ns\rfloor+\psi(ns-\lfloor ns\rfloor)\big )\quad\text{ for $s\in [ -1/2,1/2\EEE]$},
\end{equation*}
where $\lfloor\cdot\rfloor$ denotes the floor function.
Clearly, $\{ w\EEE_n\}_n$ converges uniformly to the identity on $[-1/2,1/2\EEE]$, and $|Dw_n|(-1/2,1/2\EEE)=|D^cw_n|(-1/2,1/2\EEE)=1$ for all $n\in\N$, so that the functions $v_n(x)\coloneq taw_n(x\cdot \nu)$ converge to $v$ strongly in $L^1(Q_\nu;\Rk)$, $|Dv_n|(Q_\nu)=|D^cv_n|(Q_\nu)= t|a| \EEE=t|A|$, and 
\begin{equation*}
    \frac{{\rm d}D^cv_n}{{\rm d}|D^cv_n|}=\frac{A}{|A|}\quad \text{$|D^cv_n|$-a.e.\ in $Q_\nu$}.
\end{equation*}
Thus, $F(v_n,Q_\nu)=f(0)+th(A)$, which by lower semicontinuity gives
\begin{equation*}
    f(tA)=F(v,Q_\nu)\leq \liminf_{n\to+\infty}F(v_n,Q_\nu)=f(0)+th(A).
\end{equation*}
\MMM Dividing \EEE  by $t$ and letting $t\to +\infty$ we obtain $f^\infty(A)\leq h(A)$, concluding the proof.
\end{proof}

\end{document}